\documentclass[reqno,fleqn]{amsart}
\usepackage{amsfonts,amsthm,amssymb}
\usepackage{lipsum}
\usepackage[utf8]{inputenc}
\usepackage[english]{babel}

\usepackage{lmodern,textcomp}
\usepackage{amsmath}
\usepackage{mathtools}
\usepackage{latexsym}
\usepackage{tikz}
\usetikzlibrary{fpu}
\usepackage{fancyvrb}
\usepackage{epsfig}
\usepackage{pstricks,slashbox,multirow}
\usepackage{graphicx}
\usepackage{graphics}
\usepackage{rotating}
\usetikzlibrary{positioning}
\usepackage{subcaption}
\usepackage{datetime}
\newtheorem{theorem}{Theorem}[section]

\newtheorem{lemma}[theorem]{Lemma}

\newtheorem{definition}[theorem]{Definition}

\newtheorem{prm}[theorem]{Problem}
\newtheorem{oprm}{Open Problem}
\newtheorem{obv}[theorem]{Observation}
\newtheorem{rem}[theorem]{Remark}

\newtheorem{nota}[theorem]{Notation}

\title[A study on Type-2 circulant graphs: Part 7: Isomorphism series, digraph and graph of $C_n(R)$]{A study on Type-2 isomorphic circulant graphs. \\ Part 7: Isomorphism series, digraph and graph of $C_n(R)$}

\author{\sc Vilfred Kamalappan} 
\address{Department of Mathematics, Central University of Kerala, Periye, Kasaragod, Kerala, India - 671 316.}
\email{vilfredkamal@gmail.com}  

\subjclass[2010]{05C60, 05C25, 05C75.}

\keywords{Isomorphic set, isomorphism series, Hamiltonian isomorphism series, isomorphism diagram, isomorphism digraph $\mathcal{D}$, isomorphism graph  $\mathcal{G}$, symmetric digraph, diameter of isomorphic set of $C_n(R)$, isomorphic distance.}

\date{}

\begin{document} 

\begin{abstract} This study is the $7^{th}$ part of a detailed study on Type-2 isomorphic circulant graphs having ten parts \cite{v2-1}-\cite{v2-10}. In this study, we define {\em isomorphic set}, {\em isomorphism series}, {\em isomorphism digraph} $\mathcal{D}$ or {\em isomorphism diagram} and {\em isomorphism graph} $\mathcal{G}$ of circulant graphs and obtain these  corresponding to $C_{16}(R)$, $C_{27}(S)$ and $C_{54}(1,3,17,19)$ and present the isomorphism digraph and the isomorphism graph of $C_{432}(16, 27, 48, 54, 128, 160, 189)$ which has isomorphic circulant graphs of Type-2 w.r.t. $m$ = 2 as well as $m$ = 3.  We also show that each pair of circulant graphs $C_{54}(1,3,17,19)$, $C_{54}(5,13,21,23)$;  $C_{54}(7, 11, 21, 25)$, $C_{54}(7, 11, 15, 25)$; and $C_{54}(1,3,17,19)$, $C_{54}(7,11,15,25)$ are isomorphic but they are neither of Type-1 nor of Type-2 w.r.t. $m$ = 3. More such circulant graphs are given in the conclusion. We also define {\em diameter of isomorphic set} of $C_n(R)$ and {\em isomorphic distance} of $C_n(S)$ and $C_n(T)$ and obtained these values for some circulant graphs.  
\end{abstract}

\maketitle

	
\section{Introduction}

This study is the $7^{th}$ part of a detailed study on Type-2 isomorphic circulant graphs containing ten parts by Vilfred and Wilson \cite{v2-1}-\cite{v2-10}. While studying isomorphic circulant graphs of Type-1 and Type-2, we come across circulant graphs which are isomorphic but they are neither Type-1 nor Type-2 w.r.t. any particular value of $m$. This motivated us to find how isomorphic circulant graphs are isomorphically related and for that we define {\em isomorphic set} and {\em isomorphism series} of circulant graph(s) and present our study on this topic. It is clear from our study that when the isomorphism series is having more elements, then among the members of the series many possible direct isomorphism connections occur and in order to understand direct isomorphism connections, more clearly, among the members of the isomorphism series, we introduce {\em isomorphism diagram} or {\em isomorphism digraph}  $\mathcal{D}$ and {\em isomorphism graph} $\mathcal{G}$ related to isomorphism series of the circulant graph(s). 
We present many examples of these digraphs and graphs that include isomorphism digraph of circulant graph which has isomorphic circulant graphs of Type-2 w.r.t. $m$ such that $m$ takes two values. 

This paper contains 7 sections. Section 2 presents preliminaries containing some basic definitions, notations and results that are used in the subsequent sections. In Section 3, we define {\em isomorphic set}, {\em isomorphism series} and {\em Hamiltonian isomorphism series} of circulant graphs and obtain these corresponding to circulant graphs $C_{16}(1,2,6,7,8)$, $C_{16}(1,4,6,7)$, $C_{24}(2,4,5,6,7,10)$, $C_{24}(4,5,7,10)$, $C_{27}(3,4,5,6,12,13)$, $C_{27}(3,4,5,9,13)$,  $C_{16}(1,2,6,7,8)$ $\cup$ $C_{16}(1,4,6,7)$ $\cup$ $C_{27}(3,4,5,9,13)$. We also prove that the pair of circulant graphs, $C_{54}(1,3,17,19)$, $C_{54}(5,13,21,23)$;  $C_{54}(7, 11, 21, 25)$, $C_{54}(7, 11, 15, 25)$; and $C_{54}(1,3,17,19)$, $C_{54}(7,11,15,25)$ are isomorphic but they are neither of Type-1 nor of Type-2 (w.r.t. $m$ = 3) and obtain isomorphism series and Hamiltonian isomorphism series of $C_{54}(1,3,17,19)$ and $C_{54}(7,11,15,25)$.  In Section 4, we define {\em isomorphism digraph} $\mathcal{D}$ or {\em isomorphism diagram} and {\em isomorphism graph} $\mathcal{G}$ of circulant graphs and obtain these graphs corresponding to $C_{16}(R)$, $C_{27}(S)$ and $C_{54}(1,3,17,19)$. In Section 5, we present the isomorphism digraph and the isomorphism graph of $C_{432}(16, 27, 48, 54, 128, 160, 189)$ which has isomorphic circulant graphs of Type-2 w.r.t. $m$ = 2 as well as $m$ = 3. In Section 6, we define {\em diameter of isomorphic set} of $C_n(R)$ and {\em isomorphic distance} of $C_n(S)$ and $C_n(T)$ and obtained these values for some circulant graphs. Section 7 is the conclusion and contain some open problems.

\section{Preliminaries }  

In this section, we present a few definitions and results which are needed in this paper.

\begin{definition}{\rm\cite{ad67}} \quad \label{d2.1} For $R =$ $\{r_1$, $r_2$, $\dots$, $r_k\}$ and $S$ = $\{s_1$, $s_2$, $\dots$, $s_k\}$, circulant graphs $C_n(R)$ and $C_n(S)$ are {\em Adam's isomorphic} or {\em Type-1 isomorphic} \cite{v2-1} if there exists a positive integer $x$ $\ni$ $\gcd(n, x)$ = 1 and $S$ = $\{xr_1$, $xr_2$, $\dots$, $xr_k\}_n^*$ where $<r_i>_n^*$, the {\em reflexive modular reduction} of a sequence $< r_i >$, is the sequence obtained by reducing each $r_i$ under modulo $n$ to yield $r_i'$ and then replacing all resulting terms $r_i'$ which are larger than $\frac{n}{2}$ by $n-r_i'.$  
\end{definition}

\begin{definition} {\rm\cite{v2-1}} \label{d2.2} Let $Ad_n = \{\varphi_{n,x}: x\in \varphi_n\}$, $Ad_n(S) = \{\varphi_{n,x}(S): x\in \varphi_n\}$ = $\{xS: x\in \varphi_n\}$, $Ad_{n,x}(C_n(R))$ = $T1_{n,x}(C_n(R))$ = $\varphi_{n,x}(C_n(R))$ = $C_n(\varphi_{n,x}(R))$ = $C_n(xR)$, $x\in \varphi_n$ and $Ad_n(C_n(R)) = T1_n(C_n(R)) = \{\varphi_{n,y}(C_n(R)) = C_n(yR): y\in \varphi_n\}$ for sets $R,S \subseteq \mathbb{Z}_n$ where $\varphi_{n,x}(R)$ in $C_n(\varphi_{n,x}(R))$ is calculated under the reflexive modulo $n$. Define $'\circ''$ in $Ad_n(C_n(R))$ such that $\varphi_{n,x} \circ' \varphi_{n,y}$ = $\varphi_{n,xy}$, $C_n(xR) \circ' C_n(yR)$ = $C_n((xy)R)$ and $\varphi_{n,x}(C_n(R)) \circ' \varphi_{n,y}(C_n(R))$ = $\varphi_{n,xy}(C_n(R))$, $\forall$ $x,y\in\varphi_n$.

Here, $(\varphi_{n,x} \circ' \varphi_{n,y})(C_n(R))$ = $\varphi_{n,xy}(C_n(R))$ = $C_n((xy)R)$ = $C_n(xR) \circ' C_n(yR)$  = $\varphi_{n,x}(C_n(R)) \circ' \varphi_{n,y}(C_n(R))$, $\forall$ $x,y \in \varphi_n$, under arithmetic modulo $n$. 
\end{definition}
  	
Clearly, $Ad_n(C_n(R))$ is the set of all circulant graphs which are Type-1 isomorphic to $C_n(R)$ and $(Ad_n(C_n(R)), \circ' )$ = $(T1_n(C_n(R)), \circ' )$ is an Abelian group and we call it as the {\em Adam's group} or {\em Type-1 group of} $C_n(R)$ under $``\circ'"$. Moreover, $(\varphi_{n,x} \circ' \varphi_{n,y})(C_n(R))$ = $\varphi_{n,x}( \varphi_{n,y}(C_n(R)))$ = $(\varphi_{n,x}(C_n(yR)))$ = $C_n(x(yR))$ = $C_n((xy)R)$ = $C_n(xR)$ $\circ'$  $C_n(yR)$ = $\varphi_{n,x}(C_n(R))$ $\circ'$ $\varphi_{n,y}(C_n(R))$, $\forall$ $x,y\in \varphi_n$, $(xy)R$, $xR$ and $yR$ are calculated under reflexive modulo $n$ and $xy$ is calcuated under multiplication modulo $n$.
	
\begin{theorem} \cite{v24} \label{t2.3} {\rm Let $Ad_n(C_n(R))$ = $\{\varphi_{n,x}(C_n(R)) = C_n(xR): x\in\varphi_n \}$. Then, $C_n(S)\in Ad_n(C_n(R))$ if and only if $Ad_n(C_n(R))$ = $Ad_n(C_n(S))$ if and only if $C_n(R)\in Ad_n(C_n(S))$.   \hfill $\Box$  }
\end{theorem}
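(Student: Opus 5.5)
The plan is to treat $Ad_n(C_n(R))$ as the orbit of $C_n(R)$ under the action of the unit group $\varphi_n$ (the integers in $[1,n]$ coprime to $n$, a group under multiplication modulo $n$) acting by $x\cdot C_n(R) = C_n(xR)$. All three conditions then amount to the standard fact that orbits of a group action either coincide or are disjoint. I would prove the chain of implications
\[
C_n(S)\in Ad_n(C_n(R)) \;\Rightarrow\; Ad_n(C_n(R)) = Ad_n(C_n(S)) \;\Rightarrow\; C_n(R)\in Ad_n(C_n(S)) \;\Rightarrow\; C_n(S)\in Ad_n(C_n(R)).
\]

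First I will check that the action is well defined and compatible with products. The set $xR$ is computed under reflexive modulo $n$, so it consists of representatives $\pm xr_i \bmod n$ taken in $[0,n/2]$. Replacing $r$ by $n-r$ only changes $xr$ by sign modulo $n$. Hence $x(yR)$ under reflexive reduction equals $(xy)R$. This is exactly the identity $\varphi_{n,x}(\varphi_{n,y}(C_n(R))) = C_n((xy)R)$ recorded after Definition \ref{d2.2}, and I will cite it. Taking $x=1$ shows that $1R=R$, so $C_n(R)\in Ad_n(C_n(R))$.

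For the first implication, suppose $C_n(S)=C_n(yR)$ for some $y\in\varphi_n$. Then for each $x\in\varphi_n$,
\[
C_n(xS)=C_n((xy)R)\in Ad_n(C_n(R)),
\]
since $xy\in\varphi_n$. This gives $Ad_n(C_n(S))\subseteq Ad_n(C_n(R))$. For the reverse inclusion, let $y^{-1}$ be the inverse of $y$ modulo $n$, which exists because $\gcd(y,n)=1$. Then $C_n(y^{-1}S)=C_n((y^{-1}y)R)=C_n(R)$. Consequently each $C_n(xR)$ can be written as $C_n((xy^{-1})S)\in Ad_n(C_n(S))$. So the two sets are equal.

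The second implication follows from the equality just proved together with $C_n(R)\in Ad_n(C_n(R))$. The third implication is the first one with the roles of $R$ and $S$ swapped, followed by the same membership observation.

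The only delicate point is the claim that reflexive reduction commutes with multiplication. Concretely, this means $x\cdot(n-r)\equiv -xr \pmod n$, and the reflexive representative is invariant under sign change. Once this is granted, the rest is routine group-orbit bookkeeping.
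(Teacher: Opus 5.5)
Your proof is correct. It is the standard orbit argument for the action $x\cdot C_n(R)=C_n(xR)$ of $\varphi_n$, and its only nontrivial input, $\varphi_{n,x}(\varphi_{n,y}(C_n(R)))=C_n((xy)R)$ under reflexive reduction, is the identity the paper records after Definition~\ref{d2.2}; the paper gives no separate proof of this statement (it cites \cite{v24}) and relies on the same group-action reasoning, so your route matches.
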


In 1996, Vilfred \cite{v96} defined Type-2 isomorphism of $C_n(R)$ w.r.t. $m$ $\ni$ $m$ = $\gcd(n, r) > 1$, $r\in R$ and $r,n\in\mathbb{N}$ and studied Type-2 isomorphic circulant graphs w.r.t. $m$ = 2  in \cite{v13,v20}. And with Wilson \cite{v24} obtained families of isomorphic circulant graphs of Type-2 w.r.t. $m$ = 3,5,7. In \cite{v25}, Vilfred modified the definition of Type-2 isomorphism of circulant graphs $C_n(R)$ w.r.t. $m$ by considering $m > 1$ as a divisor of $\gcd(n, r)$ and $r\in R$ and in \cite{v2-1} and he further modified the definition to $r\in R$ and $m > 1$ and $m^3$ are divisors of $\gcd(n, r)$ and $n$, respectively.

\begin{definition} \cite{v2-1}\quad  \label{d2.4} Let $V(K_n) = \{u_0,u_1,u_2,...,u_{n-1}\}$, $V(C_n(R)) = \{v_0,v_1,v_2,...,$ $v_{n-1}\}$, $r\in R$, $|R| \geq 3$, $m > 1$ and $m$ and $m^3$ be divisors of $\gcd(n, r)$ and $n$, respectively.  Define one-to-one mapping $\theta_{n,m,t} :$ $V(C_n(R)) \rightarrow V(K_n)$ such that $\theta_{n,m,t}(v_x)$ = $u_{x+jtm}$,  $\theta_{n,m,t}((v_x, v_{x+s}))$ = $(\theta_{n,m,t}(v_x), \theta_{n,m,t}(v_{x+s}))$ under subscript arithmetic modulo $n$ and $\theta_{n,m,t}(C_n(R))$ = $C_n(\theta_{n,m,t}(R))$ for every $x$ = $qm+j \in \mathbb{Z}_n$, $s\in R$, $0 \leq j \leq m-1$, $0 \leq q,t \leq \frac{n}{m} -1$ and $\theta_{n,m,t}(R)$ in $C_n(\theta_{n,m,t}(R))$ is calculated under the reflexive modulo $n$. And for a particular value of $t,$ if  $\theta_{n,m,t}(C_n(R))$ = $C_n(S)$ for some $S$  and  $S \neq yR$ for all $y\in \varphi_n$ under reflexive modulo $n$, then $C_n(R)$ and $C_n(S)$ are called {\em isomorphic circulant graphs of Type-2 w.r.t. $m$} and the isomorphism as {\em Type-2 isomorphism w.r.t. $m$.} 

When $C_n(R)$ and $C_n(S)$ are Type-2 isomorphic w.r.t. $m$, then we also say that $C_{kn}(kR)$  and $C_{kn}(kS)$ are Type-2 isomorphic w.r.t. $m$ since $k.C_n(T)$ = $C_{kn}(kT)$, $k\in\mathbb{N}$.
\end{definition}

\begin{rem}\quad \label{r2.5} Following steps are used to establish isomorphism of Type-2 w.r.t. $m$ between circulant graphs $C_n(R)$ and $C_n(S)$. (i) $R \neq S$ and $|R|$ = $|S| \geq 3$; (ii) $\exists$ $r\in R,S$, $m > 1$ $\ni$ $m$ and $m^3$ are divisors of $\gcd(n, r)$ and $n$, respectively, and for some $t$ $\ni$ $1 \leq t \leq \frac{n}{m} -1$, $\theta_{n,m,t}(C_n(R))$ = $C_n(S)$ and (iii) $S \neq xR$ for all $x\in\varphi_n$ under arithmetic reflexive modulo $n$. 
\end{rem} 

\begin{rem} \label{r2.6} \quad While searching for possible values of $t$ for which the transformed graph $\theta_{n,m,t}(C_n(R))$ is circulant of the form $C_n(S)$ for some $S \subseteq [1, \frac{n}{2}]$,  calculation on $r_i$s which are integer multiples of $m$ need not be done  under the transformation $\theta_{n,m,t}$ as there is no change in these $r_i$s where $m > 1$, $m$ and $m^3$ are divisors of $\gcd(n, r)$ and $n$, respectively, and $r\in R$. Also, for a given circulant graph $C_n(R)$, w.r.t. different values of $m$, we may get different Type-2 isomorphic circulant graphs.
\end{rem}

\begin{theorem}{\rm \cite{v2-1} \quad \label{t2.6a} Let $R$ = $\{r_1,r_2,\dots,r_k\}  \subseteq [1, \frac{n}{2}]$, $m_i > 1$ be a divisor of $\gcd(n,r_i)$ for at least one $i$,  $1 \leq i \leq k$. Then, $\theta_{n,m_i,t}(C_n(R))$ = $C_n(S)$ if and only if $\theta_{n,m_i,t}(R \cup (n-R))$ = $S \cup (n-S)$ for some $t$ and $S$ $\ni$ $0 \leq t \leq \frac{n}{m_i}-1$ and $S \subseteq [1, \frac{n}{2}]$ if and only if $\theta_{n,m_i,t}(C_n(R))$ satisfies the symmetric equidistant condition w.r.t. $v_0$.  \hfill $\Box$   }
\end{theorem}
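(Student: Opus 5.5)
The plan is to prove the two equivalences as a cycle of implications. It rests on the observation that $\theta_{n,m_i,t}$ is a bijection on vertex labels, and that it maps the edge set of $C_n(R)$ onto an edge set in $V(K_n)$. Write $T$ for the transformed graph $\theta_{n,m_i,t}(C_n(R))$. The point is that $T$ is a circulant graph exactly when its edge set is invariant under the rotation $u_x \mapsto u_{x+1}$. In that case the neighbourhood of $u_0$ determines everything, and it is closed under negation because edges are undirected. So I will go through three notions in turn: $T$ being circulant, the "jump set" read off at $u_0$, and the symmetric equidistant condition at $v_0$. That last condition I interpret as follows: the neighbours of $u_0$ come in pairs $u_s, u_{n-s}$, and every vertex $u_x$ has its neighbours at the same set of differences as $u_0$ does.

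\textbf{Step 1: from $\theta_{n,m_i,t}(C_n(R)) = C_n(S)$ to the statement on $R \cup (n-R)$.} Assume $T = C_n(S)$ with $S \subseteq [1,\frac n2]$. The neighbours of $v_0$ in $C_n(R)$ are the $v_r$ with $r \in R \cup (n-R)$. Since $0 = 0\cdot m_i + 0$ has $j = 0$, we get $\theta(v_0) = u_0$. The neighbours of $u_0$ in $C_n(S)$ are the $u_s$ with $s \in S \cup (n-S)$. Because $\theta$ preserves adjacency, this gives $\theta_{n,m_i,t}(R \cup (n-R)) = S\cup(n-S)$. Here $\theta$ acting on a set of jumps means acting on the subscripts of the neighbours of $v_0$, and this is exactly how the paper evaluates $\theta(R)$ under reflexive modulo $n$.

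\textbf{Step 2: from the set equality to the symmetric equidistant condition, and then back to $C_n(S)$.} Suppose $\theta(R \cup (n-R)) = S \cup (n-S)$. Then the neighbourhood of $u_0$ in $T$ is symmetric, which is the "symmetric" half of the condition at $v_0$. For the "equidistant" half I need to show that for every $x = qm_i + j$, the image neighbourhood of $u_{x+jtm_i}$ consists of the points $u_{x+jtm_i \pm s}$ with $s \in S$. The neighbour $v_{x+r}$ of $v_x$ maps to $u_{x+r+j'tm_i}$, where $j' \equiv j + r \pmod{m_i}$. So the displacement is $r + (j'-j)tm_i$, which depends only on $r \bmod m_i$ and on $j$. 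For jumps $r$ divisible by $m_i$ the displacement is just $r$ (compare Remark~\ref{r2.6}). The remaining jumps are handled by the periodicity of the shift $jtm_i$ in the residue $j$. Once the condition holds, every vertex of $T$ sees the same difference set $S\cup(n-S)$. Hence $T$ is rotation invariant and equals $C_n(S)$, which closes the cycle.

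\textbf{Main obstacle.} The hardest step is the transfer from $v_0$ to an arbitrary $v_x$ in Step 2. The displacement $r + (j'-j)tm_i$ depends on $j$, so the set of differences at $u_{x+jtm_i}$ could differ from the set at $u_0$. I would first try a direct case split on $j$, using the hypothesis $m_i^3 \mid n$ to reduce the shifts modulo $n$. If that does not go through, I would instead read the symmetric equidistant condition as the requirement that the difference sets agree at all vertices, and then verify it from the set equality residue class by residue class.
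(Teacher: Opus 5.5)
Your Step 1 (circulant $\Rightarrow$ set equality at $u_0=\theta_{n,m_i,t}(v_0)$) is fine. The gap is in Step 2, the implication from $\theta_{n,m_i,t}(R\cup(n-R))=S\cup(n-S)$ to $\theta_{n,m_i,t}(C_n(R))=C_n(S)$, which is the real content of the theorem. Below write $m$ for $m_i$.

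Your sentence that the remaining jumps are "handled by the periodicity of the shift $jtm$ in the residue $j$" is not just vague; it is false. Take $x\equiv j$ and $r\equiv\rho \pmod m$ with $0\le j,\rho\le m-1$, and put $y=x+jtm$. Then $v_{x+r}$ is sent to $u_{y+d}$, where
\[
d=\begin{cases} r+\rho tm, & j+\rho<m,\\ r+\rho tm-tm^2, & j+\rho\ge m.\end{cases}
\]
The wrap-around term $tm^2$ is in general nonzero modulo $n$, even when $m^3\mid n$. For $n=54$, $m=3$, $t=1$ it equals $9$, and indeed $\theta_{54,3,1}(C_{54}(1,3,17,19))$ is not circulant (Table 1). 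So the vertices with large residue $j$ see whole residue classes of jumps shifted by $-tm^2$ relative to $D_0=\theta_{n,m,t}(R\cup(n-R))$. No case split on $j$ removes this discrepancy unless the symmetry hypothesis is actually used. Your Step 2 uses that hypothesis only to note that $D_0$ is symmetric, and never propagates it to the other vertices.

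The missing lemma is a short residue-class computation. Let $A_\rho=\{a\in R\cup(n-R): a\equiv\rho \pmod m\}$.

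\begin{itemize}
\item Since $m\mid n$, negation maps $A_{m-\rho}$ onto $A_\rho$.
\item The residue-$\rho$ part of $D_0$ is $A_\rho+\rho tm$.
\item The residue-$\rho$ part of $-D_0$ is $-(A_{m-\rho}+(m-\rho)tm)=A_\rho+\rho tm-tm^2$.
\item Hence $D_0=-D_0$ if and only if $A_\rho+tm^2=A_\rho$ for every $\rho\neq 0$.
\end{itemize}

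By the display above, this translation invariance is exactly the condition that every vertex $u_y$ has neighbourhood $y+D_0$. Here one uses that $\theta_{n,m,t}$ preserves residues mod $m$. That condition is equivalent to $\theta_{n,m,t}(C_n(R))=C_n(S)$ with $S\cup(n-S)=D_0$, which closes your cycle.

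Your fallback of reading the symmetric equidistant condition as "every vertex has the same difference set as $u_0$" does not avoid the gap. That reading is a restatement of circulance, not a condition w.r.t. $v_0$. Under the intended single-vertex reading, the third clause reduces, via symmetry of the neighbourhood at $u_0$, to the same lemma, so it still needs the argument above.
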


\begin{definition}{\rm \cite{v2-1}}\quad \label{d2.7} Let $V(C_n(R)) = \{v_0, v_1, \dots, v_{n-1}\}$, $V(K_n)$ = $\{u_0, u_1,\dots, u_{n-1}\}$, $x = qm+j,$ $0 \leq j \leq m-1$, $m > 1$ divide $\gcd(n, r)$, $m^3$ divide $n$, $0 \leq q,t,t' \leq \frac{n}{m}-1$, $m,q,t,t',x\in \mathbb{Z}_n$ and $r \in R$. Define $\theta_{n,m,t}:$ $V(C_n(R))$ $\rightarrow$  $V(K_n)$ $\ni$ $\forall$  $x\in \mathbb{Z}_n$, $\theta_{n,m,t}(v_x)$ = $u_{x+jmt}$ and $\theta_{n,m,t}((v_x, v_{x+s}))$ = $(\theta_{n,m,t}(v_x), \theta_{n,m,t}(v_{x+s}))$  and $s\in R,$ under subscript arithmetic modulo $n$. Let $V_{n,m}$ = $\{\theta_{n,m,t}:$ $t$ = $0,1,\dots,\frac{n}{m}-1\}$, $V_{n,m}(v_s)$ = $\{\theta_{n,m,t}(v_s):$ $t = 0,1,\dots, \frac{n}{m}-1\}$, $s \in \mathbb{Z}_n$ and $V_{n,m}(C_n(R))$ = $\{\theta_{n,m,t}(C_n(R)) = C_n(\theta_{n,m,t}(R)):$ $t$ = $0,1,\dots,\frac{n}{m}-1\}$ where $\theta_{n,m,t}(R)$ in $C_n(\theta_{n,m,t}(R))$  is calculated under reflexive modulo $n$. Define $'\circ'$ in $V_{n,m} \ni \theta_{n,m,t} \circ  \theta_{n,m,t'}$ =  $\theta_{n,m,t+t'}$ and $(\theta_{n,m,t} ~\circ ~ \theta_{n,m,t'})(C_n(R))$ = $\theta_{n,m,t}(C_n(R)) ~\circ ~ \theta_{n,m,t'}(C_n(R))$ = $\theta_{n,m,t+t'}(C_n(R))$, $\forall$ $\theta_{n,m,t},\theta_{n,m,t'}\in V_{n,r}$ where $t+t'$ is calculated under arithmetic modulo ~$\frac{n}{m}$.
\end{definition}

$V_{n,m}(C_n(R))$ = $\{\theta_{n,m,t}(C_n(R)): t = 0,1,\dots, \frac{n}{m}-1\}$ and for $t$ = 0 to $\frac{n}{m}-1$, the $\frac{n}{m}$ graphs $\theta_{n,m,t}(C_n(R))$ are isomorphic and 
$V_{n,m}(C_n(R))$ contains all isomorphic circulant graphs of Type-2 of $C_n(R)$ w.r.t. $m$, if exist, under the transformation $\theta_{n,m,t}$ on $C_n(R)$ where $r\in R$, $m > 1$ divides $\gcd(n, r)$ and $m^3$ divides $n$. Following is an algebraic property of $V_{n,m}(C_n(R))$.  

\begin{theorem}{\rm \cite{v24}  \quad \label{t2.8}  Under the above definition of $`\circ'$ and $V_{n,m}(C_n(R))$, $(V_{n,m}(C_n(R)), \circ)$ is an Abelian  group.  \hfill $\Box$}
\end{theorem}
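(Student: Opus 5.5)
The plan is to reduce the statement to the cyclic group $(\mathbb{Z}_{n/m}, +)$, using the fact that $\circ$ on $V_{n,m}$ is defined by adding indices: $\theta_{n,m,t} \circ \theta_{n,m,t'} = \theta_{n,m,t+t'}$, with $t+t'$ taken modulo $\frac{n}{m}$.

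The first step is to check that the maps compose correctly. For $x = qm+j$ with $0 \le j \le m-1$, we have $\theta_{n,m,t'}(v_x) = u_{x+jmt'}$. Since $jmt'$ is a multiple of $m$, the index $x + jmt'$ has the same residue $j$ modulo $m$, even after reduction modulo $n$, because $m \mid n$. Applying $\theta_{n,m,t}$ next therefore gives the index $x + jmt' + jmt = x + jm(t+t')$. Also, $jm(t+t')$ modulo $n$ depends only on $t+t'$ modulo $\frac{n}{m}$, since $jm\cdot\frac{n}{m} = jn \equiv 0 \pmod n$. Hence the actual composition $\theta_{n,m,t}\theta_{n,m,t'}$, read on subscripts, equals $\theta_{n,m,t+t'}$. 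So $\circ$ agrees with genuine composition, and the rule $(\theta_{n,m,t}\circ\theta_{n,m,t'})(C_n(R)) = \theta_{n,m,t+t'}(C_n(R))$ is consistent.

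Next I verify the group axioms on $V_{n,m}(C_n(R))$ by transporting them along $t \mapsto \theta_{n,m,t}(C_n(R))$.
\begin{itemize}
\item Closure holds because $t+t'$ modulo $\frac{n}{m}$ again lies in $\{0,\dots,\frac{n}{m}-1\}$.
\item Associativity and commutativity are inherited from addition modulo $\frac{n}{m}$.
\item The identity is $\theta_{n,m,0}(C_n(R)) = C_n(R)$, since $\theta_{n,m,0}$ fixes every $v_x$.
\item The inverse of $\theta_{n,m,t}(C_n(R))$ is $\theta_{n,m,\frac{n}{m}-t}(C_n(R))$, taking $t = 0$ as its own inverse.
\end{itemize}

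The main obstacle is well-definedness. Distinct $t, t'$ can give the same graph $\theta_{n,m,t}(C_n(R)) = \theta_{n,m,t'}(C_n(R))$, so the operation must not depend on the chosen index. I would handle this by treating $V_{n,m}(C_n(R))$ as the indexed family $\{\theta_{n,m,t}(C_n(R))\}_{t}$, as the definition implicitly does, so the operation is defined on indices.

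To be safe, I would also show that coincidences respect the operation. Suppose $\theta_{n,m,t}(C_n(R)) = \theta_{n,m,t'}(C_n(R))$. Applying $\theta_{n,m,s}$ to both sides and using the composition law from the first step gives $\theta_{n,m,s+t}(C_n(R)) = \theta_{n,m,s+t'}(C_n(R))$. This holds because $\theta_{n,m,s}$ is a vertex map, so it sends equal edge sets to equal edge sets.

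Consequently, the set $H = \{t : \theta_{n,m,t}(C_n(R)) = C_n(R)\}$ is a subgroup of $\mathbb{Z}_{n/m}$. Then $V_{n,m}(C_n(R))$ is identified with the quotient $\mathbb{Z}_{n/m}/H$, which is Abelian, and this completes the proof.
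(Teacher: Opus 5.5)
Your proof is correct. The paper gives no argument for this statement; it is quoted from \cite{v24}. Since $\circ$ is \emph{defined} there by adding indices modulo $\frac{n}{m}$, the intended proof is the transport of the axioms of $(\mathbb{Z}_{n/m},+)$ that forms the middle of your proposal, with $V_{n,m}(C_n(R))$ tacitly treated as ``the $\frac{n}{m}$ graphs'' indexed by $t$.

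Your version adds two things the paper never checks.

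First, you prove that index addition agrees with actual composition of the vertex maps. The paper instead, in Lemma \ref{l3.11}(b2), passes from $\theta_{n,m,t_2}(\theta_{n,m,t_1}(C_n(R)))$ to $\theta_{n,m,t_2+t_1}(C_n(R))$ by appealing to the group structure. That step is only justified by your composition law.

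Second, you make the operation well defined on the actual set of graphs. This matters because coincidences occur in the paper itself: $\theta_{16,2,4}$ fixes $C_{16}(1,2,6,7,8)$ in Problem \ref{p3.6}(a), and $\theta_{54,3,6}$ fixes $C_{54}(1,3,17,19)$ in Table 1. Your identification $V_{n,m}(C_n(R))\cong\mathbb{Z}_{n/m}/H$ gives the sharper conclusion that the group is cyclic of order $[\mathbb{Z}_{n/m}:H]$. It also explains why the paper lists only $t=0,\dots,5$ for $V_{54,3}(C_{54}(1,3,17,19))$ although $\frac{n}{m}=18$: there $H=\{0,6,12\}$.

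You should state explicitly the two conventions your argument relies on.
\begin{enumerate}
\item[(1)] $u_i$ is identified with $v_i$, so that the maps can be composed.
\item[(2)] $\theta_{n,m,t}(C_n(R))$ means the image graph, possibly non-circulant as in the ``NS'' rows, as in Theorem \ref{t2.6a}. It does not mean $C_n$ of the reflexively reduced image of $R$ alone, which the paper's notation literally suggests.
\end{enumerate}
Under the second reading your equal-edge-set argument does not apply, and the composition law actually fails. For example, applying $\theta_{27,3,1}$ after $\theta_{27,3,2}$ to $C_{27}(1,3,8,10)$ would give $C_{27}(3,10)$ rather than $\theta_{27,3,3}(C_{27}(1,3,8,10))=C_{27}(1,3,8,10)$.
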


\begin{definition}{\rm \cite{v2-1}} \quad \label{d2.9} Let $T2_{n,m}(C_n(R))$ = $\{C_n(R)\}$ $\cup$ $\{C_n(S):$ $C_n(S)$ is Type-2 isomorphic to $C_n(R)$ w.r.t. $m\}$ where $r\in R$, $m > 1$ divides $\gcd(n,r)$ and $m^3$ divides $n$. We call $T2_{n,m}(C_n(R))$ as {\em the Type-2 set of $C_n(R)$
w.r.t. $m$}.

That is, {\em the Type-2 set of $C_n(R)$ w.r.t. $m$}, denoted by $T2_{n,m}(C_n(R))$, is $\{C_n(R)\}$ $\cup$ $\{\theta_{n,m,t}(C_n(R)):$ $\theta_{n,m,t}(C_n(R))$ = $C_n(S)$ and $C_n(S)$ is Type-2 isomorphic to $C_n(R)$ w.r.t. $m$, $1 \leq t \leq \frac{n}{m}-1\}$ = $\{\theta_{n,m,0}(C_n(R))\}$ $\cup$ $\{\theta_{n,m,t}(C_n(R)):$ $\theta_{n,m,t}(C_n(R))$ = $C_n(S)$ and $C_n(S)$ is Type-2 isomorphic to $C_n(R)$ w.r.t. $m$, $1 \leq t \leq \frac{n}{m}-1\}$ where $r\in R$, $m > 1$ divides $\gcd(n,r)$ and $m^3$ divides $n$.
\end{definition}

$C_n(R)$ has Type-2 isomorphic circulant graph w.r.t. $m$ if and only if $T2_{n,m}(C_n(R))$ $\neq$ $\{C_n(R)\}$ if and only if $T2_{n,m}(C_n(R$ $)) \setminus \{C_n(R)\} \neq \emptyset$ if and only if $|T2_{n,m}(C_n(R))| > 1$.  

\begin{theorem} {\rm \cite{v2-6} \quad \label{t2.10}  $(T2_{n,m}(C_n(R)),  \circ)$ is a subgroup of $(V_{n,m}(C_n(R)), \circ)$ where $r\in R$, $m > 1$ divides $\gcd(n,r)$ and $m^3$ divides $n$.   \hfill $\Box$}
\end{theorem}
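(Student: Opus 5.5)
Since $V_{n,m}(C_n(R))$ is finite (it has at most $\frac{n}{m}$ elements), I will use the finite subgroup criterion. A nonempty subset of a finite group that is closed under the operation is a subgroup. So the plan is:

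(a) check that $T2_{n,m}(C_n(R))$ is a nonempty subset of $V_{n,m}(C_n(R))$;

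(b) check that it is closed under $\circ$.

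Inverses then come for free. For each $t$, the powers of $\theta_{n,m,t}(C_n(R))$ under $\circ$ are $\theta_{n,m,kt}(C_n(R))$. Taking $k=\frac{n}{m}/\gcd(t,\frac{n}{m})$ gives $\theta_{n,m,0}(C_n(R))=C_n(R)$, the identity. So the inverse is a positive power of the element, and closure keeps it inside the set.

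For (a), the set contains $C_n(R)=\theta_{n,m,0}(C_n(R))$. Every other member is by definition $\theta_{n,m,t}(C_n(R))$ with $1\le t\le \frac{n}{m}-1$. Hence $T2_{n,m}(C_n(R))\subseteq V_{n,m}(C_n(R))$, and it is nonempty.

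For (b), the first step is to make the operation concrete at the level of vertex maps. Write $x=qm+j$. Then $\theta_{n,m,t}(v_x)=u_{x+jmt}$, and $x+jmt\equiv j \pmod m$, so the residue $j$ is unchanged. Consequently
\[
\theta_{n,m,t}\bigl(\theta_{n,m,t'}(v_x)\bigr)=u_{x+jm(t+t')}=\theta_{n,m,t+t'}(v_x),
\]
so $\circ$ is genuine composition of the vertex maps. In particular, if $\theta_{n,m,t'}(C_n(R))=C_n(S')$ and $\theta_{n,m,t}(C_n(S'))$ is circulant, then $\theta_{n,m,t+t'}(C_n(R))$ is circulant.

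To get circulance of $\theta_{n,m,t+t'}(C_n(R))$ directly from that of $\theta_{n,m,t}(C_n(R))$ and $\theta_{n,m,t'}(C_n(R))$, I will use Theorem \ref{t2.6a}. Put $R^{\pm}=R\cup(n-R)$. Then:

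\emph{Elements of $R^{\pm}$ divisible by $m$ are fixed} (Remark \ref{r2.6}).

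\emph{An element $r\equiv j \pmod m$ with $j\neq 0$ is moved to $r+jmt$.}

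This action is additive in $t$, so the symmetric equidistant condition for $t$ and for $t'$ should combine to give it for $t+t'$. This yields some $S''$ with $\theta_{n,m,t+t'}(C_n(R))=C_n(S'')$.

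The remaining and hardest step is the Type-2 condition. If $t+t'\equiv 0 \pmod{\frac{n}{m}}$, the product is $C_n(R)$, which lies in the set. Otherwise I must rule out $S''=yR$ for some $y\in\varphi_n$ with $C_n(yR)\neq C_n(R)$. I would argue by contradiction, using Theorem \ref{t2.3} to move Adam-equivalence between $C_n(R)$, $C_n(S)$ and $C_n(S'')$. The first thing I would try is the following.

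If $S''=yR$, then $\theta_{n,m,t+t'}$ composed with $\varphi_{n,y}^{-1}$ stabilizes $C_n(R)$. Both maps preserve the set of elements of $R$ divisible by $m$: $\theta$ fixes it pointwise, and $\varphi_{n,y}$ maps multiples of $m$ to multiples of $m$ because $\gcd(y,n)=1$. Comparing residues modulo $m$ and modulo $m^2$ (using $m^3\mid n$) should force either $y\equiv\pm1$ on $R$, or that $S=\theta_{n,m,t}(R)$ is itself of the form $xR$. The first case gives $C_n(S'')=C_n(R)$, which is in the set; the second contradicts $C_n(S)$ being Type-2.

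I expect this last residue comparison to be the main obstacle. If it fails in general, the fallback is to argue directly from the definition of $T2_{n,m}$ as a set of images indexed by $t$. In that case I would verify that the index set $\{t : \theta_{n,m,t}(C_n(R))\in T2_{n,m}(C_n(R))\}$ is a subgroup of $\mathbb{Z}_{n/m}$.
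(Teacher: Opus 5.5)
Your proposal has a genuine gap in step (b), at the non-Adam condition, and that gap cannot be filled. The dichotomy you hope the residue comparison will produce is false, and for composite $m$ so is closure itself.

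Take $n=64$, $m=4$, $R=\{1,8,15,17,31\}$; here $4\mid\gcd(64,8)$ and $4^3\mid 64$. The odd part of $R\cup(n-R)$ is $(1+16\mathbb{Z}_{64})\cup(15+16\mathbb{Z}_{64})$. A direct computation (checking one vertex in each residue class mod $4$ suffices) gives:
\begin{itemize}
\item $\theta_{64,4,1}(C_{64}(R))=C_{64}(5,8,11,21,27)$;
\item $\theta_{64,4,2}(C_{64}(R))=C_{64}(7,8,9,23,25)=C_{64}(7R)$;
\item $\theta_{64,4,3}(C_{64}(R))=C_{64}(3,8,13,19,29)$;
\item $\theta_{64,4,4}(C_{64}(R))=C_{64}(R)$.
\end{itemize}
So $V_{64,4}(C_{64}(R))$ is cyclic of order $4$.

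Suppose a unit $y$ makes $C_{64}(yR)$ equal to one of these graphs. Then $y$ must map $\{8,56\}$ to itself, so $y\equiv\pm1\pmod 8$. Hence the odd jumps of $C_{64}(yR)$ are $\equiv\pm1$ or $\pm7\pmod{16}$. But the odd jumps of the first and third images are $\equiv\pm5$ and $\pm3\pmod{16}$ respectively. So those two graphs are Type-2 isomorphic to $C_{64}(R)$ w.r.t. $4$, while the second is only Type-1.

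Thus $T2_{64,4}(C_{64}(R))$ has three elements inside a group of order four. Concretely, $\theta_{64,4,1}(C_{64}(R))\circ\theta_{64,4,1}(C_{64}(R))=C_{64}(7R)\notin T2_{64,4}(C_{64}(R))$. With $t=t'=1$ and $y=7$, neither branch of your dichotomy occurs. Your fallback only restates the claim.

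The missing idea is to work with subgroups of indices. Write $R^{(k)}$ for the elements of $R\cup(n-R)$ congruent to $k\not\equiv0\pmod m$.
\begin{itemize}
\item One checks that $\theta_{n,m,t}(C_n(R))$ is circulant iff every $R^{(k)}$ is invariant under translation by $m^2t$. This is the precise form of your ``should combine'' step.
\item Hence these $t$ form a subgroup $C$ of $\mathbb{Z}_{n/m}$. It contains the stabilizer $H$ of $C_n(R)$, and $mC\subseteq H$.
\item On such circulants $\theta_{n,m,t}$ commutes with $\varphi_{n,y}$. So the $t$ giving Adam images form a subgroup $A$ with $H\le A\le C$.
\end{itemize}
Then $T2_{n,m}(C_n(R))$ corresponds to $\{0\}\cup(C/H\setminus A/H)$, which is a subgroup iff $A=H$ or $A=C$.

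When $m$ is prime, $C/H$ is cyclic of order $1$ or $m$, so this holds automatically and the statement is true. This covers the values $m=2,3$ used in this paper. When $m$ is composite, $C/H$ can be $\mathbb{Z}_4$ as above, and no argument along your lines can succeed without an extra hypothesis such as primality of $m$.
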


\begin{definition}{\rm \cite{v2-6}}\quad \label{d2.11} With usual notation, group $(T2_{n,m}(C_n(R)), \circ)$ is called the Type-2 group of $C_n(R)$ w.r.t.  $m$.  
\end{definition}

\begin{theorem}{\rm \cite{v2-6}} \label{t2.12} \quad {\rm  Let $C_n(R)$ $\cong$ $C_n(S)$, $R \neq S$, $|R| = |S| \geq 3$, $r\in R,S$, $m > 1$, and $m$ and $m^3$ divide $\gcd(n,r)$ and $n$, respectively. Then, $C_n(S)\in$ $T2_{n,m}($ $C_n(R))$ if and only if  $T2_{n,m}(C_n(R))$ = $T2_{n,m}(C_n(S))$ if and only if  $C_n(R)\in T2_{n,m}(C_n(S))$.   \hfill $\Box$}
 \end{theorem}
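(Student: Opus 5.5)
The plan is to run the argument along the same lines as Theorem \ref{t2.3}, using the group structure from Theorem \ref{t2.8} and Theorem \ref{t2.10}. I will prove the cycle of implications
\[
C_n(S)\in T2_{n,m}(C_n(R)) \;\Rightarrow\; T2_{n,m}(C_n(R)) = T2_{n,m}(C_n(S)) \;\Rightarrow\; C_n(R)\in T2_{n,m}(C_n(S)) \;\Rightarrow\; C_n(S)\in T2_{n,m}(C_n(R)).
\]
The second implication is immediate, since $C_n(R) = \theta_{n,m,0}(C_n(R))\in T2_{n,m}(C_n(R))$ by Definition \ref{d2.9}. So the work lies in the first and third implications.

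For the third implication, suppose $C_n(R)\in T2_{n,m}(C_n(S))$. Since $R\neq S$, this means $C_n(R)=\theta_{n,m,t}(C_n(S))$ for some $1\le t\le \frac nm -1$, and $R\neq yS$ for all $y\in\varphi_n$.
\begin{itemize}
\item The first step is to show that $\theta_{n,m,-t}$ inverts $\theta_{n,m,t}$ on vertices. For $x=qm+j$ we have $x+jmt \equiv j \pmod m$, so the residue $j$ is unchanged. Hence applying $\theta_{n,m,-t}$ sends $u_{x+jmt}$ back to $u_x$.
\item Consequently $\theta_{n,m,-t}(C_n(R)) = C_n(S)$, with $-t$ read modulo $\frac nm$. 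This is just the inverse in the group $V_{n,m}$ of Theorem \ref{t2.8}.
\item The non-Type-1 condition is symmetric. If $S = yR$ for some $y\in\varphi_n$, then $R = y^{-1}S$ under reflexive modulo $n$, which contradicts the hypothesis.
\item The hypotheses $r\in R,S$ with $m\mid\gcd(n,r)$ and $m^3\mid n$ make $\theta_{n,m,\cdot}$ admissible on both graphs.
\end{itemize}
Together these give $C_n(S)\in T2_{n,m}(C_n(R))$.

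For the first implication, assume $C_n(S)=\theta_{n,m,t_0}(C_n(R))$ is Type-2 with respect to $m$. I will show $T2_{n,m}(C_n(S))\subseteq T2_{n,m}(C_n(R))$; the reverse inclusion then follows by symmetry, using the third implication already proved. Take $C_n(T)=\theta_{n,m,t}(C_n(S))\in T2_{n,m}(C_n(S))$.
\begin{itemize}
\item By the composition rule of Definition \ref{d2.7}, $C_n(T)=\theta_{n,m,t+t_0}(C_n(R))$, so $C_n(T)$ lies in $V_{n,m}(C_n(R))$.
\item If $T=R$, then $C_n(T)\in T2_{n,m}(C_n(R))$ trivially.
\item Otherwise I must rule out $T = yR$ for some $y\in\varphi_n$.
\end{itemize}

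That last point is the main obstacle. The Type-2 condition is defined by excluding Type-1 images, and the composition of two Type-2 transformations need not obviously avoid Type-1 images of $R$. My first attempt will be to use Theorem \ref{t2.10}, which says $T2_{n,m}(C_n(R))$ is a subgroup of $V_{n,m}(C_n(R))$ and hence closed under $\circ$. Closure makes $\theta_{n,m,t+t_0}(C_n(R))\in T2_{n,m}(C_n(R))$ as soon as $\theta_{n,m,t}(C_n(R))$ belongs to it. I will transfer membership from "relative to $S$" to "relative to $R$" by writing $C_n(T)=\theta_{n,m,t}\circ\theta_{n,m,t_0}(C_n(R))$. Both factors are then elements of the subgroup generated inside $V_{n,m}(C_n(R))$, because $\theta_{n,m,t}$ acts compatibly on $C_n(S)$ and $C_n(R)$ via the group operation.

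If that identification proves too loose, the fallback is to argue directly. I would show that $T=yR$ would force $T=y\,\theta_{n,m,-t_0}(S)$-type relations contradicting $S\neq xR$. For this I would use that multiplication by $y\in\varphi_n$ preserves multiples of $m$ and permutes the residue classes modulo $m$ compatibly with $\theta_{n,m,t}$.
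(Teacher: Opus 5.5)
Your second and third implications are fine: $\theta_{n,m,-t}$ inverts $\theta_{n,m,t}$ on vertices, and the condition $S\neq yR$ is symmetric. The gap is the step you flag yourself, excluding $T=yR$ with $T\neq R$ in the first implication.

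Your first attempt is circular. Closure of $T2_{n,m}(C_n(R))$ (Theorem~\ref{t2.10}) helps only if $\theta_{n,m,t}(C_n(R))$ is already known to lie in $T2_{n,m}(C_n(R))$. What you actually know is $\theta_{n,m,t}(C_n(S))\in T2_{n,m}(C_n(S))$, and the non-Type-1 condition depends on the base set. Nothing you wrote transfers it from $S$ to $R$, and the fallback is not an argument.

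The missing structure is as follows. Let $\theta=\theta_{n,m,t}$.
\begin{enumerate}
\item $\theta(C_n(R))$ is circulant if and only if, for each $a\not\equiv 0\pmod m$, the part of $R\cup(n-R)$ in residue class $a$ is invariant under translation by $m^2t$.
\item In that case $\theta$ acts on the jump set by fixing the multiples of $m$ and translating class $a$ by $atm$.
\item It then commutes with units: $\theta(C_n(yR))=C_n(y\,\theta(R))$ for all $y\in\varphi_n$.
\item Consequently the distinct circulant images of $C_n(R)$ form a cyclic group of exponent dividing $m$. If one of its elements yields $C_n(yR)$, every element of the subgroup it generates yields a Type-1 image.
\end{enumerate}
When $m$ is prime every nontrivial element generates the group. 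So either all nontrivial images are Type-1 or all are Type-2, and this gives $T2_{n,m}(C_n(R))=T2_{n,m}(C_n(S))$.

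For composite $m$ the step genuinely fails, and the statement as written is false. Take $n=64$, $m=4$, $R=\{1,8,15,17,31\}$, with $r=8$.
\begin{itemize}
\item Direct checking gives $\theta_{64,4,1}(C_{64}(R))=C_{64}(5,8,11,21,27)=C_{64}(S)$.
\item $S\neq yR$ for every $y\in\varphi_{64}$. Fixing the jump $8$ forces $y\equiv\pm1\pmod 8$. But $y\cdot 1$ would have to be an odd element of $S\cup(64-S)$, and all of those are $\equiv 3$ or $5\pmod 8$.
\item However, $\theta_{64,4,1}(C_{64}(S))=\theta_{64,4,2}(C_{64}(R))=C_{64}(7,8,9,23,25)=C_{64}(9R)$.
\item This graph lies in $T2_{64,4}(C_{64}(S))$, since it is not $C_{64}(zS)$ (otherwise $S=z^{-1}9R$). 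It does not lie in $T2_{64,4}(C_{64}(R))$, because $9R\neq R$.
\end{itemize}
So $C_{64}(S)\in T2_{64,4}(C_{64}(R))$ and $C_{64}(R)\in T2_{64,4}(C_{64}(S))$, yet $T2_{64,4}(C_{64}(R))\neq T2_{64,4}(C_{64}(S))$.

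The same example shows $T2_{64,4}(C_{64}(R))$ is not closed under $\circ$, since $\theta_{64,4,1}\circ\theta_{64,4,1}=\theta_{64,4,2}$. So Theorem~\ref{t2.10} cannot rescue your first attempt either. The paper only cites \cite{v2-6} for this result. A correct proof needs $m$ prime, as in all the paper's examples, together with the argument sketched above.
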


\begin{nota}{\rm \cite{v2-1}} \quad \label{2.13} To simplify our work, the following notations are introduced under Type-1 and Type-2 isomorphisms of circulant graphs.
\begin{enumerate}
	\item [\rm (i)] $C_n(R) \cong_{Ad_{n,x}} C_{n}(S)$ or $C_n(R) \cong_{T1_{n,x}} C_{n}(S)$ when $Ad_{n,x}(C_n(R))$ $(= T1_{n,x}(C_n(R))$ = $C_{n}(xR))$ = $C_n(S)$, $x\in\varphi_n$.
	
	\item [\rm (ii)] $C_n(R) \cong_{Ad_{n}} C_{n}(S)$ or $C_n(R) \cong_{T1_{n}} C_{n}(S)$ when $C_n(R)$ and $C_n(S)$ are Adam's isomorphic or Type-1 isomorphic. 
	
	\item [\rm (iii)] In $T2_{n,m,t}(C_{n}(R))$,  either $\theta_{n,m,t}(C_n(R))$ = $C_{n}(R)$ or  $\theta_{n,m,t}(C_n(R))$ and $C_{n}(R)$ are isomorphic circulant graphs of Type-2 w.r.t. $m$ for some $t$ where $m > 1$, $m$ divides $\gcd(n, r)$, $m^3$ divides $n$, $r\in R$ and $0 \leq t \leq \frac{n}{m}-1$.  
			
	\item [\rm (iv)] $C_n(R) \cong_{T2_{n,m,t}} C_{n}(S)$ when $T2_{n,m,t}(C_n(R))$ = $C_n(S)$ for some $t$, $0 \leq t \leq \frac{n}{m} -1$ where $m > 1$ is a divisor of $\gcd(n, r)$ and $r\in R,S$. That is when $\theta_{n,m,t}(C_n(R))$ = $C_n(S)$ for some $t$ and $C_n(R)$ and $C_n(S)$ are  Type-2 isomorphic w.r.t. $m$, $0 \leq t \leq \frac{n}{m} -1$.
			
	\item [\rm (v)]  $C_n(R) \cong_{T2_{n,r}} C_{n}(S)$ when $C_n(R)$ and $C_n(S)$ are Type-2 isomorphic w.r.t. $m$ where $m > 1$ is a divisor of $\gcd(n, r)$ and $r\in R,S$. 
\end{enumerate}	
\end{nota}

\begin{definition} \cite{bm82} \quad \label{d2.14} {\em The cross product} or {\em Cartesian product} of two simple graphs $G(V, E)$ and $H(W, F)$ is the simple graph $G \Box H$ with vertex set $V \times W$ in which two vertices $u$ = $(u_1, u_2)$ and $v$ = $(v_1, v_2)$ are adjacent if and only if either $u_1$ = $v_1$ and $u_2 v_2\in F$ or $u_2$ = $v_2$ and $u_1 v_1\in E$.  
\end{definition} 

\begin{definition}{\rm \cite{bm82}} \quad \label{d2.15} A {\em directed graph} or {\em digraph} $D$ is a triple $(V(D), A(D), \psi_D)$ consisting of a nonempty set $V(D)$ of vertices, a set $A(D)$, disjoint from $V(D)$, of arcs or edges, and an incidence function $\psi_D$ that associates with each arc of $D$ an ordered pair of (not necessarily distinct) vertices of $D$. If $a$ is an arc and $u$ and $v$ are vertices such that $\psi_D(a)$ = $(u, v)$, then $a$ is said to join $u$ to $v$; $u$ is the tail of $a$, and $v$ is its head.   
\end{definition}

\begin{definition}{\rm \cite{dw02}} \quad \label{d2.16} A {\em symmetric digraph} is a directed graph where all edges appear twice, one in each direction (that is, for every arrow that belongs to the digraph, the corresponding inverse arrow also belongs to it).  
\end{definition}

\section{Isomorphic set and isomorphism series of circulant graphs}

In this section, we define {\em isomorphic set}, {\em isomorphism series} and {\em Hamiltonian isomorphism series} of circulant graphs and obtain these corresponding to circulant graphs $C_{16}(1,2,6,7,8)$, $C_{16}(1,4,6,7)$, $C_{24}(2,4,5,6,7,10)$, $C_{24}(4,5,7,10)$, $C_{27}(3,4,5,6,12,13)$, $C_{27}(3,4,5,9,13)$,  $C_{16}(1,2,6,7,8)$ $\cup$ $C_{16}(1,4,6,7)$ $\cup$ $C_{27}(3,4,5,9,13)$. We answer the question whether any two isomorphic circulant graphs are always either of Type-1 isomorphic or of Type-2 isomorphic w.r.t. any particular $m$ by showing that the pair of circulant graphs, $C_{54}(1,3,17,19)$, $C_{54}(5,13,21,23)$;  $C_{54}(7, 11, 21, 25)$, $C_{54}(7, 11, 15, 25)$; and $C_{54}(1,3,17,19)$, $C_{54}(7,11,15,25)$ are isomorphic but they are neither of Type-1 isomorphic nor of Type-2 isomorphic (w.r.t. $m$ = 3). Thus we establish that there exist isomorphic circulant graphs $C_n(R)$ and $C_n(S)$ which are neither of Type-1 nor of Type-2 w.r.t. any value of $m$. We also  obtain isomorphism series and Hamiltonian isomorphism series of $C_{54}(1,3,17,19)$ and $C_{54}(7,11,15,25)$. 

\begin{definition} \quad \label{d3.1} For a given circulant graph $C_n(R)$, the set of all circulant graphs isomorphic to $C_n(R)$ is called {\em the isomorphic set of} $C_n(R)$ and it is denoted by $Isoset(C_n(R))$.

In general, {\em the isomorphic set of} given graphs $G_1, G_2, \dots, G_k$ is denoted by $Isoset(G_1, G_2, \dots, G_k)$ and is defined by $Isoset(G_1, G_2, \dots, G_k)$ = $\{H: H \cong G, ~G = \cup_{i=1}^k  G_i\}$, $k\in\mathbb{N}$.
\end{definition}

\begin{definition} \quad \label{d3.2} Let $C_n(R)$ be a given circulant graph and $C_n(S)$ be isomorphic to it such that $S \neq R$. Then, a series of isomorphic circulant graphs starting from $C_n(R)$ is called an {\em isomorphism series of $C_n(R)$} if (i) any two consecutive members in the series of isomorphic circulant graphs are either Type-1 isomorphic or Type-2 isomorphic and (ii) Type-1 isomorphism and Type-2 isomorphism occur alternatively in the series.

In general, for a given graph $G$, a series of graphs, each graph isomorphic to $G$, such that consecutive members in the series of isomorphic graphs carry some structural property of these graphs is called an {\em isomorphism series of $G$}. 
\end{definition}

Through out this study, $C_n(R_1) \cong C_n(R_2) \cong C_n(R_3) \cong \dots \cong C_n(R_k) \cong C_n(R_{k+1})$ denotes $C_n(R_1) \cong$ $C_n(R_2)$, $C_n(R_2) \cong C_n(R_3)$, $\dots$, $C_n(R_k)$ $\cong$ $C_n(R_{k+1})$ and each $\cong$ is either $\cong_{T1_n}$ or $\cong_{T2_{n, m}}$ where $C_n(R_i) \cong_{T1_n} C_n(R_j)$ denotes $C_n(R_i)$ and $C_n(R_j)$ are isomorphic of Type-1 and $C_n(R_x) \cong_{T2_{n, m}} C_n(R_y)$ denotes $C_n(R_x)$ and $C_n(R_y)$ are Type-2 isomorphic w.r.t. $m$, $m > 1$ divides $\gcd(n, r)$, $m^3$ divides $n$ and $r\in R_x,R_y$.  

For any circulant graph $C_n(R)$, its isomorphic set $Isoset(C_n(R))$ is unique. Clearly, $Isoset(C_n(R))$ = $Isoset(C_n(S))$ if and only if  $C_n(R)$ $\cong$ $C_n(S)$ if and only if $C_n(S)\in Isoset(C_n(R))$ if and only if $C_n(R)\in Isoset(C_n(S))$ and $Isoset(C_n(R))$ $\cap$ $Isoset(C_n(S))$ = $\emptyset$ if and only if $C_n(R)$ $\ncong$ $C_n(S)$. There may be more than one isomorphism series for a given circulant graph $C_n(R)$.

\begin{definition} \quad \label{d3.3} An isomorphism series of isomorphic circulant graphs is said to be {\em simple} if no two consecutive elements of the series repeat with the same type of isomorphism.  

Hereafter, we consider simple isomorphism series only unless otherwise it is mentioned in other way.
\end{definition}

\begin{definition} \quad \label{d3.4} An isomorphism series of isomorphic circulant graphs is said to be {\em a Hamiltonian isomorphism series} if (i) it is a simple isomorphism series, (ii) the elements of the series covers all the elements of the isomorphic set and (iii) it covers all type of isomorphisms that exist among the isomorphic circulant graphs of the isomorphic series.  
\end{definition}

\begin{definition} \quad \label{d3.5} Let $C_n(R)$ = $C_n(R_1) \cong C_n(R_2) \cong C_n(R_3) \cong \dots \cong C_n(R_k) \cong C_n(R_{k+1})$ be an isomorphism series of $Isoset(C_n(R))$. Let $\mathcal{D}$ be a directed graph with $V(\mathcal{D})$ = $\{C_n(R_i): 1 \leq i \leq k+1\}$ and $(C_n(R_i), C_n(R_j))\in E(\mathcal{D})$ if either $C_n(R_i)$ $\cong_{T1_n}$ $C_n(R_j)$, in this case $C_n(R_i)$ is connected to $C_n(R_j)$ by a directed line, or $C_n(R_i)$ $\cong_{T2_{n, m}}$ $C_n(R_j)$, in this case $C_n(R_i)$ is connected to $C_n(R_j)$ by a dashed directed line, where $m > 1$ is a divisor of $\gcd(n, r)$, $m^3$ divides $n$ and $r\in R_i,R_j$, $1 \leq i,j \leq k+1$. In this case, we call the digraph $\mathcal{D}$ as {\em the isomorphism series digraph} or {\em the isomorphism series diagram}. Moreover, when $m$ = $m_1,m_2,\dots,m_c$ are the different values taken by $m$ of Type-2 isomorphism w.r.t. $m$ that occur in the isomorphism series, then we assign color $x$ to directed line $(C_n(R_i), C_n(R_j))$ whenever $C_n(R_i)$ $\cong_{T2_{n, m_x}}$ $C_n(R_j)$, $1 \leq x \leq c$ and $1 \leq i,j \leq k+1$.  
\end{definition}

\begin{definition} \quad \label{d3.5a} Let $C_n(R)$ be a given circulant graph, $\mathcal{D}$ be a directed graph with $V(\mathcal{D})$ = $\{C_n(S): C_n(S)\in Isoset(C_n(R))\}$ and $(C_n(S), C_n(T))\in E(\mathcal{D})$ if either $C_n(S)$ $\cong_{T1_n}$ $C_n(T)$, in this case $C_n(S)$ is connected to $C_n(T)$ by a directed line, or $C_n(S)$ $\cong_{T2_{n, m}}$ $C_n(T)$, in this case $C_n(S)$ is connected to $C_n(T)$ by a dashed directed line, where $m > 1$ is a divisor of $\gcd(n, r)$, $m^3$ divides $n$, $r\in S,T$ and $C_n(S), C_n(T)\in Isoset(C_n(R))$. In this case, we call the digraph $\mathcal{D}$ as {\em the isomorphism digraph} or {\em the isomorphism diagram} of $C_n(S)$ where $C_n(S)\in V(\mathcal{D})$. Moreover, when $m$ = $m_1,m_2,\dots,m_c$ are the different values taken by $m$  of Type-2 isomorphism w.r.t. $m$ that occur in the isomorphism series of $\mathcal{D}$, then we assign color $x$ to directed line $(C_n(S), C_n(T))$ whenever $C_n(S)$ $\cong_{T2_{n, m_x}}$ $C_n(T)$, $C_n(S), C_n(T)\in Isoset(C_n(R))$ and $1 \leq x \leq c$.  
\end{definition}

\begin{prm} \quad \label{p3.6} {\rm Find the isomorphic set and a Hamiltonian isomorphism series, if it exists, for the following graphs.
\begin{enumerate}
\item [\rm (a)] $C_{16}(1,2,6,7,8)$;

\item [\rm (b)] $C_{16}(1,4,6,7)$;

\item [\rm (c)] $C_{24}(2,4,5,6,7,10)$;

\item [\rm (d)] $C_{24}(4,5,7,10)$;

\item [\rm (e)] $C_{27}(3,4,5,6,12,13)$;

\item [\rm (f)] $C_{27}(3,4,5,9,13)$;

\item [\rm (abf)] $C_{16}(1,2,6,7,8)$ $\cup$ $C_{16}(1,4,6,7)$ $\cup$ $C_{27}(3,4,5,9,13)$.
\end{enumerate} }
\end{prm}
\noindent
{\bf Solution.}\quad  In each case, we find circulant graphs which are isomorphic of Type-1 and Type-2 w.r.t. $m$  to the given graph and then again find  isomorphic circulant graphs of Type-1 and Type-2 w.r.t. $m$ to each new isomorphic circulant graphs that we have found. We repeat the process and also with different possible values of $m$ until no more new isomorphic circulant graph exists.

\begin{enumerate}
\item [\rm (a)] In this problem, 16 = $2\times 2^3$ and so $m$ = 2 is the only possible value of $m$ such that the given circulant graph is having isomorphic circulant graph of Type-2 w.r.t. $m$. Also, we have
\\
$T1_{16}(C_{16}(1,2,6,7,8))$ = $\{C_{16}(1,2,6,7,8), C_{16}(2,3,5,6,8) = C_{16}(3(1,2,6,7,8))\}$ 

\hfill = $T1_{16}(C_{16}(2,3,5,6,8))$;
\\
$T2_{16,2}(C_{16}(1,2,6,7,8))$ = $\{C_{16}(1,2,6,7,8) = \theta_{16,2,0}(C_{16}(1,2,6,7,8)) = \theta_{16,2,4}(C_{16}(1,2,6,7,8))\}$; 
\\
$T2_{16,2}(C_{16}(2,3,5,6,8))$  = $\{C_{16}(2,3,5,6,8) = \theta_{16,2,0}(C_{16}(2,3,5,6,8)) = \theta_{16,2,4}(C_{16}(2,3,5,6,8))\}$.
\\
Therefore, the isomorphic set of  $C_{16}(1,2,6,7,8)$ is

$Isoset(C_{16}(1,2,6,7,8))$ = $\{C_{16}(1,2,6,7,8), C_{16}(2,3,5,6,8)\}$ = $Isoset(C_{16}(2,3,5,6,8))$ and 
\\
the isomorphism series of $C_{16}(1,2,6,7,8)$ are 

(1) $C_{16}(1,2,6,7,8)$ $\cong_{T1_{16,3}}$ $C_{16}(2,3,5,6,8)$ and 

(2) $C_{16}(2,3,5,6,8)$ $\cong_{T1_{16,3}}$ $C_{16}(1,2,6,7,8)$.

There is no Hamiltonian isomorphism series in this case.\\

\item [\rm (b)]  We have 

$T2_{16,2, 2}(C_{16}(1,4,6,7))$ = $C_{16}(3,4,5,6)$, $T2_{16,2, 2}(C_{16}(3,4,5,6))$ = $C_{16}(1,4,6,7)$, 

$T1_{16,3}(C_{16}(1,4,6,7))$ = $C_{16}(2,3,4,5)$, $T1_{16,3}(C_{16}(3,4,5,6))$ = $C_{16}(1,2,4,7)$,

$T2_{16,2, 2}(C_{16}(2,3,4,5))$ = $C_{16}(1,2,4,7)$, $T2_{16,2, 2}(C_{16}(1,2,4,7))$ = $C_{16}(2,3,4,5)$,

 $T1_{16, 3}(C_{16}(2,3,4,5))$ = $C_{16}(1,4,6,7)$,  $T1_{16, 3}(C_{16}(1,2,4,7))$ = $C_{16}(3,4,5,6)$.
\\
Therefore, the isomorphic set of  $C_{16}(1,4,6,7)$ is

$Isoset(C_{16}(1,4,6,7))$ = $\{C_{16}(1,4,6,7), C_{16}(3,4,5,6), C_{16}(2,3,4,5), C_{16}(1,2,4,7)\}$ 

\hfill = $Isoset(C_{16}(3,4,5,6))$ = $Isoset(C_{16}(2,3,4,5))$ = $Isoset(C_{16}(1,2,4,7))$ and 
\\
longest isomorphism series of $C_{16}(1,4,6,7)$ are 

(1) $C_{16}(1,4,6,7)$ $\cong_{T1_{16,3}}$ $C_{16}(2,3,5,6)$ $\cong_{T2_{16,2,2}}$  $C_{16}(1,2,4,7)$ 

\hfill $\cong_{T1_{16,3}}$ $C_{16}((3,4,5,6)$  $\cong_{T2_{16,2,2}}$ $C_{16}(1,4,6,7)$; 

(2) $C_{16}(1,4,6,7)$ $\cong_{T2_{16,2,2}}$ $C_{16}(3,4,5,6)$ $\cong_{T1_{16,3}}$ $C_{16}(1,2,4,7)$ 

\hfill $\cong_{T2_{16,2,2}}$ $C_{16}(2,3,4,5)$ $\cong_{T1_{16,3}}$ $C_{16}(1,4,6,7)$.  

There is no Hamiltonian isomorphism series in this case.\\

\item [\rm (c)]  Here, 24 = $3\times 2^3$ and so $m$ = 2 is the only possible value of $m$ such that the given circulant graph is having isomorphic circulant graph of Type-2 w.r.t. $m$. We have 

$T1_{24}(C_{24}(2,4,5,6,7,10))$ = $\{C_{24}(2,4,5,6,7,10), C_{24}(1,2,4,6,10,11)$  

\hfill = $C_{24}(5(2,4,5,6,7,10)) \}$ = $T1_{24}(C_{24}(1,2,4,6,10,11))$; 

$T2_{24,2}(C_{16}(2,4,5,6,7,10))$  

\hfill = $\{C_{24}(2,4,5,6,7,10) = \theta_{16,2,0}(C_{16}(2,4,5,6,7,10)) = \theta_{16,2,6}(C_{16}(2,4,5,6,7,10))\}$; 

$T2_{24,2}(C_{24}1,2,4,6,10,11))$  

\hfill = $\{C_{24}(1,2,4,6,10,11)  = \theta_{16,2,0}(C_{16}(1,2,4,6,10,11)) = \theta_{16,2,6}(C_{16}(1,2,4,6,10,11))\}$.
\\
Therefore, the isomorphic set of  $C_{24}(2,4,5,6,7,10)$ is

$Isoset(C_{24}(2,4,5,6,7,10))$ = $\{C_{24}(2,4,5,6,7,10), C_{24}(1,2,4,6,10,11)\}$ 

\hfill = $Isoset(C_{24}(1,2,4,6,10,11))$ and 
\\
the isomorphism series of $C_{24}(2,4,5,6,7,10)$ are 

(1) $C_{24}(2,4,5,6,7,10)$ $\cong_{T1_{24,3}}$ $C_{24}(1,2,4,6,10,11)$ and 

(2) $C_{24}(1,2,4,6,10,11)$ $\cong_{T1_{24,3}}$ $C_{24}(2,4,5,6,7,10)$.

There is no Hamiltonian isomorphism series in this case.\\

\item [\rm (d)]  We have 

$T1_{24,5}(C_{24}(4,5,7,10))$ = $C_{24}(1,2,4,11)$, $T1_{24,5}(C_{24}(1,2,4,11))$ = $C_{24}(4,5,7,10)$,  

$T1_{24}(C_{24}(4,5,7,10))$ = $\{C_{24}(4,5,7,10), C_{24}(1,2,4,11) \}$ = $T1_{24}(C_{24}(1,2,4,11))$,  

$T2_{24,2,3}(C_{24}(4,5,7,10))$ = $C_{24}(1,4,10,11) $, $T2_{24,2,3}(C_{24}(1,4,10,11))$ = $C_{24}(4,5,7,10)$, 

$T2_{24,2,3}(C_{24}(1,2,4,11))$ = $C_{24}(2,4,5,7)$, $T2_{24,2,3}(C_{24}(2,4,5,7))$ = $C_{24}(1,2,4,11)$,

$T1_{24,5}(C_{24}(1,4,10,11))$ = $C_{24}(2,4,5,7)$, $T1_{24}(C_{24}(2,4,5,7))$ = $C_{24}(1,4,10,11)$,

$T1_{24}(C_{24}(1,4,10,11))$ = $\{C_{24}(1,4,10,11), C_{24}(2,4,5,7) \}$ = $T1_{24}(C_{24}(2,4,5,7))$.  

Therefore, the isomorphic set of  $C_{24}(4,5,7,10)$ is

$Isoset(C_{24}(4,5,7,10))$ = $\{C_{24}(4,5,7,10), C_{24}(1,2,4,11), C_{24}(1,4,10,11), C_{24}(2,4,5,7)\}$ 

\hfill = $Isoset(C_{24}(1,2,4,11))$ = $Isoset(C_{24}(1,4,10,11))$ = $Isoset(C_{24}(2,4,5,7))$ and 
\\
longest isomorphism series of $C_{24}(4,5,7,10)$ are 

(1) $C_{24}(4,5,7,10)$ $\cong_{T1_{24,5}}$ $C_{24}(1,2,4,11)$ $\cong_{T2_{24,2,3}}$  $C_{24}(2,4,5,7)$ 

\hfill $\cong_{T1_{24,5}}$ $C_{24}(1,4,10,11)$  $\cong_{T2_{24,2,3}}$ $C_{24}(4,5,7,10)$ and

(2) $C_{24}(4,5,7,10)$ $\cong_{T2_{24,2,3}}$ $C_{24}(1,4,10,11)$ $\cong_{T1_{24,5}}$ $C_{24}(2,4,5,7)$ 

\hfill $\cong_{T2_{24,2,3}}$ $C_{24}(1,2,4,11)$ $\cong_{T1_{24,5}}$ $C_{24}(4,5,7,10)$.  

There is no Hamiltonian isomorphism series in this case.

\item [\rm (e)] We have 27 = $3^3$ and so $m$ = 3 is the only possible value of $m$ $\ni$ the given circulant graph is having isomorphic circulant graph of Type-2 w.r.t. $m$. Also, we have 

$T1_{27}(C_{27}(3,4,5,6,12,13))$ = $\{C_{27}(3,4,5,6,12,13), C_{27}(1,3,6,8,10,12), C_{27}(2,3,6,7,11,12) \}$ 

\hfill = $T1_{27}(C_{27}(1,3,6,8,10,12))$  = $T1_{27}(C_{27}(2,3,6,7,11,12))$,

$T2_{27,3}(C_{27}(3,4,5,6,12,13))$ = $\{C_{27}(3,4,5,6,12,13)\}$, 

$T2_{27,3}(C_{27}(1,2,4,6,10,11))$ = $\{C_{27}(1,3,6,8,10,12) \}$, and

$T2_{27,3}(C_{27}(2,3,6,7,11,12))$ = $\{C_{27}(2,3,6,7,11,12) \}$. 

Therefore, the isomorphic set of  $C_{27}(3,4,5,6,12,13)$ is
\\
$Isoset(C_{27}(3,4,5,6,12,13))$ = $\{C_{27}(3,4,5,6,12,13), C_{27}(1,3,6,8,10,12), C_{27}(2,3,6,7,11,12)\}$ 

\hfill = $Isoset(C_{27}(1,3,6,8,10,12))$ = $Isoset(C_{27}(2,3,6,7,11,12))$ and 
\\
simple isomorphism series of $C_{27}(3,4,5,6,12,13)$ are

(1) $C_{27}(3,4,5,6,12,13)$ $\cong_{T1_{27,2}}$ $C_{27}(1,3,6,8,10,12)$;

(2) $C_{27}(1,3,6,8,10,12)$ $\cong_{T1_{27,2}}$ $C_{27}(2,3,6,7,11,12)$;

(3) $C_{27}(2,3,6,7,11,12)$ $\cong_{T1_{27,2}}$ $C_{27}(3,4,5,6,12,13)$;

(4) $C_{27}(3,4,5,6,12,13)$ $\cong_{T1_{27,4}}$ $C_{27}(2,3,6,7,11,12)$; 
 
(5) $C_{27}(2,3,6,7,11,12)$ $\cong_{T1_{27,4}}$ $C_{27}(1,3,6,8,10,12)$;

(6) $C_{27}(1,3,6,8,10,12)$ $\cong_{T1_{27,4}}$ $C_{27}(3,4,5,6,12,13)$.

There is no Hamiltonian isomorphism series in this case.

\item [\rm (f)]  We have 27 = $3^3$ and so $m$ = 3 is the only possible value $\ni$ $C_{27}(R)$ may have Type-2 isomorphic circulant graphs w.r.t. $m$ and 

$T1_{27}(C_{27}(3,4,5,9,13))$ = $\{C_{27}(3,4,5,9,13), C_{27}(1,6,8,9,10), C_{27}(2,7,9,11,12)\}$ 

\hfill = $T1_{27}(C_{27}(1,6,8,9,10))$ = $T1_{27}(C_{27}(2,7,9,11,12))$;  

$T2_{27,3}(C_{27}(3,4,5,9,13))$ = $\{C_{27}(3,4,5,9,13), C_{27}(2,3,7,9,11),  C_{27}(1,3,8,9,10)\}$ 

\hfill = $T2_{27,3}(C_{27}(2,3,7,9,11))$ = $T2_{27,3}(C_{27}(1,3,8,9,10))$;

$T2_{27,3}(C_{27}(1,6,8,9,10))$ = $\{C_{27}(1,6,8,9,10), C_{27}(4,5,6,9,13),  C_{27}(2,6,7,9,11)\}$ 

\hfill = $T2_{27,3}(C_{27}(4,5,6,9,13))$ = $T2_{27,3}(C_{27}(2,6,7,9,11))$;

$T2_{27,3}(C_{27}(2,7,9,11,12))$ = $\{C_{27}(2,7,9,11,12), C_{27}(1,8,9,10,12),  C_{27}(4,5,9,12,13)\}$ 

\hfill = $T2_{27,3}(C_{27}(1,8,9,10,12))$ = $T2_{27,3}(C_{27}(4,5,9,12,13))$. Also, 

$T1_{27}(C_{27}(2,3,7,9,11))$ = $\{C_{27}(2,3,7,9,11), C_{27}(4,5,6,9,13), C_{27}(1,8,9,10,12)\}$ 

\hfill = $T1_{27}(C_{27}(4,5,6,9,13))$ = $T1_{27}(C_{27}(1,8,9,10,12))$; 

$T1_{27}(C_{27}(1,3,8,9,10))$ = $\{C_{27}(1,3,8,9,10), C_{27}(2,6,7,9,11), C_{27}(4,5,9,12,13)\}$ 

\hfill = $T1_{27}(C_{27}(2,6,7,9,11))$ = $T1_{27}(C_{27}(4,5,9,12,13))$. Let 

$X_1$ = $\{3,4,5,9,13\}$, $X_2$ = $\{1,6,8,9,10\}$, $X_3$ = $\{2,7,9,11,12\}$, 

$Y_1$ = $\{2,3,7,9,11\}$, $Y_2$ = $\{4,5,6,9,13\}$, $Y_3$ = $\{1,8,9,10,12\}$,

$Z_1$ = $\{1,3,8,9,10\}$, $Z_2$ = $\{2,6,7,9,11\}$, $Z_3$ = $\{4,5,9,12,13\}$. Then, 

$C_{27}(3,4,5,9,13)$ = $C_{27}(X_1)$, $C_{27}(1,6,8,9,10)$ = $C_{27}(X_2)$, $C_{27}(2,7,9,11,12)$ = $C_{27}(X_3)$, 

$C_{27}(2,3,7,9,11)$ = $C_{27}(Y_1)$, $C_{27}(4,5,6,9,13)$ = $C_{27}(Y_2)$, $C_{27}(1,8,9,10,12)$ = $C_{27}(Y_3)$,

$C_{27}(1,3,8,9,10)$ = $C_{27}(Z_1)$, $C_{27}(2,6,7,9,11)$ = $C_{27}(Z_2)$, $C_{27}(4,5,9,12,13)$ = $C_{27}(Z_3)$, 

$T2_{27,3, 1}(C_{27}(X_1))$ = $C_{27}(Y_1)$, $T2_{27,3, 2}(C_{27}(X_1))$ = $C_{27}(Z_1)$,

$T2_{27,3, 1}(C_{27}(X_2))$ = $C_{27}(Y_2)$, $T2_{27,3, 2}(C_{27}(X_2))$ = $C_{27}(Z_2)$,

$T2_{27,3, 1}(C_{27}(X_3))$ = $C_{27}(Y_3)$, $T2_{27,3, 2}(C_{27}(X_3))$ = $C_{27}(Z_3)$,

$T2_{27,3, 1}(C_{27}(Y_1))$ = $C_{27}(Z_1)$, $T2_{27,3, 2}(C_{27}(Y_1))$ = $C_{27}(X_1)$,

$T2_{27,3, 1}(C_{27}(Y_2))$ = $C_{27}(Z_2)$, $T2_{27,3, 2}(C_{27}(Y_2))$ = $C_{27}(X_2)$,

$T2_{27,3, 1}(C_{27}(Y_3))$ = $C_{27}(Z_3)$, $T2_{27,3, 2}(C_{27}(Y_3))$ = $C_{27}(X_3)$,

$T2_{27,3, 1}(C_{27}(Z_1))$ = $C_{27}(X_1)$, $T2_{27,3, 2}(C_{27}(Z_1))$ = $C_{27}(Y_1)$,

$T2_{27,3, 1}(C_{27}(Z_2))$ = $C_{27}(X_2)$, $T2_{27,3, 2}(C_{27}(Z_2))$ = $C_{27}(Y_2)$,

$T2_{27,3, 1}(C_{27}(Z_3))$ = $C_{27}(X_3)$, $T2_{27,3, 2}(C_{27}(Z_3))$ = $C_{27}(Y_3)$,

$T1_{27,2}(C_{27}(X_1))$ = $C_{27}(X_2)$, $T1_{27,4}(C_{27}(X_1))$ = $C_{27}(X_3)$,

$T1_{27, 2}(C_{27}(X_2))$ = $C_{27}(X_3)$,	$T1_{27, 4}(C_{27}(X_2))$ = $C_{27}(X_1)$,

$T1_{27, 2}(C_{27}(X_3))$ = $C_{27}(X_1)$,	$T1_{27, 4}(C_{27}(X_3))$ = $C_{27}(X_2)$,

$T1_{27, 2}(C_{27}(Y_1))$ = $C_{27}(Y_2)$,	$T1_{27, 4}(C_{27}(Y_1))$ = $C_{27}(Y_3)$,

$T1_{27, 2}(C_{27}(Y_2))$ = $C_{27}(Y_3)$,	$T1_{27, 4}(C_{27}(Y_2))$ = $C_{27}(Y_1)$,

$T1_{27, 2}(C_{27}(Y_3))$ = $C_{27}(Y_1)$,	$T1_{27, 4}(C_{27}(Y_3))$ = $C_{27}(Y_2)$,

$T1_{27, 2}(C_{27}(Z_1))$ = $C_{27}(Z_2)$, $T1_{27, 4}(C_{27}(Z_1))\}$ = $C_{27}(Z_3)$,

$T1_{27, 2}(C_{27}(Z_2))$ = $C_{27}(Z_3)$,	$T1_{27, 4}(C_{27}(Z_2))$ = $C_{27}(Z_1)$,

$T1_{27, 2}(C_{27}(Z_3))$ = $C_{27}(Z_1)$,	$T1_{27, 4}(C_{27}(Z_3))$ = $C_{27}(Z_2)$.
\\
$\therefore$ The isomorphic set of  $C_{27}(3,4,5,9,13)$ is

$Isoset(C_{27}(3,4,5,9,13))$ = $Isoset(C_{27}(X_1))$ 

\hfill = $\{C_{27}(3,4,5,9,13), C_{27}(1,6,8,9,10), C_{27}(2,7,9,11,12)$,

\hfill $C_{27}(2,3,7,9,11),  C_{27}(1,3,8,9,10), C_{27}(4,5,6,9,13)$,

\hfill $C_{27}(2,6,7,9,11), C_{27}(1,8,9,10,12),  C_{27}(4,5,9,12,13) \}$ 

\hspace{3.7cm} = $\{C_{27}(X_i), C_{27}(Y_i), C_{27}(Z_i): i = 1,2,3\}$ 

\hfill = $Isoset(C_{27}(X_i))$ = $Isoset(C_{27}(Y_i))$ = $Isoset(C_{27}(Z_i))$, $i$ = 1,2,3.
\\
In this case, the isomorphic set of $C_{27}(3,4,5,9,13)$ = $Isoset(C_{27}(3,4,5,9,13))$ = $Isoset(C_{27}(X_1))$  has the following Hamiltonian isomorphism series. 

\begin{enumerate}
\item [\rm (1)] $C_{27}(X_1)$ $\cong_{T2_{27,3,1}}$ $C_{27}(Y_1)$ $\cong_{T1_{27,2}}$ $C_{27}(Y_2)$ $\cong_{T2_{27,3,1}}$ $C_{27}(Z_2)$ $\cong_{T1_{27,2}}$ $C_{27}(Z_3)$ 

\hfill $\cong_{T2_{27,3,1}}$ $C_{27}(X_3)$ $\cong_{T1_{27,4}}$ $C_{27}(X_2)$ $\cong_{T2_{27,3,2}}$ $C_{27}(Z_2)$ $\cong_{T1_{27,4}}$ $C_{27}(Z_1)$ $\cong_{T2_{27,3,2}}$ $C_{27}(Y_1)$ 

\hfill $\cong_{T1_{27,4}}$ $C_{27}(Y_3)$ $\cong_{T2_{27,3,2}}$ $C_{27}(X_3)$ $\cong_{T1_{27,2}}$ $C_{27}(X_1)$ $\cong_{T2_{27,3,2}}$ $C_{27}(Z_1)$ $\cong_{T1_{27,4}}$ $C_{27}(Z_3)$ 

\hfill $\cong_{T2_{27,3,2}}$ $C_{27}(Y_3)$ $\cong_{T1_{27,4}}$ $C_{27}(Y_2)$ $\cong_{T2_{27,3,2}}$ $C_{27}(X_2)$ $\cong_{T1_{27,2}}$ $C_{27}(X_3)$ 

\hfill $\cong_{T2_{27,3,1}}$ $C_{27}(Y_3)$ $\cong_{T1_{27,2}}$ $C_{27}(Y_1)$ $\cong_{T2_{27,3,1}}$ $C_{27}(Z_1)$ $\cong_{T1_{27,2}}$ $C_{27}(Z_2)$ 

\hfill $\cong_{T2_{27,3,2}}$ $C_{27}(Y_2)$ $\cong_{T1_{27,4}}$ $C_{27}(Y_1)$ $\cong_{T2_{27,3,2}}$ $C_{27}(X_1)$ $\cong_{T1_{27,2}}$ $C_{27}(X_2)$ 

\hfill $\cong_{T2_{27,3,1}}$ $C_{27}(Y_2)$ $\cong_{T1_{27,2}}$ $C_{27}(Y_3)$ $\cong_{T2_{27,3,1}}$ $C_{27}(Z_3)$
$\cong_{T1_{27,2}}$ $C_{27}(Z_1)$ 

\hfill  $\cong_{T2_{27,3,1}}$ $C_{27}(X_1)$ $\cong_{T1_{27,4}}$ $C_{27}(X_3)$ $\cong_{T2_{27,3,2}}$ $C_{27}(Z_3)$ 

\hfill $\cong_{T1_{27,4}}$ $C_{27}(Z_2)$ $\cong_{T2_{27,3,1}}$ $C_{27}(X_2)$ $\cong_{T1_{27,4}}$ $C_{27}(X_1)$.
\end{enumerate}

Figure 1 presents digraph of Hamiltonian isomorphism series of $C_{27}(R_i)$ by representing, in the figure, $R_i$ in the place of $C_{27}(R_i)$ where $R_i$ = $X_i, Y_i, Z_i$ and $1 \leq i \leq 3$. 

\vspace{.1cm}
\item [\rm (abf)] No two circulant graphs among the three given circulant graphs $C_{16}(1,2,6,7,8)$, $C_{16}(1,4,6,7)$ and $C_{27}(3,4,5,9,13)$ are isomorphic. Using cases (a), (b) and (f), we get
\\
$Isoset(C_{16}(1,2,6,7,8), C_{16}(1,4,6,7), C_{27}(3,4,5,9,13))$ 

\hspace{1.5cm} = $Isoset(C_{16}(1,2,6,7,8))$ $\cup$ $Isoset(C_{16}(1,4,6,7))$ $\cup$ $Isoset(C_{27}(3,4,5,9,13))$ 

\hfill = $\{C_{16}(1,2,6,7,8), C_{16}(2,3,5,6,8)\}$ $\cup$ $\{C_{16}(1,4,6,7), C_{16}(3,4,5,6), C_{16}(2,3,4,5)$, 

\hfill  $C_{16}(1,2,4,7)\}$ $\cup$ $\{C_{27}(3,4,5,9,13), C_{27}(1,6,8,9,10), C_{27}(2,7,9,11,12)$, 

\hfill $C_{27}(2,3,7,9,11),  C_{27}(1,3,8,9,10), C_{27}(4,5,6,9,13)$,

\hfill $C_{27}(2,6,7,9,11), C_{27}(1,8,9,10,12),  C_{27}(4,5,9,12,13) \}$.

Isomorphism series of graph $C_{16}(1,2,6,7,8) \cup C_{16}(1,4,6,7) \cup C_{27}(3,4,5,9,13)$ are independent isomorphism series of graphs $C_{16}(1,2,6,7,8)$, $C_{16}(1,4,6,7)$ and $C_{27}(3,4,5,9,13)$ and can be found as in cases (a), (b) and (f).  \hfill $\Box$
\end{enumerate}

\vspace{.2cm}
{\rm 	\begin{center}
		\begin{tikzpicture}  
		[scale=.8,auto=center,every node/.style={draw,circle}]
			
\node (1) at (-10,-0.5) {\tiny{$X_1$}};
\node [scale=.0] (2) at (-9,-.93) [label=90: $>$]{};
\node (3) at (-8,-0.5) {\tiny{$Y_1$}};	
\node [scale=.0] (4) at (-7,-0.93) [label=91: $>$]{};
\node (5) at (-6,-0.5)  {\tiny{$Y_2$}};

\draw[ line width=0.2mm] [blue](1)[dashed]  to (3);
\draw[ line width=0.2mm] [blue](3) to (5);

\node [scale=.0] (6) at (-5,-.93) [label=90: $>$]{};
\node (7) at (-4,-0.5) {\tiny{$Z_2$}};	
\node [scale=.0] (8) at (-3,-0.93) [label=91: $>$]{};
\node (9) at (-2,-0.5)  {\tiny{$Z_3$}};

\draw[ line width=0.2mm] [blue](5)[dashed] to (7);
\draw[ line width=0.2mm] [blue](7) to (9);

\node [scale=.0] (10) at (-1,-.93) [label=90: $>$]{};
\node (11) at (0,-0.5) {\tiny{$X_3$}};	
\node [scale=.0] (12) at (1,-0.93) [label=91: $>$]{};
\node (13) at (2,-0.5)  {\tiny{$X_2$}};

\draw[ line width=0.2mm] [blue](9)[dashed]  to (11);
\draw[ line width=0.2mm] [blue](11) to (13);

\node [scale=.0] (14) at (3,-.93) [label=90: $>$]{};
\node (15) at (4,-0.5) {\tiny{$Z_2$}};	
\node [scale=.0] (16) at (5,-0.93) [label=91: $>$]{};
\node (17) at (6,-0.5)  {\tiny{$Z_1$}};

\draw[ line width=0.2mm] [blue](13)[dashed] to (15);
\draw[ line width=0.2mm] [blue](15) to (17);

\node [scale=.0] (18) at (6,-1.93) [label=91: $\downarrow$]{};
\node (19) at (6,-2.5) {\tiny{$Y_1$}};	
\node [scale=.0] (20) at (5,-2.93) [label=90: $<$]{};
\node (21) at (4,-2.5)  {\tiny{$Y_3$}};

\draw[ line width=0.2mm] [blue](17)[dashed] to (19);
\draw[ line width=0.2mm] [blue](19) to (21);

\node [scale=.0] (22) at (3,-2.93) [label=90: $<$]{};
\node (23) at (2,-2.5) {\tiny{$X_3$}};	
\node [scale=.0] (24) at (1,-2.93) [label=91: $<$]{};
\node (25) at (0,-2.5)  {\tiny{$X_1$}};

\draw[ line width=0.2mm] [blue](21)[dashed]  to (23);
\draw[ line width=0.2mm] [blue](23) to (25);

\node [scale=.0] (26) at (-1,-2.93) [label=90: $<$]{};
\node (27) at (-2,-2.5) {\tiny{$Z_1$}};	
\node [scale=.0] (28) at (-3,-2.93) [label=91: $<$]{};
\node (29) at (-4,-2.5)  {\tiny{$Z_3$}};

\draw[ line width=0.2mm] [blue](25)[dashed] to (27);
\draw[ line width=0.2mm] [blue](27) to (29);

\node [scale=.0] (30) at (-5,-2.93) [label=90: $<$]{};
\node (31) at (-6,-2.5) {\tiny{$Y_3$}};	
\node [scale=.0] (32) at (-7,-2.93) [label=91: $<$]{};
\node (33) at (-8,-2.5)  {\tiny{$Y_2$}};

\draw[ line width=0.2mm] [blue](29)[dashed]  to (31);
\draw[ line width=0.2mm] [blue](31) to (33);

\node [scale=.0] (34) at (-9,-2.93) [label=90: $<$]{};
\node (35) at (-10,-2.5) {\tiny{$X_2$}};	
\node [scale=.0] (36) at (-10,-3.93) [label=91: $\downarrow$]{};
\node (37) at (-10,-4.5)  {\tiny{$X_3$}};

\draw[ line width=0.2mm] [blue](33)[dashed] to (35);
\draw[ line width=0.2mm] [blue](35) to (37);

\node [scale=.0] (38) at (-9,-4.93) [label=90: $>$]{};
\node (39) at (-8,-4.5) {\tiny{$Y_3$}};	
\node [scale=.0] (40) at (-7,-4.93) [label=91: $>$]{};
\node (41) at (-6,-4.5)  {\tiny{$Y_1$}};

\draw[ line width=0.2mm] [blue](37)[dashed]  to (39);
\draw[ line width=0.2mm] [blue](39) to (41);

\node [scale=.0] (42) at (-5,-4.93) [label=90: $>$]{};
\node (43) at (-4,-4.5) {\tiny{$Z_1$}};	
\node [scale=.0] (44) at (-3,-4.93) [label=91: $>$]{};
\node (45) at (-2,-4.5)  {\tiny{$Z_2$}};

\draw[ line width=0.2mm] [blue](41)[dashed] to (43);
\draw[ line width=0.2mm] [blue](43) to (45);

\node [scale=.0] (46) at (-1,-4.93) [label=90: $>$]{};
\node (47) at (0,-4.5) {\tiny{$Y_2$}};	
\node [scale=.0] (48) at (1,-4.93) [label=91: $>$]{};
\node (49) at (2,-4.5)  {\tiny{$Y_1$}};

\draw[ line width=0.2mm] [blue](45)[dashed]  to (47);
\draw[ line width=0.2mm] [blue](47) to (49);

\node [scale=.0] (50) at (3,-4.93) [label=90: $>$]{};
\node (51) at (4,-4.5) {\tiny{$X_1$}};	
\node [scale=.0] (52) at (5,-4.93) [label=91: $>$]{};
\node (53) at (6,-4.5)  {\tiny{$X_2$}};

\draw[ line width=0.2mm] [blue](49)[dashed] to (51);
\draw[ line width=0.2mm] [blue](51) to (53);

\node [scale=.0] (54) at (6,-5.93) [label=91: $\downarrow$]{};
\node (55) at (6,-6.5) {\tiny{$Y_2$}};	
\node [scale=.0] (56) at (5,-6.93) [label=90: $<$]{};
\node (57) at (4,-6.5)  {\tiny{$Y_3$}};

\draw[ line width=0.2mm] [blue](53)[dashed] to (55);
\draw[ line width=0.2mm] [blue](55) to (57);

\node [scale=.0] (58) at (3,-6.93) [label=90: $<$]{};
\node (59) at (2,-6.5) {\tiny{$Z_3$}};	
\node [scale=.0] (60) at (1,-6.93) [label=91: $<$]{};
\node (61) at (0,-6.5)  {\tiny{$Z_1$}};

\draw[ line width=0.2mm] [blue](57)[dashed]  to (59);
\draw[ line width=0.2mm] [blue](59) to (61);

\node [scale=.0] (62) at (-1,-6.93) [label=90: $<$]{};
\node (63) at (-2,-6.5) {\tiny{$X_1$}};	
\node [scale=.0] (64) at (-3,-6.93) [label=91: $<$]{};
\node (65) at (-4,-6.5)  {\tiny{$X_3$}};

\draw[ line width=0.2mm] [blue](61)[dashed] to (63);
\draw[ line width=0.2mm] [blue](63) to (65);

\node [scale=.0] (66) at (-5,-6.93) [label=90: $<$]{};
\node (67) at (-6,-6.5) {\tiny{$Z_3$}};	
\node [scale=.0] (68) at (-7,-6.93) [label=91: $<$]{};
\node (69) at (-8,-6.5)  {\tiny{$Z_2$}};

\draw[ line width=0.2mm] [blue](65)[dashed]  to (67);
\draw[ line width=0.2mm] [blue](67) to (69);

\node [scale=.0] (70) at (-9,-6.93) [label=90: $<$]{};
\node (71) at (-10,-6.5) {\tiny{$X_2$}};	
\node [scale=.0] (72) at (-11,-4.1) [label=91: $\uparrow$]{};

\draw[ line width=0.2mm] [blue](69)[dashed] to (71);
\draw[-, line width=0.2mm] [blue](71) to [out=120,in=240] (1);
	
\end{tikzpicture}

\vspace{.2cm}		
{\small  Fig. 1. Digraph of Hamiltonian isomorphism series of $C_{27}(R_i)$ with the representation of 

$R_i$ in the place of $C_{27}(R_i)$ where $R_i$ = $X_i, Y_i, Z_i$ for $i$ = 1,2,3.    }
\end{center} }

\begin{prm} \quad \label{p3.7} {\rm Show that $C_{54}(1,3,17,19)$ and $C_{54}(5,13,21,23)$ are isomorphic but they are neither of Type-1 nor of Type-2 w.r.t. $m$ = 3.}
\end{prm}
\noindent
{\bf Solution.}\quad Here, 54 = $2\times 3^3$ and so $m$ = 3 is the only possible value of $m$ such that given two circulant graphs may be Type-2 isomorphic w.r.t. $m$. We obtain the solution by proving the following steps.

Step-1: $C_{54}(1,3,17,19)$ and $C_{54}(5,13,21,23)$ are not Type-1 isomorphic; 

Step-2: $C_{54}(1,3,17,19)$ and $C_{54}(5,13,21,23)$ are not Type-2 isomorphic w.r.t. $m$ = 3 and 

Step-3: $C_{54}(1,$ $3,17,19) \cong C_{54}(3,7,11,25)$ and $C_{54}(3,7,11,25) \cong C_{54}(5,13,21,23)$.
 
\begin{enumerate}
\item [\rm Step-1:]  To prove $C_{54}(1,3,17,19)$ and $C_{54}(5,13,21,23)$ are not Type-1 isomorphic. 
\\
Consider,
\\
$T1_{54}(C_{54}(1,3,17,19))$ = $\{\varphi_{54,x}(C_{54}(1,3,17,19)): x\in\varphi_{54}\}$ 
	
\hfill = $\{C_{54}(x(1, 3,17,19)): x = 1,5,7,$ $11,13,17,19,23,25,29,31,35,37,41,43,47,49,53\}$ 
	
\hfill	= $\{C_{54}(1,3,17, 19),$ $C_{54}(5,13,15,23),$ $C_{54}(7,11,21,25)\}$ 

\hfill = $\{C_{54}(x(1,3,17,19)) : x = 1,5,7\}$. 
\\
$\Rightarrow$ $C_{54}(5,13,21,23) \notin T1_{54}( C_{54}(1,3,$ $17,19))$ and thereby $C_{54}(1,3,17,19)$ and $C_{54}(5,13,21,23)$ are not Type-1 isomorphic. 
	
\item [\rm Step-2:]  To prove $C_{54}(1,3,17,19)$ and $C_{54}(5,13,21,23)$ are not Type-2 isomorphic w.r.t. $m$ = 3.
\\
$V_{54,3}(\{1,3,17,19,35,37,51,53\})$ = $\{\theta_{54,3,t}(\{1,3,17,19,35,37,51,53\}):$ $t = 0,1,...,\frac{54}{3}-1\}$ 
	
\hfill 	= $\{\{1,3,17,19,35,37,51,53\},$ $\{4,3,23,22,41,40,51,5\}$, 

\hfill $\{7,3, 29,25,47,43,51,11\}$,  $\{10,3,35,28,53,46,51,17\}$,
	
	\hfill   $\{13,3,41, 31,5,49,51,23\}$, $\{16,3,47,34,11,52,51,29\}\}$ 
	
\hfill 	= $\{\theta_{54,3,t}(\{1,3,17,19, 35,$ $37,51,53\}):$ $t$ = $0,1,2,3,4,5\}$. See Table 1. 
\\
$\Rightarrow$ $\theta_{54,3,0}(C_{54}(1,3,17,19)$ = $C_{54}(1,3,17,19)$, $\theta_{54,3,2}(C_{54}(1,3,17,19))$ = $C_{54}(3,7,11,25)$ and 

$\theta_{54,3,4}(C_{54}(1,3,17,19))$ = $C_{54}(3,5,13,23)$ are the only graphs of the form $C_{54}(R)$ contained in $V_{54,3}(C_{54}(1,3,17,19))$. See Table 1. 
	
	$\Rightarrow$ $C_{54}(5,13,21,23)\notin V_{54,3}(C_{54}(1,3,17,19))$. 

$\Rightarrow$  $C_{54}(5,13,21,23)\notin T2_{54,3}(C_{54}(1,3,17,19))$ since $T2_{n,m}(C_{n}(R)) \subseteq V_{n,m}(C_{n}(R))$. 

$\Rightarrow$ $C_{54}(1,3,17,19)$ and $C_{54}(5,13,21,23)$ are not Type-2 isomorphic w.r.t. $m$ = 3. 
	
\begin{table} \label{10}
		\caption{Calculation of $\theta_{54,3,t}(\{1,3,17,19,35,37,51,53\})$, $0 \leq t \leq \frac{54}{3}-1$ = 17.}
		\begin{center}
			\scalebox{0.75}{
\begin{tabular}{||c|c|c|c|c|c|c|c|c|c|c||} \hline \hline
				~ \hspace{.1cm} $t$ \hspace{.2cm} & \backslashbox{$\theta_{54,3,t}(x)$}{\\ Jump size $x$}
					& \hspace{.2cm} 1 \hspace{.2cm} & \hspace{.2cm} 3 \hspace{.2cm} & \hspace{.2cm} 17 \hspace{.2cm} & \hspace{.2cm} 19 \hspace{.2cm} & \hspace{.2cm} 35 \hspace{.2cm} & \hspace{.2cm} 37 \hspace{.2cm} & \hspace{.2cm} 51 \hspace{.2cm} & \hspace{.2cm} 53 & Symmetric Test \\\hline \hline
				& & &  &   &  &  & & &  & \\
	t & $\theta_{54,3,t}(x)$ & $1+3t$ & 3 & $17+6t$ & $19+3t$ & $35+6t$ & $37+3t$ & 51 & $53+6t$ & $T1$ or $T2$ or NS  \\\hline \hline
				& & &  &   &  &  & & & & \\
	0 & $\theta_{54,3,0}(x)$ & 1 & 3 & 17 & 19 & 35 & 37 & 51 & 53 & Identity  \\\hline
				& & &  &   &  &  & & & & \\
	1 & $\theta_{54,3,1}(x)$ & 4 & 3 & 23 & 22 & 41 & 40 & 51 & 5 & NS  \\\hline 
				& & &  &   &  &  & & & & \\
	2 & $\theta_{54,3,2}(x)$ & 7 & 3 & 29 & 25 & 47 & 43 & 51 & 11 & Yes (Type-2) \\\hline 
				& & &  &   &  &  & & & & \\
	3 & $\theta_{54,3,3}(x)$ & 10 & 3 & 35 & 28 & 53 & 46 & 51 & 17 & NS  \\\hline 
				& & &  &   &  &  & & & & \\
	4 & $\theta_{54,3,4}(x)$ & 13 & 3 & 41 & 31 & 5 & 49 & 51 & 23 & Yes (Type-2)  \\\hline 
				& & &  &   &  &  & & & & \\
	5 & $\theta_{54,3,5}(x)$ & 16 & 3 & 47 & 34 & 11 & 52 & 51 & 29 & NS \\\hline\hline 
				& & &  &   &  &  & & & & \\
	6 & $\theta_{54,3,6}(x)$ & 19 & 3 & 53 & 37 & 17 & 1 & 51 & 35 & Identity  \\\hline\hline
\end{tabular}}
\end{center}
	
\vspace{.1cm}
\footnotesize{T1: Type-1; T2: Type-2 isomorphic w.r.t. $r$ = 3; NS: Non-symmetric}
\end{table} 
	
\item [\rm Step-3:] To prove $C_{54}(1,3,17,19) \cong C_{54}(3,7,11,25)$ and $C_{54}(3,7,11,25) \cong C_{54}(5,13,21,23)$.
\\	
We have obtained in step-2,  

$\theta_{54,3,2}(C_{54}(1,3,17,19))$ = $C_{54}(3,7,11,25)$.  
\\
This implies, $C_{54}(1,3,17,19)$ $\cong$ $C_{54}(3,7,11,25)$. 
	
	Also, we have $\varphi_{54,5}(C_{54}(5,13,21,23))$ = $\varphi_{54,5}(C_{54}(5,13,21,23,31,33,41,49))$ 

\hfill = $C_{54}(5(5,13,21,23,31,33,41,49))$ = $C_{54}(25,11,51,7,47,3,43,29)$ = $C_{54}(3,$ $7,11,25)$.
\\
This implies, $C_{54}(5,13,$ $21,23)$ and $C_{54}(3,7,11,25)$ are Type-1 isomorphic. 
\\
This implies, $C_{54}(1,3,17,19)$ and $C_{54}(5,13,21,23)$ are isomorphic but they are neither of Type-1 nor of Type-2 (w.r.t. $m$ = 3). 

Isomorphic circulant graphs $C_{54}(1,3,17,19)$, $C_{54}(3,7,11,25)$ and $C_{54}(5,13,21,23)$ are shown in Figures 2, 3, 4. $C_{54}(3,7,11,25)$ is shown in Figures 3 and 5 as $\theta_{54,3,2}(C_{54}(1,3, 17,19))$ = $C_{54}(3,7, 11,25)$ and  $\varphi_{54,5}(C_{54}(5,13, 21,23))$ = $C_{54}(3,7, 11,25)$, respectively. That is, circulant graph $C_{54}(3,7,11,25)$  obtained from graph $C_{54}(5,13,21,23)$ under the transformation $\theta_{54,3,2}$ as well as under $\varphi_{54,5}$ are shown in Figures 3 and 5, respectively. \hfill $\Box$
\end{enumerate}

\begin{figure}[ht]
	\centerline{\includegraphics[width=4.2in]{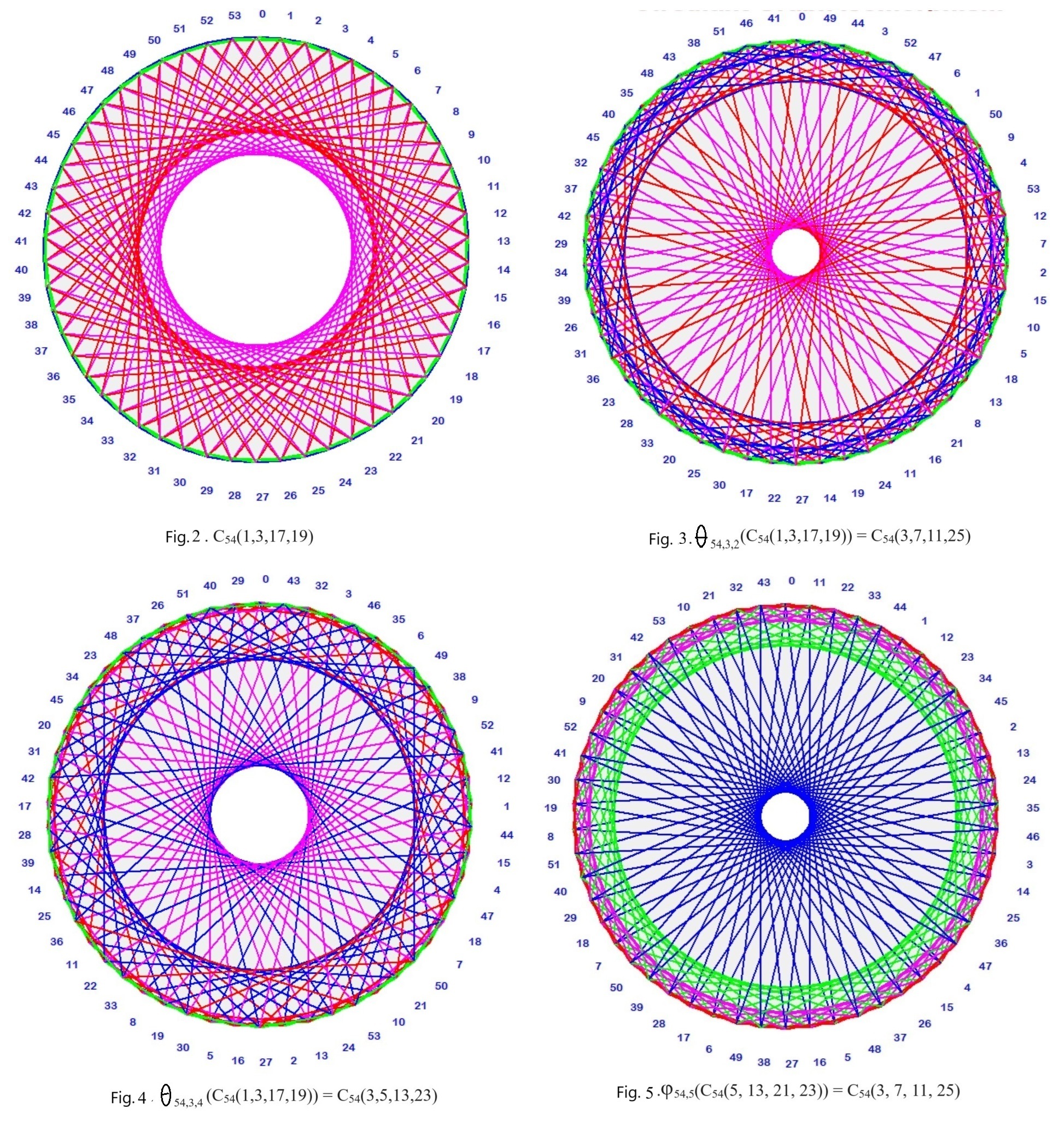}}
\end{figure}

\noindent
{\bf Alternate solution.}\quad  In this, we show  $C_{54}(1,3,17,19) \cong C_{54}(5,13,21,23)$ by proving $C_{54}(5,13,21,23)$ $\cong$ $C_{54}(7,11,21,25)$ and $C_{54}(7,11,21,25) \cong C_{54}(1,3,17,19)$ as follows and all other steps are similar. 

$\theta_{54,3,4}(C_{54}(5,13, 21,23))$ = $\theta_{54,3,4}(C_{54}(5,13, 21,23, 31,33,41,49))$ = $C_{54}(29,25,21,47, 43,33, 11,7)$ = $C_{54}(7,11,21,25)$  which implies,  $C_{54}(5,13,21,23) \cong C_{54}(7,11,21,25)$. Also, $\varphi_{54,5}(C_{54}(7,11,21,25))$ = $\varphi_{54,5}(C_{54}(7,11,21,25, 29,33,43,47))$ = $C_{54}(5(7,11,21,25, 29,33,43,47))$ = $C_{54}(35,1,51,17, 37,3,53,19)$ = $C_{54}(1,3,17,19)$ which implies, $C_{54}(7,11,21,25)$ and $C_{54}(1,3,17,19)$ are Type-1 isomorphic. \hfill $\Box$

\begin{prm} \quad \label{p3.8} {\rm Show that $C_{54}(7,11,21,25)$ and $C_{54}(7,11,15,25)$ are isomorphic but they are neither Type-1 nor Type-2 w.r.t. 3.}
\end{prm}
\noindent
{\bf Solution.}\quad Here, 54 = $2\times 3^3$ and so $m$ = 3 is the only possible value of $m$ such that given two circulant graphs may be Type-2 isomorphic w.r.t. $m$. We obtain the solution to the problem by proving the following three steps.

Step 1: $C_{54}(7,11,21,25)$ and $C_{54}(7,11,15,25)$ are not Type-1 isomorphic; 

Step 2: $C_{54}(7,11,21,25)$ and $C_{54}(7,11,15,25)$ are not Type-2 isomorphic w.r.t. $m$ = 3; and 

Step 3: $C_{54}(7,11,21,25) \cong C_{54}(5,13,15,23)$ and $C_{54}(5,13,15,23) \cong C_{54}(7,11,15,25)$.
\begin{enumerate}
	\item [\rm Step] 1:  $T1_{54}(C_{54}(7,11,21,25))$ = $\{\varphi_{54,x}(C_{54}(7,11,21,25)): x\in\varphi_{54}\}$ 
	
	= $\{C_{54}(x(7,11,21,25)): x = 1,5,7,$ $11,13,17,19,23,25,29,31,35,37,41,43,47,49,53\}$ 
	
	= $\{C_{54}(7,11,21,25),$ $C_{54}(1,3,17,19),$ $C_{54}(5,$ $13,15,23)\}$ = $\{C_{54}($ $x(7,11,21,25)) : x = 1,5,7\}$. $\Rightarrow$ $C_{54}(7,11,15,25) \notin T1_{54}( C_{54}(7,11,$ $21,25))$ and thereby $C_{54}(7,11,21,25)$ and $C_{54}(7,11,15,25)$ are not Type-1 isomorphic. 
	
	\item [\rm Step] 2: $V_{54,3}(\{7,11,21,25, 29,33,43,47\})$ = $\{\theta_{54,3,t}(\{7,11,21,25, 29,33,43,47\}):$ $t = 0,1,...,\frac{54}{3}-1\}$ 
	
	\hfill = $\{\{7,11,21,25, 29,33,43,47\},$ $\{10,17,21,28, 35,33,46,53\}$, $\{13,23,21,31, 41,33,49,5\}$, 
	
	\hfill $\{16,29,21,34, 47,33,52,11\}$, $\{19,35,21,37, 53,33,1,17\}$, $\{22,41,21,40, 5,33,4,23\}\}$ 
	
	\hspace{.5cm} = $\{\theta_{54,3,t}(\{7,11,21,25, 29,33,43,47\}):$ $t$ = $0,1,2,3,4,5\}$. 
	
	\noindent
	$\Rightarrow$ $\theta_{54,3,0}(C_{54}(7,11,21,25)$ = $C_{54}(7,11,21,25)$, $\theta_{54,3,2}(C_{54}(7,11,21,25))$ = $C_{54}(5,13,21,23)$ and $\theta_{54,3,4}(C_{54}(7,11,21,25))$ = $C_{54}(1,17,19,21)$ are the only graphs of the form $C_{54}(R)$ contained in $V_{54,3}(C_{54}(7,11,21,25))$. 
	
	$\Rightarrow$ $C_{54}(7,11,15,25)\notin V_{54,3}(C_{54}(7,11,21,25))$. 

$\Rightarrow$  $C_{54}(7,11,15,25)\notin T2_{54,3}(C_{54}(7,11,21,25))$ since $T2_{n,m}(C_{n}(R)) \subseteq V_{n,m}(C_{n}(R))$. 

$\Rightarrow$  $C_{54}(7,11,15,25)$ and $C_{54}(7,11,21,25)$ are not Type-2 isomorphic w.r.t. $m$ = 3. 
	
\item [\rm Step] 3: To prove $C_{54}(7,$ $11,21,25) \cong C_{54}(5,13,15,23)$ and $C_{54}(5,13,15,23) \cong C_{54}(7,11,15,23)$.
\\	
Consider, $C_{54}(7(7,11,21,25))$ = $C_{54}(7(7,11,21,25, 29,33,43,47))$ 

\hfill = $C_{54}(49,23,39,13,41,15,31,5)$ = $C_{54}(5,13,15,23)$. 
\\
This implies, $C_{54}(5,13$, $15,23)$ and $C_{54}(7,11,21,25)$ are Type-1 isomorphic.
	
	Also, we have $\theta_{54,3,4}(C_{54}(5,13,15,23))$ = $\theta_{54,3,4}(C_{54}(5,13,15,23, 31,39,41,49))$ 

\hfill = $C_{54}(29,25,$ $15,47, 43,39,11,7)$ = $C_{54}(7,11,15,25, 29,39,43,47) = C_{54}(7,11,15,25)$. 

$\Rightarrow$  $C_{54}(5,13,15,23)$ $\cong$ $C_{54}(7,11,15,25)$.
	
	 This implies that $C_{54}(7,11,15,25)$ and $C_{54}(7,11,21,25)$ are isomorphic but they are neither of Type-1 nor of Type-2 (w.r.t. 3).  \hfill $\Box$
\end{enumerate}

Corresponding to the above problems, we get the following result.

\begin{theorem}\quad {\rm \label{t3.9} Let $C_n(R)$ and $C_n(S)$ be Type-2 isomorphic w.r.t. $m$ and $C_n(xR)$ = $C_n(T)$ where $T \neq R$, $m > 1$ is a divisor of $\gcd(n, r)$, $m^3$ divides $n$, $r\in R,S$ and $x\in \varphi_n$. Then, $C_n(S)$ and $C_n(T)$ are isomorphic and $S$ $\neq$ $T$ but the two circulant graphs are neither of Type-1 nor of Type-2 w.r.t. $m$. }
\end{theorem}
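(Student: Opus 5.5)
The plan has three parts. Isomorphism and $S\neq T$ come from transitivity and from Type-1 and Type-2 classes being disjoint. The "not Type-1" part uses Theorem \ref{t2.3}. The "not Type-2" part uses Theorem \ref{t2.12} together with the group structure of $T2_{n,m}$.

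\textbf{Isomorphism and $S\neq T$.} Since $C_n(R)\cong_{T2_{n,m}}C_n(S)$ and $C_n(xR)=C_n(T)$ with $x\in\varphi_n$, the map $\varphi_{n,x}$ gives $C_n(R)\cong_{T1_n}C_n(T)$. Transitivity then gives $C_n(S)\cong C_n(R)\cong C_n(T)$. For $S\neq T$: by Definition \ref{d2.4}, $S\neq yR$ for every $y\in\varphi_n$. Since $T=xR$ under reflexive modulo $n$, we get $S\neq T$.

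\textbf{Not Type-1.} Suppose $C_n(S)\cong_{T1_n}C_n(T)$. Then $C_n(S)\in Ad_n(C_n(T))$. By Theorem \ref{t2.3}, $Ad_n(C_n(T))=Ad_n(C_n(R))$, because $C_n(T)=C_n(xR)\in Ad_n(C_n(R))$. Hence $S=yR$ for some $y\in\varphi_n$, contradicting the Type-2 hypothesis $S\neq yR$.

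\textbf{Not Type-2 w.r.t. $m$.} Suppose $C_n(S)\cong_{T2_{n,m}}C_n(T)$. By Theorem \ref{t2.12}, $T2_{n,m}(C_n(T))=T2_{n,m}(C_n(S))=T2_{n,m}(C_n(R))$. Hence $C_n(T)\in T2_{n,m}(C_n(R))$, and since $T\neq R$, $C_n(T)$ is Type-2 isomorphic to $C_n(R)$ w.r.t. $m$. By Definition \ref{d2.4} this requires $T\neq yR$ for all $y\in\varphi_n$. But $T=xR$, a contradiction.

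\textbf{Main obstacle.} Theorem \ref{t2.12} is stated for a graph known to be isomorphic, with $r\in R,S$ and $m\mid\gcd(n,r)$. I must check that its hypotheses hold for the pair $(S,T)$, i.e.\ that a common element $r$ with $m\mid r$ lies in $S$ and in $T$.

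\begin{itemize}
\item $r\in S$: $\theta_{n,m,t}$ fixes multiples of $m$ (Remark \ref{r2.6}), so the $r\in R$ with $m\mid r$ also lies in $S$.
\item $r\in T$: here $T=xR$ contains $xr$ reduced reflexively, not necessarily $r$ itself.
\end{itemize}

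To handle the second point, I would either work only with the multiples of $m$ common to $S$ and $T$, or note that the contradiction arises as soon as $C_n(T)$ is Type-2 related to $C_n(S)$ through the chain $C_n(R)$. The contradiction only needs the defining condition ``$\neq yR$'', so an exact common $r$ may be unnecessary. If this bookkeeping fails, the fallback is to read the conclusion as a statement about membership in $T2_{n,m}(C_n(S))$ only. The paper's own examples, $C_{54}(1,3,17,19)$ and $C_{54}(5,13,21,23)$, fit this template with $x=5$.
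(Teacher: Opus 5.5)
Your proposal is correct and takes essentially the same route as the paper. You get $S\neq T$ from the defining condition $S\neq yR$. You rule out Type-1 via Theorem \ref{t2.3}, since $C_n(R)$, $C_n(S)$, $C_n(T)$ would lie in one Type-1 class. You rule out Type-2 via Theorem \ref{t2.12}, since all three would lie in one Type-2 class, making $C_n(T)=C_n(xR)$ Type-2 isomorphic to $C_n(R)$.

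Your ``main obstacle'' is not a real one, and the fallback is unnecessary. Type-2 isomorphism of $C_n(S)$ and $C_n(T)$ w.r.t.\ $m$ already requires a common $r\in S,T$ with $m\mid\gcd(n,r)$ (Remark \ref{r2.5}(ii)); the paper simply writes ``$r\in S,T$'' into that supposition. If no such $r$ exists, the pair is trivially not Type-2 w.r.t.\ $m$. Moreover, $T2_{n,m}$ depends only on $m$, not on the particular $r$, so the two applications of Theorem \ref{t2.12} combine.
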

\begin{proof} \quad Given, $C_n(R)$ and $C_n(S)$ are Type-2 isomorphic w.r.t. $m$ and $C_n(xR)$ = $C_n(T)$ where $T \neq R$, $m > 1$ is a divisor of $\gcd(n, r)$, $m^3$ divides $n$, $r\in R,S$ and $x\in \varphi_n$. This implies, $C_n(R)$ $\cong$ $C_n(S)$ $\cong$ $C_n(T)$, $C_n(R),C_n(S)\in T2_{n,m}(C_n(R))$ = $T2_{n,m}(C_n(S))$ and $C_n(R),C_n(T)\in T1_n(C_n(R))$ = $T1_n(C_n(T))$ and $R$ $\neq$ $T$ using Theorems \ref{t2.3}, \ref{t2.10} and \ref{t2.12} where $(T1_n(C_n(R)), \circ')$ and $(T2_{n,m}(C_n(R)), \circ)$ are Abelian groups. Also, given that $C_n(R)$ and $C_n(S)$ are Type-2 isomorphic w.r.t. $m$ which implies, $R \neq S$. Suppose, $S$ = $T$. Then, this implies, $C_n(R)$ and $C_n(T)$ are Type-2 isomorphic w.r.t. $m$. This is a contradiction to given condition that $C_n(R)$ and $C_n(T)$ are Type-1 isomorphic and $R$ $\neq$ $T$. This implies, $S$ $\neq$ $T$.
	
Suppose, $C_n(S)$ and $C_n(T)$ be Type-1 isomorphic, $S \neq T$. This implies, $C_n(S),C_n(T)\in T1_n(C_n(S))$ = $T1_n(C_n(T))$, $S \neq T$. Already, we have $C_n(R),C_n(T)\in T1_n(C_n(R))$ = $T1_n(C_n(T))$, $R \neq T$. This implies, $C_n(R),C_n(S),C_n(T)\in T1_n(C_n(R))$ = $T1_n(C_n(S))$ = $T1_n(C_n(T))$, $S \neq T$, $R \neq T$ and $R \neq S$. This implies, $C_n(R)$ and $C_n(S)$ are Type-1 isomorphic using Theorem \ref{t2.3}, $R \neq S$. This is a contradiction to the given condition that $C_n(R)$ and $C_n(S)$ are Type-2 isomorphic w.r.t. $m$ where $m > 1$ is a divisor of $\gcd(n, r)$, $m^3$ divides $n$ and $r\in R,S$. This implies, $C_n(S)$ and $C_n(T)$ are isomorphic but they are not Type-1 isomorphic, $S \neq T$. 
	
On the otherhand, suppose $C_n(S)$ and $C_n(T)$ be Type-2 isomorphic w.r.t. $m$ where $m > 1$ is a divisor of $\gcd(n, r)$, $m^3$ divides $n$ and $r\in S,T$. This implies, $C_n(S),C_n(T)\in T2_{n,m}(C_n(S))$ = $T2_{n,m}(C_n(T))$ using Theorem \ref{t2.12}, $S \neq T$. By the given condition $C_n(R)$ and $C_n(S)$ are Type-2 isomorphic w.r.t. $m$ where $m > 1$ is a divisor of $\gcd(n, r)$, $m^3$ divides $n$ and $r\in R,S$. This implies that $C_n(R),C_n(S)\in T2_{n,m}(C_n(R))$ = $T2_{n,m}(C_n(S))$. Combining the above two statements, we get, $C_n(R),C_n(S),C_n(T)\in T2_{n,m}(C_n(R))$ = $T2_{n,m}(C_n(S))$ = $T2_{n,m}(C_n(T))$, $R \neq S$, $S \neq T$ and $R \neq T$. This implies that $C_n(R)$ and $C_n(T)$ are Type-2 isomorphic w.r.t. $m$ using Theorem \ref{t2.12}. This is a contradiction to the given condition that $C_n(R)$ and $C_n(T)$ are Type-1 isomorphic. This implies that our assumption is wrong. This implies, $C_n(S)$ and $C_n(T)$ are isomorphic but they are not Type-2 isomorphic w.r.t. $m$. 
	
	Hence we get the result.  
\end{proof}

In the next problem, we find a sequence of isomorphic circulant graphs of given two isomorphic circulant graphs, starting from one graph and ending with the other graph.

\begin{prm} \quad \label{p3.10} {\rm Show that $C_{54}(1,3,17,19)$ and $C_{54}(7,11,15,25)$ are isomorphic but they are neither of Type-1 nor of Type-2 w.r.t. $m$ = 3. Also, find the isomorphic set and some isomorphism series of the two (isomorphic) circulant graphs.}
\end{prm}
\noindent
{\bf Solution.}\quad In problem \ref{p3.7}, we proved that 

$(i)$ $C_{54}(1,3,17,19)$ and $C_{54}(5,13,21,23)$ are isomorphic but they are neither of Type-1 nor of Type-2 w.r.t. $m$ = 3 and 

$(ii)$ $C_{54}(1,3,17,19) \cong_{T2_{54,3,2}} C_{54}(3,7,11,25)  \cong_{T1_{54,7}} C_{54}(5,13,21,23)$. 
\\
In problem \ref{p3.8}, we proved that 

$(iii)$ $C_{54}(7,11,21,25)$ and $C_{54}(7,11,15,25)$ are isomorphic but they are neither of Type-1 nor of Type-2 w.r.t. $m$ = 3 and 

$(iv)$ $C_{54}(7,11,21,25) \cong_{T1_{54,7}} C_{54}(5,13,15,23) \cong_{T2_{54,3,4}} C_{54}(7,11,15,25)$. 

$(v)$ Here, we prove that $C_{54}(5,13,21,23)$ $\cong_{T2_{54,3,4}} C_{54}(7,11,21,25)$. We have

 $T1_{54}(C_{54}(7,11,21,25))$ = $\{C_{54}(7,11,21,25), C_{54}(1,3,17,19), C_{54}(5,13,15,23)\}$. 

 $\Rightarrow$ $C_{54}(5,13,21,23)\notin T1_{54}(C_{54}(7,11,21,25))$.

 $\Rightarrow$ $C_{54}(5,13,21,23)$ and $C_{54}(7,11,21,25)$ are not Type-1 isomorphic. Also,

 $\theta_{54,3,4}(C_{54}(5,13,21,23))$ = $\theta_{54,3,4}(C_{54}(5,13,21,23, 31,33,41,49))$ 

\hfill = $C_{54}(29,25,21,47, 43,33,11,7))$ = $C_{54}(7,11,21,25))$.

 $\Rightarrow$ $C_{54}(5,13,21,23)$ $\cong_{T2_{54,3,4}} C_{54}(7,11,21,25)$. 

Combining $(ii)$, $(iv)$ $\&$ $(v)$, we get,
\begin{enumerate}
\item [\rm (1)] $C_{54}(1,3,17,19) \cong_{T2_{54,3,2}} C_{54}(3,7,11,25)  \cong_{T1_{54,7}} C_{54}(5,13,21,23) $ 
	
	\hfill $\cong_{T2_{54,3,4}} C_{54}(7,11,21,25) \cong_{T1_{54,7}} C_{54}(5,13,15,23) \cong_{T2_{54,3,4}} C_{54}(7,11,15,25)$. 

$\Rightarrow$ $C_{54}(1,3,17,19)$ $\cong$ $C_{54}(7,11,21,25)$.
\end{enumerate}

Also, $C_{54}(1,3,17,19)$ and $C_{54}(7,11,15,25)$ are not Type-2 isomorphic w.r.t. $m$ = 3 since there is no common jump size in the two circulant graphs (Only for more clarity, we followed another method to establish this step in problems \ref{p3.7} and \ref{p3.8}.). And $C_{54}(7,11,15,25)\notin T1_{54}(C_{54}(1,3,17,19))$ = $\{C_{54}(1,3,17,19), C_{54}(5,13,15,23), C_{54}(7,11,21,25)\}$, see problem \ref{p3.7}. This implies, $C_{54}(1,3,17,19)$ and $C_{54}(7,11,15,25)$ are not Type-1 isomorphic.  Thus, $C_{54}(1,3,17,19)$ and $C_{54}(7,11,15,25)$ are isomorphic but they are neither of Type-1 nor of Type-2 w.r.t. $m$ = 3.

Now, we find the isomorphic set and some isomorphism series of the two isomorphic circulant graphs.

\vspace{.2cm}
\noindent
{\bf The isoset of $C_{54}(1,3,17,19), C_{54}(7,11,21,25)$:}

\vspace{.2cm}
Isoset of $C_{54}(1,3,17,19), C_{54}(7,11,21,25)$ = $Isoset(C_{54}(1,3,17,19), C_{54}(7,11,21,25))$ 

\hfill = $Isoset(C_{54}(1,3,17,19)$ $\cup$ $C_{54}(7,11,21,25))$ = $Isoset(C_{54}(1,3,17,19))$  

\hfill  = $Isoset(C_{54}(7,11,21,25))$ since $C_{54}(1,3,17,19)$ $\cong$ $C_{54}(7,11,21,25)$.

$T2_{54,3}(C_{54}(1,3,17,19))$ = $\{C_{54}(1,3,17,19), C_{54}(3,7,11,25) = T2_{54,3,2}(C_{54}(1,3,17,19))$, 

\hfill $C_{54}(3,5,13,23) = T2_{54,3,4}(C_{54}(1,3,17,19))\}$,

$T1_{54}(C_{54}(1,3,17,19))$ = $\{C_{54}(1,3,17,19), C_{54}(5,13,15,23) = T1_{54,5}(C_{54}(1,3,17,19)),$ 

\hfill $C_{54}(7,11,21,25) = T1_{54,7}(C_{54}(1,3,17,19))\}$,

$T2_{54,3}(C_{54}(5,13,15,23))$ = $\{C_{54}(5,13,15,23), C_{54}(1,15,17,19) = T2_{54,3,2}(C_{54}(5,13,15,23))$, 

\hfill $C_{54}(7,11,15,25) = T2_{54,3,4}(C_{54}(5,13,15,23))\}$,

$T1_{54}(C_{54}(3,7,11,25))$ = $\{C_{54}(3,7,11,25), C_{54}(1,15,17,19) = T1_{54,5}(C_{54}(3,7,11,25)),$ 

\hfill $C_{54}(5,13,21,23) = T1_{54,7}(C_{54}(3,7,11,25))\}$,

$T2_{54,3}(C_{54}(7,11,21,25))$ = $\{C_{54}(7,11,21,25), C_{54}(5,13,21,23) = T2_{54,3,2}(C_{54}(7,11,21,25))$, 

\hfill $C_{54}(1,17,19,21) = T2_{54,3,4}(C_{54}(7,11,21,25))\}$,

$T1_{54}(C_{54}(3,5,13,23))$ = $\{C_{54}(3,5,13,23), C_{54}(7,11,15,25) = T1_{54,5}(C_{54}(3,5,13,23)),$ 

\hfill $C_{54}(1,17,19,21) = T1_{54,7}(C_{54}(3,5,13,23))\}$. Let

 $X_1$ = $C_{54}(1,3,17,19)$, $X_2$ = $C_{54}(5,13,15,23)$, $X_3$ = $C_{54}(7,11,21,25)$, 

$Y_1$ = $C_{54}(3,7,11,25)$, $Y_2$ = $C_{54}(1,15,17,19)$, $Y_3$ = $C_{54}(5,13,21,23)$, 

$Z_1$ = $C_{54}(3,5,13,23)$, $Z_2$ = $C_{54}(7,11,15,25)$, $Z_3$ = $C_{54}(1,17,19,21)$.  

Then, the isoset of $C_{54}(1,3,17,19), C_{54}(7,11,21,25)$ = $Isoset(C_{54}(1,3,17,19), C_{54}(7,11,21,25))$ 

\hfill = $\{C_{54}(1,3,17,19), C_{54}(5,13,15,23), C_{54}(7,11,21,25), C_{54}(3,7,11,25), C_{54}(3,5,13,23),$ 

\hfill $C_{54}(1,15,17,19), C_{54}(7,11,15,25), C_{54}(5,13,21,23), C_{54}(1,17,19,21)\}$ 

\hfill = $\{C_{54}(X_i), C_{54}(Y_i), C_{54}(Z_i): i = 1,2,3\}$ = $Isoset(C_{54}(X_i))$ = $Isoset(C_{54}(Y_j))$ 

\hfill = $Isoset(C_{54}(Z_k))$, $1 \leq i,j,k \leq 3$.

\noindent
{\bf Some isomorphis series of $C_{54}(1,3,17,19)$ and $C_{54}(7,11,21,25)$:}

The followings are some isomorphism series, other than $(1)$, of the two isomorphic circulant graphs $C_{54}(1,3,17,19)$ and $C_{54}(7,11,15,25)$. 
\begin{enumerate}
\item [\rm (1)] $C_{54}(1,3,17,19) \cong_{T2_{54,3,2}} C_{54}(3,7,11,25)  \cong_{T1_{54,7}} C_{54}(5,13,21,23) $ 
	
	\hfill $\cong_{T2_{54,3,4}} C_{54}(7,11,21,25) \cong_{T1_{54,7}} C_{54}(5,13,15,23) \cong_{T2_{54,3,4}} C_{54}(7,11,15,25)$. 

\item [\rm $(2)$] $C_{54}(1,3,17,19) \cong_{T2_{54,5,2}} C_{54}(3,7,11,25)  \cong_{T1_{54,5}} C_{54}(1,15,17,19)$  $\cong_{T2_{54,3,2}} C_{54}(7,11,15,25)$. 

\item [\rm $(3)$] $C_{54}(1,3,17,19) \cong_{T2_{54,5,4}} C_{54}(3,5,13,23)  \cong_{T1_{54,5}} C_{54}(7,11,15,25)$. 
	
\item [\rm $(4)$] $C_{54}(1,3,17,19) \cong_{T2_{54,5,4}} C_{54}(3,5,13,23) \cong_{T1_{54,7}} C_{54}(1,17,19,21)$  

\hfill $\cong_{T2_{54,5,2}} C_{54}(7,11,21,25)  \cong_{T1_{54,7}} C_{54}(5,13,15,23)$ $\cong_{T2_{54,5,4}} C_{54}(7,11,15,25)$. 
	
\item [\rm $(5)$] $C_{54}(1,3,17,19) \cong_{T1_{54,5}} C_{54}(5,13,15,23)  \cong_{T2_{54,5,4}} C_{54}(7,11,15,25)$.

\item [\rm $(6)$] $C_{54}(1,3,17,19) \cong_{T1_{54,7}} C_{54}(7,11,21,25)  \cong_{T2_{54,3,2}} C_{54}(5,13,21,23)$	

\hfill $\cong_{T2_{54,7}} C_{54}(1,15,17,19) \cong_{T2_{54,3,2}} C_{54}(7,11,15,25)$.
    
\item [\rm $(7)$]  $C_{54}(7,11,15,25) \cong_{T2_{54,3,4}} C_{54}(1,15,17,19)$ 

\hfill $\cong_{T1_{54,7}} C_{54}(3,7,11,25) \cong_{T2_{54,5,2}} C_{54}(3,5,13,23)  \cong_{T1_{54,5}} C_{54}(7,11,15,25)$.  
	
\item [\rm $(8)$] $C_{54}(7,11,15,25) \cong_{T2_{54,5,2}} C_{54}(5,13,15,23)$ 

\hfill $\cong_{T1_{54,5}} C_{54}(7,11,21,25) \cong_{T2_{54,3,2}} C_{54}(5,13,21,23) \cong_{T2_{54,7}} C_{54}(1,15,17,19) $

\hfill $\cong_{T2_{54,5,4}} C_{54}(5,13,15,23) \cong_{T1_{54,7}} C_{54}(1,3,17,19)$.
 	 	
\item [\rm $(9)$]  $C_{54}(7,11,15,25) \cong_{T2_{54,3,4}} C_{54}(1,15,17,19) \cong_{T1_{54,7}} C_{54}(3,7,11,25) \cong_{T2_{54,5,4}} C_{54}(1,3,17,19)$.  
	
\item [\rm $(10)$] $C_{54}(7,11,15,25) \cong_{T1_{54,5}} C_{54}(1,17,19,21) \cong_{T2_{54,5,2}} C_{54}(7,11,21,25)$

\hfill $\cong_{T2_{54,3,2}} C_{54}(5,13,21,23) \cong_{T2_{54,7}} C_{54}(1,15,17,19) $

\hfill $\cong_{T2_{54,5,4}} C_{54}(5,13,15,23) \cong_{T1_{54,7}} C_{54}(1,3,17,19)$.
 	 		
\item [\rm $(11)$] $C_{54}(7,11,15,25) \cong_{T1_{54,7}} C_{54}(3,5,13,23) \cong_{T2_{54,5,2}} C_{54}(1,3,17,19)$.
	
\item [\rm $(12)$] $C_{54}(7,11,15,25) \cong_{T2_{54,5,2}} C_{54}(5,13,15,23) \cong_{T1_{54,7}} C_{54}(1,3,17,19)$.
 		
\item [\rm $(13)$] $C_{54}(7,11,15,25) \cong_{T2_{54,5,4}} C_{54}(1,15,17,19) \cong_{T1_{54,5}} C_{54}(5,13,21,23)$

\hfill $\cong_{T2_{54,5,2}} C_{54}(1,17,19,21)  \cong_{T1_{54,5}} C_{54}(3,5,13,23) \cong_{T2_{54,5,2}} C_{54}(1,3,17,19)$.

\item [\rm $(14)$] $C_{54}(7,11,15,25) \cong_{T2_{54,3,2}} C_{54}(5,13,15,23) \cong_{T1_{54,5}} C_{54}(7,11,21,25)$
	
	\hfill $ \cong_{T2_{54,3,2}} C_{54}(5,13,21,23) \cong_{T1_{54,5}} C_{54}(3,7,11,25) \cong_{T2_{54,3,4}} C_{54}(1,3,17,19)$.  \\
\\
Isoset$(C_{54}(1,3,17,19), C_{54}(7,11,21,25))$ has the following Hamiltonian isomorphism series. 

{\rm 	\begin{center}
		\begin{tikzpicture}  
		[scale=.8,auto=center,every node/.style={draw,circle}]
			
\node (1) at (-10,-0.5) {\tiny{$X_1$}};
\node [scale=.0] (2) at (-9,-.93) [label=90: $>$]{};
\node (3) at (-8,-0.5) {\tiny{$Y_1$}};	
\node [scale=.0] (4) at (-7,-0.93) [label=91: $>$]{};
\node (5) at (-6,-0.5)  {\tiny{$Y_2$}};

\draw[ line width=0.2mm] [blue](1)[dashed]  to (3);
\draw[ line width=0.2mm] [blue](3) to (5);

\node [scale=.0] (6) at (-5,-.93) [label=90: $>$]{};
\node (7) at (-4,-0.5) {\tiny{$Z_2$}};	
\node [scale=.0] (8) at (-3,-0.93) [label=91: $>$]{};
\node (9) at (-2,-0.5)  {\tiny{$Z_3$}};

\draw[ line width=0.2mm] [blue](5)[dashed] to (7);
\draw[ line width=0.2mm] [blue](7) to (9);

\node [scale=.0] (10) at (-1,-.93) [label=90: $>$]{};
\node (11) at (0,-0.5) {\tiny{$X_3$}};	
\node [scale=.0] (12) at (1,-0.93) [label=91: $>$]{};
\node (13) at (2,-0.5)  {\tiny{$X_2$}};

\draw[ line width=0.2mm] [blue](9)[dashed]  to (11);
\draw[ line width=0.2mm] [blue](11) to (13);

\node [scale=.0] (14) at (3,-.93) [label=90: $>$]{};
\node (15) at (4,-0.5) {\tiny{$Z_2$}};	
\node [scale=.0] (16) at (5,-0.93) [label=91: $>$]{};
\node (17) at (6,-0.5)  {\tiny{$Z_1$}};

\draw[ line width=0.2mm] [blue](13)[dashed] to (15);
\draw[ line width=0.2mm] [blue](15) to (17);

\node [scale=.0] (18) at (6,-1.93) [label=91: $\downarrow$]{};
\node (19) at (6,-2.5) {\tiny{$Y_1$}};	
\node [scale=.0] (20) at (5,-2.93) [label=90: $<$]{};
\node (21) at (4,-2.5)  {\tiny{$Y_3$}};

\draw[ line width=0.2mm] [blue](17)[dashed] to (19);
\draw[ line width=0.2mm] [blue](19) to (21);

\node [scale=.0] (22) at (3,-2.93) [label=90: $<$]{};
\node (23) at (2,-2.5) {\tiny{$X_3$}};	
\node [scale=.0] (24) at (1,-2.93) [label=91: $<$]{};
\node (25) at (0,-2.5)  {\tiny{$X_1$}};

\draw[ line width=0.2mm] [blue](21)[dashed]  to (23);
\draw[ line width=0.2mm] [blue](23) to (25);

\node [scale=.0] (26) at (-1,-2.93) [label=90: $<$]{};
\node (27) at (-2,-2.5) {\tiny{$Z_1$}};	
\node [scale=.0] (28) at (-3,-2.93) [label=91: $<$]{};
\node (29) at (-4,-2.5)  {\tiny{$Z_3$}};

\draw[ line width=0.2mm] [blue](25)[dashed] to (27);
\draw[ line width=0.2mm] [blue](27) to (29);

\node [scale=.0] (30) at (-5,-2.93) [label=90: $<$]{};
\node (31) at (-6,-2.5) {\tiny{$Y_3$}};	
\node [scale=.0] (32) at (-7,-2.93) [label=91: $<$]{};
\node (33) at (-8,-2.5)  {\tiny{$Y_2$}};

\draw[ line width=0.2mm] [blue](29)[dashed]  to (31);
\draw[ line width=0.2mm] [blue](31) to (33);

\node [scale=.0] (34) at (-9,-2.93) [label=90: $<$]{};
\node (35) at (-10,-2.5) {\tiny{$X_2$}};	
\node [scale=.0] (36) at (-10,-3.93) [label=91: $\downarrow$]{};
\node (37) at (-10,-4.5)  {\tiny{$X_3$}};

\draw[ line width=0.2mm] [blue](33)[dashed] to (35);
\draw[ line width=0.2mm] [blue](35) to (37);

\node [scale=.0] (38) at (-9,-4.93) [label=90: $>$]{};
\node (39) at (-8,-4.5) {\tiny{$Y_3$}};	
\node [scale=.0] (40) at (-7,-4.93) [label=91: $>$]{};
\node (41) at (-6,-4.5)  {\tiny{$Y_1$}};

\draw[ line width=0.2mm] [blue](37)[dashed]  to (39);
\draw[ line width=0.2mm] [blue](39) to (41);

\node [scale=.0] (42) at (-5,-4.93) [label=90: $>$]{};
\node (43) at (-4,-4.5) {\tiny{$Z_1$}};	
\node [scale=.0] (44) at (-3,-4.93) [label=91: $>$]{};
\node (45) at (-2,-4.5)  {\tiny{$Z_2$}};

\draw[ line width=0.2mm] [blue](41)[dashed] to (43);
\draw[ line width=0.2mm] [blue](43) to (45);

\node [scale=.0] (46) at (-1,-4.93) [label=90: $>$]{};
\node (47) at (0,-4.5) {\tiny{$Y_2$}};	
\node [scale=.0] (48) at (1,-4.93) [label=91: $>$]{};
\node (49) at (2,-4.5)  {\tiny{$Y_1$}};

\draw[ line width=0.2mm] [blue](45)[dashed]  to (47);
\draw[ line width=0.2mm] [blue](47) to (49);

\node [scale=.0] (50) at (3,-4.93) [label=90: $>$]{};
\node (51) at (4,-4.5) {\tiny{$X_1$}};	
\node [scale=.0] (52) at (5,-4.93) [label=91: $>$]{};
\node (53) at (6,-4.5)  {\tiny{$X_2$}};

\draw[ line width=0.2mm] [blue](49)[dashed] to (51);
\draw[ line width=0.2mm] [blue](51) to (53);

\node [scale=.0] (54) at (6,-5.93) [label=91: $\downarrow$]{};
\node (55) at (6,-6.5) {\tiny{$Y_2$}};	
\node [scale=.0] (56) at (5,-6.93) [label=90: $<$]{};
\node (57) at (4,-6.5)  {\tiny{$Y_3$}};

\draw[ line width=0.2mm] [blue](53)[dashed] to (55);
\draw[ line width=0.2mm] [blue](55) to (57);

\node [scale=.0] (58) at (3,-6.93) [label=90: $<$]{};
\node (59) at (2,-6.5) {\tiny{$Z_3$}};	
\node [scale=.0] (60) at (1,-6.93) [label=91: $<$]{};
\node (61) at (0,-6.5)  {\tiny{$Z_1$}};

\draw[ line width=0.2mm] [blue](57)[dashed]  to (59);
\draw[ line width=0.2mm] [blue](59) to (61);

\node [scale=.0] (62) at (-1,-6.93) [label=90: $<$]{};
\node (63) at (-2,-6.5) {\tiny{$X_1$}};	
\node [scale=.0] (64) at (-3,-6.93) [label=91: $<$]{};
\node (65) at (-4,-6.5)  {\tiny{$X_3$}};

\draw[ line width=0.2mm] [blue](61)[dashed] to (63);
\draw[ line width=0.2mm] [blue](63) to (65);

\node [scale=.0] (66) at (-5,-6.93) [label=90: $<$]{};
\node (67) at (-6,-6.5) {\tiny{$Z_3$}};	
\node [scale=.0] (68) at (-7,-6.93) [label=91: $<$]{};
\node (69) at (-8,-6.5)  {\tiny{$Z_2$}};

\draw[ line width=0.2mm] [blue](65)[dashed]  to (67);
\draw[ line width=0.2mm] [blue](67) to (69);

\node [scale=.0] (70) at (-9,-6.93) [label=90: $<$]{};
\node (71) at (-10,-6.5) {\tiny{$X_2$}};	
\node [scale=.0] (72) at (-11,-4.1) [label=91: $\uparrow$]{};

\draw[ line width=0.2mm] [blue](69)[dashed] to (71);
\draw[-, line width=0.2mm] [blue](71) to [out=120,in=240] (1);
	
\end{tikzpicture}

\vspace{.2cm}		
{\small  Fig. 6. Digraph of Hamiltonian isomorphism series of $C_{54}(R_i)$ with the representation of 

$R_i$ in the place of $C_{54}(R_i)$ where $R_i$ = $X_i, Y_i, Z_i$ for $i$ = 1,2,3.    }
\end{center} }

\item [\rm (15)] $C_{54}(X_1)$ $\cong_{T2_{54,3,2}}$ $C_{54}(Y_1)$ $\cong_{T1_{54,5}}$ $C_{54}(Y_2)$ $\cong_{T2_{54,3,2}}$ $C_{54}(Z_2)$ $\cong_{T1_{54,5}}$ $C_{54}(Z_3)$ 

\hfill $\cong_{T2_{54,3,2}}$ $C_{54}(X_3)$ $\cong_{T1_{54,7}}$ $C_{54}(X_2)$ $\cong_{T2_{54,3,4}}$ $C_{54}(Z_2)$ $\cong_{T1_{54,7}}$ $C_{54}(Z_1)$ $\cong_{T2_{54,3,4}}$ $C_{54}(Y_1)$ 

\hfill $\cong_{T1_{54,7}}$ $C_{54}(Y_3)$ $\cong_{T2_{54,3,4}}$ $C_{54}(X_3)$ $\cong_{T1_{54,5}}$ $C_{54}(X_1)$ $\cong_{T2_{54,3,4}}$ $C_{54}(Z_1)$ $\cong_{T1_{54,7}}$ $C_{54}(Z_3)$ 

\hfill $\cong_{T2_{54,3,4}}$ $C_{54}(Y_3)$ $\cong_{T1_{54,7}}$ $C_{54}(Y_2)$ $\cong_{T2_{54,3,4}}$ $C_{54}(X_2)$ $\cong_{T1_{54,5}}$ $C_{54}(X_3)$ 

\hfill $\cong_{T2_{54,3,2}}$ $C_{54}(Y_3)$ $\cong_{T1_{54,5}}$ $C_{54}(Y_1)$ $\cong_{T2_{54,3,2}}$ $C_{54}(Z_1)$ $\cong_{T1_{54,5}}$ $C_{54}(Z_2)$ 

\hfill $\cong_{T2_{54,3,4}}$ $C_{54}(Y_2)$ $\cong_{T1_{54,7}}$ $C_{54}(Y_1)$ $\cong_{T2_{54,3,4}}$ $C_{54}(X_1)$ $\cong_{T1_{54,5}}$ $C_{54}(X_2)$ 

\hfill $\cong_{T2_{54,3,2}}$ $C_{54}(Y_2)$ $\cong_{T1_{54,5}}$ $C_{54}(Y_3)$ $\cong_{T2_{54,3,2}}$ $C_{54}(Z_3)$
$\cong_{T1_{54,5}}$ $C_{54}(Z_1)$ 

\hfill  $\cong_{T2_{54,3,2}}$ $C_{54}(X_1)$ $\cong_{T1_{54,7}}$ $C_{54}(X_3)$ $\cong_{T2_{54,3,4}}$ $C_{54}(Z_3)$ 

\hfill $\cong_{T1_{54,7}}$ $C_{54}(Z_2)$ $\cong_{T2_{54,3,2}}$ $C_{54}(X_2)$ $\cong_{T1_{54,7}}$ $C_{54}(X_1)$.
\end{enumerate}

Figure 6 presents digraph of Hamiltonian isomorphism series of $C_{54}(R_i)$ by representing, in the figure, $R_i$ in the place of $C_{54}(R_i)$ where $R_i$ = $X_i, Y_i, Z_i$ and $1 \leq i \leq 3$.  \hfill $\Box$	

\vspace{.2cm}
By analysing solutions of the above four problems, we obtain  many properties of isomorphic circulant graphs which are presented in the following lemma.

\begin{lemma} \quad \label{l3.11} {\rm The following holds on isomorphic circulant graphs. 
\begin{enumerate}
	\item [\rm (a1)]   $C_n(R) \cong_{T1_{n, x}} C_n(S)$ $\Leftrightarrow$ $C_n(S) \cong_{T1_{n, x^*}} C_n(R)$, $x,x^*,xx^* = 1\in\varphi_n$.

	\item [\rm (a2)] For $x,y\in\varphi_n$, if $C_n(R) \cong_{T1_{n, x}} C_n(S)$ and $C_n(S) \cong_{T1_{n, y}} C_n(T)$, then $C_n(R) \cong_{T1_{n, yx}} C_n(T)$.
	
	\item [\rm (a3)] In general, for $x_1,x_2,...,x_j\in\varphi_n$, if $C_n(R_1) \cong_{T1_{n, x_1}} C_n(R_2)$, $C_n(R_2) \cong_{T1_{n, x_2}} C_n(R_3)$, $\dots$, $C_n(R_{j}) \cong_{T1_{n, x_j}} C_n(R_{j+1})$, then $C_n(R_1) \cong_{T1_{n, x_jx_{j-1}...X_2x_1}} C_n(R_{j+1})$.
	
	\item [\rm (b1)]   $C_n(R) \cong_{T2_{n, m, t}} C_n(S)$ $\Leftrightarrow$ $C_n(S) \cong_{T2_{n, m, \frac{n}{m}-t}} C_n(R)$ where $r\in R,S$, $m > 1$ divides $\gcd(n, r)$, $m^3$ divides $n$ and $0 \leq t,\frac{n}{m}-t \leq \frac{n}{m}-1$.

	\item [\rm (b2)] For $0 \leq t_1,t_2 \leq \frac{n}{m}-1$, if  $C_n(R) \cong_{T2_{n, m, t_1}} C_n(S)$ and $C_n(S) \cong_{T2_{n, m, t_2}} C_n(T)$ , then $C_n(R)$ $\cong_{T2_{n, m, t_2+t_1}}$ $C_n(T)$ where $t_1+t_2$ = $t_2+t_1$ is calculated under addition modulo $\frac{n}{m}$, $m > 1$ divides $\gcd(n, r)$, $m^3$ divides $n$ and $r\in R,S,T$.
	
	\item [\rm (b3)] In general, for $0 \leq t_1,t_2,...,t_j \leq \frac{n}{m}-1$, if  $C_n(R_1) \cong_{T2_{n, m, t_1}} C_n(R_2)$, $C_n(R_2) \cong_{T2_{n, m, t_2}} C_n(R_3), \dots, C_n(R_{j})$ $\cong_{T2_{n, m, t_j}}$ $C_n(R_{j+1})$, then $C_n(R_1)$ $\cong_{T2_{n, m, t_j+t_{j-1} + \dots +t_2+ t_1}}$ $C_n(R_{j+1})$ where $r\in R_1,R_2,...R_{j+1}$, $m > 1$ divides $\gcd(n, r)$, $m^3$ divides $n$ and $t_1$ + $t_2 + \dots + t_j$ = $t_j+t_{j-1} + \dots + t_1$ is calculated under addition modulo $\frac{n}{m}$.
\end{enumerate}  }
\end{lemma}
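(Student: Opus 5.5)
The plan is to reduce every item to the action of the two families of maps on jump sets. Type-1 maps act as $\varphi_{n,x}(R)=xR$ under reflexive modulo $n$. Type-2 maps act as $\theta_{n,m,t}$. In both cases composition corresponds to the group operation already recorded in Definition \ref{d2.2}, Definition \ref{d2.7} and Theorems \ref{t2.8}, \ref{t2.10}.

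\textbf{Type-1 parts.} For (a2), assume $C_n(S)=C_n(xR)$ and $C_n(T)=C_n(yS)$. Then $T=y(xR)=(yx)R$ under reflexive modulo $n$. This holds because multiplication by a unit commutes with the reduction $r\mapsto \pm r \pmod n$: the set $R\cup(n-R)$ is closed under negation, so $y(xR\cup(n-xR))=(yx)R\cup(n-(yx)R)$. Hence $C_n(R)\cong_{T1_{n,yx}}C_n(T)$, which is exactly $\varphi_{n,y}\circ'\varphi_{n,x}=\varphi_{n,yx}$. Item (a1) then follows from (a2) with $y=x^*$, since $x^*xR=R$ gives $\varphi_{n,x^*}(C_n(S))=C_n(R)$. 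The converse is the same argument with the roles of $x$ and $x^*$ exchanged. Item (a3) follows by induction on $j$ using (a2).

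\textbf{Type-2 parts.} The key step is the composition law $\theta_{n,m,t_2}(\theta_{n,m,t_1}(v_x))=\theta_{n,m,t_1+t_2}(v_x)$. To verify it, write $x=qm+j$ with $0\le j\le m-1$. Then $\theta_{n,m,t_1}(v_x)=u_{x+jmt_1}$, and $x+jmt_1=(q+jt_1)m+j$ has the same residue $j$ modulo $m$ because $m\mid n$. Applying $\theta_{n,m,t_2}$ therefore adds $jmt_2$, giving $u_{x+jm(t_1+t_2)}$. Since $jm\cdot\frac{n}{m}=jn\equiv 0 \pmod n$, the parameter $t$ matters only modulo $\frac{n}{m}$.

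Transferring this from vertices to edges and to jump sets, we get $\theta_{n,m,t_2}(C_n(S))=\theta_{n,m,t_1+t_2}(C_n(R))$ whenever $C_n(S)=\theta_{n,m,t_1}(C_n(R))$. This uses Theorem \ref{t2.6a}, which identifies $\theta_{n,m,t}(C_n(R))=C_n(S)$ with $\theta_{n,m,t}(R\cup(n-R))=S\cup(n-S)$; this is the statement of (b2). Item (b1) is (b2) with $t_2=\frac{n}{m}-t$, because $\theta_{n,m,0}$ is the identity. Item (b3) follows by induction on $j$.

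\textbf{Main obstacle.} The hardest point is a consistency issue: the graph obtained by composing must still be a Type-2 isomorphism in the sense of Definition \ref{d2.4}, or else equal $C_n(R)$ when $t_1+t_2\equiv 0$. Also, the common jump $r$ with $m\mid\gcd(n,r)$ must persist, which holds since $\theta_{n,m,t}$ fixes multiples of $m$ (Remark \ref{r2.6}). For this I would invoke Theorem \ref{t2.12}: it gives $T2_{n,m}(C_n(R))=T2_{n,m}(C_n(S))$, so both $C_n(R)$ and $C_n(T)$ lie in one Type-2 group. I would then read the statement, in the spirit of Notation \ref{2.13}(iii), as $\theta_{n,m,t_1+t_2}(C_n(R))=C_n(T)$.
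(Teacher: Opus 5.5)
Your proposal is correct and follows essentially the paper's route. The Type-1 items come from composing the multipliers $\varphi_{n,x}$; the Type-2 items come from the rule $\theta_{n,m,t_2}\circ\theta_{n,m,t_1}=\theta_{n,m,t_1+t_2}$, with Theorem \ref{t2.12} used to keep $C_n(T)$ inside $T2_{n,m}(C_n(R))$; and (a3), (b3) follow by induction.

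Where you differ, you are more careful than the paper. You check the composition rule directly from $v_x\mapsto u_{x+jmt}$ (the residue $j$ survives because $m\mid n$), rather than citing the group operation of Definition \ref{d2.7}. You also flag the degenerate case $C_n(T)=C_n(R)$, which the paper's proof of (b2) passes over by simply asserting $R\neq T$. One small correction: this case can arise even when $t_1+t_2\not\equiv 0 \pmod{\frac{n}{m}}$, e.g.\ $t_1=t_2=2$ for $C_{16}(1,4,6,7)$. Your appeal to Notation \ref{2.13}(iii) already covers it.
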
   
\begin{proof} \quad 
\item [\rm (a1)]  $C_n(R) \cong_{T1_{n, x}} C_n(S)$, $x\in\varphi_n$. 
	
	$\Leftrightarrow$  $T1_{n, x}(C_n(R))$ = $C_n(S)$, $x\in\varphi_n$. $\Leftrightarrow$  $C_n(xR))$ = $C_n(S)$, $x\in\varphi_n$. 
	
	$\Leftrightarrow$  $T1_{n, x^*}(C_n(xR))$ = $T1_{n, x^*}(C_n(S))$, $x,x^*,xx^* = 1\in\varphi_n$. 
	
	$\Leftrightarrow$  $C_n(x^*xR))$ = $T1_{n, x^*}(C_n(S))$, $x,x^*,xx^* = 1\in\varphi_n$. 
	
	$\Leftrightarrow$  $C_n(R))$ = $T1_{n, x^*}(C_n(S))$, $x,x^*,xx^* = 1\in\varphi_n$. 
	
	$\Leftrightarrow$  $C_n(S))$ $\cong_{T1_{n, x^*}} C_n(R)$, $x,x^*,xx^* = 1\in\varphi_n$.
	
\item [\rm (a2)]  Let $C_n(R) \cong_{T1_{n, x}} C_n(S)$ and $C_n(S) \cong_{T1_{n, y}} C_n(T)$, $x,y\in\varphi_n$.
	
	$\Rightarrow$  $T1_{n, x}(C_n(R))$ = $C_n(S)$ and $T1_{n, y}(C_n(S))$ = $C_n(T)$, $x,y\in\varphi_n$. 
	
	$\Rightarrow$  $C_n(xR)$ = $C_n(S)$ and $T1_{n, y}(C_n(S))$ = $C_n(T)$, $x,y\in\varphi_n$. 
	
	$\Rightarrow$  $T1_{n, y}(C_n(xR))$ = $C_n(T)$, $x,y\in\varphi_n$. 
	
	$\Rightarrow$  $C_n(y(xR))$ = $C_n(T)$, $x,y\in\varphi_n$. 
	
	$\Rightarrow$  $C_n((yx)R)$ = $C_n(T)$, $x,y\in\varphi_n$. 
	
	$\Rightarrow$  $T1_{n, yx}(C_n(R))$ = $C_n(T)$, $x,y\in\varphi_n$. 
	
	$\Rightarrow$  $C_n(R) \cong_{T1_{n, yx}} C_n(T)$, $x,y,yx\in\varphi_n$.
	
\item [\rm (a3)] It is easy to prove the result by the principle of mathematical induction on $j$ and using $(a2)$.
	
\item [\rm (b1)] For $0 \leq t,\frac{n}{m}-t \leq \frac{n}{m}-1$,  $m > 1$ divides $\gcd(n, r)$, $m^3$ divides $n$, $S \neq R$ and $r\in R,S$,
	
	~ ~ $C_n(R) \cong_{T2_{n, m, t}} C_n(S)$ 	$\Leftrightarrow$ $\theta_{n, m, t}(C_n(R))$ = $C_n(S)$ and $C_n(S)\in T2_{n,m}(C_n(R))$.
	
\hfill	$\Leftrightarrow$ $\theta_{n, m, \frac{n}{m}-t}(\theta_{n, m, t}(C_n(R)))$ = $\theta_{n, m, \frac{n}{m}-t}(C_n(S))$ and $\theta_{n, m, \frac{n}{m}-t}(C_n(S))\in T2_{n,m}(C_n(R))$ 

\hfill since $\theta_{n, m, t}(C_n(S))\in T2_{n,m}(C_n(R))$ $\Leftrightarrow$ $\theta_{n, m, \frac{n}{m}-t}(C_n(S))\in T2_{n,m}(C_n(R))$ = $T2_{n,m}(C_n(S))$ 

\hfill where $S \neq R$, $m > 1$ divides $\gcd(n, r)$, $m^3$ divides $n$ and $r\in R,S$.

	$\Leftrightarrow$ $\theta_{n, m, (\frac{n}{m}-t)+t}(C_n(R))$ = $\theta_{n, m, \frac{n}{m}-t}(C_n(S))$ and $\theta_{n, m, t}(C_n(S)),\theta_{n, m, \frac{n}{m}-t}(C_n(S))\in T2_{n,m}(C_n(R))$ 

\hfill where $S \neq R$, $m > 1$ divides $\gcd(n, r)$, $m^3$ divides $n$ and $r\in R,S$.

	$\Leftrightarrow$ $C_n(R)$ = $\theta_{n, m, \frac{n}{m}-t}(C_n(S))$ and $\theta_{n, m, \frac{n}{m}-t}(C_n(S))\in T2_{n,m}(C_n(R))$ 

\hfill where $S \neq R$, $m > 1$ divides $\gcd(n, r)$, $m^3$ divides $n$ and $r\in R,S$.

	$\Leftrightarrow$ $C_n(S) \cong_{T2_{n, m, \frac{n}{m}-t}} C_n(R)$  where $m > 1$ divides $\gcd(n, r)$, $m^3$ divides $n$, 

\hfill $r\in R,S$ and  $0 \leq t,\frac{n}{m}-t \leq \frac{n}{m}-1$.
	 
\item [\rm (b2)] Let $0 \leq t_1,t_2 \leq \frac{n}{m}-1$, 
$C_n(R) \cong_{T2_{n, m, t_1}} C_n(S)$ and $C_n(S) \cong_{T2_{n, m, t_2}} C_n(T)$. 

$\Rightarrow$ $\theta_{n, m, t_1}(C_n(R))$ = $C_n(S)$ and $C_n(S)\in T2_{n, m}(C_n(R))$ = $T2_{n, m}(C_n(S))$, and 

\hspace{.5cm} $\theta_{n, m, t_2}(C_n(S))$ = $C_n(T)$ and $C_n(T)\in T2_{n, m}(C_n(S))$ = $T2_{n, m}(C_n(T))$

\hfill  where $S \neq R$, $T \neq S$, $m > 1$ divides $\gcd(n, r)$, $m^3$ divides $n$, $r\in R,S,T$. 

$\Rightarrow$ $\theta_{n, m, t_2}(\theta_{n, m, t_1}(C_n(R)))$ = $C_n(T)$ and $C_n(T)\in T2_{n, m}(C_n(R))$ = $T2_{n, m}(C_n(S))$ = $T2_{n, m}(C_n(T))$ 

\hfill where $S \neq R$, $T \neq S$, $R \neq T$, $m > 1$ divides $\gcd(n, r)$, $m^3$ divides $n$, $r\in R,S,T$. 

$\Rightarrow$ $\theta_{n, m, t_2+t_1}(C_n(R))$ = $C_n(T)$ and $C_n(T)\in T2_{n, m}(C_n(R))$ = $T2_{n, m}(C_n(S))$ = $T2_{n, m}(C_n(T))$ since  

\hfill $(T2_{n, m}(C_n(R)), \circ)$ is an Abelian group, $0 \leq t_1,t_2 \leq \frac{n}{m}-1$, $t_1+t_2$ = $t_2+t_1$ is calculated under 

\hfill addition modulo $\frac{n}{m}$, $T \neq R$, $m > 1$ divides $\gcd(n, r)$, $m^3$ divides $n$ and $r\in R,S,T$.

$\Rightarrow$ $C_n(R) \cong_{T2_{n, m, t_2+t_1}} C_n(T)$, $0 \leq t_1,t_2 \leq \frac{n}{m}-1$, $t_1+t_2$ is calculated under addition modulo $\frac{n}{m}$, 

\hfill $m > 1$ divides $\gcd(n, r)$, $m^3$ divides $n$ and $r\in R,S,T$.

\item [\rm (b3)] It is easy to prove the result by the principle of mathematical induction on $j$ and using $(b2)$.
\end{proof} 
 
Thus in this section, for a given circulant graph $C_n(R)$, we obtained isomorphism series involving Type-1 as well as Type-2 w.r.t. $m$, if exists, and also obtained isomorphic circulant graph(s) $C_n(S)$ which are neither of Type-1 nor of Type-2 w.r.t. a particular $m$ to $C_n(R)$. In \cite{v2-8}, the author obtained two families of circulant graphs, (i) $C_{432}(R)$ which has isomorphic circulant graphs of Type-2 w.r.t. $m$ = 2 as well as $m$ = 3, and (ii) $C_{6750}(S)$ which has isomorphic circulant graphs of Type-2 w.r.t. $m$ = 3 as well as $m$ = 5. In section 5, we discuss, in details, on isomorphism series of $C_{432}(R)$ which has isomorphic circulant graphs of Type-2 w.r.t. $m$ = 2 as well as $m$ = 3.  

\section{Isomorphism diagram or digraph and isomorphism graph of circulant graphs}

Definitions \ref{d3.1} and \ref{d3.2} present definitions of the isomorphic set and isomorphism series of a circulant graph. In this section, for a given  circulant graph, we define {\em isomorphism diagram} or {\em isomorphism digraph} and {\em isomorphism graph} which cover all isomorphic circulant graphs and their isomorphic relationships diagrammatrically and helps to identify easily whether given two circulant graphs are isomorphic or not? And also helps to identify the type of isomorphism that exists among the two, if the two circulant graphs are isomorphic. In these graphs, we have if $C_n(S)$ $\cong_{T1_{n,x}}$ $C_n(T)$, then $C_n(T)$ $\cong_{T1_{n,x^*}}$ $C_n(S)$ using $(a1)$ of Lemma \ref{l3.11} and if $C_n(S)$ $\cong_{T2_{n,m,t}}$ $C_n(T)$, then $C_n(T)$ $\cong_{T2_{n,m,\frac{n}{m}-t}}$ $C_n(S)$ using $(b1)$ of Lemma \ref{l3.11} where $x,x^*\in\varphi_n$, $xx^*$ = 1, $m > 1$ is a divisor of $\gcd(n, r)$, $m^3$ divides $n$, $r\in S,T$ and $0 \leq t \leq \frac{n}{m}-1$.  

\begin{definition} \quad \label{d4.1} Let $\mathcal{C}_{n}$ = $\{C_n(R_i): i = 1,2,\ldots,k ~\text{and}~k \geq 2\}$ be a collection of circulant graphs. Define a simple directed graph $\mathcal{D}$ such that $V(\mathcal{D})$ = $\mathcal{C}_{n}$ and $(C_n(R_i), C_n(R_j))\in E(\mathcal{D})$ if either $C_n(R_i)$ $\cong_{T1_n}$ $C_n(R_j)$, in this case, $C_n(R_i)$ is connected to $C_n(R_j)$ by a directed line, or $C_n(R_i)$ $\cong_{T2_{n, m}}$ $C_n(R_j)$, in this case, $C_n(R_i)$ is connected to $C_n(R_j)$ by a dashed directed line, where $m > 1$ divides $\gcd(n, r)$, $m^3$ divides $n$,  $r\in R_i,R_j$ and $1 \leq i,j \leq k$. In this case, we call the simple digraph $\mathcal{D}$ as {\em an isomorphism digraph} or {\em an isomorphism diagram} of $\mathcal{C}_{n}$ and we denote it by $\mathcal{I}so\mathcal{D}_{n}(\mathcal{C}_n)$. The  underlying simple graph of $\mathcal{D}$ is called the {\em isomorphism graph} of $\mathcal{C}_{n}$ and is denoted by $\mathcal{G}$ = $\mathcal{I}so_{n}(\mathcal{C}_n)$.
\end{definition}
	
\begin{definition} \quad \label{d4.2} When $\mathcal{C}_{n}$ is a collection of isomorphic circulant graphs, each of order $n$, then the corresponding isomorphism digraph/diagram is denoted by $\mathcal{ID}_{n}(\mathcal{C}_n)$.  And $\mathcal{I}\mathcal{D}_{n: m_1,m_2,\dots,m_c}(\mathcal{C}_n)$ denotes  isomorphism diagram $\mathcal{I}\mathcal{D}_{n}(\mathcal{C}_n)$ when $\mathcal{C}_{n}$ contains isomorphic circulant graphs of Type-2 w.r.t. $m$ $\ni$ $m$ = $m_1,m_2,\dots,m_c$. Moreover, when $m$ = $m_1,m_2,\dots,m_c$ are the different values taken by $m$ in the isomorphism series, then we assign color $x$ to directed line $(C_n(R_i), C_n(R_j))$ whenever $C_n(R_i)$ $\cong_{T2_{n, m_x}}$ $C_n(R_j)$, $1 \leq x \leq c$ and $1 \leq i,j \leq k$. When $\mathcal{C}_{n}$ is a collection of isomorphic circulant graphs, each of order $n$, then the corresponding isomorphism graph is denoted by $\mathcal{I}_{n}(\mathcal{C}_n)$.
\end{definition}

\begin{obv} \quad \label{4.3ob} Let $\mathcal{C}_{n}$ be a collection of isomorphic circulant graphs, each of order $n$; contain isomorphic circulant graphs of Type-2 w.r.t. $m$ $\ni$ $m$ = $m_1,m_2,\dots,m_c$; and $\mathcal{I}\mathcal{D}_{n: m_1,m_2,\dots,m_c}(\mathcal{C}_n)$ be the isomorphism diagram $\mathcal{I}\mathcal{D}_{n}(\mathcal{C}_n)$. Then, clearly, $\mathcal{I}\mathcal{D}_{n: m_1,m_2,\dots,m_c}(\mathcal{C}_n)$ = $\mathcal{I}\mathcal{D}_{n: m_1}(\mathcal{C}_n)$ $\cup$ $\mathcal{I}\mathcal{D}_{n: m_2}(\mathcal{C}_n)$ $\cup$ $\dots$ $\mathcal{I}\mathcal{D}_{n: m_c}(\mathcal{C}_n)$, $c\in\mathbb{N}$. In the construction of $\mathcal{I}\mathcal{D}_{n: m_1,m_2,\dots,m_c}(\mathcal{C}_n)$, the above relationship is used, especially when $n$ is large and $c > 1$, $c\in\mathbb{N}$.  
\end{obv}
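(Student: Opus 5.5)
The plan is to argue directly from Definitions \ref{d4.1} and \ref{d4.2} by comparing vertex sets and then arc sets of the two sides. First I fix how $\mathcal{I}\mathcal{D}_{n: m_x}(\mathcal{C}_n)$ is read, for each $x$ with $1 \leq x \leq c$. It is the isomorphism diagram on the same vertex set $\mathcal{C}_n$. Its solid arcs are all the Type-1 arcs $(C_n(R_i), C_n(R_j))$ with $C_n(R_i) \cong_{T1_n} C_n(R_j)$. Its dashed arcs are only those with $C_n(R_i) \cong_{T2_{n,m_x}} C_n(R_j)$, and these carry colour $x$. The union of digraphs on a common vertex set is then the digraph on that vertex set whose arc set is the union of the arc sets, with each arc keeping its type and colour.

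The first step is the vertex sets. Every $\mathcal{I}\mathcal{D}_{n: m_x}(\mathcal{C}_n)$ has vertex set $\mathcal{C}_n$, and so does $\mathcal{I}\mathcal{D}_{n: m_1,\dots,m_c}(\mathcal{C}_n)$. Hence the union of the right-hand side also has vertex set $\mathcal{C}_n$.

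The second step is the inclusion of the left-hand arc set in the right-hand one. Take an arc $(C_n(R_i), C_n(R_j))$ of $\mathcal{I}\mathcal{D}_{n: m_1,\dots,m_c}(\mathcal{C}_n)$. By Definition \ref{d4.1} it is either a Type-1 arc or a Type-2 arc with respect to some admissible $m$. In the Type-1 case it lies in every $\mathcal{I}\mathcal{D}_{n: m_x}(\mathcal{C}_n)$. In the Type-2 case, $m$ must be one of $m_1,\dots,m_c$, since by Definition \ref{d4.2} these are all the values of $m$ that occur in $\mathcal{C}_n$. So $m = m_x$ for some $x$, and the arc, with colour $x$, lies in $\mathcal{I}\mathcal{D}_{n: m_x}(\mathcal{C}_n)$.

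The third step is the reverse inclusion. Every arc of some $\mathcal{I}\mathcal{D}_{n: m_x}(\mathcal{C}_n)$ is a Type-1 arc or a Type-2 arc with respect to $m_x$. Each such relation is among those used in Definition \ref{d4.1} to build the full diagram, so the arc is present there with the same line style and colour. The two inclusions together give the equality of arc sets, and with the first step, equality of the digraphs.

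The only point needing care, and the one I expect to be the main obstacle, is that the digraph is required to be simple. I must check that taking the union does not create conflicting parallel arcs. A pair cannot be joined both by a solid and by a dashed arc. Indeed, by Definition \ref{d2.4}, Type-2 isomorphism requires $S \neq yR$ for all $y \in \varphi_n$, which excludes Type-1 isomorphism. Hence Type-1 arcs and Type-2 arcs are disjoint. A Type-1 arc repeated across several $\mathcal{I}\mathcal{D}_{n: m_x}(\mathcal{C}_n)$ is the same arc, so in the union it is counted once. It remains possible that one ordered pair is Type-2 isomorphic with respect to two different values $m_x$ and $m_y$. In that case I would note that the same pair then appears as a dashed arc in both components. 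I would treat the union as carrying that single arc with both colours, which is exactly how the colouring convention of Definition \ref{d4.2} records it in the full diagram. This keeps the equality valid at the level of coloured simple digraphs.
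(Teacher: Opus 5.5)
Your proposal is correct and follows the paper's approach: the paper gives no argument beyond ``clearly'', meaning the equality is a direct unwinding of Definitions~\ref{d4.1} and~\ref{d4.2}, which is what you carry out in detail (common vertex set $\mathcal{C}_n$, arc sets compared in both directions, and Type-1 and Type-2 arcs disjoint by Definition~\ref{d2.4}). Your explicit readings match the paper's own usage in Section~5, where $\mathcal{D}_7$ and $\mathcal{D}_8$ are $\mathcal{D}$ with the Type-2 arcs for one value of $m$ removed: $\mathcal{I}\mathcal{D}_{n: m_x}(\mathcal{C}_n)$ as all Type-1 arcs plus only the Type-2 arcs w.r.t.\ $m_x$, and $m_1,\dots,m_c$ as all the values of $m$ that occur.
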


\begin{definition} \quad \label{d4.4} Given two isomorphic circulant graphs $C_n(R)$ and $C_n(S)$, let $Iso(C_n(R), C_n(S))$ = $\{C_n(T): C_n(T) \cong C_n(R)\}$. The corresponding isomorphism digraph of $Iso(C_n(R), C_n(S))$ is denoted by $\mathcal{ID}_{n}(C_n(R), C_n(S))$ and the isomorphism graph of $Iso(C_n(R), C_n(S))$ by $\mathcal{I}_{n}(C_n(R), C_n(S))$.
	
Clearly, the isomorphism digraph  $\mathcal{ID}_{n}(C_n(R), C_n(S))$ = $\mathcal{ID}_{n}(C_n(T))$ for each $C_n(T)$ where $C_n(T))$ $\cong$ $C_n(R)$. In general, for $\mathcal{C}_{n}$ = $\{C_n(R_i): C_n(R_i) \cong C_n(R_j),  1 \leq i,j \leq k\}$, the isomorphism digraph $\mathcal{ID}_{n}(\mathcal{C}_{n})$ = $\mathcal{ID}_{n}(C_n(R_i))$ and the isomorphism graph $\mathcal{I}_{n}(\mathcal{C}_{n})$ = $\mathcal{I}_{n}(C_n(R_i))$, $1 \leq i \leq k$. And so we focus on $\mathcal{ID}_{n}(C_n(R))$ and $\mathcal{I}_{n}(C_n(R))$ while studying the isomorphism digraph or the isomorphism graph of isomorphic circulant graphs of order $n$.
\end{definition}

Throughout this study, we consider isomorphism digraphs and isomorphism graphs corresponding to family of circulant graphs which contains circulant graphs of Type-2 isomorphism even though one is free to consider the other case of family of circulant graphs.  

\begin{rem} \quad \label{r4.5} Given a circulant graph $C_n(R)$, it is possible to construct its isomorphism digraph and isomorphism graph by finding circulant graphs which are isomorphic to it as well as their isomorphism series when $n$ is small. While constructing the isomorphism digraphs and the isomorphism graphs, we use Lemma \ref{l3.11} and consider $C_n(R_1)$ $\cong_{T1_{n, x_j x_{j-1} \dots x_1}}$ $C_n(R_{j+1})$ or/and $C_n(R_1)$ $\cong_{T2_{n, m, t_j t_{j-1} \dots t_1}}$ $C_n(R_{j+1})$ whenever elements of $Iso(C_n(R))$ have isomorphism series $C_n(R_1)$ $\cong_{T1_{n, x_1}}$ $C_n(R_2) \cong_{T1_{n, x_2}} C_n(R_3) \cong_{T1_{n, x_3}}$ $\dots$ $\cong_{T1_{n, x_j}}$ $C_n(R_{j+1})$ or/and $C_n(R_1)$ $\cong_{T2_{n, m, t_1}}$ $C_n(R_2)$ $\cong_{T2_{n, m, t_2}}$ $C_n(R_3) \cong_{T2_{n, m, t_3}}$ $\dots$ $\cong_{T2_{n, m, t_j}}$ $C_n(R_{j+1})$, respectively, $j\in\mathbb{N}$ and $x_1,x_2,\ldots,x_j\in\varphi_n$.  	
\end{rem}

In \cite{v2-2}, we found that Type-2 isomorphic circulant graphs exist only w.r.t. $m$ = 2 among different isomorphic $C_{16}(R)$ and only w.r.t. $m$ = 3 among different isomorphic $C_{27}(R)$. And using remark \ref{r4.5}, we find the isomorphism digraphs and the isomorphism graphs corresponding to different $C_{16}(R)$ and $C_{27}(R)$ graphs and their isomorphism series and also corresponding to $C_{54}(1,3,17,19)$ in the following problems.

\begin{prm} \quad \label{p4.6} {\rm  Find the isomorphism digraphs, the isomorphism graphs and different isomorphism series corresponding to different isomorphic circulant graphs which are having Type-2 isomorphism and of order 16 each.  }
\end{prm}

\noindent
{\bf Solution.}\quad In problem 3.1 in \cite{v2-2}, it is shown that the following 8 pairs of circulant graphs, each of order 16, are the only isomorphic circulant graphs of Type-2 w.r.t. $m$ = 2.

\begin{enumerate}
	\item [\rm (1)]	$C_{16}(1,2,7)$ $\cong_{T2_{16,2,2}}$ $C_{16}(2,3,5)$;
	
	\item [\rm (2)] $C_{16}(1,6,7)$ $\cong_{T2_{16,2,2}}$ $C_{16}(3,5,6)$; 
	
	\item [\rm (3)]	$C_{16}(1,2,4,7)$ $\cong_{T2_{16,2,2}}$ $C_{16}(2,3,4,5)$;
	
	\item [\rm (4)]	$C_{16}(1,2,7,8)$ $\cong_{T2_{16,2,2}}$ $C_{16}(2,3,5,8)$;
	
	\item [\rm (5)]	$C_{16}(1,4,6,7)$ $\cong_{T2_{16,2,2}}$ $C_{16}(3,4,5,6)$;
	
	\item [\rm (6)]	$C_{16}(1,6,7,8)$ $\cong_{T2_{16,2,2}}$ $C_{16}(3,5,6,8)$;
	
	\item [\rm (7)]	$C_{16}(1,2,4,7,8)$ $\cong_{T2_{16,2,2}}$ $C_{16}(2,3,4,5,8)$;
	
	\item [\rm (8)]	$C_{16}(1,4,6,7,8)$ $\cong_{T2_{16,2,2}}$ $C_{16}(3,4,5,6,8)$.  
\end{enumerate}
Let $\mathcal{D}$ be the isomorphism digraph $\mathcal{ID}_{16, 2}(C_{16}(R))$  and $\mathcal{G}$ be the isomorphism graph $\mathcal{I}_{16, 2}(C_{16}(R))$ for different $R$. Our aim is to find $\mathcal{D}$ and $\mathcal{G}$ for different $R$ where $V(\mathcal{D})$ = $Iso(C_{16}(R))$ = $\{C_{16}(S):$ $C_{16}(S)$ $\cong$ $C_{16}(R)\}$ = $V(\mathcal{G})$.  

In each case, we find the isomorphism digraph $\mathcal{ID}_{16, 2}(C_{16}(R))$ at first and then from this digraph, we obtain the isomorphism graph $\mathcal{I}_{16, 2}(C_{16}(R))$ as well as different isomorphism series. 

\begin{enumerate}
\item [\rm (1)] Here, $C_{16}(R)$ = $C_{16}(1,2,7)$,  

$T2_{16,2}(C_{16}(1,2,7))$ = $\{C_{16}(1,2,7), C_{16}(2,3,5) = T2_{16,2,2}(C_{16}(1,2,7))\}$ = $T2_{16,2}(C_{16}(2,3,5))$,

$T1_{16}(C_{16}(1,2,7))$ = $\{C_{16}(1,2,7), C_{16}(3,5,6) = T1_{16,3}(C_{16}(1,2,7))\}$ = $T1_{16,3}(C_{16}(3,5,6))$,

$T2_{16,2}(C_{16}(3,5,6))$ = $\{C_{16}(3,5,6), C_{16}(1,6,7) = T2_{16,2,2}(C_{16}(3,5,6))\}$ = $T2_{16,2}(C_{16}(1,6,7))$,

$T1_{16}(C_{16}(2,3,5))$ = $\{C_{16}(2,3,5), C_{16}(1,6,7) = T1_{16,3}(C_{16}(2,3,5))\}$ = $T1_{16}(C_{16}(2,3,5))$ and
\\
$V(\mathcal{D})$ = $Iso(C_{16}(R))$ = $\{C_{16}(1,2,7), C_{16}(2,3,5), C_{16}(1,6,7), C_{16}(3,5,6)$ = $V(\mathcal{G})$. 

Corresponding to $V(\mathcal{D})$ = $V(\mathcal{G})$, we draw the isomorphism digraph $\mathcal{ID}_{16, 2}(C_{16}(1,2,7))$ and from this digraph we obtain the isomorphism graph $\mathcal{ID}_{16, 2}(C_{16}(1,2,7))$. The isomorphism digraph $\mathcal{ID}_{16, 2}(C_{16}(1,2,7))$ and the isomorphism graph $\mathcal{I}_{16, 2}(C_{16}(1,2,7))$ are given in Figures 7 and 8, respectively. 

\item [\rm (2)] Here, $C_{16}(R)$ = $C_{16}(1,6,7)$ and $C_{16}(1,6,7)$ $\cong_{T2_{16,2,2}}$ $C_{16}(3,5,6)$. From $(1)$, we get,  

$V(\mathcal{D})$ = $V(\mathcal{G})$ = $Iso(C_{16}(1,6,7))$ = $Iso(C_{16}(3,5,6))$ = $Iso(C_{16}(2,3,5))$ = $Iso(C_{16}(1,2,7))$. 

This implies that the isomorphism digraph $\mathcal{ID}_{16, 2}(C_{16}(1,2,7))$ = $\mathcal{ID}_{16, 2}(C_{16}(1,6,7))$ = $\mathcal{ID}_{16, 2}(C_{16}(3,5,6))$ = $\mathcal{ID}_{16, 2}(C_{16}(2,3,5))$ and the isomorphism graph $\mathcal{I}_{16, 2}(C_{16}(1,2,7))$ =
\\
$\mathcal{I}_{16, 2}(C_{16}(1,6,7))$ = $\mathcal{I}_{16, 2}(C_{16}(3,5,6))$ = $\mathcal{I}_{16, 2}(C_{16}(2,3,5))$ which are given in Figures 7 and 8.

{\rm 	\begin{center}
	\begin{tikzpicture}  
		[scale=.55,auto=center,every node/.style={draw,circle}]  
	
		\node (1) at (-7,-4) {\tiny{$C_{16}(S_1)$}};
                   \node (2) at (0,-4) {\tiny{$C_{16}(R_1)$}};	
		\node (3) at (0,2)  {\tiny{$C_{16}(R_2)$}};
		\node (4) at (-7,2) {\tiny{$C_{16}(S_2)$}};

		\node [scale=.0] (12) at (-3.5,-5.5) [label=90: $>$]{};
		\node [scale=.0] (21) at (-3.5,-3.7) [label=91: $<$]{};
		\node [scale=.0] (23) at (-.95,-1.3) [label=92: $\downarrow$]{};
                   \node [scale=.0] (32) at (.95,-1.5) [label=93: $\uparrow$]{};
		\node [scale=.0] (34) at (-3.5,2.27) [label=90: $<$]{};
		\node [scale=.0] (43) at (-3.5,.52) [label=91: $>$]{};
		\node [scale=.0] (41) at (-7.95,-1.5) [label=92: $\downarrow$]{};
                   \node [scale=.0] (14) at (-6.05,-1.45) [label=93: $\uparrow$]{};
      			
\draw[line width=0.2mm] [blue](3)[dashed] to [out=160,in=20] (4);
\draw[line width=0.2mm] [blue](4)[dashed] to [out=340,in=200] (3);
\draw[line width=0.2mm] [blue](1)[dashed] to [out=340,in=200] (2);
\draw[line width=0.2mm] [blue](2)[dashed] to [out=160,in=20] (1);

   	\draw[line width=0.2mm] [blue](3) to [out=245,in=115] (2);
   	\draw[line width=0.2mm] [blue](2) to [out=65,in=295] (3);
    \draw[line width=0.2mm] [blue](4) to [out=245,in=115] (1);
   	\draw[line width=0.2mm] [blue](1) to [out=65,in=295] (4);
    	
 \node [scale=.0] (6) at (-3.5,2.2) [label=90:\tiny{$T2_{16,2,2}$}]{};
 \node [scale=.0] (7) at (-3.5,-0.3)[label=90:\tiny{$T2_{16,2,2}$}]{};
 \node [scale=.0] (8) at (-3.5,-3.8)[label=90:\tiny{$T2_{16,2,2}$}]{};
 \node [scale=.0] (9) at (-3.5,-6.4)[label=90:\tiny{$T2_{16,2,2}$}]{};

 \node [scale=.0] (10) at (-5.2,-1.5) [label=90:\tiny{$T1_{16,3}$}]{};
 \node [scale=.0] (11) at (-8.7,-2.5) [label=90:\tiny{$T1_{16,3}$}]{};
 \node [scale=.0] (12) at (1.75,-1.5) [label=90:\tiny{$T1_{16,3}$}]{};
\node [scale=.0] (13) at (-1.6,-2.5) [label=90:\tiny{$T1_{16,3}$}]{};

\node (3) at (12.45,2)  {\tiny{$C_{16}(R_2)$}};
\node (4) at (5.45,2) {\tiny{$C_{16}(S_2)$}};
\node (1) at (5.45,-4) {\tiny{$C_{16}(S_1)$}};
\node (2) at (12.45,-4) {\tiny{$C_{16}(R_1)$}};	

\draw[-, line width=0.2mm] [blue](1)[dashed] to (2);
\draw[-, line width=0.2mm] [blue](3)[dashed] to (4);

\draw[-, line width=0.2mm] [blue](2) to (3);
\draw[-, line width=0.2mm] [blue](1) to (4);

\node [scale=.0] (6) at (8.95,1.3) [label=90:\tiny{$T2_{16,2}$}]{};
\node [scale=.0] (8) at (8.95,-4.7) [label=90:\tiny{$T2_{16,2}$}]{};

\node [scale=.0] (10) at (6.3,-1.5) [label=90:\tiny{$T1_{16}$}]{};
\node [scale=.0] (12) at (11.7,-1.5) [label=90:\tiny{$T1_{16}$}]{};
\end{tikzpicture}

\hspace{.8cm} Figure  7. {\small Digraph $\mathcal{D}$ with   \hspace{3cm} Figure  8. Graph $\mathcal{G}$ with  \hspace{1.5cm} \\
 \hspace{.5cm} $R_1$ = $\{1,2,7\}$, $S_1$ = $\{2,3,5\}$, $R_2$ = $\{3,5,6\}$, $S_2$ = $\{1,6,7\}$.   } 
\end{center} }
	
\item [\rm (3)]	Here, $C_{16}(R)$ = $C_{16}(1,2,4,7)$,  

$T2_{16,2}(C_{16}(1,2,4,7))$ = $\{C_{16}(1,2,4,7),$  

\hfill $C_{16}(2,3,4,5)$ = $T2_{16,2,2}(C_{16}(1,2,4,7))\}$ = $T2_{16,2}(C_{16}(2,3,4,5))$,

$T1_{16}(C_{16}(1,2,4,7))$ = $\{C_{16}(1,2,4,7),$  

\hfill $C_{16}(3,4,5,6)$ = $T1_{16,3}(C_{16}(1,2,4,7))\}$ = $T1_{16,3}(C_{16}(3,4,5,6))$,

$T2_{16,2}(C_{16}(3,4,5,6))$ = $\{C_{16}(3,4,5,6),$ 

\hfill $C_{16}(1,4,6,7)$ = $T2_{16,2,2}(C_{16}(3,4,5,6))\}$ = $T2_{16,2}(C_{16}(1,4,6,7))$,

$T1_{16}(C_{16}(2,3,4,5))$ = $\{C_{16}(2,3,4,5),$  

\hfill $C_{16}(1,4,6,7)$ = $T1_{16,3}(C_{16}(2,3,4,5))\}$ = $T1_{16}(C_{16}(2,3,4,5))$, and
\\
$V(\mathcal{D})$ = $Iso(C_{16}(R))$ = $\{C_{16}(1,2,4,7), C_{16}(2,3,4,5), C_{16}(1,4,6,7), C_{16}(3,4,5,6)$ = $V(\mathcal{G})$. 

This implies that the isomorphism digraph $\mathcal{ID}_{16, 2}(C_{16}(1,2,4,7))$ = $\mathcal{ID}_{16, 2}(C_{16}(2,3,4,5))$ = $\mathcal{I}D_{16, 2}(C_{16}(1,4,6,7))$ = $\mathcal{ID}_{16, 2}(C_{16}(3,4,5,6))$ and the isomorphism graph $\mathcal{I}_{16, 2}(C_{16}(1,2,4,7))$ = $\mathcal{I}_{16, 2}(C_{16}(2,3,4,5))$ = $\mathcal{I}_{16, 2}(C_{16}(1,4,6,7))$ = $\mathcal{I}_{16, 2}(C_{16}(3,4,5,6))$ which are given in Figures 9 and 10.

Clearly, graphs in Figures 7 and 8 are same as in Figures 9 and 10, except in Figures 9 and 10, $S_i$ = $R_i \cup \{4\}$ for $i$ = 1 to 4. Moreover, $V(\mathcal{D})$ = $V(\mathcal{G})$ = $Iso(C_{16}(1,2,4,7))$ = $Iso(C_{16}(2,3,4,5))$ = $Iso(C_{16}(1,4,6,7))$ = $Iso(C_{16}(3,4,5,6))$. 

\item [\rm (4)]	Here, $C_{16}(X)$ = $C_{16}(1,2,7,8)$,  

$T2_{16,2}(C_{16}(1,2,7,8))$ = $\{C_{16}(1,2,7,8),$ 

\hfill $C_{16}(2,3,5,8)$ = $T2_{16,2,2}(C_{16}(1,2,7,8))\}$ = $T2_{16,2}(C_{16}(2,3,5,8))$,

$T1_{16}(C_{16}(1,6,7,8))$ = $\{C_{16}(1,6,7,8)$, 

\hfill $C_{16}(3,5,6,8)$ = $T1_{16,3}(C_{16}(1,6,7,8))\}$ = $T1_{16,3}(C_{16}(3,5,6,8))$,

$T2_{16,2}(C_{16}(3,5,6,8))$ = $\{C_{16}(3,5,6,8),$ 

\hfill $C_{16}(1,6,7,8)$ = $T2_{16,2,2}(C_{16}(3,5,6,8))\}$ = $T2_{16,2}(C_{16}(1,6,7,8))$,

$T1_{16}(C_{16}(2,3,5,8))$ = $\{C_{16}(2,3,5,8),$ 

\hfill $C_{16}(1,6,7,8)$ = $T1_{16,3}(C_{16}(2,3,5,8))\}$ = $T1_{16}(C_{16}(2,3,5,8))$ and
\\
$V(\mathcal{D})$ = $Iso(C_{16}(R))$ = $\{C_{16}(1,2,7,8), C_{16}(2,3,5,8), C_{16}(1,6,7,8), C_{16}(3,5,6,8)$ = $V(\mathcal{G})$.

This implies that the isomorphism digraph $\mathcal{ID}_{16, 2}(C_{16}(1,2,7,8))$ = $\mathcal{ID}_{16, 2}(C_{16}(2,3,5,8))$ = $\mathcal{ID}_{16, 2}(C_{16}(1,6,7,8))$ = $\mathcal{ID}_{16, 2}(C_{16}(3,5,6,8))$ and the isomorphism graph $\mathcal{I}_{16, 2}(C_{16}(1,2,7,8))$ = $\mathcal{I}_{16, 2}(C_{16}(2,3,5,8))$ = $\mathcal{I}_{16, 2}(C_{16}(1,6,7,8))$ = $\mathcal{I}_{16, 2}(C_{16}(3,5,6,8))$ which are given in Figures 11 and 12.
    
    Clearly, graphs in Figures 11 and 12 are same as in Figures 9 and 10, except in Figures 11 and 12, $X_i$ = $(S_i \setminus \{4\}) \cup \{8\}$ for $i$ = 1 to 4.

{\rm 	\begin{center}
	\begin{tikzpicture}  
		[scale=.55, auto=center,every node/.style={draw,circle}]  
	
		\node (1) at (-7,-4) {\tiny{$C_{16}(S_1)$}};
                   \node (2) at (0,-4) {\tiny{$C_{16}(R_1)$}};	
		\node (3) at (0,2)  {\tiny{$C_{16}(R_2)$}};
		\node (4) at (-7,2) {\tiny{$C_{16}(S_2)$}};

		\node [scale=.0] (12) at (-3.5,-5.45) [label=90: $>$]{};
		\node [scale=.0] (21) at (-3.5,-3.8) [label=91: $<$]{};
		\node [scale=.0] (23) at (-.95,-1.3) [label=92: $\downarrow$]{};
                   \node [scale=.0] (32) at (.95,-1.5) [label=93: $\uparrow$]{};
		\node [scale=.0] (34) at (-3.5,2.27) [label=90: $<$]{};
		\node [scale=.0] (43) at (-3.5,.53) [label=91: $>$]{};
		\node [scale=.0] (41) at (-7.95,-1.5) [label=92: $\downarrow$]{};
                   \node [scale=.0] (14) at (-6.05,-1.45) [label=93: $\uparrow$]{};
      			
\draw[line width=0.2mm] [blue](3)[dashed] to [out=160,in=20] (4);
\draw[line width=0.2mm] [blue](4)[dashed] to [out=340,in=200] (3);
\draw[line width=0.2mm] [blue](1)[dashed] to [out=340,in=200] (2);
\draw[line width=0.2mm] [blue](2)[dashed] to [out=160,in=20] (1);

   	\draw[line width=0.2mm] [blue](3) to [out=245,in=115] (2);
   	\draw[line width=0.2mm] [blue](2) to [out=65,in=295] (3);
    \draw[line width=0.2mm] [blue](4) to [out=245,in=115] (1);
   	\draw[line width=0.2mm] [blue](1) to [out=65,in=295] (4);
    	
 \node [scale=.0] (6) at (-3.5,2.2) [label=90:\tiny{$T2_{16,2,2}$}]{};
 \node [scale=.0] (7) at (-3.5,-0.4)[label=90:\tiny{$T2_{16,2,2}$}]{};
 \node [scale=.0] (8) at (-3.5,-3.8)[label=90:\tiny{$T2_{16,2,2}$}]{};
 \node [scale=.0] (9) at (-3.5,-6.6)[label=90:\tiny{$T2_{16,2,2}$}]{};

 \node [scale=.0] (10) at (-5.2,-1.5) [label=90:\tiny{$T1_{16,3}$}]{};
 \node [scale=.0] (11) at (-8.7,-2.5) [label=90:\tiny{$T1_{16,3}$}]{};
 \node [scale=.0] (12) at (1.75,-1.5) [label=90:\tiny{$T1_{16,3}$}]{};
\node [scale=.0] (13) at (-1.6,-2.5) [label=90:\tiny{$T1_{16,3}$}]{};

\node (2) at (12.45,-4) {\tiny{$C_{16}(R_1)$}};	
\node (1) at (5.45,-4) {\tiny{$C_{16}(S_1)$}};
\node (3) at (12.45,2)  {\tiny{$C_{16}(R_2)$}};
\node (4) at (5.45,2) {\tiny{$C_{16}(S_2)$}};

\draw[-, line width=0.2mm] [blue](1)[dashed] to (2);
\draw[-, line width=0.2mm] [blue](3)[dashed] to (4);

\draw[-, line width=0.2mm] [blue](2) to (3);
\draw[-, line width=0.2mm] [blue](1) to (4);

\node [scale=.0] (6) at (8.95,1.3) [label=90:\tiny{$T2_{16,2}$}]{};
\node [scale=.0] (8) at (8.95,-4.7) [label=90:\tiny{$T2_{16,2}$}]{};

\node [scale=.0] (10) at (6.3,-1.5) [label=90:\tiny{$T1_{16}$}]{};
\node [scale=.0] (12) at (11.7,-1.5) [label=90:\tiny{$T1_{16}$}]{};
\end{tikzpicture}

\hspace{.8cm} Figure  9. {\small Digraph $\mathcal{D}$ with   \hspace{3cm} Figure  10. Graph $\mathcal{G}$ with  \hspace{1.5cm} \\
\hspace{.5cm} $R_1$ = $\{1,2,4,7\}$, $S_1$ = $\{2,3,4,5\}$, $R_2$ = $\{3,4,5,6\}$, $S_2$ = $\{1,4,6,7\}$. } 
\end{center} }
    
\item [\rm (5)] Here, $C_{16}(S)$ = $C_{16}(1,4,6,7)$ and $C_{16}(1,4,6,7)$ $\cong_{T2_{16,2,2}}$ $C_{16}(3,4,5,6)$. This is same as case (3) since $V(\mathcal{D})$ = $V(\mathcal{G})$ = $Iso(C_{16}(1,4,6,7))$ = $Iso(C_{16}(3,4,5,6))$ =  $Iso(C_{16}(1,2,4,7))$ = $Iso(C_{16}(2,3,4,5))$. See Figures 9 and 10.
    
    \item [\rm (6)] Here, $C_{16}(X)$ = $C_{16}(1,6,7,8)$ and $C_{16}(1,6,7,8)$ $\cong_{T2_{16,2,2}}$ $C_{16}(3,5,6,8)$. This is same as case (4) since $V(\mathcal{D})$ = $V(\mathcal{G})$ = $Iso(C_{16}(1,6,7,8))$ = $Iso(C_{16}(3,5,6,8))$ =  $Iso(C_{16}(1,2,7,8))$ = $Iso(C_{16}(2,3,5,8))$. See Figures 11 and 12.

	\item [\rm (7)]	Here, $C_{16}(Y)$ = $C_{16}(1,2,4,7,8)$,  

$T2_{16,2}(C_{16}(1,2,4,7,8))$ = $\{C_{16}(1,2,4,7,8),$ 

\hfill $C_{16}(2,3,4,5,8)$ = $T2_{16,2,2}(C_{16}(1,2,4,7,8))\}$ = $T2_{16,2}(C_{16}(2,3,4,5,8))$,

$T1_{16}(C_{16}(1,2,4,7,8))$ = $\{C_{16}(1,2,4,7,8)$, 

\hfill $C_{16}(3,4,5,6,8)$ = $T1_{16,3}(C_{16}(1,2,4,7,8))\}$ = $T1_{16,3}(C_{16}(3,4,5,6,8))$,

$T2_{16,2}(C_{16}(3,4,5,6,8))$ = $\{C_{16}(3,4,5,6,8)$, 

\hfill $C_{16}(1,4,6,7,8)$ = $T2_{16,2,2}(C_{16}(3,4,5,6,8))\}$ = $T2_{16,2}(C_{16}(1,4,6,7,8))$,

$T1_{16}(C_{16}(2,3,4,5,8))$ = $\{C_{16}(2,3,4,5,8)$, 

\hfill $C_{16}(1,4,6,7,8)$ = $T1_{16,3}(C_{16}(2,3,4,5,8))\}$ = $T1_{16}(C_{16}(2,3,4,5,8))$, and
\\
$V(\mathcal{D})$ = $Iso(C_{16}(R))$ 

\hfill = $\{C_{16}(1,2,4,7,8), C_{16}(2,3,4,5,8), C_{16}(1,4,6,7,8), C_{16}(3,4,5,6,8)$ = $V(\mathcal{G})$. 

This implies that the isomorphism digraph 
	$\mathcal{ID}_{16, 2}(C_{16}(1,2,4,7,8))$ = $\mathcal{ID}_{16, 2}(C_{16}(2,3,4,5,8))$ = $\mathcal{ID}_{16, 2}(C_{16}(1,4,6,7,8))$ = $\mathcal{ID}_{16, 2}(C_{16}(3,4,5,6,8))$ and 
	\\
	the isomorphism graph $\mathcal{I}_{16, 2}(C_{16}(1,2,4,7,8))$ = $\mathcal{I}_{16, 2}(C_{16}(2,3,4,5,8))$ = $\mathcal{I}_{16, 2}(C_{16}(1,4,6,7,8))$ = $\mathcal{I}_{16, 2}(C_{16}(3,4,5,6,8))$ which are given in Figures 13 and 14.

{\rm 	\begin{center}
	\begin{tikzpicture}  
		[scale=.6,auto=center,every node/.style={draw,circle}]  
	
		\node (1) at (-7,-4) {\tiny{$C_{16}(Y_1)$}};
                   \node (2) at (0,-4) {\tiny{$C_{16}(X_1)$}};	
		\node (3) at (0,2)  {\tiny{$C_{16}(X_2)$}};
		\node (4) at (-7,2) {\tiny{$C_{16}(Y_2)$}};

		\node [scale=.0] (12) at (-3.5,-5.45) [label=90: $>$]{};
		\node [scale=.0] (21) at (-3.5,-3.7) [label=91: $<$]{};
		\node [scale=.0] (23) at (-.95,-1.3) [label=92: $\downarrow$]{};
                   \node [scale=.0] (32) at (.95,-1.5) [label=93: $\uparrow$]{};
		\node [scale=.0] (34) at (-3.5,2.27) [label=90: $<$]{};
		\node [scale=.0] (43) at (-3.5,.57) [label=91: $>$]{};
		\node [scale=.0] (41) at (-7.95,-1.5) [label=92: $\downarrow$]{};
                   \node [scale=.0] (14) at (-6.05,-1.45) [label=93: $\uparrow$]{};
      			
\draw[line width=0.2mm] [blue](3)[dashed] to [out=160,in=20] (4);
\draw[line width=0.2mm] [blue](4)[dashed] to [out=340,in=200] (3);
\draw[line width=0.2mm] [blue](1)[dashed] to [out=340,in=200] (2);
\draw[line width=0.2mm] [blue](2)[dashed] to [out=160,in=20] (1);

   	\draw[line width=0.2mm] [blue](3) to [out=245,in=115] (2);
   	\draw[line width=0.2mm] [blue](2) to [out=65,in=295] (3);
    \draw[line width=0.2mm] [blue](4) to [out=245,in=115] (1);
   	\draw[line width=0.2mm] [blue](1) to [out=65,in=295] (4);
    	
 \node [scale=.0] (6) at (-3.5,2.2) [label=90:\tiny{$T2_{16,2,2}$}]{};
 \node [scale=.0] (7) at (-3.5,-0.3)[label=90:\tiny{$T2_{16,2,2}$}]{};
 \node [scale=.0] (8) at (-3.5,-3.8)[label=90:\tiny{$T2_{16,2,2}$}]{};
 \node [scale=.0] (9) at (-3.5,-6.4)[label=90:\tiny{$T2_{16,2,2}$}]{};

 \node [scale=.0] (10) at (-5.2,-1.5) [label=90:\tiny{$T1_{16,3}$}]{};
 \node [scale=.0] (11) at (-8.7,-2.5) [label=90:\tiny{$T1_{16,3}$}]{};
 \node [scale=.0] (12) at (1.75,-1.5) [label=90:\tiny{$T1_{16,3}$}]{};
\node [scale=.0] (13) at (-1.6,-2.5) [label=90:\tiny{$T1_{16,3}$}]{};

\node (3) at (12.45,2)  {\tiny{$C_{16}(X_2)$}};
\node (4) at (5.45,2) {\tiny{$C_{16}(Y_2)$}};
\node (1) at (5.45,-4) {\tiny{$C_{16}(Y_1)$}};
\node (2) at (12.45,-4) {\tiny{$C_{16}(X_1)$}};	

\draw[-, line width=0.2mm] [blue](1)[dashed] to (2);
\draw[-, line width=0.2mm] [blue](3)[dashed] to (4);

\draw[-, line width=0.2mm] [blue](2) to (3);
\draw[-, line width=0.2mm] [blue](1) to (4);

\node [scale=.0] (6) at (8.95,1.3) [label=90:\tiny{$T2_{16,2}$}]{};
\node [scale=.0] (8) at (8.95,-4.7) [label=90:\tiny{$T2_{16,2}$}]{};

\node [scale=.0] (10) at (6.3,-1.5) [label=90:\tiny{$T1_{16}$}]{};
\node [scale=.0] (12) at (11.7,-1.5) [label=90:\tiny{$T1_{16}$}]{};
\end{tikzpicture}
			
\hspace{.8cm} Figure  11. {\small Digraph $\mathcal{D}$ with   \hspace{3cm} Figure  12. Graph $\mathcal{G}$ with  \hspace{1.5cm} \\
\hspace{.5cm} $X_1$ = $\{1,2,7,8\}$, $Y_1$ = $\{2,3,5,8\}$, $X_2$ = $\{3,5,6,8\}$, $Y_2$ = $\{1,6,7,8\}$. } 
\end{center} }
	
{\rm 	\begin{center}
	\begin{tikzpicture}  
		[scale=.6,auto=center,every node/.style={draw,circle}]  
	
		\node (1) at (-7,-4) {\tiny{$C_{16}(Y_1)$}};
    \node (2) at (0,-4) {\tiny{$C_{16}(X_1)$}};	
		\node (3) at (0,2)  {\tiny{$C_{16}(X_2)$}};
		\node (4) at (-7,2) {\tiny{$C_{16}(Y_2)$}};

		\node [scale=.0] (12) at (-3.5,-5.45) [label=90: $>$]{};
		\node [scale=.0] (21) at (-3.5,-3.7) [label=91: $<$]{};
		\node [scale=.0] (23) at (-.95,-1.3) [label=92: $\downarrow$]{};
                   \node [scale=.0] (32) at (.95,-1.5) [label=93: $\uparrow$]{};
		\node [scale=.0] (34) at (-3.5,2.27) [label=90: $<$]{};
		\node [scale=.0] (43) at (-3.5,.57) [label=91: $>$]{};
		\node [scale=.0] (41) at (-7.95,-1.5) [label=92: $\downarrow$]{};
                   \node [scale=.0] (14) at (-6.05,-1.45) [label=93: $\uparrow$]{};
      			
\draw[line width=0.2mm] [blue](3)[dashed] to [out=160,in=20] (4);
\draw[line width=0.2mm] [blue](4)[dashed] to [out=340,in=200] (3);
\draw[line width=0.2mm] [blue](1)[dashed] to [out=340,in=200] (2);
\draw[line width=0.2mm] [blue](2)[dashed] to [out=160,in=20] (1);

   	\draw[line width=0.2mm] [blue](3) to [out=245,in=115] (2);
   	\draw[line width=0.2mm] [blue](2) to [out=65,in=295] (3);
    \draw[line width=0.2mm] [blue](4) to [out=245,in=115] (1);
   	\draw[line width=0.2mm] [blue](1) to [out=65,in=295] (4);
    	
 \node [scale=.0] (6) at (-3.5,2.2) [label=90:\tiny{$T2_{16,2,2}$}]{};
 \node [scale=.0] (7) at (-3.5,-0.3)[label=90:\tiny{$T2_{16,2,2}$}]{};
 \node [scale=.0] (8) at (-3.5,-3.8)[label=90:\tiny{$T2_{16,2,2}$}]{};
 \node [scale=.0] (9) at (-3.5,-6.4)[label=90:\tiny{$T2_{16,2,2}$}]{};

 \node [scale=.0] (10) at (-5.2,-1.5) [label=90:\tiny{$T1_{16,3}$}]{};
 \node [scale=.0] (11) at (-8.7,-2.5) [label=90:\tiny{$T1_{16,3}$}]{};
 \node [scale=.0] (12) at (1.75,-1.5) [label=90:\tiny{$T1_{16,3}$}]{};
\node [scale=.0] (13) at (-1.6,-2.5) [label=90:\tiny{$T1_{16,3}$}]{};

\node (3) at (12.45,2)  {\tiny{$C_{16}(X_2)$}};
\node (4) at (5.45,2) {\tiny{$C_{16}(Y_2)$}};
\node (1) at (5.45,-4) {\tiny{$C_{16}(Y_1)$}};
\node (2) at (12.45,-4) {\tiny{$C_{16}(X_1)$}};	

\draw[-, line width=0.2mm] [blue](1)[dashed] to (2);
\draw[-, line width=0.2mm] [blue](3)[dashed] to (4);

\draw[-, line width=0.2mm] [blue](2) to (3);
\draw[-, line width=0.2mm] [blue](1) to (4);

\node [scale=.0] (6) at (8.95,1.3) [label=90:\tiny{$T2_{16,2}$}]{};
\node [scale=.0] (8) at (8.95,-4.7) [label=90:\tiny{$T2_{16,2}$}]{};

\node [scale=.0] (10) at (6.3,-1.5) [label=90:\tiny{$T1_{16}$}]{};
\node [scale=.0] (12) at (11.7,-1.5) [label=90:\tiny{$T1_{16}$}]{};
\end{tikzpicture}
			
\hspace{.8cm} Figure  13. {\small Digraph $\mathcal{D}$ with   \hspace{3cm} Figure  14. Graph $\mathcal{G}$ with  \hspace{1.5cm} \\
\hspace{.5cm} $X_1$ = $\{1,2,4,7,8\}$, $Y_1$ = $\{2,3,4,5,8\}$, $X_2$ = $\{3,4,5,6,8\}$, $Y_2$ = $\{1,4,6,7,8\}$. } 

\end{center} }
Clearly, graphs in Figures 13 and 14 are similar to graphs in Figures 11 and 12, respectively.
	
\item [\rm (8)]	Here, $C_{16}(Y)$ = $C_{16}(1,4,6,7,8)$ and $C_{16}(1,4,6,7,8)$ $\cong_{T2_{16,2,2}}$ $C_{16}(3,4,5,6,8)$. This case is same as case (7) since $V(\mathcal{D})$ = $V(\mathcal{G})$ = $Iso(C_{16}(1,4,6,7,8))$ = = $Iso(C_{16}(3,4,5,6,8))$ =  $Iso(C_{16}(1,2,4,7,8))$ = $Iso(C_{16}(2,3,4,5,8))$. See Figures 13 and 14.  
\end{enumerate}

It is noted that in all the 8 cases, all the unlabeled digraphs are the same as well as their unlabeled underlying graphs are the same and all isomorphism series corresponding to one case, say in case 8, only are presented. It is easy to write all isomorphism series corresponding to other cases.

In Figure 13, corresponding to case (8), let $X_1$ = $\{1,2,4,7,8\}$, $Y_1$ = $\{2,3,4,5,8\}$, $X_2$ = $\{3,4,5,6,8\}$ and $Y_2$ = $\{1,4,6,7,8\}$. From Figure 13, we get the following isomorphism series.
\begin{enumerate}
\item [\rm (8.1)] $C_{16}(X_1)$ $\cong$ $C_{16}(Y_1)$:
	
	$(i)$ $C_{16}(X_1)$ $\cong_{T2_{16,2,2}}$ $C_{16}(Y_1)$ or 
	
	$(ii)$ $C_{16}(X_1)$ $\cong_{T1_{16,3}}$ $C_{16}(X_2)$ $\cong_{T2_{16,2,2}}$ $C_{16}(Y_2)$  $\cong_{T1_{16,3}}$ $C_{16}(Y_1)$;	 

\item [\rm (8.2)] $C_{16}(X_1)$ $\cong$ $C_{16}(Y_2)$:
	
	 $(i)$ $C_{16}(X_1)$ $\cong_{T2_{16,2,2}}$ $C_{16}(Y_1)$ $\cong_{T1_{16,3}}$ $C_{16}(Y_2)$ or 
	 
	 $(ii)$  $C_{16}(X_1)$ $\cong_{T1_{16,3}}$   $C_{16}(X_2)$ $\cong_{T2_{16,2,2}}$ $C_{16}(Y_2)$;
	 
\item [\rm (8.3)] $C_{16}(X_1)$ $\cong$ $C_{16}(X_2)$:	
	
	 $(i)$ $C_{16}(X_1)$ $\cong_{T1_{16,3}}$ $C_{16}(X_2)$ or 

     $(ii)$  $C_{16}(X_1)$ $\cong_{T2_{16,2,2}}$ $C_{16}(Y_1)$ $\cong_{T1_{16,3}}$ $C_{16}(Y_2)$ $\cong_{T2_{16,2,2}}$ $C_{16}(X_2)$; 		

\item [\rm (8.4)] $C_{16}(Y_1)$ $\cong$ $C_{16}(Y_2)$:
	
	$(i)$ $C_{16}(Y_1)$ $\cong_{T1_{16,3}}$ $C_{16}(Y_2)$ or 
	
	$(ii)$  $C_{16}(Y_1)$ $\cong_{T2_{16,2,2}}$ $C_{16}(X_1)$ $\cong_{T1_{16,3}}$   $C_{16}(X_2)$ $\cong_{T2_{16,2,2}}$ $C_{16}(Y_2)$;		

\item [\rm (8.5)] $C_{16}(Y_1)$ $\cong$ $C_{16}(X_2)$:

	$(i)$ $C_{16}(Y_1)$ $\cong_{T1_{16,3}}$ $C_{16}(Y_2)$ $\cong_{T2_{16,2,2}}$ $C_{16}(X_2)$ or 

    $(ii)$  $C_{16}(Y_1)$ $\cong_{T2_{16,2,2}}$ $C_{16}(X_1)$ $\cong_{T1_{16,3}}$ $C_{16}(X_2)$;		
	
\item [\rm (8.6)] $C_{16}(Y_2)$ $\cong$ $C_{16}(X_2)$:
	
	$(i)$ $C_{16}(Y_2)$ $\cong_{T2_{16,2,2}}$ $C_{16}(X_2)$ or 

   $(ii)$ $C_{16}(Y_2)$ $\cong_{T1_{16,3}}$ $C_{16}(Y_1)$ $\cong_{T2_{16,2,2}}$ $C_{16}(X_1)$ $\cong_{T1_{16,3}}$ $C_{16}(X_2)$. 	
\end{enumerate}

In all these cases, it is easy to find from their isomorphism digraphs that they don't have Hamiltonian isomorphism series. \hfill $\Box$
 
 \begin{prm} \quad \label{p4.7} {\rm Find the isomorphism digraphs, the isomorphism graphs and different isomorphism series corresponding to different isomorphic circulant graphs which are having Type-2 isomorphism and each of order 27. }
 \end{prm}
 
 \noindent
 {\bf Solution.}\quad In problem 3.6 in \cite{v2-2}, it is shown that the following 12 triples of circulant graphs, each of order 27, are the only isomorphic circulant graphs of Type-2 w.r.t. $m$ = 3.
 
 \begin{enumerate}
\item [\rm (1)]	$C_{27}(1,3,8,10)$, $C_{27}(2,3,7,11)$ and $C_{27}(1,3,8,10)$,

\hfill $C_{27}(1,3,8,10)$ $\cong_{T2_{27,3,1 }}$  $C_{27}(2,3,7,11)$ and $C_{27}(1,3,8,10)$ $\cong_{T2_{27,3,2}}$ $C_{27}(3,4,5,13)$; 	

\item [\rm (2)]	 $C_{27}(1,6,8,10)$, $C_{27}(2,6,7,11)$ and $C_{27}(1,6,8,10)$,
	
\hfill $C_{27}(1,6,8,10)$ $\cong_{T2_{27,3, 1}}$ $C_{27}(2,6,7,11)$ and $C_{27}(1,6,8,10)$ $\cong_{T2_{27,3, 2}}$ $C_{27}(4,5,6,13)$;
	
\item [\rm (3)]	$C_{27}(1,8,10,12)$, $C_{27}(2,7,11,12)$ and $C_{27}(1,8,10,12)$,

\hfill $C_{27}(1,8,10,12)$ $\cong_{T2_{27,3, 1}}$ $C_{27}(2,7,11,12)$ and $C_{27}(1,8,10,12)$ $\cong_{T2_{27,3,2 }}$ $C_{27}(4,5,12,13)$;

\item [\rm (4)] $C_{27}(1,3,6,8,10)$, $C_{27}(2,3,6,7,11)$ and $C_{27}(1,3,6,8,10)$,

\hfill $C_{27}(1,3,6,8,10)$ $\cong_{T2_{27,3, 1}}$ $C_{27}(2,3,6,7,11)$ and $C_{27}(1,3,6,8,10)$ $\cong_{T2_{27,3, 2}}$ $C_{27}(3,4,5,6,13)$;

\item [\rm (5)]	$C_{27}(1,3,8,9,10)$, $C_{27}(2,3,7,9,11)$ and $C_{27}(1,3,8,9,10)$,

\hfill $C_{27}(1,3,8,9,10)$ $\cong_{T2_{27,3, 1}}$ $C_{27}(2,3,7,9,11)$ and $C_{27}(1,3,8,9,10)$ $\cong_{T2_{27,3, 2}}$ $C_{27}(3,4,5,9,13)$;

\item [\rm (6)]	$C_{27}(1,3,8,10,12)$, $C_{27}(2,3,7,11,12)$ and $C_{27}(1,3,8,10,12)$,
\\
$C_{27}(1,3,8,10,12)$ $\cong_{T2_{27,3, 1}}$ $C_{27}(2,3,7,11,12)$ and $C_{27}(1,3,8,10,12)$ $\cong_{T2_{27,3, 2}}$ $C_{27}(3,4,5,12,13)$;

\item [\rm (7)]	$C_{27}(1,6,8,9,10)$, $C_{27}(2,6,7,9,11)$ and $C_{27}(1,6,8,9,10)$,

\hfill $C_{27}(1,6,8,9,10)$ $\cong_{T2_{27,3, 1}}$ $C_{27}(2,6,7,9,11)$ and $C_{27}(1,6,8,9,10)$ $\cong_{T2_{27,3, 2}}$ $C_{27}(4,5,6,9,13)$;

\item [\rm (8)]	$C_{27}(1,6,8,10,12)$, $C_{27}(2,6,7,11,12)$ and $C_{27}(1,6,8,10,12)$,
\\
$C_{27}(1,6,8,10,12)$ $\cong_{T2_{27,3, 1}}$ $C_{27}(2,6,7,11,12)$ and $C_{27}(1,6,8,10,12)$ $\cong_{T2_{27,3, 2}}$ $C_{27}(4,5,6,12,13)$;

\item [\rm (9)] $C_{27}(1,8,9,10,12)$, $C_{27}(2,7,9,11,12)$ and $C_{27}(1,8,9,10,12)$,
\\
$C_{27}(1,8,9,10,12)$ $\cong_{T2_{27,3, 1}}$ $C_{27}(2,7,9,11,12)$ and $C_{27}(1,8,9,10,12)$ $\cong_{T2_{27,3, 2}}$ $C_{27}(4,5,9,12,13)$;

\item [\rm (10)] $C_{27}(1,3,6,8,9,10)$, $C_{27}(2,3,6,7,9,11)$ and $C_{27}(1,3,6,8,9,10)$,

$C_{27}(1,3,6,8,9,10)$ $\cong_{T2_{27,3, 1}}$  
$C_{27}(2,3,6,7,9,11)$

\hfill and $C_{27}(1,3,6,8,9,10)$ $\cong_{T2_{27,3, 2}}$ $C_{27}(3,4,5,6,9,13)$;

\item [\rm (11)] $C_{27}(1,3,8,9,10,12)$, $C_{27}(3,4,5,9,12,13)$, and $C_{27}(1,3,8,9,10,12)$,

$C_{27}(1,3,8,9,10,12)$ $\cong_{T2_{27,3, }}$ $C_{27}(3,4,5,9,12,13)$  

\hfill and $C_{27}(1,3,8,9,10,12)$ $\cong_{T2_{27,3, 1}}$ $C_{27}(2,3,7,9,11,12)$;

\item [\rm (12)] $C_{27}(1,6,8,9,10,12)$, $C_{27}(2,6,7,9,11,12)$ and $C_{27}(1,6,8,9,10,12)$,

$C_{27}(1,6,8,9,10,12)$ $\cong_{T2_{27,3, 1}}$   
$C_{27}(2,6,7,9,11,12)$

\hfill and $C_{27}(1,6,8,9,10,12)$ $\cong_{T2_{27,3, 2}}$ $C_{27}(4,5,6,9,12,13)$.  
 \end{enumerate}

Let $\mathcal{D}$ be the isomorphism digraph $\mathcal{ID}_{27, 3}(C_{27}(R))$  and $\mathcal{G}$ be the isomorphism graph $\mathcal{I}_{27, 3}(C_{27}(R))$ w.r.t. $m$ = 3 for different $R$. Our aim is to find $\mathcal{D}$ and $\mathcal{G}$ for different $R$ where $V(\mathcal{D})$ = $Iso(C_{27}(R))$ = $\{C_{27}(S):$ $C_{27}(S)$ $\cong$ $C_{27}(R)\}$ = $V(\mathcal{G})$.  

In each case, we find the isomorphism digraph $\mathcal{ID}_{27, 3}(C_{27}(R))$ at first and then from this digraph, we obtain the isomorphism graph $\mathcal{I}_{27, 3}(C_{27}(R))$ as well as different isomorphism series. 

 \begin{enumerate}
 	\item [\rm (1)] Here, $C_{27}(R)$ = $C_{27}(1,3,8,10)$, 

$T2_{27,3 }(C_{27}(1,3,8,10))$ = $\{C_{27}(1,3,8,10), C_{27}(3,4,5,13) = T2_{27,3, 1}(C_{27}(1,3,8,10))$,

\hfill $C_{27}(2,3,7,11) = T2_{27,3, 2}(C_{27}(1,3,8,10))\}$ 

\hfill = $T2_{27,3 }(C_{27}(3,4,5,13))$ = $T2_{27,3 }(C_{27}(2,3,7,11))$,

$T1_{27}(C_{27}(1,3,8,10))$ = $\{C_{27}(1,3,8,10), C_{27}(2,6,7,11) = T1_{27, 2}(C_{27}(1,3,8,10))$,

\hfill $C_{27}(4,5,12,13) = T1_{27, 4}(C_{27}(1,3,8,10))\}$ 

\hfill = $T1_{27}(C_{27}(2,6,7,11))$ = $T1_{27}(C_{27}(4,5,12,13))$,

$T2_{27,3 }(C_{27}(2,6,7,11))$ = $\{C_{27}(2,6,7,11), C_{27}(1,6,8,10) = T2_{27,3, 1}(C_{27}(2,6,7,11))$,

\hfill $C_{27}(4,5,6,13) = T2_{27,3, 2}(C_{27}(2,6,7,11))\}$ 

\hfill = $T2_{27,3 }(C_{27}(1,6,8,10))$ = $T2_{27,3 }(C_{27}(4,5,6,13))$,

$T1_{27}(C_{27}(1,6,8,10))$ = $\{C_{27}(1,6,8,10), C_{27}(2,7,11,12) = T1_{27, 2}(C_{27}1,6,8,10))$,

\hfill $C_{27}(3,4,5,13) = T1_{27, 4}(C_{27}(1,6,8,10))\}$ 

\hfill = $T1_{27}(C_{27}(2,7,11,12))$ = $T1_{27}(C_{27}(3,4,5,13))$,

$T2_{27,3 }(C_{27}(2,7,11,12))$ = $\{C_{27}(2,7,11,12), C_{27}(1,8,10,12) = T2_{27,3, 1}(C_{27}(2,7,11,12))$,

\hfill $C_{27}(4,5,12,13) = T2_{27,3, 2}(C_{27}(2,7,11,12))\}$ 

\hfill = $T2_{27,3 }(C_{27}(1,8,10,12))$ = $T2_{27,3 }(C_{27}(4,5,12,13))$,

$T1_{27}(C_{27}(1,8,10,12))$ = $\{C_{27}(1,8,10,12), C_{27}(2,3,7,11) = T1_{27, 2}(C_{27}(1,8,10,12))$,

\hfill $C_{27}(4,5,6,13) = T1_{27, 4}(C_{27}(1,8,10,12))\}$ 

\hfill = $T1_{27}(C_{27}(2,3,7,11))$ = $T1_{27}(C_{27}(4,5,6,13))$ and
\\
 $V(\mathcal{D})$ = $V(\mathcal{G})$ = $Iso(C_{27}(R))$ = $Iso(C_{27}(1,3,8,10))$ = $Iso(C_{27}(2,6,7,11))$ = $Iso(C_{27}(4,5,12,13))$   
 	
 \hfill 	 = $\{C_{27}(1,3,8,10), C_{27}(2,6,7,11),  C_{27}(4,5,12,13), C_{27}(3,4,5,13),  C_{27}(1,6,8,10)$, 

 	\hfill	$C_{27}(2,7,11,12), C_{27}(2,3,7,11), C_{27}(4,5,6,13), C_{27}(1,8,10,12)\}$
    
 \hfill   = $Iso(C_{27}(3,4,5,13))$ = $Iso(C_{27}(1,6,8,10))$ = $Iso(C_{27}(2,7,11,12))$ 

\hfill    = $Iso(C_{27}((2,3,7,11))$ = $Iso(C_{27}(4,5,6,13))$ = $Iso(C_{27}(1,8,10,12))$. 
    
    Corresponding to $V(\mathcal{D})$ = $V(\mathcal{G})$, we draw the isomorphism digraph $\mathcal{ID}_{27, 3}(C_{27}(1,3,8,10))$ w.r.t. $m$ = 3 and from this digraph we obtain the isomorphism graph $\mathcal{ID}_{27, 3}(C_{27}(1,3,8,10))$ w.r.t. $m$ = 3. The isomorphism digraph $\mathcal{ID}_{27, 3}(C_{27}(1,3,8,10))$ = $\mathcal{ID}_{27, 3}(C_{27}(2,6,7,11))$ = $\mathcal{ID}_{27, 3}(C_{27}(4,5,12,13))$ = $\mathcal{ID}_{27, 3}(C_{27}(3,4,5,13))$ = $\mathcal{ID}_{27, 3}(C_{27}(1,6,8,10))$ 

\hfill = $\mathcal{ID}_{27, 3}(C_{27}(2,7,11,12))$ = $\mathcal{ID}_{27, 3}(C_{27}(2,3,7,11))$ = $\mathcal{ID}_{27, 3}(C_{27}(4,5,6,13))$ = \\
$\mathcal{ID}_{27, 3}(C_{27}(1,8,10,12))$ and the isomorphism graph $\mathcal{I}_{27, 3}(C_{27}(1,3,8,10))$ = $\mathcal{I}_{27, 3}(C_{27}(2,6,7,11))$ = $\mathcal{I}_{27, 3}(C_{27}(4,5,12,13))$ = $\mathcal{I}_{27, 3}(C_{27}(3,4,5,13))$ = $\mathcal{I}_{27, 3}(C_{27}(1,6,8,10))$ = $\mathcal{I}_{27, 3}(C_{27}(2,7,11,12))$ = $\mathcal{I}_{27, 3}(C_{27}(2,3,7,11))$ = $\mathcal{I}_{27, 3}(C_{27}(4,5,6,13))$ = $\mathcal{I}_{27, 3}(C_{27}(1,8,10,12))$ are given in Figures 15 and 16, respectively. 
\end{enumerate}

{\rm 	\begin{center}
		\begin{tikzpicture}  
		[scale=.37,auto=center,every node/.style={draw,circle}] 
		
		\node (1) at (0,0) {\tiny{$C_{27}(R_1)$}};
		\node (2) at (9,6) {\tiny{$C_{27}(T_1)$}};	
		\node (3) at (-9,6){\tiny{$C_{27}(S_1)$}};

\draw[->, line width=0.2mm] [blue](1)[dashed] to [out=50,in=200] (2);
\draw[->, line width=0.2mm] [blue](2)[dashed] to [out=230,in=20] (1);
\draw[->, line width=0.2mm] [blue](2)[dashed] to [out=175,in=5] (3);
\draw[->, line width=0.2mm] [blue](3)[dashed] to [out=355,in=185](2);
\draw[->, line width=0.2mm] [blue](3)[dashed] to [out=340,in=140](1);
\draw[->, line width=0.2mm] [blue](1)[dashed] to [out=160,in=310](3);
		
\node [scale=.0] (11) at (6,3) [label=90:\tiny{$T2_{27,3,2}$}]{};
\node [scale=.0] (12) at (3.5,-1) [label=90:\tiny{$T2_{27,3,1}$}]{};
\node [scale=.0] (13) at (-4.5,5) [label=90:\tiny{$T2_{27,3,2}$}]{};
\node [scale=.0] (14) at (3.25,3.5) [label=90:\tiny{$T2_{27,3,1}$}]{};
\node [scale=.0] (15) at (-7.2,1.75)[label=90:\tiny{$T2_{27,3,1}$}]{};
\node [scale=.0] (16) at (-3,1) [label=90:\tiny{$T2_{27,3,2}$}]{};
		\node (4) at (0,10) {\tiny{$C_{27}(R_2)$}};
		\node (5) at (9,16) {\tiny{$C_{27}(T_2)$}};	
		\node (6) at (-9,16){\tiny{$C_{27}(S_2)$}};
		
\draw[->, line width=0.2mm] [blue](4)[dashed] to [out=50,in=200] (5);
\draw[->, line width=0.2mm] [blue](5)[dashed] to [out=230,in=20] (4);
\draw[->, line width=0.2mm] [blue](5)[dashed] to [out=175,in=5] (6);
\draw[->, line width=0.2mm] [blue](6)[dashed] to [out=355,in=185] (5);
\draw[->, line width=0.2mm] [blue](6)[dashed] to [out=340,in=140] (4);
\draw[->, line width=0.2mm] [blue](4)[dashed] to [out=160,in=320] (6);
		
\node [scale=.0] (17) at (6.2,13) [label=90:\tiny{$T2_{27,3,2}$}]{};
\node [scale=.0] (18) at (3.2,9.2) [label=90:\tiny{$T2_{27,3,1}$}]{};
\node [scale=.0] (19) at (-5.5,15)[label=90:\tiny{$T2_{27,3,2}$}]{};
\node [scale=.0] (20) at (5,14.2) [label=90:\tiny{$T2_{27,3,1}$}]{};
\node [scale=.0](21) at (-6.5,12.3)[label=90:\tiny{$T2_{27,3,1}$}]{};
\node [scale=.0] (22) at (-2,10) [label=90:\tiny{$T2_{27,3,2}$}]{};
		\node (7) at (0,20) {\tiny{$C_{27}(R_3)$}};
		\node (8) at (9,26) {\tiny{$C_{27}(T_3)$}};	
		\node (9) at (-9,26){\tiny{$C_{27}(S_3)$}};
		
\draw[->, line width=0.2mm] [blue](7)[dashed] to [out=50,in=200] (8);
\draw[->, line width=0.2mm] [blue](8)[dashed] to [out=230,in=20] (7);
\draw[->, line width=0.2mm] [blue](8)[dashed] to [out=175,in=5] (9);
\draw[->, line width=0.2mm] [blue](9)[dashed] to [out=355,in=185] (8);
\draw[->, line width=0.2mm] [blue](9)[dashed] to [out=340,in=140] (7);
\draw[->, line width=0.2mm] [blue](7)[dashed] to [out=160,in=320] (9);
		
\node [scale=.0] (17) at (5.5,23) [label=90:\tiny{$T2_{27,3,2}$}]{};
\node [scale=.0] (18) at (3.5,19.25)[label=90:\tiny{$T2_{27,3,1}$}]{};
\node [scale=.0](19) at (-5,25.1) [label=90:\tiny{$T2_{27,3,2}$}]{};
\node [scale=.0] (20) at (2,24.1) [label=90:\tiny{$T2_{27,3,1}$}]{};
\node [scale=.0] (21) at (-6.5,22.3)[label=90:\tiny{$T2_{27,3,1}$}]{};
\node [scale=.0](22) at (-1.25,20.2)[label=90:\tiny{$T2_{27,3,2}$}]{};
		\draw[->, line width=0.2mm] [blue](1) to [out=110,in=250] (4);
		\draw[->, line width=0.2mm] [blue](4) to [out=290,in=70] (1);
		\draw[->, line width=0.2mm] [blue](1) to [out=120,in=240] (7);
		\draw[->, line width=0.2mm] [blue](7) to [out=300,in=60] (1);
  	\draw[->, line width=0.2mm] [blue](2) to [out=110,in=250] (5);
		\draw[->, line width=0.2mm] [blue](5) to [out=290,in=70] (2);
		\draw[->, line width=0.2mm] [blue](2) to [out=120,in=240] (8);
		\draw[->, line width=0.2mm] [blue](8) to [out=300,in=60] (2);
  	\draw[->, line width=0.2mm] [blue](3) to [out=110,in=250] (6);
		\draw[->, line width=0.2mm] [blue](6) to [out=290,in=70] (3);
		\draw[->, line width=0.2mm] [blue](3) to [out=120,in=240] (9);
		\draw[->, line width=0.2mm] [blue](9) to [out=300,in=60] (3);
 		\draw[->, line width=0.2mm] [blue](4) to [out=110,in=250] (7);
		\draw[->, line width=0.2mm] [blue](7) to [out=290,in=70] (4);
		\draw[->, line width=0.2mm] [blue](5) to [out=110,in=250] (8);
		\draw[->, line width=0.2mm] [blue](8) to [out=290,in=70] (5);
		\draw[->, line width=0.2mm] [blue](6) to [out=110,in=250] (9);
		\draw[->, line width=0.2mm] [blue](9) to [out=290,in=70] (6);
		
\node [scale=.0] (39) at (-1,6) [label=90:\tiny{$T1_{27,2}$}]{};
\node [scale=.0] (45) at (1,2.5) [label=90:\tiny{$T1_{27,4}$}]{};
\node [scale=.0] (43) at (-3.5,7.5) [label=90:\tiny{$T1_{27,4}$}]{};
\node [scale=.0] (40) at (4.5,7.5) [label=90:\tiny{$T1_{27,2}$}]{};
\node [scale=.0] (33) at (10,9) [label=90:\tiny{$T1_{27,4}$}]{};
\node [scale=.0] (44) at (8,11) [label=90:\tiny{$T1_{27,2}$}]{};
\node [scale=.0] (35) at (6.5,19) [label=90:\tiny{$T1_{27,4}$}]{};
\node [scale=.0] (34) at (12.5,13) [label=90:\tiny{$T1_{27,2}$}]{};

\node [scale=.0] (47) at (-10,11) [label=90:\tiny{$T1_{27,2}$}]{};
\node [scale=.0] (42) at (-8,9) [label=90:\tiny{$T1_{27,4}$}]{};
\node [scale=.0] (41) at (-12,19) [label=90:\tiny{$T1_{27,4}$}]{};
\node [scale=.0] (36) at (-6,10) [label=90:\tiny{$T1_{27,2}$}]{};

\node [scale=.0] (46) at (1.5,12) [label=90:\tiny{$T1_{27,4}$}]{};
\node [scale=.0] (37) at (-1,13) [label=90:\tiny{$T1_{27,2}$}]{};
\node [scale=.0] (31) at (8.5,21) [label=90:\tiny{$T1_{27,2}$}]{};
\node [scale=.0] (38) at (10,18) [label=90:\tiny{$T1_{27,4}$}]{};
\node [scale=.0] (31) at (-9.5,21) [label=90:\tiny{$T1_{27,2}$}]{};
\node [scale=.0] (32) at (-8,18) [label=90:\tiny{$T1_{27,4}$}]{};
\end{tikzpicture}		
		
 {\footnotesize  \hspace{1cm} Fig.  15.   Digraph $\mathcal{D}$ with   

 $R_1$ = $\{1,3,8,10\}$, $S_1$ = $\{3,4,5,13\}$, $T_1$ = $\{2,3,7,11\}$,  
  
$R_2$ = $\{2,6,7,11\}$, $S_2$ = $\{1,6,8,10\}$, $T_2$ = $\{4,5,6,13\}$, 
  
 ~ $R_3$ = $\{4,5,12,13\}$, $S_3$ = $\{2,7,11,12\}$,  $T_3$ = $\{1,8,10,12\}$. 
 }  
\end{center} }

\begin{enumerate} 
\item [\rm (2)] Here, $C_{27}(R)$ = $C_{27}(1,6,8,10)$,  

$T2_{27,3 }(C_{27}(1,6,8,10))$ = $\{C_{27}(1,6,8,10), C_{27}(4,5,6,13) = T2_{27,3, 1}(C_{27}(1,6,8,10))$,

\hfill $C_{27}(2,6,7,11) = T2_{27,3, 2}(C_{27}(1,6,8,10))\}$ 

\hfill = $T2_{27,3 }(C_{27}(4,5,6,13))$ = $T2_{27,3 }(C_{27}(2,6,7,11))$,

$T1_{27}(C_{27}(1,6,8,10))$ = $\{C_{27}(1,6,8,10), C_{27}(2,7,11,12) = T1_{27, 2}(C_{27}(1,6,8,10))$,

\hfill $C_{27}(3,4,5,13) = T1_{27, 4}(C_{27}(1,6,8,10))\}$ 

\hfill = $T1_{27}(C_{27}(2,7,11,12))$ = $T1_{27}(C_{27}(3,4,5,13))$,

$T2_{27,3 }(C_{27}(2,7,11,12))$ = $\{C_{27}(2,7,11,12), C_{27}(1,8,10,12) = T2_{27,3, 1}(C_{27}(2,7,11,12))$,

\hfill $C_{27}(4,5,12,13) = T2_{27,3, 2}(C_{27}(2,7,11,12))\}$ 

\hfill = $T2_{27,3 }(C_{27}(1,8,10,12))$ = $T2_{27,3 }(C_{27}(4,5,12,13))$,

$T1_{27}(C_{27}(1,8,10,12))$ = $\{C_{27}(1,8,10,12), C_{27}(2,3,7,11) = T1_{27, 2}(C_{27}1,8,10,12))$,

\hfill $C_{27}(4,5,6,13) = T1_{27, 4}(C_{27}(1,8,10,12))\}$ 

\hfill = $T1_{27}(C_{27}(2,3,7,11))$ = $T1_{27}(C_{27}(4,5,6,13))$,

$T2_{27,3 }(C_{27}(2,3,7,11))$ = $\{C_{27}(2,3,7,11), C_{27}(1,3,8,10) = T2_{27,3, 1}(C_{27}(2,3,7,11))$,

\hfill $C_{27}(3,4,5,13) = T2_{27,3, 2}(C_{27}(2,3,7,11))\}$ 

\hfill = $T2_{27,3 }(C_{27}(1,3,8,10))$ = $T2_{27,3 }(C_{27}(3,4,5,13))$,

$T1_{27}(C_{27}(1,3,8,10))$ = $\{C_{27}(1,3,8,10), C_{27}(2,6,7,11) = T1_{27, 2}(C_{27}(1,3.8,10))$,

\hfill $C_{27}(4,5,12,13) = T1_{27, 4}(C_{27}(1,3,8,10))\}$ 

\hfill = $T1_{27}(C_{27}(2,6,7,11))$ = $T1_{27}(C_{27}(4,5,12,13))$ and
\\
 $V(\mathcal{D})$ = $V(\mathcal{G})$ = $Iso(C_{27}(R))$ = $Iso(C_{27}(1,6,8,10))$ = $Iso(C_{27}(2,7,11,12))$ = $Iso(C_{27}(3,4,5,13))$   
 	
 \hfill 	 = $\{C_{27}(1,6,8,10), C_{27}(2,7,11,12),  C_{27}(3,4,5,13), C_{27}(4,5,6,13), C_{27}(1,8,10,12)$, 

 	\hfill	$C_{27}(2,3,7,11), C_{27}(2,6,7,11), C_{27}(4,5,12,13), C_{27}(1,3,8,10)\}$
    
 \hfill   = $Iso(C_{27}(4,5,6,13))$ = $Iso(C_{27}(1,8,10,12))$ = $Iso(C_{27}(2,3,7,11))$ 

\hfill    = $Iso(C_{27}((2,6,7,11))$ = $Iso(C_{27}(4,5,12,13))$ = $Iso(C_{27}(1,3,8,10))$. 
    
 	The isomorphism digraph $\mathcal{ID}_{27, 3}(C_{27}(1,6,8,10))$ = $\mathcal{ID}_{27, 3}(C_{27}(2,7,11,12))$ 
\\
= $\mathcal{ID}_{27, 3}(C_{27}(3,4,5,13))$ = $\mathcal{ID}_{27, 3}(C_{27}(4,5,6,13))$ = $\mathcal{ID}_{27, 3}(C_{27}(1,8,10,12))$ 
\\
= $\mathcal{ID}_{27, 3}(C_{27}(2,3,7,11))$ = $\mathcal{ID}_{27, 3}(C_{27}(2,6,7,11))$ = $\mathcal{ID}_{27, 3}(C_{27}(4,5,12,13))$ = \\
$\mathcal{ID}_{27, 3}(C_{27}(1,3,8,10))$ and the isomorphism graph $\mathcal{I}_{27, 3}(C_{27}(1,6,8,10))$ = $\mathcal{I}_{27, 3}(C_{27}(2,7,11,12))$ = $\mathcal{I}_{27, 3}(C_{27}(3,4,5,13))$ = $\mathcal{I}_{27, 3}(C_{27}(4,5,6,13))$ = $\mathcal{I}_{27, 3}(C_{27}(1,8,10,12))$ = $\mathcal{I}_{27, 3}(C_{27}(2,3,7,11))$ = $\mathcal{I}_{27, 3}(C_{27}(2,6,7,11))$ = $\mathcal{I}_{27, 3}(C_{27}(4,5,12,13))$ = $\mathcal{I}_{27, 3}(C_{27}(1,3,8,10))$ are given in Figures 15 and 16, respectively. 

{\rm 	\begin{center}
		\begin{tikzpicture}  
		[scale=.25, auto=center,every node/.style={draw,circle}] 

		\node (1) at (0,0) {\tiny{$C_{27}(R_1)$}};
		\node (2) at (9,6) {\tiny{$C_{27}(T_1)$}};	
		\node (3) at (-9,6) {\tiny{$C_{27}(S_1)$}};

\draw[line width=0.2mm] [blue](1)[dashed] to (2);
\draw[line width=0.2mm] [blue](2)[dashed] to (3);
\draw[line width=0.2mm] [blue](3)[dashed] to (1);
		
\node [scale=.0] (11) at (5.5,1) [label=90:\tiny{$T2_{27,3}$}]{};
\node [scale=.0] (13) at (3,5) [label=90:\tiny{$T2_{27,3}$}]{};
\node [scale=.0] (15) at (-5.5,1)[label=90:\tiny{$T2_{27,3}$}]{};
		\node (4) at (0,10) {\tiny{$C_{27}(R_2)$}};
		\node (5) at (9,16) {\tiny{$C_{27}(T_2)$}};	
		\node (6) at (-9,16)  {\tiny{$C_{27}(S_2)$}};
		
\draw[line width=0.2mm] [blue](4)[dashed] to (5);
\draw[line width=0.2mm] [blue](5)[dashed] to (6);
\draw[line width=0.2mm] [blue](6)[dashed] to (4);
		
\node [scale=.0] (17) at (4.5,11) [label=90:\tiny{$T2_{27,3}$}]{};
\node [scale=.0] (19) at (2.5,15)[label=90:\tiny{$T2_{27,3}$}]{};
\node [scale=.0](21) at (-5,11.5)[label=90:\tiny{$T2_{27,3}$}]{};
		\node (7) at (0,20) {\tiny{$C_{27}(R_3)$}};
		\node (8) at (9,26) {\tiny{$C_{27}(T_3)$}};	
		\node (9) at (-9,26)  {\tiny{$C_{27}(S_3)$}};
		
\draw[line width=0.2mm] [blue](7)[dashed] to (8);
\draw[line width=0.2mm] [blue](8)[dashed] to (9);
\draw[line width=0.2mm] [blue](9)[dashed] to (7);
		
\node [scale=.0] (17) at (4,22) [label=90:\tiny{$T2_{27,3}$}]{};
\node [scale=.0](19) at (0,25.1) [label=90:\tiny{$T2_{27,3}$}]{};
\node [scale=.0] (21) at (-3,21.5)[label=90:\tiny{$T2_{27,3}$}]{};
		\draw[line width=0.2mm] [blue](1) to (4);
		\draw[line width=0.2mm] [blue](1) to [out=120,in=240] (7);
		\draw[line width=0.2mm] [blue](2) to (5);
		\draw[line width=0.2mm] [blue](2) to [out=120,in=240] (8);
		\draw[line width=0.2mm] [blue](3) to (6);
		\draw[line width=0.2mm] [blue](3) to [out=120,in=240] (9);
		\draw[line width=0.2mm] [blue](4) to (7);
		\draw[line width=0.2mm] [blue](5) to (8);
		\draw[line width=0.2mm] [blue](6) to (9);
		
\node [scale=.0] (39) at (1,3) [label=90:\tiny{$T1_{27}$}]{};
\node [scale=.0] (43) at (-3.75,7.5) [label=90:\tiny{$T1_{27}$}]{};
\node [scale=.0] (33) at (10,9.5) [label=90:\tiny{$T1_{27}$}]{};
\node [scale=.0] (35) at (6.75,18) [label=90:\tiny{$T1_{27}$}]{};

\node [scale=.0] (47) at (-8,10) [label=90:\tiny{$T1_{27}$}]{};
\node [scale=.0] (41) at (-11,18.5) [label=90:\tiny{$T1_{27}$}]{};

\node [scale=.0] (46) at (1,13) [label=90:\tiny{$T1_{27}$}]{};
\node [scale=.0] (31) at (10,20) [label=90:\tiny{$T1_{27}$}]{};
\node [scale=.0] (31) at (-8,20) [label=90:\tiny{$T1_{27}$}]{};

\end{tikzpicture}		

 {\footnotesize  \hspace{1cm} Fig.  16.   Graph $\mathcal{G}$ with   

 $R_1$ = $\{1,3,8,10\}$, $S_1$ = $\{3,4,5,13\}$, $T_1$ = $\{2,3,7,11\}$,  
  
$R_2$ = $\{2,6,7,11\}$, $S_2$ = $\{1,6,8,10\}$, $T_2$ = $\{4,5,6,13\}$, 
  
 ~ $R_3$ = $\{4,5,12,13\}$, $S_3$ = $\{2,7,11,12\}$,  $T_3$ = $\{1,8,10,12\}$. 
 } 
\end{center} }
	
\item [\rm (3)]	Here, $C_{27}(R)$ = $C_{27}(1,8,10,12)$,  $C_{27}(1,8,10,12)$ $\cong_{T2_{27,3, 1}}$ $C_{27}(2,7,11,12)$ and $C_{27}(1,8,10,12)$ $\cong_{T2_{27,3,2 }}$ $C_{27}(4,5,12,13)$. From (1) and (2), we have $V(\mathcal{D})$ = $V(\mathcal{G})$ = $Iso(C_{27}(1,8,10,12))$ = $Iso(C_{27}(2,7,11,12))$ = $Iso(C_{27}(4,5,12,13))$ = $Iso(C_{27}(1,3,8,10))$ = $Iso(C_{27}(2,3,7,11))$ = $Iso(C_{27}(3,4,5,13))$ = $Iso(C_{27}(1,6,8,10))$ = $Iso(C_{27}(2,6,7,11))$ = $Iso(C_{27}(4,5,6,13))$ and hence the isomorphism digraph $\mathcal{ID}_{27, 3}(C_{27}(1,3,8,10))$ = $\mathcal{ID}_{27, 3}(C_{27}(2,3,7,11))$ 
\\
= $\mathcal{ID}_{27, 3}(C_{27}(3,4,5,13))$ = $\mathcal{ID}_{27, 3}(C_{27}(1,6,8,10))$ = $\mathcal{ID}_{27, 3}(C_{27}(2,6,7,11))$ 
    \\
    = $\mathcal{ID}_{27, 3}(C_{27}(4,5,6,13))$ = $\mathcal{ID}_{27, 3}(C_{27}(1,8,10,12))$ = $\mathcal{ID}_{27, 3}(C_{27}(2,7,11,12))$ = \\
    $\mathcal{ID}_{27, 3}(C_{27}(4,5,12,13))$ and the isomorphism graph $\mathcal{I}_{27, 3}(C_{27}(1,3,8,10))$ = $\mathcal{I}_{27, 3}(C_{27}(2,3,7,11))$ = $\mathcal{I}_{27, 3}(C_{27}(3,4,5,13))$ = $\mathcal{I}_{27, 3}(C_{27}(1,6,8,10))$ = $\mathcal{I}_{27, 3}(C_{27}(2,6,7,11))$ = $\mathcal{I}_{27, 3}(C_{27}(4,5,6,13))$ = $\mathcal{I}_{27, 3}(C_{27}(1,8,10,12))$ = $\mathcal{I}_{27, 3}(C_{27}(2,7,11,12))$ = $\mathcal{I}_{27, 3}(C_{27}(4,5,12,13))$ are same as in case (1) and (2) and are given in  Figures 15 and 16, respectively. 
    
\item [\rm (4)] Here, $C_{27}(R)$ = $C_{27}(1,3,6,8,10)$,  

$T2_{27,3 }(C_{27}(1,3,6,8,10))$ = $\{C_{27}(1,3,6,8,10), C_{27}(3,4,5,6,13) = T2_{27,3, 1}(C_{27}(1,3,6,8,10))$,

\hfill $C_{27}(2,3,6,7,11) = T2_{27,3, 2}(C_{27}(1,3,6,8,10))\}$ 

\hfill = $T2_{27,3 }(C_{27}(3,4,5,6,13))$ = $T2_{27,3 }(C_{27}(2,3,6,7,11))$,

$T1_{27}(C_{27}(1,3,6,8,10))$ = $\{C_{27}(1,3,6,8,10), C_{27}(2,6,7,11,12) = T1_{27, 2}(C_{27}(1,3,6,8,10))$,

\hfill $C_{27}(3,4,5,12,13) = T1_{27, 4}(C_{27}(1,3,6,8,10))\}$ 

\hfill = $T1_{27}(C_{27}(2,6,7,11,12))$ = $T1_{27}(C_{27}(3,4,5,12,13))$,

$T2_{27,3 }(C_{27}(2,6,7,11,12))$ = $\{C_{27}(2,6,7,11,12),$ 

\hfill $C_{27}(1,6,8,10,12) = T2_{27,3, 1}(C_{27}(2,6,7,11,12))$,

\hfill $C_{27}(4,5,6,12,13) = T2_{27,3, 2}(C_{27}(2,6,7,11,12))\}$ 

\hfill = $T2_{27,3 }(C_{27}(1,6,8,10,12))$ = $T2_{27,3 }(C_{27}(4,5,6,12,13))$,

$T1_{27}(C_{27}(1,6,8,10,12))$ = $\{C_{27}(1,6,8,10,12), C_{27}(2,3,7,11,12) = T1_{27, 2}(C_{27}1,6,8,10,12))$,

\hfill $C_{27}(3,4,5,6,13) = T1_{27, 4}(C_{27}(1,6,8,10,12))\}$ 

\hfill = $T1_{27}(C_{27}(2,3,7,11,12))$ = $T1_{27}(C_{27}(3,4,5,6,13))$,

$T2_{27,3 }(C_{27}(2,3,7,11,12))$ = $\{C_{27}(2,3,7,11,12),$ 

\hfill $C_{27}(1,3,8,10,12) = T2_{27,3, 1}(C_{27}(2,3,7,11,12))$,

\hfill $C_{27}(3,4,5,13,12) = T2_{27,3, 2}(C_{27}(2,3,7,11,12))\}$ 

\hfill = $T2_{27,3 }(C_{27}(1,3,8,10,12))$ = $T2_{27,3 }(C_{27}(3,4,5,12,13))$,

$T1_{27}(C_{27}(1,3,8,10,12))$ = $\{C_{27}(1,3,8,10,12), C_{27}(2,3,6,7,11) = T1_{27, 2}(C_{27}(1,3.8,10,12))$,

\hfill $C_{27}(4,5,6,12,13) = T1_{27, 4}(C_{27}(1,3,8,10,12))\}$ 

\hfill = $T1_{27}(C_{27}(2,3,6,7,11))$ = $T1_{27}(C_{27}(4,5,6,12,13))$ and
\\
 $V(\mathcal{D})$ = $V(\mathcal{G})$ = $Iso(C_{27}(R))$ = $Iso(C_{27}(1,3,6,8,10))$ = $Iso(C_{27}(2,6,7,11,12))$

\hfill  = $Iso(C_{27}(3,4,5,12,13))$  = $\{C_{27}(1,3,6,8,10), C_{27}(2,6,7,11,12),  C_{27}(3,4,5,12,13),$ 
 	
 \hfill 	 $ C_{27}(3,4,5,6,13), C_{27}(1,6,8,10,12)$, $C_{27}(2,3,7,11,12),$

 	\hfill	$C_{27}(2,3,6,7,11), C_{27}(4,5,6,12,13), C_{27}(1,3,8,10,12)\}$
    
 \hfill   = $Iso(C_{27}(3,4,5,6,13))$ = $Iso(C_{27}(1,6,8,10,12))$ = $Iso(C_{27}(2,3,7,11,12))$ 

\hfill    = $Iso(C_{27}((2,3,6,7,11))$ = $Iso(C_{27}(4,5,6,12,13))$ = $Iso(C_{27}(1,3,8,10,12))$. 
    
 	$\Rightarrow$ The isomorphism digraph $\mathcal{ID}_{27, 3}(C_{27}(1,3,6,8,10))$ = $\mathcal{ID}_{27, 3}(C_{27}(2,6,7,11,12))$ 

\hfill = $\mathcal{ID}_{27, 3}(C_{27}(3,4,5,12,13))$ = $\mathcal{ID}_{27, 3}(C_{27}(3,4,5,6,13))$ = $\mathcal{ID}_{27, 3}(C_{27}(1,6,8,10,12))$ 

\hfill = $\mathcal{ID}_{27, 3}(C_{27}(2,3,7,11,12))$ = $\mathcal{ID}_{27, 3}(C_{27}(2,3,6,7,11))$ = $\mathcal{ID}_{27, 3}(C_{27}(4,5,6,12,13))$ 

\hfill = $\mathcal{ID}_{27, 3}(C_{27}(1,3,8,10,12))$ and the isomorphism graph $\mathcal{I}_{27, 3}(C_{27}(1,3,6,8,10))$ 

\hfill = $\mathcal{I}_{27, 3}(C_{27}(2,6,7,11,12))$ = $\mathcal{I}_{27, 3}(C_{27}(3,4,5,6,13))$ = $\mathcal{I}_{27, 3}(C_{27}(3,4,5,6,13))$ 

\hfill = $\mathcal{I}_{27, 3}(C_{27}(1,6,8,10,12))$ = $\mathcal{I}_{27, 3}(C_{27}(2,3,7,11,12))$ = $\mathcal{I}_{27, 3}(C_{27}(2,3,6,7,11))$ 
\\
= $\mathcal{I}_{27, 3}(C_{27}(4,5,6,12,13))$ = $\mathcal{I}_{27, 3}(C_{27}(1,3,8,10,12))$  which are same as Figures 15 and 16, except one additional element is found in each jump size set $X$ of $C_{27}(X)$. The isomorphism digraph and the isomorphism graph are presented in Figures 17 and 18. Clearly, unlabeled digraphs in Figures 15 and 17 are the same as well as unlabeled graphs in Figures 16 and 18.
\end{enumerate}

{\rm 	\begin{center}
 		\begin{tikzpicture}  
 		[scale=.32,auto=center,every node/.style={draw,circle}] 
 		
 		\node (1) at (0,0) {\tiny{$C_{27}(U_1)$}};
		\node (2) at (9,6) {\tiny{$C_{27}(W_1)$}};	
		\node (3) at (-9,6){\tiny{$C_{27}(V_1)$}};

\draw[->, line width=0.2mm] [blue](1)[dashed] to [out=50,in=200] (2);
\draw[->, line width=0.2mm] [blue](2)[dashed] to [out=230,in=20] (1);
\draw[->, line width=0.2mm] [blue](2)[dashed] to [out=175,in=5] (3);
\draw[->, line width=0.2mm] [blue](3)[dashed] to [out=355,in=185](2);
\draw[->, line width=0.2mm] [blue](3)[dashed] to [out=340,in=140](1);
\draw[->, line width=0.2mm] [blue](1)[dashed] to [out=160,in=310](3);
		
\node [scale=.0] (11) at (5,2) [label=90:\tiny{$T2_{27,3,2}$}]{};
\node [scale=.0] (12) at (5,-1) [label=90:\tiny{$T2_{27,3,1}$}]{};
\node [scale=.0] (13) at (-4.5,4.5)[label=90:\tiny{$T2_{27,3,2}$}]{};
\node [scale=.0] (14) at (4.8,3.5)[label=90:\tiny{$T2_{27,3,1}$}]{};
\node [scale=.0](15) at (-6,.5)[label=90:\tiny{$T2_{27,3,1}$}]{};
\node [scale=.0] (16) at (-4,1.5) [label=90:\tiny{$T2_{27,3,2}$}]{};
		\node (4) at (0,10) {\tiny{$C_{27}(U_2)$}};
		\node (5) at (9,16) {\tiny{$C_{27}(W_2)$}};	
		\node (6) at (-9,16){\tiny{$C_{27}(V_2)$}};
		
\draw[->, line width=0.2mm] [blue](4)[dashed] to [out=50,in=200] (5);
\draw[->, line width=0.2mm] [blue](5)[dashed] to [out=230,in=20] (4);
\draw[->, line width=0.2mm] [blue](5)[dashed] to [out=175,in=5] (6);
\draw[->, line width=0.2mm] [blue](6)[dashed] to [out=355,in=185] (5);
\draw[->, line width=0.2mm] [blue](6)[dashed] to [out=340,in=140] (4);
\draw[->, line width=0.2mm] [blue](4)[dashed] to [out=160,in=320] (6);
		
\node [scale=.0] (17) at (5,12) [label=90:\tiny{$T2_{27,3,2}$}]{};
\node [scale=.0] (18) at (5.2,9.7) [label=90:\tiny{$T2_{27,3,1}$}]{};
\node [scale=.0](19) at (-3.5,14.7)[label=90:\tiny{$T2_{27,3,2}$}]{};
\node [scale=.0] (20) at (4.1,13.8)[label=90:\tiny{$T2_{27,3,1}$}]{};
\node [scale=.0](21) at (-6,11.5)[label=90:\tiny{$T2_{27,3,1}$}]{};
\node [scale=.0] (22) at (-2.5,10) [label=90:\tiny{$T2_{27,3,2}$}]{};
		\node (7) at (0,20) {\tiny{$C_{27}(U_3)$}};
		\node (8) at (9,26) {\tiny{$C_{27}(W_3)$}};	
		\node (9) at (-9,26){\tiny{$C_{27}(V_3)$}};
		
\draw[->, line width=0.2mm] [blue](7)[dashed] to [out=50,in=200] (8);
\draw[->, line width=0.2mm] [blue](8)[dashed] to [out=230,in=20] (7);
\draw[->, line width=0.2mm] [blue](8)[dashed] to [out=175,in=5] (9);
\draw[->, line width=0.2mm] [blue](9)[dashed] to [out=355,in=185] (8);
\draw[->, line width=0.2mm] [blue](9)[dashed] to [out=340,in=140] (7);
\draw[->, line width=0.2mm] [blue](7)[dashed] to [out=160,in=320] (9);
		
\node [scale=.0] (17) at (5,22) [label=90:\tiny{$T2_{27,3,2}$}]{};
\node [scale=.0](18) at (3.5,19.25)[label=90:\tiny{$T2_{27,3,1}$}]{};
\node [scale=.0](19) at (-4,24.7) [label=90:\tiny{$T2_{27,3,2}$}]{};
\node [scale=.0] (20) at (2,23.8) [label=90:\tiny{$T2_{27,3,1}$}]{};
\node [scale=.0](21) at (-5.5,21)[label=90:\tiny{$T2_{27,3,1}$}]{};
\node [scale=.0](22) at (-1.25,20.5)[label=90:\tiny{$T2_{27,3,2}$}]{};
		\draw[->, line width=0.2mm] [blue](1) to [out=110,in=250] (4);
		\draw[->, line width=0.2mm] [blue](4) to [out=290,in=70] (1);
		\draw[->, line width=0.2mm] [blue](1) to [out=120,in=240] (7);
		\draw[->, line width=0.2mm] [blue](7) to [out=300,in=60] (1);
  	\draw[->, line width=0.2mm] [blue](2) to [out=110,in=250] (5);
		\draw[->, line width=0.2mm] [blue](5) to [out=290,in=70] (2);
		\draw[->, line width=0.2mm] [blue](2) to [out=120,in=240] (8);
		\draw[->, line width=0.2mm] [blue](8) to [out=300,in=60] (2);
  	\draw[->, line width=0.2mm] [blue](3) to [out=110,in=250] (6);
		\draw[->, line width=0.2mm] [blue](6) to [out=290,in=70] (3);
		\draw[->, line width=0.2mm] [blue](3) to [out=120,in=240] (9);
		\draw[->, line width=0.2mm] [blue](9) to [out=300,in=60] (3);
 		\draw[->, line width=0.2mm] [blue](4) to [out=110,in=250] (7);
		\draw[->, line width=0.2mm] [blue](7) to [out=290,in=70] (4);
		\draw[->, line width=0.2mm] [blue](5) to [out=110,in=250] (8);
		\draw[->, line width=0.2mm] [blue](8) to [out=290,in=70] (5);
		\draw[->, line width=0.2mm] [blue](6) to [out=110,in=250] (9);
		\draw[->, line width=0.2mm] [blue](9) to [out=290,in=70] (6);
		
\node [scale=.0] (39) at (-1,5.5) [label=90:\tiny{$T1_{27,2}$}]{};
\node [scale=.0] (45) at (1,2.5) [label=90:\tiny{$T1_{27,4}$}]{};
\node [scale=.0] (43) at (-3.5,7.5) [label=90:\tiny{$T1_{27,4}$}]{};
\node [scale=.0] (40) at (4,7.5) [label=90:\tiny{$T1_{27,2}$}]{};
\node [scale=.0] (33) at (10,9) [label=90:\tiny{$T1_{27,4}$}]{};
\node [scale=.0] (44) at (8,11) [label=90:\tiny{$T1_{27,2}$}]{};
\node [scale=.0] (35) at (6.5,19) [label=90:\tiny{$T1_{27,4}$}]{};
\node [scale=.0] (34) at (12.5,13) [label=90:\tiny{$T1_{27,2}$}]{};

\node [scale=.0] (47) at (-10,11) [label=90:\tiny{$T1_{27,2}$}]{};
\node [scale=.0] (42) at (-8,9) [label=90:\tiny{$T1_{27,4}$}]{};
\node [scale=.0] (41) at (-12,19) [label=90:\tiny{$T1_{27,4}$}]{};
\node [scale=.0] (36) at (-6,10) [label=90:\tiny{$T1_{27,2}$}]{};

\node [scale=.0] (46) at (1.5,12) [label=90:\tiny{$T1_{27,4}$}]{};
\node [scale=.0] (37) at (-1,13) [label=90:\tiny{$T1_{27,2}$}]{};
\node [scale=.0] (31) at (8.5,21) [label=90:\tiny{$T1_{27,2}$}]{};
\node [scale=.0] (38) at (10,18) [label=90:\tiny{$T1_{27,4}$}]{};
\node [scale=.0] (31) at (-9.5,21) [label=90:\tiny{$T1_{27,2}$}]{};
\node [scale=.0] (32) at (-8,18) [label=90:\tiny{$T1_{27,4}$}]{};
				
		\node (1) at (26,2) {\tiny{$C_{27}(U_1)$}};
		\node (2) at (33,7) {\tiny{$C_{27}(W_1)$}};	
		\node (3) at (19,7)  {\tiny{$C_{27}(V_1)$}};

\draw[line width=0.2mm] [blue](1)[dashed] to (2);
\draw[line width=0.2mm] [blue](2)[dashed] to (3);
\draw[line width=0.2mm] [blue](3)[dashed] to (1);
		
\node [scale=.0] (11) at (31,2) [label=90:\tiny{$T2_{27,3}$}]{};
\node [scale=.0] (13) at (28.5,5.5) [label=90:\tiny{$T2_{27,3}$}]{};
\node [scale=.0] (15) at (22.5,2.5)[label=90:\tiny{$T2_{27,3}$}]{};
		\node (4) at (26,10.5) {\tiny{$C_{27}(U_2)$}};
		\node (5) at (33,15.5) {\tiny{$C_{27}(W_2)$}};	
		\node (6) at (19,15.5)  {\tiny{$C_{27}(V_2)$}};
		
\draw[line width=0.2mm] [blue](4)[dashed] to (5);
\draw[line width=0.2mm] [blue](5)[dashed] to (6);
\draw[line width=0.2mm] [blue](6)[dashed] to (4);
		
\node [scale=.0] (17) at (28.5,10.75) [label=90:\tiny{$T2_{27,3}$}]{};
\node [scale=.0] (19) at (28.5,14)[label=90:\tiny{$T2_{27,3}$}]{};
\node [scale=.0](21) at (22.5,11.5)[label=90:\tiny{$T2_{27,3}$}]{};
		\node (7) at (26,19) {\tiny{$C_{27}(U_3)$}};
		\node (8) at (33,24) {\tiny{$C_{27}(W_3)$}};	
		\node (9) at (19,24)  {\tiny{$C_{27}(V_3)$}};
		
\draw[line width=0.2mm] [blue](7)[dashed] to (8);
\draw[line width=0.2mm] [blue](8)[dashed] to (9);
\draw[line width=0.2mm] [blue](9)[dashed] to (7);
		
\node [scale=.0] (17) at (23,20) [label=90:\tiny{$T2_{27,3}$}]{};
\node [scale=.0] (19) at (29,20)[label=90:\tiny{$T2_{27,3}$}]{};
\node [scale=.0](21) at (26.5,22.5) [label=90:\tiny{$T2_{27,3}$}]{};
		\draw[line width=0.2mm] [blue](1) to (4);
		\draw[line width=0.2mm] [blue](1) to [out=120,in=240] (7);
		\draw[line width=0.2mm] [blue](2) to (5);
		\draw[line width=0.2mm] [blue](2) to [out=120,in=240] (8);
		\draw[line width=0.2mm] [blue](3) to (6);
		\draw[line width=0.2mm] [blue](3) to [out=120,in=240] (9);
		\draw[line width=0.2mm] [blue](4) to (7);
		\draw[line width=0.2mm] [blue](5) to (8);
		\draw[line width=0.2mm] [blue](6) to (9);
		
\node [scale=.0] (39) at (27.25,4) [label=90:\tiny{$T1_{27}$}]{};
\node [scale=.0] (43) at (22,8) [label=90:\tiny{$T1_{27}$}]{};
\node [scale=.0] (33) at (34.2,9.5) [label=90:\tiny{$T1_{27}$}]{};

\node [scale=.0] (47) at (20.2,10) [label=90:\tiny{$T1_{27}$}]{};
\node [scale=.0] (46) at (27.2,12.5) [label=90:\tiny{$T1_{27}$}]{};
\node [scale=.0] (35) at (34.2,18) [label=90:\tiny{$T1_{27}$}]{};

\node [scale=.0] (41) at (16,19) [label=90:\tiny{$T1_{27}$}]{};
\node [scale=.0] (32) at (20.2,18) [label=90:\tiny{$T1_{27}$}]{};
\node [scale=.0] (31) at (30.5,18) [label=90:\tiny{$T1_{27}$}]{};
 \end{tikzpicture}		

	\vspace{.1cm}		
 {\small  \hspace{1.5cm}	Fig.  17.   Digraph $\mathcal{D}$  \hspace{5.5cm} Figure  18. Graph $\mathcal{G}$ 
\\		  
 \vspace{.1cm}	
   \hspace{1.4cm} with	$U_1$ = $\{1,3,6,8,10\}$, ~$V_1$ = $\{3,4,5,6,13\}$, ~$W_1$ = $\{2,3,6,7,11\}$,   
 			
	\hspace{2.5cm}	$U_2$ = $\{2,6,7,11,12\}$, $V_2$ = $\{1,6,8,10,12\}$, $W_2$ = $\{4,5,6,12,13\}$,  
 			
	\hspace{2.5cm}	$U_3$ = $\{3,4,5,12,13\}$, $V_3$ = $\{2,3,7,11,12\}$, $W_3$ = $\{1,3,8,10,12\}$.  
} 
\end{center} }

\begin{enumerate}
\item [\rm (5)] Here, $C_{27}(R)$ = $C_{27}(1,3,8,9,10)$,  

$T2_{27,3 }(C_{27}(1,3,8,9,10))$ = $\{C_{27}(1,3,8,9,10), C_{27}(3,4,5,9,13) = T2_{27,3, 1}(C_{27}(1,3,8,9,10))$,

\hfill $C_{27}(2,3,7,9,11) = T2_{27,3, 2}(C_{27}(1,3,8,9,10))\}$ 

\hfill = $T2_{27,3 }(C_{27}(3,4,5,9,13))$ = $T2_{27,3 }(C_{27}(2,3,7,9,11))$,

$T1_{27}(C_{27}(1,3,8,9,10))$ = $\{C_{27}(1,3,8,9,10), C_{27}(2,6,7,9,11) = T1_{27, 2}(C_{27}(1,3,8,9,10))$,

\hfill $C_{27}(4,5,9,12,13) = T1_{27, 4}(C_{27}(1,3,8,9,10))\}$ 

\hfill = $T1_{27}(C_{27}(2,6,7,9,11))$ = $T1_{27}(C_{27}(4,5,9,12,13))$,

$T2_{27,3 }(C_{27}(2,6,7,9,11))$ = $\{C_{27}(2,6,7,9,11), C_{27}(1,6,8,9,10) = T2_{27,3, 1}(C_{27}(2,6,7,9,11))$,

\hfill $C_{27}(4,5,6,9,13) = T2_{27,3, 2}(C_{27}(2,6,7,9,11))\}$ 

\hfill = $T2_{27,3 }(C_{27}(1,6,8,9,10))$ = $T2_{27,3 }(C_{27}(4,5,6,9,13))$,

$T1_{27}(C_{27}(1,6,8,9,10))$ = $\{C_{27}(1,6,8,9,10), C_{27}(2,7,9,11,12) = T1_{27, 2}(C_{27}1,6,8,9,10))$,

\hfill $C_{27}(3,4,5,9,13) = T1_{27, 4}(C_{27}(1,6,8,9,10))\}$ 

\hfill = $T1_{27}(C_{27}(2,7,9,11,12))$ = $T1_{27}(C_{27}(3,4,5,9,13))$,

$T2_{27,3 }(C_{27}(2,7,9,11,12))$ = $\{C_{27}(2,7,9,11,12),$

\hfill $C_{27}(1,8,9,10,12) = T2_{27,3, 1}(C_{27}(2,7,9,11,12))$, 

\hfill $C_{27}(4,5,9,13,12) = T2_{27,3, 2}(C_{27}(2,7,9,11,12))\}$ 

\hfill = $T2_{27,3 }(C_{27}(1,8,9,10,12))$ = $T2_{27,3 }(C_{27}(4,5,9,12,13))$,

$T1_{27}(C_{27}(1,8,9,10,12))$ = $\{C_{27}(1,8,9,10,12), C_{27}(2,3,7,9,11) = T1_{27, 2}(C_{27}(1,8,9,10,12))$,

\hfill $C_{27}(4,5,6,9,13) = T1_{27, 4}(C_{27}(1,8,9,10,12))\}$ 

\hfill = $T1_{27}(C_{27}(2,3,7,9,11))$ = $T1_{27}(C_{27}(4,5,6,9,13))$ and
\\
 $V(\mathcal{D})$ = $V(\mathcal{G})$ = $Iso(C_{27}(R))$ = $Iso(C_{27}(1,3,8,9,10))$ = $Iso(C_{27}(2,6,7,9,11))$ 

\hfill = $Iso(C_{27}(4,5,9,12,13))$ = $\{C_{27}(1,3,8,9,10), C_{27}(2,6,7,9,11),  C_{27}(4,5,9,12,13),$   
 	
 \hfill 	 $C_{27}(3,4,5,9,13), C_{27}(1,6,8,9,10)$, $C_{27}(2,7,9,11,12),$

 	\hfill	$C_{27}(2,3,7,9,11), C_{27}(4,5,6,9,13), C_{27}(1,8,9,10,12)\}$
    
 \hfill   = $Iso(C_{27}(3,4,5,9,13))$ = $Iso(C_{27}(1,6,8,9,10))$ = $Iso(C_{27}(2,7,9,11,12))$ 

\hfill    = $Iso(C_{27}((2,3,7,9,11))$ = $Iso(C_{27}(4,5,6,9,13))$ = $Iso(C_{27}(1,8,9,10,12))$. 
    
  $\Rightarrow$	The isomorphism digraph $\mathcal{ID}_{27, 3}(C_{27}(1,3,8,9,10))$ = $\mathcal{ID}_{27, 3}(C_{27}(2,6,7,9,11))$ 

\hfill = $\mathcal{ID}_{27, 3}(C_{27}(4,5,9,12,13))$ = $\mathcal{ID}_{27, 3}(C_{27}(3,4,5,9,13))$ = $\mathcal{ID}_{27, 3}(C_{27}(1,6,8,9,10))$ 

\hfill = $\mathcal{ID}_{27, 3}(C_{27}(2,7,9,11,12))$ = $\mathcal{ID}_{27, 3}(C_{27}(2,3,7,9,11))$ = $\mathcal{ID}_{27, 3}(C_{27}(4,5,6,9,13))$  

\hfill = $\mathcal{ID}_{27, 3}(C_{27}(1,8,9,10,12))$ and the isomorphism graph $\mathcal{I}_{27, 3}(C_{27}(1,3,8,9,10))$ 

\hfill = $\mathcal{I}_{27, 3}(C_{27}(2,6,7,9,11))$ = $\mathcal{I}_{27, 3}(C_{27}(4,5,9,12,13))$ = $\mathcal{I}_{27, 3}(C_{27}(3,4,5,9,13))$ 

\hfill  = $\mathcal{I}_{27, 3}(C_{27}(1,6,8,9,10))$ = $\mathcal{I}_{27, 3}(C_{27}(2,7,9,11,12))$ = $\mathcal{I}_{27, 3}(C_{27}(2,3,7,9,11))$ 

\hfill = $\mathcal{I}_{27, 3}(C_{27}(4,5,6,9,13))$ = $\mathcal{I}_{27, 3}(C_{27}(1,8,9,10,12))$ which are given in Figures 19 and 20. 

\item [\rm (6)] Here, $C_{27}(R)$ = $C_{27}(1,3,8,10,12)$, 

$T2_{27,3 }(C_{27}(1,3,8,10,12))$ = $\{C_{27}(1,3,8,10,12),$ 

\hfill $C_{27}(3,4,5,12,13) = T2_{27,3, 1}(C_{27}(1,3,8,10,12))$,

\hfill $C_{27}(2,3,7,11,12) = T2_{27,3, 2}(C_{27}(1,3,8,10,12))\}$ 

\hfill = $T2_{27,3 }(C_{27}(3,4,5,12,13))$ = $T2_{27,3 }(C_{27}(2,3,7,11,12))$,

$T1_{27}(C_{27}(1,3,8,10,12))$ = $\{C_{27}(1,3,8,10,12), C_{27}(2,3,6,7,11) = T1_{27, 2}(C_{27}(1,3,8,10,12))$,

\hfill $C_{27}(4,5,6,12,13) = T1_{27, 4}(C_{27}(1,3,8,10,12))\}$ 

\hfill = $T1_{27}(C_{27}(2,3,6,7,11))$ = $T1_{27}(C_{27}(4,5,6,12,13))$,

$T2_{27,3 }(C_{27}(2,3,6,7,11))$ = $\{C_{27}(2,3,6,7,11), C_{27}(1,3,6,8,10) = T2_{27,3, 1}(C_{27}(2,3,6,7,11))$,

\hfill $C_{27}(3,4,5,6,13) = T2_{27,3, 2}(C_{27}(2,3,6,7,11))\}$ 

\hfill = $T2_{27,3 }(C_{27}(1,3,6,8,10))$ = $T2_{27,3 }(C_{27}(3,4,5,6,13))$,

$T1_{27}(C_{27}(1,3,6,8,10))$ = $\{C_{27}(1,3,6,8,10), C_{27}(2,6,7,11,12) = T1_{27, 2}(C_{27}1,3,6,8,10))$,

\hfill $C_{27}(3,4,5,12,13) = T1_{27, 4}(C_{27}(1,3,6,8,10))\}$ 

\hfill = $T1_{27}(C_{27}(2,6,7,11,12))$ = $T1_{27}(C_{27}(3,4,5,12,13))$,

$T2_{27,3 }(C_{27}(2,6,7,11,12))$ = $\{C_{27}(2,6,7,11,12),$ 

\hfill $C_{27}(1,6,8,10,12) = T2_{27,3, 1}(C_{27}(2,6,7,11,12))$,

\hfill $C_{27}(4,5,6,12,13) = T2_{27,3, 2}(C_{27}(2,6,7,11,12))\}$ 

\hfill = $T2_{27,3 }(C_{27}(1,6,8,10,12))$ = $T2_{27,3 }(C_{27}(4,5,6,12,13))$,

$T1_{27}(C_{27}(1,6,8,10,12))$ = $\{C_{27}(1,6,8,10,12), C_{27}(2,3,7,11,12) = T1_{27, 2}(C_{27}(1,6,8,10,12))$,

\hfill $C_{27}(3,4,5,6,13) = T1_{27, 4}(C_{27}(1,6,8,10,12))\}$ 

\hfill = $T1_{27}(C_{27}(2,3,7,11,12))$ = $T1_{27}(C_{27}(3,4,5,6,13))$ and
\\
 $V(\mathcal{D})$ = $V(\mathcal{G})$ = $Iso(C_{27}(R))$ = $Iso(C_{27}(1,3,8,10,12))$ 

\hfill = $Iso(C_{27}(2,3,6,7,11))$ = $Iso(C_{27}(4,5,6,12,13))$  = $\{C_{27}(1,3,8,10,12), C_{27}(2,3,6,7,11), $ 
 	
 \hfill 	 $C_{27}(4,5,6,12,13), C_{27}(3,4,5,12,13),  C_{27}(1,3,6,8,10), C_{27}(2,6,7,11,12)$, 

 	\hfill	$C_{27}(2,3,7,11,12), C_{27}(3,4,5,6,13), C_{27}(1,6,8,10,12)\}$
    
 \hfill   = $Iso(C_{27}(3,4,5,12,13))$ = $Iso(C_{27}(1,3,6,8,10))$ = $Iso(C_{27}(2,6,7,11,12))$ 

\hfill    = $Iso(C_{27}((2,3,7,11,12))$ = $Iso(C_{27}(3,4,5,6,13))$ = $Iso(C_{27}(1,6,8,10,12))$. 
    
    Corresponding to $V(\mathcal{D})$ = $V(\mathcal{G})$, we draw the isomorphism digraph $\mathcal{ID}_{27, 3}(C_{27}(1,3,8,10,12))$ w.r.t. $m$ = 3 and from this digraph we obtain the isomorphism graph $\mathcal{ID}_{27, 3}(C_{27}(1,3,8,10,12))$ w.r.t. $m$ = 3. The isomorphism digraph is $\mathcal{ID}_{27, 3}(C_{27}(1,3,8,10,12))$ = $\mathcal{ID}_{27, 3}(C_{27}(2,3,6,7,11))$ = $\mathcal{ID}_{27, 3}(C_{27}(4,5,6,12,13))$ = $\mathcal{ID}_{27, 3}(C_{27}(3,4,5,12,13))$ = $\mathcal{ID}_{27, 3}(C_{27}(1,3,6,8,10))$ 

\hfill = $\mathcal{ID}_{27, 3}(C_{27}(2,6,7,11,12))$ = $\mathcal{ID}_{27, 3}(C_{27}(2,3,7,11,12))$ = $\mathcal{ID}_{27, 3}(C_{27}(3,4,5,6,13))$ \\
== $\mathcal{ID}_{27, 3}(C_{27}(1,6,8,10,12))$ and the isomorphism graph $\mathcal{I}_{27, 3}(C_{27}(1,3,8,10,12))$ 

\hfill = $\mathcal{I}_{27, 3}(C_{27}(2,3,6,7,11))$ = $\mathcal{I}_{27, 3}(C_{27}(4,5,6,12,13))$ = $\mathcal{I}_{27, 3}(C_{27}(3,4,5,12,13))$ 

\hfill = $\mathcal{I}_{27, 3}(C_{27}(1,3,6,8,10))$ = $\mathcal{I}_{27, 3}(C_{27}(2,6,7,11,12))$ = $\mathcal{I}_{27, 3}(C_{27}(2,3,7,11,12))$ 

\hfill = $\mathcal{I}_{27, 3}(C_{27}(3,4,5,6,13))$ = $\mathcal{I}_{27, 3}(C_{27}(1,6,8,10,12))$ are given in Figures 17 and 18 which are same as graphs given in Figures 15 and 16, respectively, except one additional element is found in each jump size $X$ of $C_{27}(X)$, the vertex labels of the graphs in this case. 

\item [\rm (7)]	In this case, $C_{27}(R)$ = $C_{27}(1,6,8,9,10)$,  $C_{27}(1,6,8,9,10)$ $\cong_{T2_{27,3, 1}}$ $C_{27}(2,6,7,9,11)$ and 
 	
 	$C_{27}(1,6,8,9,10)$ $\cong_{T2_{27,3, 2}}$ $C_{27}(4,5,6,9,13)$. This is similar to case (5). In this case,  
 	\\
 	$V(\mathcal{D})$ = $V(\mathcal{G})$ = $Iso(C_{27}(1,6,8,9,10))$ = $Iso(C_{27}(2,6,7,9,11))$ = $Iso(C_{27}(4,5,6,9,13))$ 
 	
 	= $Iso(C_{27}(1,8,9,10,12))$ = $Iso(C_{27}(2,7,9,11,12))$ = $Iso(C_{27}(4,5,9,12,13))$ 
 	
 	= $Iso(C_{27}(1,3,8,9,10))$ = $Iso(C_{27}(2,3,7,9,11))$ = $Iso(C_{27}(3,4,5,9,13))$, 

the isomorphism digraph $\mathcal{ID}_{27, 3}(C_{27}(1,6,8,9,10))$ = $\mathcal{ID}_{27, 3}(C_{27}(2,6,7,9,11))$ 

= $\mathcal{ID}_{27, 3}(C_{27}(4,5,6,9,13))$ = $\mathcal{ID}_{27, 3}(C_{27}(1,8,9,10,12,))$ = $\mathcal{ID}_{27, 3}(C_{27}(2,7,9,11,12))$ 

= $\mathcal{ID}_{27, 3}(C_{27}(4,5,9,12,13))$ = $\mathcal{ID}_{27, 3}(C_{27}(1,3,8,9,10))$ = $\mathcal{ID}_{27, 3}(C_{27}(2,3,7,9,11))$ 

 = $\mathcal{ID}_{27, 3}(C_{27}(3,4,5,9,13))$ and 	the isomorphism graph $\mathcal{I}_{27, 3}(C_{27}(1,6,8,9,10))$ 

= $\mathcal{I}_{27, 3}(C_{27}(2,6,7,9,11))$ = $\mathcal{I}_{27, 3}(C_{27}(4,5,6,9,13))$ = $\mathcal{I}_{27, 3}(C_{27}(1,8,9,10,12,))$ 

= $\mathcal{I}_{27, 3}(C_{27}(2,7,9,11,12))$ = $\mathcal{I}_{27, 3}(C_{27}(4,5,9,12,13))$ = $\mathcal{I}_{27, 3}(C_{27}(1,3,8,9,10))$ 

= $\mathcal{I}_{27, 3}(C_{27}(2,3,7,9,11))$ = $\mathcal{I}_{27, 3}(C_{27}(3,4,5,9,13))$. The isomorphism digraph 

$\mathcal{D}$ = $\mathcal{ID}_{27, 3}(C_{27}(1,6,8,9,10))$ and the isomorphism graph $\mathcal{G}$ = $\mathcal{I}_{27, 3}(C_{27}(1,6,8,9,10))$ are given in Figures 19 and 20.
\end{enumerate}

{\rm 	\begin{center}
 		\begin{tikzpicture}  
 		[scale=.32,auto=center,every node/.style={draw,circle}] 
 		
 		\node (1) at (0,0) {\tiny{$C_{27}(R_1)$}};
		\node (2) at (9,6) {\tiny{$C_{27}(T_1)$}};	
		\node (3) at (-9,6){\tiny{$C_{27}(S_1)$}};

\draw[->, line width=0.2mm] [blue](1)[dashed] to [out=50,in=200] (2);
\draw[->, line width=0.2mm] [blue](2)[dashed] to [out=230,in=20] (1);
\draw[->, line width=0.2mm] [blue](2)[dashed] to [out=175,in=5] (3);
\draw[->, line width=0.2mm] [blue](3)[dashed] to [out=355,in=185](2);
\draw[->, line width=0.2mm] [blue](3)[dashed] to [out=340,in=140](1);
\draw[->, line width=0.2mm] [blue](1)[dashed] to [out=160,in=310](3);
		
\node [scale=.0] (11) at (5,2) [label=90:\tiny{$T2_{27,3,2}$}]{};
\node [scale=.0] (12) at (5,-1) [label=90:\tiny{$T2_{27,3,1}$}]{};
\node [scale=.0] (13) at (-4.5,4.5)[label=90:\tiny{$T2_{27,3,2}$}]{};
\node [scale=.0] (14) at (4.8,3.5)[label=90:\tiny{$T2_{27,3,1}$}]{};
\node [scale=.0](15) at (-6,.5)[label=90:\tiny{$T2_{27,3,1}$}]{};
\node [scale=.0] (16) at (-4,1.5) [label=90:\tiny{$T2_{27,3,2}$}]{};
		\node (4) at (0,10) {\tiny{$C_{27}(R_2)$}};
		\node (5) at (9,16) {\tiny{$C_{27}(T_2)$}};	
		\node (6) at (-9,16){\tiny{$C_{27}(S_2)$}};
		
\draw[->, line width=0.2mm] [blue](4)[dashed] to [out=50,in=200] (5);
\draw[->, line width=0.2mm] [blue](5)[dashed] to [out=230,in=20] (4);
\draw[->, line width=0.2mm] [blue](5)[dashed] to [out=175,in=5] (6);
\draw[->, line width=0.2mm] [blue](6)[dashed] to [out=355,in=185] (5);
\draw[->, line width=0.2mm] [blue](6)[dashed] to [out=340,in=140] (4);
\draw[->, line width=0.2mm] [blue](4)[dashed] to [out=160,in=320] (6);
		
\node [scale=.0] (17) at (5,12) [label=90:\tiny{$T2_{27,3,2}$}]{};
\node [scale=.0] (18) at (5.2,9.7) [label=90:\tiny{$T2_{27,3,1}$}]{};
\node [scale=.0](19) at (-3.5,14.7)[label=90:\tiny{$T2_{27,3,2}$}]{};
\node [scale=.0] (20) at (4.1,13.8)[label=90:\tiny{$T2_{27,3,1}$}]{};
\node [scale=.0](21) at (-6,11.5)[label=90:\tiny{$T2_{27,3,1}$}]{};
\node [scale=.0] (22) at (-2.5,10) [label=90:\tiny{$T2_{27,3,2}$}]{};
		\node (7) at (0,20) {\tiny{$C_{27}(R_3)$}};
		\node (8) at (9,26) {\tiny{$C_{27}(T_3)$}};	
		\node (9) at (-9,26){\tiny{$C_{27}(S_3)$}};
		
\draw[->, line width=0.2mm] [blue](7)[dashed] to [out=50,in=200] (8);
\draw[->, line width=0.2mm] [blue](8)[dashed] to [out=230,in=20] (7);
\draw[->, line width=0.2mm] [blue](8)[dashed] to [out=175,in=5] (9);
\draw[->, line width=0.2mm] [blue](9)[dashed] to [out=355,in=185] (8);
\draw[->, line width=0.2mm] [blue](9)[dashed] to [out=340,in=140] (7);
\draw[->, line width=0.2mm] [blue](7)[dashed] to [out=160,in=320] (9);
		
\node [scale=.0] (17) at (5,22) [label=90:\tiny{$T2_{27,3,2}$}]{};
\node [scale=.0](18) at (3.5,19.25)[label=90:\tiny{$T2_{27,3,1}$}]{};
\node [scale=.0](19) at (-4,24.7) [label=90:\tiny{$T2_{27,3,2}$}]{};
\node [scale=.0] (20) at (2,23.8) [label=90:\tiny{$T2_{27,3,1}$}]{};
\node [scale=.0](21) at (-5.5,21)[label=90:\tiny{$T2_{27,3,1}$}]{};
\node [scale=.0](22) at (-1.25,20.5)[label=90:\tiny{$T2_{27,3,2}$}]{};
		\draw[->, line width=0.2mm] [blue](1) to [out=110,in=250] (4);
		\draw[->, line width=0.2mm] [blue](4) to [out=290,in=70] (1);
		\draw[->, line width=0.2mm] [blue](1) to [out=120,in=240] (7);
		\draw[->, line width=0.2mm] [blue](7) to [out=300,in=60] (1);
  	\draw[->, line width=0.2mm] [blue](2) to [out=110,in=250] (5);
		\draw[->, line width=0.2mm] [blue](5) to [out=290,in=70] (2);
		\draw[->, line width=0.2mm] [blue](2) to [out=120,in=240] (8);
		\draw[->, line width=0.2mm] [blue](8) to [out=300,in=60] (2);
  	\draw[->, line width=0.2mm] [blue](3) to [out=110,in=250] (6);
		\draw[->, line width=0.2mm] [blue](6) to [out=290,in=70] (3);
		\draw[->, line width=0.2mm] [blue](3) to [out=120,in=240] (9);
		\draw[->, line width=0.2mm] [blue](9) to [out=300,in=60] (3);
 		\draw[->, line width=0.2mm] [blue](4) to [out=110,in=250] (7);
		\draw[->, line width=0.2mm] [blue](7) to [out=290,in=70] (4);
		\draw[->, line width=0.2mm] [blue](5) to [out=110,in=250] (8);
		\draw[->, line width=0.2mm] [blue](8) to [out=290,in=70] (5);
		\draw[->, line width=0.2mm] [blue](6) to [out=110,in=250] (9);
		\draw[->, line width=0.2mm] [blue](9) to [out=290,in=70] (6);
		
\node [scale=.0] (39) at (-1,5.5) [label=90:\tiny{$T1_{27,2}$}]{};
\node [scale=.0] (45) at (1,2.5) [label=90:\tiny{$T1_{27,4}$}]{};
\node [scale=.0] (43) at (-3.5,7.5) [label=90:\tiny{$T1_{27,4}$}]{};
\node [scale=.0] (40) at (4,7.5) [label=90:\tiny{$T1_{27,2}$}]{};
\node [scale=.0] (33) at (10,9) [label=90:\tiny{$T1_{27,4}$}]{};
\node [scale=.0] (44) at (8,11) [label=90:\tiny{$T1_{27,2}$}]{};
\node [scale=.0] (35) at (6.5,19) [label=90:\tiny{$T1_{27,4}$}]{};
\node [scale=.0] (34) at (12.5,13) [label=90:\tiny{$T1_{27,2}$}]{};

\node [scale=.0] (47) at (-10,11) [label=90:\tiny{$T1_{27,2}$}]{};
\node [scale=.0] (42) at (-8,9) [label=90:\tiny{$T1_{27,4}$}]{};
\node [scale=.0] (41) at (-12,19) [label=90:\tiny{$T1_{27,4}$}]{};
\node [scale=.0] (36) at (-6,10) [label=90:\tiny{$T1_{27,2}$}]{};

\node [scale=.0] (46) at (1.5,12) [label=90:\tiny{$T1_{27,4}$}]{};
\node [scale=.0] (37) at (-1,13) [label=90:\tiny{$T1_{27,2}$}]{};
\node [scale=.0] (31) at (8.5,21) [label=90:\tiny{$T1_{27,2}$}]{};
\node [scale=.0] (38) at (10,18) [label=90:\tiny{$T1_{27,4}$}]{};
\node [scale=.0] (31) at (-9.5,21) [label=90:\tiny{$T1_{27,2}$}]{};
\node [scale=.0] (32) at (-8,18) [label=90:\tiny{$T1_{27,4}$}]{};
				
		\node (1) at (26,2) {\tiny{$C_{27}(R_1)$}};
		\node (2) at (33,7) {\tiny{$C_{27}(T_1)$}};	
		\node (3) at (19,7)  {\tiny{$C_{27}(S_1)$}};

\draw[line width=0.2mm] [blue](1)[dashed] to (2);
\draw[line width=0.2mm] [blue](2)[dashed] to (3);
\draw[line width=0.2mm] [blue](3)[dashed] to (1);
		
\node [scale=.0] (11) at (31,2) [label=90:\tiny{$T2_{27,3}$}]{};
\node [scale=.0] (13) at (28.5,5.5) [label=90:\tiny{$T2_{27,3}$}]{};
\node [scale=.0] (15) at (22.5,2.5)[label=90:\tiny{$T2_{27,3}$}]{};
		\node (4) at (26,10.5) {\tiny{$C_{27}(R_2)$}};
		\node (5) at (33,15.5) {\tiny{$C_{27}(T_2)$}};	
		\node (6) at (19,15.5)  {\tiny{$C_{27}(S_2)$}};
		
\draw[line width=0.2mm] [blue](4)[dashed] to (5);
\draw[line width=0.2mm] [blue](5)[dashed] to (6);
\draw[line width=0.2mm] [blue](6)[dashed] to (4);
		
\node [scale=.0] (17) at (28.5,10.75) [label=90:\tiny{$T2_{27,3}$}]{};
\node [scale=.0] (19) at (28.5,14)[label=90:\tiny{$T2_{27,3}$}]{};
\node [scale=.0](21) at (22.5,11.5)[label=90:\tiny{$T2_{27,3}$}]{};
		\node (7) at (26,19) {\tiny{$C_{27}(R_3)$}};
		\node (8) at (33,24) {\tiny{$C_{27}(T_3)$}};	
		\node (9) at (19,24)  {\tiny{$C_{27}(S_3)$}};
		
\draw[line width=0.2mm] [blue](7)[dashed] to (8);
\draw[line width=0.2mm] [blue](8)[dashed] to (9);
\draw[line width=0.2mm] [blue](9)[dashed] to (7);
		
\node [scale=.0] (17) at (23,20) [label=90:\tiny{$T2_{27,3}$}]{};
\node [scale=.0] (19) at (29,20)[label=90:\tiny{$T2_{27,3}$}]{};
\node [scale=.0](21) at (26.5,22.5) [label=90:\tiny{$T2_{27,3}$}]{};
		\draw[line width=0.2mm] [blue](1) to (4);
		\draw[line width=0.2mm] [blue](1) to [out=120,in=240] (7);
		\draw[line width=0.2mm] [blue](2) to (5);
		\draw[line width=0.2mm] [blue](2) to [out=120,in=240] (8);
		\draw[line width=0.2mm] [blue](3) to (6);
		\draw[line width=0.2mm] [blue](3) to [out=120,in=240] (9);
		\draw[line width=0.2mm] [blue](4) to (7);
		\draw[line width=0.2mm] [blue](5) to (8);
		\draw[line width=0.2mm] [blue](6) to (9);
		
\node [scale=.0] (39) at (27.25,4) [label=90:\tiny{$T1_{27}$}]{};
\node [scale=.0] (43) at (22,8) [label=90:\tiny{$T1_{27}$}]{};
\node [scale=.0] (33) at (34.2,9.5) [label=90:\tiny{$T1_{27}$}]{};

\node [scale=.0] (47) at (20.2,10) [label=90:\tiny{$T1_{27}$}]{};
\node [scale=.0] (46) at (27.2,12.5) [label=90:\tiny{$T1_{27}$}]{};
\node [scale=.0] (35) at (34.2,18) [label=90:\tiny{$T1_{27}$}]{};

\node [scale=.0] (41) at (16,19) [label=90:\tiny{$T1_{27}$}]{};
\node [scale=.0] (32) at (20.2,18) [label=90:\tiny{$T1_{27}$}]{};
\node [scale=.0] (31) at (30.5,18) [label=90:\tiny{$T1_{27}$}]{};
 \end{tikzpicture}		
	
	\vspace{.1cm}		
 {\small  \hspace{1.5cm}	Fig.  19.   Digraph $\mathcal{D}$  \hspace{5.5cm} Figure  20. Graph $\mathcal{G}$ 
\\		  
 \vspace{.1cm}	
   \hspace{1.4cm} with	$R_1$ = $\{1,3,8,9,10\}$, ~$S_1$ = $\{3,4,5,9,13\}$, ~$T_1$ = $\{2,3,7,9,11\}$,   
 			
	\hspace{2.1cm}	$R_2$ = $\{2,6,7,9,11\}$, ~$S_2$ = $\{1,6,8,9,10\}$, ~$T_2$ =  $\{4,5,6,9,13\}$,  
 			
	\hspace{2.4cm}	$R_3$ = $\{4,5,9,12,13\}$, $S_3$ = $\{2,7,9,11,12\}$, $T_3$ = $\{1,8,9,10,12\}$.   
}		
\end{center} }

\begin{enumerate} 		 		
\item [\rm (8)]	Here, $C_{27}(R)$ = $C_{27}(1,6,8,10,12)$,	$C_{27}(1,6,8,10,12)$ $\cong_{T2_{27,3, 1}}$ $C_{27}(2,6,7,11,12)$ and 

      $C_{27}(1,6,8,10,12)$ $\cong_{T2_{27,3, 2}}$ $C_{27}(4,5,6,12,13)$. This is similar to case (6) and the isomorphism digraph and the isomorphism graph are given in Figures 17 and 18..

\item [\rm (9)] Here, $C_{27}(R)$ = $C_{27}(1,8,9,10,12)$, $C_{27}(1,8,9,10,12)$ $\cong_{T2_{27,3, 1}}$ $C_{27}(2,7,9,11,12)$ and 
 	
 	$C_{27}(1,8,9,10,12)$ $\cong_{T2_{27,3, 2}}$ $C_{27}(4,5,9,12,13)$. This is similar to case (7) and the isomorphism digraph and the isomorphism graph are given in Figures 19 and 20.

\item [\rm (10)] Here, $C_{27}(R)$ = $C_{27}(1,3,6,8,9,10)$,  

$T2_{27,3 }(C_{27}(1,3,6,8,9,10))$ = $\{C_{27}(1,3,6,8,9,10)$, 

\hfill $C_{27}(3,4,5,6,9,13) = T2_{27,3, 1}(C_{27}(1,3,6,8,9,10))$,

\hfill $C_{27}(2,3,6,7,9,11) = T2_{27,3, 2}(C_{27}(1,3,6,8,9,10))\}$ 

\hfill = $T2_{27,3 }(C_{27}(3,4,5,6,9,13))$ = $T2_{27,3 }(C_{27}(2,3,6,7,9,11))$,

$T1_{27}(C_{27}(1,3,6,8,9,10))$ = $\{C_{27}(1,3,6,8,9,10)$, 

\hfill $C_{27}(2,6,7,9,11,12) = T1_{27, 2}(C_{27}(1,3,6,8,9,10))$,

\hfill $C_{27}(3,4,5,9,12,13) = T1_{27, 4}(C_{27}(1,3,6,8,9,10))\}$ 

\hfill = $T1_{27}(C_{27}(2,6,7,9,11,12))$ = $T1_{27}(C_{27}(3,4,5,9,12,13))$,

$T2_{27,3 }(C_{27}(2,6,7,9,11,12))$ = $\{C_{27}(2,6,7,9,11,12)$, 

\hfill $C_{27}(1,6,8,9,10,12) = T2_{27,3, 1}(C_{27}(2,6,7,9,11,12))$,

\hfill $C_{27}(4,5,6,9,12,13) = T2_{27,3, 2}(C_{27}(2,6,7,9,11,12))\}$ 

\hfill = $T2_{27,3 }(C_{27}(1,6,8,9,10,12))$ = $T2_{27,3 }(C_{27}(4,5,6,9,12,13))$,

$T1_{27}(C_{27}(1,6,8,9,10,12))$ = $\{C_{27}(1,6,8,9,10,12)$, 

\hfill $C_{27}(2,3,7,9,11,12) = T1_{27, 2}(C_{27}1,6,8,9,10,12))$,

\hfill $C_{27}(3,4,5,6,9,13) = T1_{27, 4}(C_{27}(1,6,8,9,10,12))\}$ 

\hfill = $T1_{27}(C_{27}(2,3,7,9,11,12))$ = $T1_{27}(C_{27}(3,4,5,6,9,13))$,

$T2_{27,3 }(C_{27}(2,3,7,9,11,12))$ = $\{C_{27}(2,3,7,9,11,12),$

\hfill $C_{27}(1,3,8,9,10,12) = T2_{27,3, 1}(C_{27}(2,3,7,9,11,12))$, 

\hfill $C_{27}(3,4,5,9,13,12) = T2_{27,3, 2}(C_{27}(2,3,7,9,11,12))\}$ 

\hfill = $T2_{27,3 }(C_{27}(1,3,8,9,10,12))$ = $T2_{27,3 }(C_{27}(3,4,5,9,12,13))$,

$T1_{27}(C_{27}(1,3,8,9,10,12))$ = $\{C_{27}(1,3,8,9,10,12)$, 

\hfill $C_{27}(2,3,6,7,9,11) = T1_{27, 2}(C_{27}(1,3,8,9,10,12))$,

\hfill $C_{27}(4,5,6,9,12,13) = T1_{27, 4}(C_{27}(1,3,8,9,10,12))\}$ 

\hfill = $T1_{27}(C_{27}(2,3,6,7,9,11))$ = $T1_{27}(C_{27}(3,4,5,6,9,12,13))$ and
\\
 $V(\mathcal{D})$ = $V(\mathcal{G})$ = $Iso(C_{27}(R))$ = $Iso(C_{27}(1,3,6,8,9,10))$ = $Iso(C_{27}(2,6,7,9,11,12))$ 

\hfill = $Iso(C_{27}(3,4,5,9,12,13))$ = $\{C_{27}(1,3,6,8,9,10), C_{27}(2,6,7,9,11,12),  C_{27}(3,4,5,9,12,13),$   
 	
 \hfill 	 $C_{27}(3,4,5,6,9,13), C_{27}(1,6,8,9,10,12)$, $C_{27}(2,3,7,9,11,12),$

 	\hfill	$C_{27}(2,3,6,7,9,11), C_{27}(4,5,6,9,12,13), C_{27}(1,3,8,9,10,12)\}$
    
 \hfill   = $Iso(C_{27}(3,4,5,6,9,13))$ = $Iso(C_{27}(1,6,8,9,10,12))$ = $Iso(C_{27}(2,3,6,7,9,11,12))$ 

\hfill    = $Iso(C_{27}((2,3,7,9,11))$ = $Iso(C_{27}(4,5,6,9,12,13))$ = $Iso(C_{27}(1,3,8,9,10,12))$. 
    
  $\Rightarrow$	The isomorphism digraph $\mathcal{ID}_{27, 3}(C_{27}(1,3,6,8,9,10))$ = $\mathcal{ID}_{27, 3}(C_{27}(2,6,7,9,11,12))$ 

\hfill = $\mathcal{ID}_{27, 3}(C_{27}(3,4,5,9,12,13))$ = $\mathcal{ID}_{27, 3}(C_{27}(3,4,5,6,9,13))$ = $\mathcal{ID}_{27, 3}(C_{27}(1,6,8,9,10,12))$ 

\hfill = $\mathcal{ID}_{27, 3}(C_{27}(2,3,7,9,11,12))$ = $\mathcal{ID}_{27, 3}(C_{27}(2,3,6,7,9,11))$ = $\mathcal{ID}_{27, 3}(C_{27}(4,5,6,9,12,13))$  

\hfill = $\mathcal{ID}_{27, 3}(C_{27}(1,3,8,9,10,12))$ and the isomorphism graph $\mathcal{I}_{27, 3}(C_{27}(1,3,6,8,9,10))$ 

\hfill = $\mathcal{I}_{27, 3}(C_{27}(2,6,7,9,11,12))$ = $\mathcal{I}_{27, 3}(C_{27}(3,4,5,9,12,13))$ = $\mathcal{I}_{27, 3}(C_{27}(3,4,5,6,9,13))$ 

\hfill  = $\mathcal{I}_{27, 3}(C_{27}(1,6,8,9,10,12))$ = $\mathcal{I}_{27, 3}(C_{27}(2,3,7,9,11,12))$ = $\mathcal{I}_{27, 3}(C_{27}(2,3,6,7,9,11))$ 

 = $\mathcal{I}_{27, 3}(C_{27}(4,5,6,9,12,13))$ = $\mathcal{I}_{27, 3}(C_{27}(1,3,8,9,10,12))$ and are given in Figures 21 and 22, respectively. 
\end{enumerate}

{\rm 	\begin{center}
 		\begin{tikzpicture}  
 		[scale=.32,auto=center,every node/.style={draw,circle}] 
 		
 		\node (1) at (0,0) {\tiny{$C_{27}(R_1)$}};
		\node (2) at (9,6) {\tiny{$C_{27}(T_1)$}};	
		\node (3) at (-9,6){\tiny{$C_{27}(S_1)$}};

\draw[->, line width=0.2mm] [blue](1)[dashed] to [out=50,in=200] (2);
\draw[->, line width=0.2mm] [blue](2)[dashed] to [out=230,in=20] (1);
\draw[->, line width=0.2mm] [blue](2)[dashed] to [out=175,in=5] (3);
\draw[->, line width=0.2mm] [blue](3)[dashed] to [out=355,in=185](2);
\draw[->, line width=0.2mm] [blue](3)[dashed] to [out=340,in=140](1);
\draw[->, line width=0.2mm] [blue](1)[dashed] to [out=160,in=310](3);
		
\node [scale=.0] (11) at (5,2) [label=90:\tiny{$T2_{27,3,2}$}]{};
\node [scale=.0] (12) at (5,-1) [label=90:\tiny{$T2_{27,3,1}$}]{};
\node [scale=.0] (13) at (-4.5,4.5)[label=90:\tiny{$T2_{27,3,2}$}]{};
\node [scale=.0] (14) at (4.8,3.5)[label=90:\tiny{$T2_{27,3,1}$}]{};
\node [scale=.0](15) at (-6,.5)[label=90:\tiny{$T2_{27,3,1}$}]{};
\node [scale=.0] (16) at (-4,1.5) [label=90:\tiny{$T2_{27,3,2}$}]{};
		\node (4) at (0,10) {\tiny{$C_{27}(R_2)$}};
		\node (5) at (9,16) {\tiny{$C_{27}(T_2)$}};	
		\node (6) at (-9,16){\tiny{$C_{27}(S_2)$}};
		
\draw[->, line width=0.2mm] [blue](4)[dashed] to [out=50,in=200] (5);
\draw[->, line width=0.2mm] [blue](5)[dashed] to [out=230,in=20] (4);
\draw[->, line width=0.2mm] [blue](5)[dashed] to [out=175,in=5] (6);
\draw[->, line width=0.2mm] [blue](6)[dashed] to [out=355,in=185] (5);
\draw[->, line width=0.2mm] [blue](6)[dashed] to [out=340,in=140] (4);
\draw[->, line width=0.2mm] [blue](4)[dashed] to [out=160,in=320] (6);
		
\node [scale=.0] (17) at (5,12) [label=90:\tiny{$T2_{27,3,2}$}]{};
\node [scale=.0] (18) at (5.2,9.7) [label=90:\tiny{$T2_{27,3,1}$}]{};
\node [scale=.0](19) at (-3.5,14.7)[label=90:\tiny{$T2_{27,3,2}$}]{};
\node [scale=.0] (20) at (4.1,13.8)[label=90:\tiny{$T2_{27,3,1}$}]{};
\node [scale=.0](21) at (-6,11.5)[label=90:\tiny{$T2_{27,3,1}$}]{};
\node [scale=.0] (22) at (-2.5,10) [label=90:\tiny{$T2_{27,3,2}$}]{};
		\node (7) at (0,20) {\tiny{$C_{27}(R_3)$}};
		\node (8) at (9,26) {\tiny{$C_{27}(T_3)$}};	
		\node (9) at (-9,26){\tiny{$C_{27}(S_3)$}};
		
\draw[->, line width=0.2mm] [blue](7)[dashed] to [out=50,in=200] (8);
\draw[->, line width=0.2mm] [blue](8)[dashed] to [out=230,in=20] (7);
\draw[->, line width=0.2mm] [blue](8)[dashed] to [out=175,in=5] (9);
\draw[->, line width=0.2mm] [blue](9)[dashed] to [out=355,in=185] (8);
\draw[->, line width=0.2mm] [blue](9)[dashed] to [out=340,in=140] (7);
\draw[->, line width=0.2mm] [blue](7)[dashed] to [out=160,in=320] (9);
		
\node [scale=.0] (17) at (5,22) [label=90:\tiny{$T2_{27,3,2}$}]{};
\node [scale=.0](18) at (3.5,19.25)[label=90:\tiny{$T2_{27,3,1}$}]{};
\node [scale=.0](19) at (-4,24.7) [label=90:\tiny{$T2_{27,3,2}$}]{};
\node [scale=.0] (20) at (2,23.8) [label=90:\tiny{$T2_{27,3,1}$}]{};
\node [scale=.0](21) at (-5.5,21)[label=90:\tiny{$T2_{27,3,1}$}]{};
\node [scale=.0](22) at (-1.25,20.5)[label=90:\tiny{$T2_{27,3,2}$}]{};
		\draw[->, line width=0.2mm] [blue](1) to [out=110,in=250] (4);
		\draw[->, line width=0.2mm] [blue](4) to [out=290,in=70] (1);
		\draw[->, line width=0.2mm] [blue](1) to [out=120,in=240] (7);
		\draw[->, line width=0.2mm] [blue](7) to [out=300,in=60] (1);
  	\draw[->, line width=0.2mm] [blue](2) to [out=110,in=250] (5);
		\draw[->, line width=0.2mm] [blue](5) to [out=290,in=70] (2);
		\draw[->, line width=0.2mm] [blue](2) to [out=120,in=240] (8);
		\draw[->, line width=0.2mm] [blue](8) to [out=300,in=60] (2);
  	\draw[->, line width=0.2mm] [blue](3) to [out=110,in=250] (6);
		\draw[->, line width=0.2mm] [blue](6) to [out=290,in=70] (3);
		\draw[->, line width=0.2mm] [blue](3) to [out=120,in=240] (9);
		\draw[->, line width=0.2mm] [blue](9) to [out=300,in=60] (3);
 		\draw[->, line width=0.2mm] [blue](4) to [out=110,in=250] (7);
		\draw[->, line width=0.2mm] [blue](7) to [out=290,in=70] (4);
		\draw[->, line width=0.2mm] [blue](5) to [out=110,in=250] (8);
		\draw[->, line width=0.2mm] [blue](8) to [out=290,in=70] (5);
		\draw[->, line width=0.2mm] [blue](6) to [out=110,in=250] (9);
		\draw[->, line width=0.2mm] [blue](9) to [out=290,in=70] (6);
		
\node [scale=.0] (39) at (-1,5.5) [label=90:\tiny{$T1_{27,2}$}]{};
\node [scale=.0] (45) at (1,2.5) [label=90:\tiny{$T1_{27,4}$}]{};
\node [scale=.0] (43) at (-3.5,7.5) [label=90:\tiny{$T1_{27,4}$}]{};
\node [scale=.0] (40) at (4,7.5) [label=90:\tiny{$T1_{27,2}$}]{};
\node [scale=.0] (33) at (10,9) [label=90:\tiny{$T1_{27,4}$}]{};
\node [scale=.0] (44) at (8,11) [label=90:\tiny{$T1_{27,2}$}]{};
\node [scale=.0] (35) at (6.5,19) [label=90:\tiny{$T1_{27,4}$}]{};
\node [scale=.0] (34) at (12.5,13) [label=90:\tiny{$T1_{27,2}$}]{};

\node [scale=.0] (47) at (-10,11) [label=90:\tiny{$T1_{27,2}$}]{};
\node [scale=.0] (42) at (-8,9) [label=90:\tiny{$T1_{27,4}$}]{};
\node [scale=.0] (41) at (-12,19) [label=90:\tiny{$T1_{27,4}$}]{};
\node [scale=.0] (36) at (-6,10) [label=90:\tiny{$T1_{27,2}$}]{};

\node [scale=.0] (46) at (1.5,12) [label=90:\tiny{$T1_{27,4}$}]{};
\node [scale=.0] (37) at (-1,13) [label=90:\tiny{$T1_{27,2}$}]{};
\node [scale=.0] (31) at (8.5,21) [label=90:\tiny{$T1_{27,2}$}]{};
\node [scale=.0] (38) at (10,18) [label=90:\tiny{$T1_{27,4}$}]{};
\node [scale=.0] (31) at (-9.5,21) [label=90:\tiny{$T1_{27,2}$}]{};
\node [scale=.0] (32) at (-8,18) [label=90:\tiny{$T1_{27,4}$}]{};
				
		\node (1) at (26,2) {\tiny{$C_{27}(R_1)$}};
		\node (2) at (33,7) {\tiny{$C_{27}(T_1)$}};	
		\node (3) at (19,7)  {\tiny{$C_{27}(S_1)$}};

\draw[line width=0.2mm] [blue](1)[dashed] to (2);
\draw[line width=0.2mm] [blue](2)[dashed] to (3);
\draw[line width=0.2mm] [blue](3)[dashed] to (1);
		
\node [scale=.0] (11) at (31,2) [label=90:\tiny{$T2_{27,3}$}]{};
\node [scale=.0] (13) at (28.5,5.5) [label=90:\tiny{$T2_{27,3}$}]{};
\node [scale=.0] (15) at (22.5,2.5)[label=90:\tiny{$T2_{27,3}$}]{};
		\node (4) at (26,10.5) {\tiny{$C_{27}(R_2)$}};
		\node (5) at (33,15.5) {\tiny{$C_{27}(T_2)$}};	
		\node (6) at (19,15.5)  {\tiny{$C_{27}(S_2)$}};
		
\draw[line width=0.2mm] [blue](4)[dashed] to (5);
\draw[line width=0.2mm] [blue](5)[dashed] to (6);
\draw[line width=0.2mm] [blue](6)[dashed] to (4);
		
\node [scale=.0] (17) at (28.5,10.75) [label=90:\tiny{$T2_{27,3}$}]{};
\node [scale=.0] (19) at (28.5,14)[label=90:\tiny{$T2_{27,3}$}]{};
\node [scale=.0](21) at (22.5,11.5)[label=90:\tiny{$T2_{27,3}$}]{};
		\node (7) at (26,19) {\tiny{$C_{27}(R_3)$}};
		\node (8) at (33,24) {\tiny{$C_{27}(T_3)$}};	
		\node (9) at (19,24)  {\tiny{$C_{27}(S_3)$}};
		
\draw[line width=0.2mm] [blue](7)[dashed] to (8);
\draw[line width=0.2mm] [blue](8)[dashed] to (9);
\draw[line width=0.2mm] [blue](9)[dashed] to (7);
		
\node [scale=.0] (17) at (23,20) [label=90:\tiny{$T2_{27,3}$}]{};
\node [scale=.0] (19) at (29,20)[label=90:\tiny{$T2_{27,3}$}]{};
\node [scale=.0](21) at (26.5,22.5) [label=90:\tiny{$T2_{27,3}$}]{};
		\draw[line width=0.2mm] [blue](1) to (4);
		\draw[line width=0.2mm] [blue](1) to [out=120,in=240] (7);
		\draw[line width=0.2mm] [blue](2) to (5);
		\draw[line width=0.2mm] [blue](2) to [out=120,in=240] (8);
		\draw[line width=0.2mm] [blue](3) to (6);
		\draw[line width=0.2mm] [blue](3) to [out=120,in=240] (9);
		\draw[line width=0.2mm] [blue](4) to (7);
		\draw[line width=0.2mm] [blue](5) to (8);
		\draw[line width=0.2mm] [blue](6) to (9);
		
\node [scale=.0] (39) at (27.25,4) [label=90:\tiny{$T1_{27}$}]{};
\node [scale=.0] (43) at (22,8) [label=90:\tiny{$T1_{27}$}]{};
\node [scale=.0] (33) at (34.2,9.5) [label=90:\tiny{$T1_{27}$}]{};

\node [scale=.0] (47) at (20.2,10) [label=90:\tiny{$T1_{27}$}]{};
\node [scale=.0] (46) at (27.2,12.5) [label=90:\tiny{$T1_{27}$}]{};
\node [scale=.0] (35) at (34.2,18) [label=90:\tiny{$T1_{27}$}]{};

\node [scale=.0] (41) at (16,19) [label=90:\tiny{$T1_{27}$}]{};
\node [scale=.0] (32) at (20.2,18) [label=90:\tiny{$T1_{27}$}]{};
\node [scale=.0] (31) at (30.5,18) [label=90:\tiny{$T1_{27}$}]{};
 \end{tikzpicture}		
		
{\small  \hspace{1cm} Fig.  21.   Digraph $\mathcal{D}$  \hspace{5cm} Figure  22. Graph $\mathcal{G}$   

 \vspace{.1cm}	
 \hspace{1.6cm}	with	$R_1$ = $\{1,3,6,8,9,10\}$, ~~$S_1$ = $\{3,4,5,6,9,13\}$, $T_1$ = $\{2,3,6,7,9,11\}$ ,  
			
	\hspace{2.6cm}	   $R_2$ = $\{2,6,7,9,11,12\}$,  $S_2$ = $\{1,6,8,9,10,12\}$, $T_2$ = $\{4,5,6,9,12,13\}$, 
			
	\hspace{2.5cm}	    $R_3$ = $\{3,4,5,9,12,13\}$,  $S_3$ = $\{2,3,7,9,11,12\}$, $T_3$ = $\{1,3,8,9,10,12\}$.  }
\end{center} }

\begin{enumerate}
 \item [\rm (11)] Here, $C_{27}(R)$ = $C_{27}(1,3,8,9,10,12)$, 

$T2_{27,3 }(C_{27}(1,3,8,9,10,12))$ = $\{C_{27}(1,3,8,9,10,12),$ 

\hfill $C_{27}(3,4,5,9,12,13) = T2_{27,3, 1}(C_{27}(1,3,8,9,10,12))$,

\hfill $C_{27}(2,3,7,9,11,12) = T2_{27,3, 2}(C_{27}(1,3,8,9,10,12))\}$ 

\hfill = $T2_{27,3 }(C_{27}(3,4,5,9,12,13))$ = $T2_{27,3 }(C_{27}(2,3,7,9,11,12))$,

$T1_{27}(C_{27}(1,3,8,9,10,12))$ = $\{C_{27}(1,3,8,9,10,12)$, 

\hfill $C_{27}(2,3,6,7,9,11) = T1_{27, 2}(C_{27}(1,3,8,9,10,12))$,

\hfill $C_{27}(4,5,6,9,12,13) = T1_{27, 4}(C_{27}(1,3,8,9,10,12))\}$ 

\hfill = $T1_{27}(C_{27}(2,3,6,7,9,11))$ = $T1_{27}(C_{27}(4,5,6,9,12,13))$,

$T2_{27,3 }(C_{27}(2,3,6,7,9,11))$ = $\{C_{27}(2,3,6,7,9,11)$, 

\hfill $C_{27}(1,3,6,8,9,10) = T2_{27,3, 1}(C_{27}(2,3,6,7,9,11))$,

\hfill $C_{27}(3,4,5,6,9,13) = T2_{27,3, 2}(C_{27}(2,3,6,7,9,11))\}$ 

\hfill = $T2_{27,3 }(C_{27}(1,3,6,8,9,10))$ = $T2_{27,3 }(C_{27}(3,4,5,6,9,13))$,

$T1_{27}(C_{27}(1,3,6,8,9,10))$ = $\{C_{27}(1,3,6,8,9,10)$, 

\hfill $C_{27}(2,6,7,9,11,12) = T1_{27, 2}(C_{27}1,3,6,8,9,10))$,

\hfill $C_{27}(3,4,5,9,12,13) = T1_{27, 4}(C_{27}(1,3,6,8,9,10))\}$ 

\hfill = $T1_{27}(C_{27}(2,6,7,9,11,12))$ = $T1_{27}(C_{27}(3,4,5,9,12,13))$,

$T2_{27,3 }(C_{27}(2,6,7,9,11,12))$ = $\{C_{27}(2,6,7,9,11,12),$ 

\hfill $C_{27}(1,6,8,9,10,12) = T2_{27,3, 1}(C_{27}(2,6,7,9,11,12))$,

\hfill $C_{27}(4,5,6,9,12,13) = T2_{27,3, 2}(C_{27}(2,6,7,9,11,12))\}$ 

\hfill = $T2_{27,3 }(C_{27}(1,6,8,9,10,12))$ = $T2_{27,3 }(C_{27}(4,5,6,9,12,13))$,

$T1_{27}(C_{27}(1,6,8,9,10,12))$ = $\{C_{27}(1,6,8,9,10,12)$, 

\hfill $C_{27}(2,3,7,9,11,12) = T1_{27, 2}(C_{27}(1,6,8,9,10,12))$,

\hfill $C_{27}(3,4,5,6,9,13) = T1_{27, 4}(C_{27}(1,6,8,9,10,12))\}$ 

\hfill = $T1_{27}(C_{27}(2,3,7,9,11,12))$ = $T1_{27}(C_{27}(3,4,5,6,9,13))$ and
\\
 $V(\mathcal{D})$ = $V(\mathcal{G})$ = $Iso(C_{27}(R))$ = $Iso(C_{27}(1,3,8,9,10,12))$ 

\hfill = $Iso(C_{27}(2,3,6,7,9,11))$ = $Iso(C_{27}(4,5,6,9,12,13))$

\hfill  = $\{C_{27}(1,3,8,9,10,12), C_{27}(2,3,6,7,9,11), C_{27}(4,5,6,9,12,13), $ 
 	
 \hfill 	 $C_{27}(3,4,5,9,12,13),  C_{27}(1,3,6,8,9,10), C_{27}(2,6,7,9,11,12)$, 

 	\hfill	$C_{27}(2,3,7,9,11,12), C_{27}(3,4,5,6,9,13), C_{27}(1,6,8,9,10,12)\}$
    
 \hfill   = $Iso(C_{27}(3,4,5,9,12,13))$ = $Iso(C_{27}(1,3,6,8,9,10))$ = $Iso(C_{27}(2,6,7,9,11,12))$ 

\hfill    = $Iso(C_{27}((2,3,7,9,11,12))$ = $Iso(C_{27}(3,4,5,6,9,13))$ = $Iso(C_{27}(1,6,8,9,10,12))$. 
\\    
    Corresponding to $V(\mathcal{D})$ = $V(\mathcal{G})$, we draw the isomorphism digraph $\mathcal{ID}_{27, 3}(C_{27}(1,3,8,9,10,12))$ w.r.t. $m$ = 3 and from this digraph we obtain the isomorphism graph $\mathcal{ID}_{27, 3}(C_{27}(1,3,8,9,10,12))$ w.r.t. $m$ = 3. 
\\
The isomorphism digraph is $\mathcal{ID}_{27, 3}(C_{27}(1,3,8,9,10,12))$ = $\mathcal{ID}_{27, 3}(C_{27}(2,3,6,7,9,11))$ 

= $\mathcal{ID}_{27, 3}(C_{27}(4,5,6,9,12,13))$ = $\mathcal{ID}_{27, 3}(C_{27}(3,4,5,9,12,13))$ = $\mathcal{ID}_{27, 3}(C_{27}(1,3,6,8,9,10))$ 

= $\mathcal{ID}_{27, 3}(C_{27}(2,6,7,9,11,12))$ = $\mathcal{ID}_{27, 3}(C_{27}(2,3,7,9,11,12))$ = $\mathcal{ID}_{27, 3}(C_{27}(3,4,5,6,9,13))$ 

\hfill = $\mathcal{ID}_{27, 3}(C_{27}(1,6,8,9,10,12))$. 
\\
The isomorphism graph $\mathcal{I}_{27, 3}(C_{27}(1,3,8,9,10,12))$ = $\mathcal{I}_{27, 3}(C_{27}(2,3,6,7,9,11))$ 

= $\mathcal{I}_{27, 3}(C_{27}(4,5,6,9,12,13))$ = $\mathcal{I}_{27, 3}(C_{27}(3,4,5,9,12,13))$ = $\mathcal{I}_{27, 3}(C_{27}(1,3,6,8,9,10))$ 

\hfill = $\mathcal{I}_{27, 3}(C_{27}(2,6,7,9,11,12))$ = $\mathcal{I}_{27, 3}(C_{27}(2,3,7,9,11,12))$. 
 
This implies that in this case, the isomorphism digraph and the isomorphism graph are same as given in Figures 21 and 22, respectively. 

\item [\rm (12)] Here, $C_{27}(R)$ = $C_{27}(1,6,8,9,10,12)$,	$C_{27}(1,6,8,9,10,12)$ $\cong_{T2_{27,3, 1}}$ $C_{27}(2,6,7,9,11,12)$ and 
	\\
	$C_{27}(1,6,8,9,10,12)$ $\cong_{T2_{27,3, 2}}$ $C_{27}(4,5,6,9,12,13)$. This implies that this case is similar to case (11) and the isomorphism digraphs and the isomorphism graphs in this case are given in Figures 21 and 22, respectively.
\end{enumerate} 	

It is noted that the unlabeled digraphs in all the above cases are the same as well as their unlabeled graphs. Here, we present all isomorphism series corresponding to Figure 21 of cases (10), (11) and (12). Corresponding to cases (10), (11) and (12), in Figure 21, let $R_1$ = $\{1,3,6,8,9,10\}$, $S_1$ = $\{3,4,5,6,9,13\}$,  $T_1$ = $\{2,3,6,7,9,11\}$,  $R_2$ = $\{2,6,7,9,11,12\}$, $S_2$ = $\{1,6,8,9,10,12\}$,  $T_2$ = $\{4,5,6,9,12,13\}$, $R_3$ = $\{3,4,5,9,12,13\}$, $S_3$ = $\{2,3,7,9,11,12\}$,  $T_3$ = $\{1,3,8,9,10,12\}$. We consider isomorphism series in their shortest possible forms and also present a digraph of Hamiltonian isomorphism series in this case in Figure 23. From the isomorphism diagram given in Figure 21, we get the following isomorphism series. 	

 \begin{enumerate}
\item [\rm (11.a1)] 	$C_{27}(R_1)$ $\cong$ $C_{27}(S_1)$:  
	
	$(a)$ $C_{27}(R_1)$ $\cong_{T2_{27,3,1}}$ $C_{27}(S_1)$; 
	
	$(b)$ $C_{27}(S_1)$ $\cong_{T2_{27,3,2}}$ $C_{27}(R_1)$.	 
	 
\item [\rm (11.a2)] 	$C_{27}(R_1)$ $\cong$ $C_{27}(T_1)$:  

    $(a)$ $C_{27}(R_1)$ $\cong_{T2_{27,3,2}}$ $C_{27}(T_1)$; 
    
    $(b)$ $C_{27}(T_1)$ $\cong_{T2_{27,3,1}}$ $C_{27}(R_1)$.	 
	
\item [\rm (11.a3)] $C_{27}(S_1)$ $\cong$ $C_{27}(T_1)$:  

    $(a)$ $C_{27}(S_1)$ $\cong_{T2_{27,3,1}}$ $C_{27}(T_1)$; 
    
    $(b)$ $C_{27}(T_1)$ $\cong_{T2_{27,3,2}}$ $C_{27}(S_1)$.	 

\item [\rm (11.a4)] 	$C_{27}(R_2)$ $\cong$ $C_{27}(S_2)$:  

    $(a)$ $C_{27}(R_2)$ $\cong_{T2_{27,3,1}}$ $C_{27}(S_2)$; 
    
    $(b)$ $C_{27}(S_2)$ $\cong_{T2_{27,3,2}}$ $C_{27}(R_2)$.	 

\item [\rm (11.a5)] 	$C_{27}(R_2)$ $\cong$ $C_{27}(T_2)$:  

   $(a)$ $C_{27}(R_2)$ $\cong_{T2_{27,3,2}}$ $C_{27}(T_2)$; 
   
   $(b)$ $C_{27}(T_2)$ $\cong_{T2_{27,3,1}}$ $C_{27}(R_2)$.	 

\item [\rm (11.a6)] 	$C_{27}(S_2)$ $\cong$ $C_{27}(T_2)$:  

  $(a)$ $C_{27}(S_2)$ $\cong_{T2_{27,3,1}}$ $C_{27}(T_2)$; 
  
  $(b)$ $C_{27}(T_2)$ $\cong_{T2_{27,3,2}}$ $C_{27}(S_2)$.
  
\item [\rm (11.a7)] 	$C_{27}(R_3)$ $\cong$ $C_{27}(S_3)$:  

$(a)$ $C_{27}(R_3)$ $\cong_{T2_{27,3,1}}$ $C_{27}(S_3)$; 

$(b)$ $C_{27}(S_3)$ $\cong_{T2_{27,3,2}}$ $C_{27}(R_3)$.	 

\item [\rm (11.a8)] 	$C_{27}(R_3)$ $\cong$ $C_{27}(T_3)$:  

$(a)$ $C_{27}(R_3)$ $\cong_{T2_{27,3,2}}$ $C_{27}(T_3)$; 

$(b)$ $C_{27}(T_3)$ $\cong_{T2_{27,3,1}}$ $C_{27}(R_3)$.	 

\item [\rm (11.a9)] 	$C_{27}(S_3)$ $\cong$ $C_{27}(T_3)$:  

   $(a)$ $C_{27}(S_3)$ $\cong_{T2_{27,3,1}}$ $C_{27}(T_3)$;  
   
   $(b)$ $C_{27}(T_3)$ $\cong_{T2_{27,3,2}}$ $C_{27}(S_3)$.	 

\item [\rm (11.b1)] 	$C_{27}(R_1)$ $\cong$ $C_{27}(R_2)$:  

$(a)$ $C_{27}(R_1)$ $\cong_{T1_{27,2}}$ $C_{27}(R_2)$; 

$(b)$ $C_{27}(R_2)$ $\cong_{T1_{27,4}}$ $C_{27}(R_1)$.	 

\item [\rm (11.b2)] 	$C_{27}(R_1)$ $\cong$ $C_{27}(R_3)$:  

$(a)$ $C_{27}(R_1)$ $\cong_{T1_{27,4}}$ $C_{27}(R_3)$; 

$(b)$ $C_{27}(R_3)$ $\cong_{T1_{27,2}}$ $C_{27}(R_1)$.	 

\item [\rm (11.b3)] 	$C_{27}(S_1)$ $\cong$ $C_{27}(S_2)$:  

$(a)$ $C_{27}(S_1)$ $\cong_{T1_{27,2}}$ $C_{27}(S_2)$; 

$(b)$ $C_{27}(S_2)$ $\cong_{T1_{27,4}}$ $C_{27}(S_1)$.	 

\item [\rm (11.b4)] 	$C_{27}(S_1)$ $\cong$ $C_{27}(S_3)$:  

$(a)$ $C_{27}(S_1)$ $\cong_{T1_{27,4}}$ $C_{27}(S_3)$; 

$(b)$ $C_{27}(S_3)$ $\cong_{T1_{27,2}}$ $C_{27}(S_1)$.	 

\item [\rm (11.b5)] 	$C_{27}(T_1)$ $\cong$ $C_{27}(T_2)$:  

$(a)$ $C_{27}(T_1)$ $\cong_{T1_{27,2}}$ $C_{27}(T_2)$; 

$(b)$ $C_{27}(T_2)$ $\cong_{T1_{27,4}}$ $C_{27}(T_1)$.	 

\item [\rm (11.b6)] 	$C_{27}(T_1)$ $\cong$ $C_{27}(T_3)$:  

$(a)$ $C_{27}(T_1)$ $\cong_{T1_{27,4}}$ $C_{27}(T_3)$; 

$(b)$ $C_{27}(T_3)$ $\cong_{T1_{27,2}}$ $C_{27}(T_1)$.	 

\item [\rm (11.b7)] 	$C_{27}(R_2)$ $\cong$ $C_{27}(R_3)$:  

$(a)$ $C_{27}(R_2)$ $\cong_{T1_{27,2}}$ $C_{27}(R_3)$; 

$(b)$ $C_{27}(R_3)$ $\cong_{T1_{27,4}}$ $C_{27}(R_2)$.	

\item [\rm (11.b8)] 	$C_{27}(S_2)$ $\cong$ $C_{27}(S_3)$:  

$(a)$ $C_{27}(S_2)$ $\cong_{T1_{27,2}}$ $C_{27}(S_3)$; 

$(b)$ $C_{27}(S_3)$ $\cong_{T1_{27,4}}$ $C_{27}(S_2)$.	 

\item [\rm (11.b9)] 	$C_{27}(T_2)$ $\cong$ $C_{27}(T_3)$:  

$(a)$ $C_{27}(T_2)$ $\cong_{T1_{27,2}}$ $C_{27}(T_3)$; 

$(b)$ $C_{27}(T_3)$ $\cong_{T1_{27,4}}$ $C_{27}(T_2)$.	 

\item [\rm (11.ab1)] $(i)$	$C_{27}(R_1)$ $\cong$ $C_{27}(S_1)$ $\cong$ $C_{27}(S_2)$:  

$(a)$ $C_{27}(R_1)$ $\cong_{T2_{27,3,1}}$ $C_{27}(S_1)$ $\cong_{T1_{27,2}}$ $C_{27}(S_2)$; 

$(b)$ $C_{27}(S_2)$ $\cong_{T1_{27,4}}$ $C_{27}(S_1)$ $\cong_{T2_{27,3,2}}$ $C_{27}(R_1)$.	 

\item [\rm (11.ab1)] $(ii)$	$C_{27}(R_1)$ $\cong$ $C_{27}(S_1)$ $\cong$ $C_{27}(S_3)$:  

$(a)$ $C_{27}(R_1)$ $\cong_{T2_{27,3,1}}$ $C_{27}(S_1)$ $\cong_{T1_{27,4}}$ $C_{27}(S_3)$; 

$(b)$ $C_{27}(S_3)$ $\cong_{T1_{27,2}}$ $C_{27}(S_1)$ $\cong_{T2_{27,3,2}}$ $C_{27}(R_1)$.	 

\item [\rm (11.ab2)] $(i)$	$C_{27}(R_1)$ $\cong$ $C_{27}(T_1)$ $\cong$ $C_{27}(T_2)$:  

$(a)$ $C_{27}(R_1)$ $\cong_{T2_{27,3,2}}$ $C_{27}(T_1)$ $\cong_{T1_{27,2}}$ $C_{27}(T_2)$; 

$(b)$ $C_{27}(T_2)$ $\cong_{T1_{27,4}}$ $C_{27}(T_1)$ $\cong_{T2_{27,3,1}}$ $C_{27}(R_1)$.	 

\item [\rm (11.ab2)] $(ii)$	$C_{27}(R_1)$ $\cong$ $C_{27}(T_1)$ $\cong$ $C_{27}(T_3)$:  

$(a)$ $C_{27}(R_1)$ $\cong_{T2_{27,3,2}}$ $C_{27}(T_1)$ $\cong_{T1_{27,4}}$ $C_{27}(T_3)$; 

$(b)$ $C_{27}(T_3)$ $\cong_{T1_{27,2}}$ $C_{27}(T_1)$ $\cong_{T2_{27,3,1}}$ $C_{27}(R_1)$.	 

\item [\rm (11.ab3)] $(i)$	$C_{27}(R_1)$ $\cong$ $C_{27}(R_2)$ $\cong$ $C_{27}(S_2)$:  

$(a)$ $C_{27}(R_1)$ $\cong_{T1_{27,2}}$ $C_{27}(R_2)$ $\cong_{T2_{27,3,1}}$ $C_{27}(S_2)$; 

$(b)$ $C_{27}(S_2)$ $\cong_{T2_{27,3,2}}$ $C_{27}(R_2)$ $\cong_{T1_{27,4}}$ $C_{27}(R_1)$.

\item [\rm (11.ab3)] $(ii)$	$C_{27}(R_1)$ $\cong$ $C_{27}(R_2)$ $\cong$ $C_{27}(T_2)$:  

$(a)$ $C_{27}(R_1)$ $\cong_{T1_{27,2}}$ $C_{27}(R_2)$ $\cong_{T2_{27,3,2}}$ $C_{27}(T_2)$; 

$(b)$ $C_{27}(T_2)$ $\cong_{T2_{27,3,1}}$ $C_{27}(R_2)$ $\cong_{T1_{27,4}}$ $C_{27}(R_1)$.

\item [\rm (11.ab4)] $(i)$	$C_{27}(R_1)$ $\cong$ $C_{27}(R_3)$ $\cong$ $C_{27}(S_3)$:  

$(a)$ $C_{27}(R_1)$ $\cong_{T1_{27,4}}$ $C_{27}(R_3)$ $\cong_{T2_{27,3,1}}$ $C_{27}(S_3)$; 

$(b)$ $C_{27}(S_3)$ $\cong_{T2_{27,3,2}}$ $C_{27}(R_3)$ $\cong_{T1_{27,2}}$ $C_{27}(R_1)$.

\item [\rm (11.ab4] $(ii)$	$C_{27}(R_1)$ $\cong$ $C_{27}(R_3)$ $\cong$ $C_{27}(T_3)$:  

$(a)$ $C_{27}(R_1)$ $\cong_{T1_{27,4}}$ $C_{27}(R_3)$ $\cong_{T2_{27,3,2}}$ $C_{27}(T_3)$; 

$(b)$ $C_{27}(T_3)$ $\cong_{T2_{27,3,1}}$ $C_{27}(R_3)$ $\cong_{T1_{27,2}}$ $C_{27}(R_1)$.

\item [\rm (11.ab5)] $(i)$	$C_{27}(S_1)$ $\cong$ $C_{27}(T_1)$ $\cong$ $C_{27}(T_2)$:  

$(a)$ $C_{27}(S_1)$ $\cong_{T2_{27,3,2}}$ $C_{27}(T_1)$ $\cong_{T1_{27,2}}$ $C_{27}(T_2)$; 

$(b)$ $C_{27}(T_2)$ $\cong_{T1_{27,4}}$ $C_{27}(T_1)$ $\cong_{T2_{27,3,2}}$ $C_{27}(S_1)$.	 

\item [\rm (11.ab5)] $(ii)$	$C_{27}(S_1)$ $\cong$ $C_{27}(T_1)$ $\cong$ $C_{27}(T_3)$:  

$(a)$ $C_{27}(S_1)$ $\cong_{T2_{27,3,1}}$ $C_{27}(T_1)$ $\cong_{T1_{27,4}}$ $C_{27}(T_3)$; 

$(b)$ $C_{27}(T_3)$ $\cong_{T1_{27,4}}$ $C_{27}(T_1)$ $\cong_{T2_{27,3,2}}$ $C_{27}(S_1)$.	 

\item [\rm (11.ab6)] $(i)$	$C_{27}(S_1)$ $\cong$ $C_{27}(R_1)$ $\cong$ $C_{27}(R_2)$:  

$(a)$ $C_{27}(S_1)$ $\cong_{T2_{27,3,2}}$ $C_{27}(R_1)$ $\cong_{T1_{27,2}}$ $C_{27}(R_2)$; 

$(b)$ $C_{27}(R_2)$ $\cong_{T1_{27,4}}$ $C_{27}(R_1)$ $\cong_{T2_{27,3,1}}$ $C_{27}(S_1)$.	 

\item [\rm (11.ab6)] $(ii)$	$C_{27}(S_1)$ $\cong$ $C_{27}(R_1)$ $\cong$ $C_{27}(R_3)$:  

$(a)$ $C_{27}(S_1)$ $\cong_{T2_{27,3,2}}$ $C_{27}(R_1)$ $\cong_{T1_{27,4}}$ $C_{27}(R_3)$; 

$(b)$ $C_{27}(R_3)$ $\cong_{T1_{27,2}}$ $C_{27}(R_1)$ $\cong_{T2_{27,3,1}}$ $C_{27}(S_1)$.	 

\item [\rm (11.ab7)] $(i)$	$C_{27}(S_1)$ $\cong$ $C_{27}(S_2)$ $\cong$ $C_{27}(T_2)$:  

$(a)$ $C_{27}(S_1)$ $\cong_{T1_{27,2}}$ $C_{27}(S_2)$ $\cong_{T2_{27,3,1}}$ $C_{27}(T_2)$; 

$(b)$ $C_{27}(T_2)$ $\cong_{T2_{27,3,2}}$ $C_{27}(S_2)$ $\cong_{T1_{27,4}}$ $C_{27}(S_1)$.

\item [\rm (11.ab7)] $(ii)$	$C_{27}(S_1)$ $\cong$ $C_{27}(S_2)$ $\cong$ $C_{27}(R_2)$:  

$(a)$ $C_{27}(S_1)$ $\cong_{T1_{27,2}}$ $C_{27}(S_2)$ $\cong_{T2_{27,3,2}}$ $C_{27}(R_2)$; 

$(b)$ $C_{27}(R_2)$ $\cong_{T2_{27,3,1}}$ $C_{27}(S_2)$ $\cong_{T1_{27,4}}$ $C_{27}(S_1)$.

\item [\rm (11.ab8)] $(i)$	$C_{27}(S_1)$ $\cong$ $C_{27}(S_3)$ $\cong$ $C_{27}(T_3)$:  

$(a)$ $C_{27}(S_1)$ $\cong_{T1_{27,4}}$ $C_{27}(S_3)$ $\cong_{T2_{27,3,1}}$ $C_{27}(T_3)$; 

$(b)$ $C_{27}(T_3)$ $\cong_{T2_{27,3,2}}$ $C_{27}(S_3)$ $\cong_{T1_{27,2}}$ $C_{27}(S_1)$.

\item [\rm (11.ab8)] $(ii)$	$C_{27}(S_1)$ $\cong$ $C_{27}(S_3)$ $\cong$ $C_{27}(R_3)$:  

$(a)$ $C_{27}(S_1)$ $\cong_{T1_{27,4}}$ $C_{27}(S_3)$ $\cong_{T2_{27,3,2}}$ $C_{27}(R_3)$; 

$(b)$ $C_{27}(R_3)$ $\cong_{T2_{27,3,1}}$ $C_{27}(S_3)$ $\cong_{T1_{27,2}}$ $C_{27}(S_1)$.

\item [\rm (11.ab9)] $(i)$	$C_{27}(T_1)$ $\cong$ $C_{27}(R_1)$ $\cong$ $C_{27}(R_2)$:  

$(a)$ $C_{27}(T_1)$ $\cong_{T2_{27,3,1}}$ $C_{27}(R_1)$ $\cong_{T1_{27,2}}$ $C_{27}(R_2)$; 

$(b)$ $C_{27}(R_2)$ $\cong_{T1_{27,4}}$ $C_{27}(R_1)$ $\cong_{T2_{27,3,2}}$ $C_{27}(T_1)$.	 

\item [\rm (11.ab9)] $(ii)$	$C_{27}(T_1)$ $\cong$ $C_{27}(R_1)$ $\cong$ $C_{27}(R_3)$:  

$(a)$ $C_{27}(T_1)$ $\cong_{T2_{27,3,1}}$ $C_{27}(R_1)$ $\cong_{T1_{27,4}}$ $C_{27}(R_3)$; 

$(b)$ $C_{27}(R_3)$ $\cong_{T1_{27,2}}$ $C_{27}(R_1)$ $\cong_{T2_{27,3,2}}$ $C_{27}(T_1)$.	 

\item [\rm (11.ab10)] $(i)$	$C_{27}(T_1)$ $\cong$ $C_{27}(S_1)$ $\cong$ $C_{27}(S_2)$:  

$(a)$ $C_{27}(T_1)$ $\cong_{T2_{27,3,2}}$ $C_{27}(S_1)$ $\cong_{T1_{27,2}}$ $C_{27}(S_2)$; 

$(b)$ $C_{27}(S_2)$ $\cong_{T1_{27,4}}$ $C_{27}(S_1)$ $\cong_{T2_{27,3,1}}$ $C_{27}(T_1)$.	 

\item [\rm (11.ab10)] $(ii)$	$C_{27}(T_1)$ $\cong$ $C_{27}(S_1)$ $\cong$ $C_{27}(S_3)$:  

$(a)$ $C_{27}(T_1)$ $\cong_{T2_{27,3,2}}$ $C_{27}(S_1)$ $\cong_{T1_{27,4}}$ $C_{27}(S_3)$; 

$(b)$ $C_{27}(S_3)$ $\cong_{T1_{27,2}}$ $C_{27}(S_1)$ $\cong_{T2_{27,3,1}}$ $C_{27}(T_1)$.	 

\item [\rm (11.ab11)] $(i)$	$C_{27}(T_1)$ $\cong$ $C_{27}(T_2)$ $\cong$ $C_{27}(R_2)$:  

$(a)$ $C_{27}(T_1)$ $\cong_{T1_{27,2}}$ $C_{27}(T_2)$ $\cong_{T2_{27,3,1}}$ $C_{27}(R_2)$; 

$(b)$ $C_{27}(R_2)$ $\cong_{T2_{27,3,2}}$ $C_{27}(T_2)$ $\cong_{T1_{27,4}}$ $C_{27}(T_1)$.	 

\item [\rm (11.ab11)] $(ii)$	$C_{27}(T_1)$ $\cong$ $C_{27}(T_2)$ $\cong$ $C_{27}(S_2)$:  

$(a)$ $C_{27}(T_1)$ $\cong_{T1_{27,2}}$ $C_{27}(T_2)$ $\cong_{T2_{27,3,2}}$ $C_{27}(S_2)$; 

$(b)$ $C_{27}(S_2)$ $\cong_{T2_{27,3,1}}$ $C_{27}(T_2)$ $\cong_{T1_{27,4}}$ $C_{27}(T_1)$.	 

\item [\rm (11.ab12)] $(i)$	$C_{27}(T_1)$ $\cong$ $C_{27}(T_3)$ $\cong$ $C_{27}(R_3)$:  

$(a)$ $C_{27}(T_1)$ $\cong_{T1_{27,4}}$ $C_{27}(T_3)$ $\cong_{T2_{27,3,1}}$ $C_{27}(R_3)$; 

$(b)$ $C_{27}(R_3)$ $\cong_{T2_{27,3,2}}$ $C_{27}(T_3)$ $\cong_{T1_{27,2}}$ $C_{27}(T_1)$.

\item [\rm (11.ab12)] $(ii)$	$C_{27}(T_1)$ $\cong$ $C_{27}(T_3)$ $\cong$ $C_{27}(S_3)$:  

$(a)$ $C_{27}(T_1)$ $\cong_{T1_{27,4}}$ $C_{27}(T_3)$ $\cong_{T2_{27,3,2}}$ $C_{27}(S_3)$; 

$(b)$ $C_{27}(S_3)$ $\cong_{T2_{27,3,1}}$ $C_{27}(T_3)$ $\cong_{T1_{27,2}}$ $C_{27}(T_1)$.

\item [\rm (11.ab13)] $(i)$	$C_{27}(R_2)$ $\cong$ $C_{27}(S_2)$ $\cong$ $C_{27}(S_3)$:  

$(a)$ $C_{27}(R_2)$ $\cong_{T2_{27,3,1}}$ $C_{27}(S_2)$ $\cong_{T1_{27,2}}$ $C_{27}(S_3)$; 

$(b)$ $C_{27}(S_3)$ $\cong_{T1_{27,4}}$ $C_{27}(S_2)$ $\cong_{T2_{27,3,2}}$ $C_{27}(R_2)$.	 

\item [\rm (11.ab13)] $(ii)$	$C_{27}(R_2)$ $\cong$ $C_{27}(S_2)$ $\cong$ $C_{27}(S_1)$:  

$(a)$ $C_{27}(R_2)$ $\cong_{T2_{27,3,1}}$ $C_{27}(S_2)$ $\cong_{T1_{27,4}}$ $C_{27}(S_1)$; 

$(b)$ $C_{27}(S_1)$ $\cong_{T1_{27,2}}$ $C_{27}(S_2)$ $\cong_{T2_{27,3,2}}$ $C_{27}(R_2)$.	 

\item [\rm (11.ab14)] $(i)$	$C_{27}(R_2)$ $\cong$ $C_{27}(T_2)$ $\cong$ $C_{27}(T_3)$:  

$(a)$ $C_{27}(R_2)$ $\cong_{T2_{27,3,2}}$ $C_{27}(T_2)$ $\cong_{T1_{27,2}}$ $C_{27}(T_3)$; 

$(b)$ $C_{27}(T_3)$ $\cong_{T1_{27,4}}$ $C_{27}(T_2)$ $\cong_{T2_{27,3,1}}$ $C_{27}(R_2)$.	 

\item [\rm (11.ab14)] $(ii)$	$C_{27}(R_2)$ $\cong$ $C_{27}(T_2)$ $\cong$ $C_{27}(T_1)$:  

$(a)$ $C_{27}(R_2)$ $\cong_{T2_{27,3,2}}$ $C_{27}(T_2)$ $\cong_{T1_{27,4}}$ $C_{27}(T_1)$; 

$(b)$ $C_{27}(T_1)$ $\cong_{T1_{27,2}}$ $C_{27}(T_2)$ $\cong_{T2_{27,3,1}}$ $C_{27}(R_2)$.	 

\item [\rm (11.ab15)] $(i)$	$C_{27}(R_2)$ $\cong$ $C_{27}(R_3)$ $\cong$ $C_{27}(S_3)$:  

$(a)$ $C_{27}(R_2)$ $\cong_{T1_{27,2}}$ $C_{27}(R_3)$ $\cong_{T2_{27,3,1}}$ $C_{27}(S_3)$; 

$(b)$ $C_{27}(S_3)$ $\cong_{T2_{27,3,2}}$ $C_{27}(R_3)$ $\cong_{T1_{27,4}}$ $C_{27}(R_2)$.	 

\item [\rm (11.ab15)] $(ii)$	$C_{27}(R_2)$ $\cong$ $C_{27}(R_3)$ $\cong$ $C_{27}(T_3)$:  

$(a)$ $C_{27}(R_2)$ $\cong_{T1_{27,2}}$ $C_{27}(R_3)$ $\cong_{T2_{27,3,2}}$ $C_{27}(T_3)$; 

$(b)$ $C_{27}(T_3)$ $\cong_{T2_{27,3,1}}$ $C_{27}(R_3)$ $\cong_{T1_{27,4}}$ $C_{27}(R_2)$.

\item [\rm (11.ab16)] $(i)$	$C_{27}(S_2)$ $\cong$ $C_{27}(T_2)$ $\cong$ $C_{27}(T_3)$:  

$(a)$ $C_{27}(S_2)$ $\cong_{T2_{27,3,1}}$ $C_{27}(T_2)$ $\cong_{T1_{27,2}}$ $C_{27}(T_3)$; 

$(b)$ $C_{27}(T_3)$ $\cong_{T1_{27,4}}$ $C_{27}(T_2)$ $\cong_{T2_{27,3,2}}$ $C_{27}(S_2)$.	 

\item [\rm (11.ab16)] $(ii)$	$C_{27}(S_2)$ $\cong$ $C_{27}(T_2)$ $\cong$ $C_{27}(T_1)$:  

$(a)$ $C_{27}(S_2)$ $\cong_{T2_{27,3,1}}$ $C_{27}(T_2)$ $\cong_{T1_{27,4}}$ $C_{27}(T_1)$; 

$(b)$ $C_{27}(T_1)$ $\cong_{T1_{27,2}}$ $C_{27}(T_2)$ $\cong_{T2_{27,3,2}}$ $C_{27}(S_2)$.	 

\item [\rm (11.ab17)] $(i)$	$C_{27}(S_2)$ $\cong$ $C_{27}(R_2)$ $\cong$ $C_{27}(R_3)$:  

$(a)$ $C_{27}(S_2)$ $\cong_{T2_{27,3,2}}$ $C_{27}(R_2)$ $\cong_{T1_{27,2}}$ $C_{27}(R_3)$; 

$(b)$ $C_{27}(R_3)$ $\cong_{T1_{27,4}}$ $C_{27}(R_2)$ $\cong_{T2_{27,3,1}}$ $C_{27}(S_2)$.	 

\item [\rm (11.ab17)] $(ii)$	$C_{27}(S_2)$ $\cong$ $C_{27}(R_2)$ $\cong$ $C_{27}(R_1)$:  

$(a)$ $C_{27}(S_2)$ $\cong_{T2_{27,3,2}}$ $C_{27}(R_2)$ $\cong_{T1_{27,4}}$ $C_{27}(R_1)$; 

$(b)$ $C_{27}(R_1)$ $\cong_{T1_{27,2}}$ $C_{27}(R_2)$ $\cong_{T2_{27,3,1}}$ $C_{27}(S_2)$.	 

\item [\rm (11.ab18)] $(i)$ $C_{27}(S_2)$ $\cong$ $C_{27}(S_3)$ $\cong$ $C_{27}(T_3)$:  

$(a)$ $C_{27}(S_2)$ $\cong_{T1_{27,2}}$ $C_{27}(S_3)$ $\cong_{T2_{27,3,1}}$ $C_{27}(T_3)$; 

$(b)$ $C_{27}(T_3)$ $\cong_{T2_{27,3,2}}$ $C_{27}(S_3)$ $\cong_{T1_{27,4}}$ $C_{27}(S_2)$.

\item [\rm (11.ab18)] $(ii)$ $C_{27}(S_2)$ $\cong$ $C_{27}(S_3)$ $\cong$ $C_{27}(R_3)$:  

$(a)$ $C_{27}(S_2)$ $\cong_{T1_{27,2}}$ $C_{27}(S_3)$ $\cong_{T2_{27,3,2}}$ $C_{27}(R_3)$; 

$(b)$ $C_{27}(R_3)$ $\cong_{T2_{27,3,1}}$ $C_{27}(S_3)$ $\cong_{T1_{27,4}}$ $C_{27}(S_2)$.

\item [\rm (11.ab19)] $(i)$	$C_{27}(T_2)$ $\cong$ $C_{27}(R_2)$ $\cong$ $C_{27}(R_3)$:  

$(a)$ $C_{27}(T_2)$ $\cong_{T2_{27,3,1}}$ $C_{27}(R_2)$ $\cong_{T1_{27,2}}$ $C_{27}(R_3)$; 

$(b)$ $C_{27}(R_3)$ $\cong_{T1_{27,4}}$ $C_{27}(R_2)$ $\cong_{T2_{27,3,2}}$ $C_{27}(T_2)$.	 

\item [\rm (11.ab19)] $(ii)$	$C_{27}(T_2)$ $\cong$ $C_{27}(R_2)$ $\cong$ $C_{27}(R_1)$:  

$(a)$ $C_{27}(T_2)$ $\cong_{T2_{27,3,1}}$ $C_{27}(R_2)$ $\cong_{T1_{27,4}}$ $C_{27}(R_1)$; 

$(b)$ $C_{27}(R_1)$ $\cong_{T1_{27,2}}$ $C_{27}(R_2)$ $\cong_{T2_{27,3,2}}$ $C_{27}(T_2)$.	 

\item [\rm (11.ab20)] $(i)$	$C_{27}(T_2)$ $\cong$ $C_{27}(S_2)$ $\cong$ $C_{27}(S_3)$:  

$(a)$ $C_{27}(T_2)$ $\cong_{T2_{27,3,2}}$ $C_{27}(S_2)$ $\cong_{T1_{27,2}}$ $C_{27}(S_3)$; 

$(b)$ $C_{27}(S_3)$ $\cong_{T1_{27,4}}$ $C_{27}(S_2)$ $\cong_{T2_{27,3,1}}$ $C_{27}(T_2)$.	 

\item [\rm (11.ab20)] $(ii)$	$C_{27}(T_2)$ $\cong$ $C_{27}(S_2)$ $\cong$ $C_{27}(S_1)$:  

$(a)$ $C_{27}(T_2)$ $\cong_{T2_{27,3,2}}$ $C_{27}(S_2)$ $\cong_{T1_{27,4}}$ $C_{27}(S_1)$; 

$(b)$ $C_{27}(S_1)$ $\cong_{T1_{27,2}}$ $C_{27}(S_2)$ $\cong_{T2_{27,3,1}}$ $C_{27}(T_2)$.	 

\item [\rm (11.ab21)] $(i)$ $C_{27}(T_2)$ $\cong$ $C_{27}(T_3)$ $\cong$ $C_{27}(R_3)$:  

$(a)$ $C_{27}(T_2)$ $\cong_{T1_{27,2}}$ $C_{27}(T_3)$ $\cong_{T2_{27,3,1}}$ $C_{27}(R_3)$; 

$(b)$ $C_{27}(R_3)$ $\cong_{T2_{27,3,2}}$ $C_{27}(T_3)$ $\cong_{T1_{27,4}}$ $C_{27}(T_2)$.

\item [\rm (11.ab22)] $(ii)$ $C_{27}(T_2)$ $\cong$ $C_{27}(T_3)$ $\cong$ $C_{27}(S_3)$:  

$(a)$ $C_{27}(T_2)$ $\cong_{T1_{27,2}}$ $C_{27}(T_3)$ $\cong_{T2_{27,3,2}}$ $C_{27}(S_3)$; 

$(b)$ $C_{27}(S_3)$ $\cong_{T2_{27,3,1}}$ $C_{27}(T_3)$ $\cong_{T1_{27,4}}$ $C_{27}(T_2)$.
\end{enumerate}

{\rm 	\begin{center}
		\begin{tikzpicture}  
		[scale=.8,auto=center,every node/.style={draw,circle}]
			
\node (1) at (-10,-0.5) {\tiny{$R_1$}};
\node [scale=.0] (2) at (-9,-.93) [label=90: $>$]{};
\node (3) at (-8,-0.5) {\tiny{$S_1$}};	
\node [scale=.0] (4) at (-7,-0.93) [label=91: $>$]{};
\node (5) at (-6,-0.5)  {\tiny{$S_2$}};

\draw[ line width=0.2mm] [blue](1)[dashed]  to (3);
\draw[ line width=0.2mm] [blue](3) to (5);

\node [scale=.0] (6) at (-5,-.93) [label=90: $>$]{};
\node (7) at (-4,-0.5) {\tiny{$T_2$}};	
\node [scale=.0] (8) at (-3,-0.93) [label=91: $>$]{};
\node (9) at (-2,-0.5)  {\tiny{$T_3$}};

\draw[ line width=0.2mm] [blue](5)[dashed] to (7);
\draw[ line width=0.2mm] [blue](7) to (9);

\node [scale=.0] (10) at (-1,-.93) [label=90: $>$]{};
\node (11) at (0,-0.5) {\tiny{$R_3$}};	
\node [scale=.0] (12) at (1,-0.93) [label=91: $>$]{};
\node (13) at (2,-0.5)  {\tiny{$R_2$}};

\draw[ line width=0.2mm] [blue](9)[dashed]  to (11);
\draw[ line width=0.2mm] [blue](11) to (13);

\node [scale=.0] (14) at (3,-.93) [label=90: $>$]{};
\node (15) at (4,-0.5) {\tiny{$T_2$}};	
\node [scale=.0] (16) at (5,-0.93) [label=91: $>$]{};
\node (17) at (6,-0.5)  {\tiny{$T_1$}};

\draw[ line width=0.2mm] [blue](13)[dashed] to (15);
\draw[ line width=0.2mm] [blue](15) to (17);

\node [scale=.0] (18) at (6,-1.93) [label=91: $\downarrow$]{};
\node (19) at (6,-2.5) {\tiny{$S_1$}};	
\node [scale=.0] (20) at (5,-2.93) [label=90: $<$]{};
\node (21) at (4,-2.5)  {\tiny{$S_3$}};

\draw[ line width=0.2mm] [blue](17)[dashed] to (19);
\draw[ line width=0.2mm] [blue](19) to (21);

\node [scale=.0] (22) at (3,-2.93) [label=90: $<$]{};
\node (23) at (2,-2.5) {\tiny{$R_3$}};	
\node [scale=.0] (24) at (1,-2.93) [label=91: $<$]{};
\node (25) at (0,-2.5)  {\tiny{$R_1$}};

\draw[ line width=0.2mm] [blue](21)[dashed]  to (23);
\draw[ line width=0.2mm] [blue](23) to (25);

\node [scale=.0] (26) at (-1,-2.93) [label=90: $<$]{};
\node (27) at (-2,-2.5) {\tiny{$T_1$}};	
\node [scale=.0] (28) at (-3,-2.93) [label=91: $<$]{};
\node (29) at (-4,-2.5)  {\tiny{$T_3$}};

\draw[ line width=0.2mm] [blue](25)[dashed] to (27);
\draw[ line width=0.2mm] [blue](27) to (29);

\node [scale=.0] (30) at (-5,-2.93) [label=90: $<$]{};
\node (31) at (-6,-2.5) {\tiny{$S_3$}};	
\node [scale=.0] (32) at (-7,-2.93) [label=91: $<$]{};
\node (33) at (-8,-2.5)  {\tiny{$S_2$}};

\draw[ line width=0.2mm] [blue](29)[dashed]  to (31);
\draw[ line width=0.2mm] [blue](31) to (33);

\node [scale=.0] (34) at (-9,-2.93) [label=90: $<$]{};
\node (35) at (-10,-2.5) {\tiny{$R_2$}};	
\node [scale=.0] (36) at (-10,-3.93) [label=91: $\downarrow$]{};
\node (37) at (-10,-4.5)  {\tiny{$R_3$}};

\draw[ line width=0.2mm] [blue](33)[dashed] to (35);
\draw[ line width=0.2mm] [blue](35) to (37);

\node [scale=.0] (38) at (-9,-4.93) [label=90: $>$]{};
\node (39) at (-8,-4.5) {\tiny{$S_3$}};	
\node [scale=.0] (40) at (-7,-4.93) [label=91: $>$]{};
\node (41) at (-6,-4.5)  {\tiny{$S_1$}};

\draw[ line width=0.2mm] [blue](37)[dashed]  to (39);
\draw[ line width=0.2mm] [blue](39) to (41);

\node [scale=.0] (42) at (-5,-4.93) [label=90: $>$]{};
\node (43) at (-4,-4.5) {\tiny{$T_1$}};	
\node [scale=.0] (44) at (-3,-4.93) [label=91: $>$]{};
\node (45) at (-2,-4.5)  {\tiny{$T_2$}};

\draw[ line width=0.2mm] [blue](41)[dashed] to (43);
\draw[ line width=0.2mm] [blue](43) to (45);

\node [scale=.0] (46) at (-1,-4.93) [label=90: $>$]{};
\node (47) at (0,-4.5) {\tiny{$S_2$}};	
\node [scale=.0] (48) at (1,-4.93) [label=91: $>$]{};
\node (49) at (2,-4.5)  {\tiny{$S_1$}};

\draw[ line width=0.2mm] [blue](45)[dashed]  to (47);
\draw[ line width=0.2mm] [blue](47) to (49);

\node [scale=.0] (50) at (3,-4.93) [label=90: $>$]{};
\node (51) at (4,-4.5) {\tiny{$R_1$}};	
\node [scale=.0] (52) at (5,-4.93) [label=91: $>$]{};
\node (53) at (6,-4.5)  {\tiny{$R_2$}};

\draw[ line width=0.2mm] [blue](49)[dashed] to (51);
\draw[ line width=0.2mm] [blue](51) to (53);

\node [scale=.0] (54) at (6,-5.93) [label=91: $\downarrow$]{};
\node (55) at (6,-6.5) {\tiny{$S_2$}};	
\node [scale=.0] (56) at (5,-6.93) [label=90: $<$]{};
\node (57) at (4,-6.5)  {\tiny{$S_3$}};

\draw[ line width=0.2mm] [blue](53)[dashed] to (55);
\draw[ line width=0.2mm] [blue](55) to (57);

\node [scale=.0] (58) at (3,-6.93) [label=90: $<$]{};
\node (59) at (2,-6.5) {\tiny{$T_3$}};	
\node [scale=.0] (60) at (1,-6.93) [label=91: $<$]{};
\node (61) at (0,-6.5)  {\tiny{$T_1$}};

\draw[ line width=0.2mm] [blue](57)[dashed]  to (59);
\draw[ line width=0.2mm] [blue](59) to (61);

\node [scale=.0] (62) at (-1,-6.93) [label=90: $<$]{};
\node (63) at (-2,-6.5) {\tiny{$R_1$}};	
\node [scale=.0] (64) at (-3,-6.93) [label=91: $<$]{};
\node (65) at (-4,-6.5)  {\tiny{$R_3$}};

\draw[ line width=0.2mm] [blue](61)[dashed] to (63);
\draw[ line width=0.2mm] [blue](63) to (65);

\node [scale=.0] (66) at (-5,-6.93) [label=90: $<$]{};
\node (67) at (-6,-6.5) {\tiny{$T_3$}};	
\node [scale=.0] (68) at (-7,-6.93) [label=91: $<$]{};
\node (69) at (-8,-6.5)  {\tiny{$T_2$}};

\draw[ line width=0.2mm] [blue](65)[dashed]  to (67);
\draw[ line width=0.2mm] [blue](67) to (69);

\node [scale=.0] (70) at (-9,-6.93) [label=90: $<$]{};
\node (71) at (-10,-6.5) {\tiny{$R_2$}};	
\node [scale=.0] (72) at (-11,-4.1) [label=91: $\uparrow$]{};

\draw[ line width=0.2mm] [blue](69)[dashed] to (71);
\draw[-, line width=0.2mm] [blue](71) to [out=120,in=240] (1);
	
\end{tikzpicture}

\vspace{.2cm}		
{\small  Fig. 23. Digraph of Hamiltonian isomorphism series of $C_{27}(1,3,8,9,10)$ with 
			
$R_1$ = $\{1,3,6,8,9,10\}$,  $S_1$ = $\{3,4,5,6,9,13\}$, ~~$T_1$ = $\{2,3,6,7,9,11\}$,
					
\hspace{.1cm}	    
$R_2$ = $\{2,6,7,9,11,12\}$,  $S_2$ = $\{1,6,8,9,10,12\}$, $T_2$ = $\{4,5,6,9,12,13\}$, 

\hspace{.1cm}	   
$R_3$ = $\{3,4,5,9,12,13\}$, $S_3$ = $\{2,3,7,9,11,12\}$, $T_3$ = $\{1,3,8,9,10,12\}$.   }
\end{center} }

In this case, the isomorphism diagram $\mathcal{D}$ = $\mathcal{I}so\mathcal{D}(C_{27}(1,3,6,8,9,10))$ contains a Hamiltonian isomorphism series and its digraph of the Hamiltonian isomorphism series is given in Figure 23 with $C_{27}(R_1)$ = $C_{27}(1,3,6,8,9,10)$. In the figure, digraph of Hamiltonian isomorphism series of $C_{27}(X_i)$ with the representation of $X_i$ in the place of $C_{27}(X_i)$ is given with $X_i$ = $R_i, S_i, T_i$ for $i$ = 1,2,3 and 
			
$R_1$ = $\{1,3,6,8,9,10\}$,  $S_1$ = $\{3,4,5,6,9,13\}$, $T_1$ = $\{2,3,6,7,9,11\}$, 
					
$R_2$ = $\{2,6,7,9,11,12\}$,  $S_2$ = $\{1,6,8,9,10,12\}$, $T_2$ = $\{4,5,6,9,12,13\}$, 

$R_3$ = $\{3,4,5,9,12,13\}$, $S_3$ = $\{23,,7,9,11,12\}$, $T_3$ = $\{1,3,8,9,10,12\}$, 
\\
the isomorphism set of $C_{27}(1,3,6,8,9,10)$ is 
\\
$Isoset(C_{27}(1,3,6,8,9,10))$ =  $V(\mathcal{D})$ = $V(\mathcal{G})$ = $Iso(C_{27}(2,6,7,9,11,12))$ = $Iso(C_{27}(3,4,5,9,12,13))$ 

\hfill = $\{C_{27}(1,3,6,8,9,10), C_{27}(2,6,7,9,11,12),  C_{27}(3,4,5,9,12,13),$   
 	
 \hfill 	 $C_{27}(3,4,5,6,9,13), C_{27}(1,6,8,9,10,12)$, $C_{27}(2,3,7,9,11,12),$

 	\hfill	$C_{27}(2,3,6,7,9,11), C_{27}(4,5,6,9,12,13), C_{27}(1,3,8,9,10,12)\}$
    
 \hfill   = $Iso(C_{27}(3,4,5,6,9,13))$ = $Iso(C_{27}(1,6,8,9,10,12))$ = $Iso(C_{27}(2,3,7,9,11,12))$ 

  = $Iso(C_{27}((2,3,6,7,9,11))$ = $Iso(C_{27}(4,5,6,9,12,13))$ = $Iso(C_{27}(1,3,8,9,10,12))$. See case (10) for more details.  \hfill $\Box$

In \cite{v2-4}, the author studied Type-2 isomorphic circulant graphs of order 54 and showed that there are 960 triples of Type-2 isomorphic circulant graphs of order 54 and each triple of isomorphic circulant graphs is of Type-2 isomorphic w.r.t. $m$ = 3. In the next problem, we find the isomorphism diagram, the isomorphism graph and isomorphism series corresponding to one triple of isomorphic circulant graphs order 54, namely $C_{54}(1,3,17,19)$, $C_{54}(3,7,11,25)$, $C_{54}(3,5,13,23)$ which are Type-2 isomorphic w.r.t. $m$ = 3. We get similar results in other cases.

 \begin{prm} \quad \label{p4.8} {\rm Find the isomorphism digraph, the isomorphism graph and Hamiltonian isomorphism series, if it exists, of $C_{54}(1,3,17,19)$. }
 \end{prm}
 
 \noindent
 {\bf Solution.}\quad We follow the same method used in the previous problem, especially follow its case (10). 

Let $X_1$ = $\{1,3,17,19\}$, $Y_1$ = $\{3,7,11,25\}$, $Z_1$ = $\{3,5,13,23\}$, 

\hspace{.5cm} $X_2$ = $\{2,3,16,20\}$, $Y_2$ = $\{3,4,14,22\}$, $Z_2$ = $\{3,8,10,26\}$, 

\hspace{.5cm} $X_3$ = $\{7,11,21,25\}$, $Y_3$ = $\{5,13,21,23\}$, $Z_3$ = $\{1,17,19,21\}$. 

Here, $n$ = 54 = $2\times 3^3$, $r$ = $m$ = 3 = $\gcd(54, 3)$ and $r\in X_i,Y_i,Z_i$ $i$ = 1,2,3. 

Using solutions obtained in \cite{v2-4}, the followings are obtained with $C_{54}(R)$ = $C_{54}(1,3,17,19)$,  

$T2_{54,3 }(C_{54}(1,3,17,19))$ = $\{C_{54}(1,3,17,19)$, $C_{54}(3,7,11,25) = T2_{54,3, 2}(C_{54}(1,3,17,19))$,

\hfill $C_{54}(3,5,13,23) = T2_{54,3, 4}(C_{54}(1,3,17,19))\}$ = $T2_{54,3 }(C_{54}(3,7,11,25))$ 

\hfill = $T2_{54,3 }(C_{54}(3,5,13,23))$,

$T1_{54}(C_{54}(1,3,17,19))$ = $\{C_{54}(1,3,17,19)$, $C_{54}(5,13,15,23) = T1_{27, 5}(C_{54}(1,3,17,19))$, 

\hfill $C_{54}(7,11,21,25) = T1_{27, 7}(C_{54}(1,3,17,19))\}$ = $T1_{54}(C_{54}(5,13,15,23))$ 

\hfill = $T1_{54}(C_{54}(7,11,21,25))$, 

$T2_{54,3 }(C_{54}(5,13,15,23))$ = $\{C_{54}(5,13,15,23)$, $C_{54}(1,15,17,19) = T2_{54,3, 2}(C_{54}(5,13,15,23))$,

\hfill $C_{54}(7,11,15,25) = T2_{54,3, 4}(C_{54}(5,13,15,23))\}$ = $T2_{54,3 }(C_{54}(1,15,17,19))$ 

\hfill = $T2_{54,3 }(C_{54}(7,11,15,25))$,

$T1_{54}(C_{54}(1,15,17,19))$ = $\{C_{54}(1,15,17,19)$,  $C_{54}(5,13,21,23) = T1_{27, 5}(C_{54}(1,15,17,19))$,

\hfill $C_{54}(3,7,11,25) = T1_{27, 7}(C_{54}(1,15,17,19))\}$ = $T1_{54}(C_{54}(5,13,21,23))$ = $T1_{54}(C_{54}(3,7,11,25))$,

$T2_{54,3 }(C_{54}(5,13,21,23))$ = $\{C_{54}(5,13,21,23),$ $C_{54}(1,17,19,21) = T2_{54,3, 2}(C_{54}(5,13,21,23))$, 

\hfill $C_{54}(7,11,21,25) = T2_{54,3, 4}(C_{54}(5,13,21,23))\}$ = $T2_{54,3 }(C_{54}(1,17,19,21))$ 

\hfill = $T2_{54,3 }(C_{54}(7,11,21,25))$,

$T1_{54}(C_{54}(1,17,19,21))$ = $\{C_{54}(1,17,19,21)$, $C_{54}(3,5,13,23) = T1_{27, 5}(C_{54}(1,17,19,21))$,

\hfill $C_{54}(7,11,15,25) = T1_{27, 7}(C_{54}(1,17,19,21))\}$ = $T1_{54}(C_{54}(3,5,13,23))$ 

\hfill = $T1_{54}(C_{54}(7,11,15,25))$ and

 $V(\mathcal{D})$ = $V(\mathcal{G})$ = $Iso(C_{54}(R))$ = $Iso(C_{54}(1,3,17,19))$ = $Iso(C_{54}(5,13,15,23))$ 

\hfill = $Iso(C_{54}(7,11,21,25))$ = $\{C_{54}(1,3,17,19), C_{54}(5,13,15,23), C_{54}(7,11,21,25),$   
 	
 \hfill 	 $C_{54}(1,15,17,19), C_{54}(5,13,21,23), C_{54}(3,7,11,25),$

 	\hfill	$C_{54}(1,17,19,21), C_{54}(3,5,13,23), C_{54}(7,11,15,25)\}$
    
 \hfill   = $Iso(C_{54}(1,15,17,19))$ = $Iso(C_{54}(5,13,21,23))$ = $Iso(C_{54}((3,7,11,25))$ 

\hfill    = $Iso(C_{54}(1,17,19,21))$ = $Iso(C_{54}(3,5,13,23))$ = $Iso(C_{54}(7,11,15,25))$. 
    
  $\Rightarrow$	The isomorphism digraph $\mathcal{ID}_{54, 3}(C_{54}(1,3,17,19))$ = $\mathcal{ID}_{54, 3}(C_{54}(5,13,15,23))$ 

\hfill = $\mathcal{ID}_{54, 3}(C_{54}(7,11,21,25))$ = $\mathcal{ID}_{54, 3}(C_{54}(1,15,17,19))$ = $\mathcal{ID}_{54, 3}(C_{54}(5,13,21,23))$ 

\hfill = $\mathcal{ID}_{54, 3}(C_{54}(3,7,11,25))$ = $\mathcal{ID}_{54, 3}(C_{54}(1,17,19,21))$ = $\mathcal{ID}_{54, 3}(C_{54}(3,5,13,23))$  

\hfill = $\mathcal{ID}_{54, 3}(C_{54}(7,11,15,25))$ and the isomorphism graph $\mathcal{I}_{54, 3}(C_{54}(1,3,17,19))$ 

\hfill = $\mathcal{I}_{54, 3}(C_{54}(5,13,15,23))$ = $\mathcal{I}_{54, 3}(C_{54}(7,11,21,25))$ = $\mathcal{I}_{54, 3}(C_{54}(1,15,17,19))$ 

\hfill  = $\mathcal{I}_{54, 3}(C_{54}(5,13,21,23))$ = $\mathcal{I}_{54, 3}(C_{54}(3,7,11,25))$ = $\mathcal{I}_{54, 3}(C_{54}(1,17,19,21))$ 

 \hfill = $\mathcal{I}_{54, 3}(C_{54}(3,5,13,23))$ = $\mathcal{I}_{54, 3}(C_{54}(7,11,15,25))$ and are given in Figures 24 and 25.

 {\rm 	\begin{center}
 		\begin{tikzpicture}  
 		[scale=.32,auto=center,every node/.style={draw,circle}] 
 		
 		\node (1) at (0,0) {\tiny{$C_{54}(X_1)$}};
		\node (2) at (9,6) {\tiny{$C_{54}(Z_1)$}};	
		\node (3) at (-9,6){\tiny{$C_{54}(Y_1)$}};

\draw[->, line width=0.2mm] [blue](1)[dashed] to [out=50,in=200] (2);
\draw[->, line width=0.2mm] [blue](2)[dashed] to [out=230,in=20] (1);
\draw[->, line width=0.2mm] [blue](2)[dashed] to [out=175,in=5] (3);
\draw[->, line width=0.2mm] [blue](3)[dashed] to [out=355,in=185](2);
\draw[->, line width=0.2mm] [blue](3)[dashed] to [out=340,in=140](1);
\draw[->, line width=0.2mm] [blue](1)[dashed] to [out=160,in=310](3);
		
\node [scale=.0] (11) at (5,2) [label=90:\tiny{$T2_{54,3,4}$}]{};
\node [scale=.0] (12) at (5,-1) [label=90:\tiny{$T2_{54,3,2}$}]{};
\node [scale=.0] (13) at (-4.5,4.5)[label=90:\tiny{$T2_{54,3,4}$}]{};
\node [scale=.0] (14) at (4.8,3.5)[label=90:\tiny{$T2_{54,3,2}$}]{};
\node [scale=.0](15) at (-6,.5)[label=90:\tiny{$T2_{54,3,2}$}]{};
\node [scale=.0] (16) at (-4,1.5) [label=90:\tiny{$T2_{54,3,4}$}]{};
		\node (4) at (0,10) {\tiny{$C_{54}(X_2)$}};
		\node (5) at (9,16) {\tiny{$C_{54}(Z_2)$}};	
		\node (6) at (-9,16){\tiny{$C_{54}(Y_2)$}};
		
\draw[->, line width=0.2mm] [blue](4)[dashed] to [out=50,in=200] (5);
\draw[->, line width=0.2mm] [blue](5)[dashed] to [out=230,in=20] (4);
\draw[->, line width=0.2mm] [blue](5)[dashed] to [out=175,in=5] (6);
\draw[->, line width=0.2mm] [blue](6)[dashed] to [out=355,in=185] (5);
\draw[->, line width=0.2mm] [blue](6)[dashed] to [out=340,in=140] (4);
\draw[->, line width=0.2mm] [blue](4)[dashed] to [out=160,in=320] (6);
		
\node [scale=.0] (17) at (5,12) [label=90:\tiny{$T2_{54,3,4}$}]{};
\node [scale=.0] (18) at (5.2,9.7) [label=90:\tiny{$T2_{54,3,2}$}]{};
\node [scale=.0](19) at (-3.5,14.7)[label=90:\tiny{$T2_{54,3,4}$}]{};
\node [scale=.0] (20) at (4.1,13.8)[label=90:\tiny{$T2_{54,3,2}$}]{};
\node [scale=.0](21) at (-6,11.5)[label=90:\tiny{$T2_{54,3,2}$}]{};
\node [scale=.0] (22) at (-2.5,10) [label=90:\tiny{$T2_{54,3,4}$}]{};
		\node (7) at (0,20) {\tiny{$C_{54}(X_3)$}};
		\node (8) at (9,26) {\tiny{$C_{54}(Z_3)$}};	
		\node (9) at (-9,26){\tiny{$C_{54}(Y_3)$}};
		
\draw[->, line width=0.2mm] [blue](7)[dashed] to [out=50,in=200] (8);
\draw[->, line width=0.2mm] [blue](8)[dashed] to [out=230,in=20] (7);
\draw[->, line width=0.2mm] [blue](8)[dashed] to [out=175,in=5] (9);
\draw[->, line width=0.2mm] [blue](9)[dashed] to [out=355,in=185] (8);
\draw[->, line width=0.2mm] [blue](9)[dashed] to [out=340,in=140] (7);
\draw[->, line width=0.2mm] [blue](7)[dashed] to [out=160,in=320] (9);
		
\node [scale=.0] (17) at (5,22) [label=90:\tiny{$T2_{54,3,4}$}]{};
\node [scale=.0](18) at (3.5,19.25)[label=90:\tiny{$T2_{54,3,2}$}]{};
\node [scale=.0](19) at (-4,24.7) [label=90:\tiny{$T2_{54,3,4}$}]{};
\node [scale=.0] (20) at (2,23.8) [label=90:\tiny{$T2_{54,3,2}$}]{};
\node [scale=.0](21) at (-5.5,21)[label=90:\tiny{$T2_{54,3,2}$}]{};
\node [scale=.0](22) at (-1.25,20.5)[label=90:\tiny{$T2_{54,3,4}$}]{};
		\draw[->, line width=0.2mm] [blue](1) to [out=110,in=250] (4);
		\draw[->, line width=0.2mm] [blue](4) to [out=290,in=70] (1);
		\draw[->, line width=0.2mm] [blue](1) to [out=120,in=240] (7);
		\draw[->, line width=0.2mm] [blue](7) to [out=300,in=60] (1);
  	\draw[->, line width=0.2mm] [blue](2) to [out=110,in=250] (5);
		\draw[->, line width=0.2mm] [blue](5) to [out=290,in=70] (2);
		\draw[->, line width=0.2mm] [blue](2) to [out=120,in=240] (8);
		\draw[->, line width=0.2mm] [blue](8) to [out=300,in=60] (2);
  	\draw[->, line width=0.2mm] [blue](3) to [out=110,in=250] (6);
		\draw[->, line width=0.2mm] [blue](6) to [out=290,in=70] (3);
		\draw[->, line width=0.2mm] [blue](3) to [out=120,in=240] (9);
		\draw[->, line width=0.2mm] [blue](9) to [out=300,in=60] (3);
 		\draw[->, line width=0.2mm] [blue](4) to [out=110,in=250] (7);
		\draw[->, line width=0.2mm] [blue](7) to [out=290,in=70] (4);
		\draw[->, line width=0.2mm] [blue](5) to [out=110,in=250] (8);
		\draw[->, line width=0.2mm] [blue](8) to [out=290,in=70] (5);
		\draw[->, line width=0.2mm] [blue](6) to [out=110,in=250] (9);
		\draw[->, line width=0.2mm] [blue](9) to [out=290,in=70] (6);
		
\node [scale=.0] (39) at (-1,5.5) [label=90:\tiny{$T1_{54,5}$}]{};
\node [scale=.0] (45) at (1,2.5) [label=90:\tiny{$T1_{54,7}$}]{};
\node [scale=.0] (43) at (-3.5,7.5) [label=90:\tiny{$T1_{54,7}$}]{};
\node [scale=.0] (40) at (4,7.5) [label=90:\tiny{$T1_{54,5}$}]{};
\node [scale=.0] (33) at (10,9) [label=90:\tiny{$T1_{54,7}$}]{};
\node [scale=.0] (44) at (8,11) [label=90:\tiny{$T1_{54,5}$}]{};
\node [scale=.0] (35) at (6.5,19) [label=90:\tiny{$T1_{54,7}$}]{};
\node [scale=.0] (34) at (12.5,13) [label=90:\tiny{$T1_{54,5}$}]{};

\node [scale=.0] (47) at (-10,11) [label=90:\tiny{$T1_{54,5}$}]{};
\node [scale=.0] (42) at (-8,9) [label=90:\tiny{$T1_{54,7}$}]{};
\node [scale=.0] (41) at (-12,19) [label=90:\tiny{$T1_{54,7}$}]{};
\node [scale=.0] (36) at (-6,10) [label=90:\tiny{$T1_{54,5}$}]{};

\node [scale=.0] (46) at (1.5,12) [label=90:\tiny{$T1_{54,7}$}]{};
\node [scale=.0] (37) at (-1,13) [label=90:\tiny{$T1_{54,5}$}]{};
\node [scale=.0] (31) at (8.5,21) [label=90:\tiny{$T1_{54,5}$}]{};
\node [scale=.0] (38) at (10,18) [label=90:\tiny{$T1_{54,7}$}]{};
\node [scale=.0] (31) at (-9.5,21) [label=90:\tiny{$T1_{54,5}$}]{};
\node [scale=.0] (32) at (-8,18) [label=90:\tiny{$T1_{54,7}$}]{};
				
		\node (1) at (26,2) {\tiny{$C_{54}(X_1)$}};
		\node (2) at (33,7) {\tiny{$C_{54}(Z_1)$}};	
		\node (3) at (19,7)  {\tiny{$C_{54}(Y_1)$}};

\draw[line width=0.2mm] [blue](1)[dashed] to (2);
\draw[line width=0.2mm] [blue](2)[dashed] to (3);
\draw[line width=0.2mm] [blue](3)[dashed] to (1);
		
\node [scale=.0] (11) at (31,2) [label=90:\tiny{$T2_{27,3}$}]{};
\node [scale=.0] (13) at (28.5,5.5) [label=90:\tiny{$T2_{27,3}$}]{};
\node [scale=.0] (15) at (22.5,2.5)[label=90:\tiny{$T2_{27,3}$}]{};
		\node (4) at (26,10.5) {\tiny{$C_{54}(X_2)$}};
		\node (5) at (33,15.5) {\tiny{$C_{54}(Z_2)$}};	
		\node (6) at (19,15.5)  {\tiny{$C_{54}(Y_2)$}};
		
\draw[line width=0.2mm] [blue](4)[dashed] to (5);
\draw[line width=0.2mm] [blue](5)[dashed] to (6);
\draw[line width=0.2mm] [blue](6)[dashed] to (4);
		
\node [scale=.0] (17) at (28.5,10.75) [label=90:\tiny{$T2_{27,3}$}]{};
\node [scale=.0] (19) at (28.5,14)[label=90:\tiny{$T2_{27,3}$}]{};
\node [scale=.0](21) at (22.5,11.5)[label=90:\tiny{$T2_{27,3}$}]{};
		\node (7) at (26,19) {\tiny{$C_{54}(X_3)$}};
		\node (8) at (33,24) {\tiny{$C_{54}(Z_3)$}};	
		\node (9) at (19,24)  {\tiny{$C_{54}(Y_3)$}};
		
\draw[line width=0.2mm] [blue](7)[dashed] to (8);
\draw[line width=0.2mm] [blue](8)[dashed] to (9);
\draw[line width=0.2mm] [blue](9)[dashed] to (7);
		
\node [scale=.0] (17) at (23,20) [label=90:\tiny{$T2_{27,3}$}]{};
\node [scale=.0] (19) at (29,20)[label=90:\tiny{$T2_{27,3}$}]{};
\node [scale=.0](21) at (26.5,22.5) [label=90:\tiny{$T2_{27,3}$}]{};
		\draw[line width=0.2mm] [blue](1) to (4);
		\draw[line width=0.2mm] [blue](1) to [out=120,in=240] (7);
		\draw[line width=0.2mm] [blue](2) to (5);
		\draw[line width=0.2mm] [blue](2) to [out=120,in=240] (8);
		\draw[line width=0.2mm] [blue](3) to (6);
		\draw[line width=0.2mm] [blue](3) to [out=120,in=240] (9);
		\draw[line width=0.2mm] [blue](4) to (7);
		\draw[line width=0.2mm] [blue](5) to (8);
		\draw[line width=0.2mm] [blue](6) to (9);
		
\node [scale=.0] (39) at (27.25,4) [label=90:\tiny{$T1_{54}$}]{};
\node [scale=.0] (43) at (22,8) [label=90:\tiny{$T1_{54}$}]{};
\node [scale=.0] (33) at (34.2,9.5) [label=90:\tiny{$T1_{54}$}]{};

\node [scale=.0] (47) at (20.2,10) [label=90:\tiny{$T1_{54}$}]{};
\node [scale=.0] (46) at (27.2,12.5) [label=90:\tiny{$T1_{54}$}]{};
\node [scale=.0] (35) at (34.2,18) [label=90:\tiny{$T1_{54}$}]{};

\node [scale=.0] (41) at (16,19) [label=90:\tiny{$T1_{54}$}]{};
\node [scale=.0] (32) at (20.2,18) [label=90:\tiny{$T1_{54}$}]{};
\node [scale=.0] (31) at (30.5,18) [label=90:\tiny{$T1_{54}$}]{};
 \end{tikzpicture}		
	
	\vspace{.1cm}		
 {\small  \hspace{3cm}		Fig.  24.   Digraph $\mathcal{D}$  \hfill Figure  25. Graph $\mathcal{G}$ \hspace{2cm}		  

 \vspace{.1cm}	
   \hspace{1.1cm} with	$X_1$ = $\{1,3,17,19\}$, ~$Y_1$ = $\{3,7,11,25\}$, ~$Z_1$ = $\{3,5,13,23\}$,   
 			
 		\hspace{2cm}		$X_2$ = $\{5,13,15,23\}$, $Y_2$ = $\{1,15,17,19\}$, $Z_2$ = $\{7,11,15,25\}$,  
 			
 			\hspace{2cm}	$X_3$ = $\{7,11,21,25\}$, $Y_3$ = $\{5,13,21,23\}$, ~$Z_3$ = $\{1,17,19,21\}$.   
}		
\end{center} }

In this case, Hamiltonian isomorphism series exists and is given below. 

\begin{enumerate}
\item [\rm (a)] $C_{54}(X_1)$ $\cong_{T2_{54,3,2}}$ $C_{54}(Y_1)$ $\cong_{T1_{54,5}}$ $C_{54}(Y_2)$ $\cong_{T2_{54,3,2}}$ $C_{54}(Z_2)$ $\cong_{T1_{54,5}}$ $C_{54}(Z_3)$ 

\hfill $\cong_{T2_{54,3,2}}$ $C_{54}(X_3)$ $\cong_{T1_{54,7}}$ $C_{54}(X_2)$ $\cong_{T2_{54,3,4}}$ $C_{54}(Z_2)$ $\cong_{T1_{54,7}}$ $C_{54}(Z_1)$ $\cong_{T2_{54,3,4}}$ $C_{54}(Y_1)$ 

\hfill $\cong_{T1_{54,7}}$ $C_{54}(Y_3)$ $\cong_{T2_{54,3,4}}$ $C_{54}(X_3)$ $\cong_{T1_{54,5}}$ $C_{54}(X_1)$ $\cong_{T2_{54,3,4}}$ $C_{54}(Z_1)$ $\cong_{T1_{54,7}}$ $C_{54}(Z_3)$ 

\hfill $\cong_{T2_{27,3,4}}$ $C_{54}(Y_3)$ $\cong_{T1_{54,7}}$ $C_{54}(Y_2)$ $\cong_{T2_{54,3,4}}$ $C_{54}(X_2)$ $\cong_{T1_{54,5}}$ $C_{54}(X_3)$ 

\hfill $\cong_{T2_{54,3,2}}$ $C_{54}(Y_3)$ $\cong_{T1_{54,5}}$ $C_{54}(Y_1)$ $\cong_{T2_{54,3,2}}$ $C_{54}(Z_1)$ $\cong_{T1_{54,5}}$ $C_{54}(Z_2)$ 

\hfill $\cong_{T2_{54,3,4}}$ $C_{54}(Y_2)$ $\cong_{T1_{54,7}}$ $C_{54}(Y_1)$ $\cong_{T2_{54,3,4}}$ $C_{54}(X_1)$ $\cong_{T1_{54,5}}$ $C_{54}(X_2)$ 

\hfill $\cong_{T2_{54,3,2}}$ $C_{54}(Y_2)$ $\cong_{T1_{54,5}}$ $C_{54}(Y_3)$ $\cong_{T2_{54,3,2}}$ $C_{54}(Z_3)$
$\cong_{T1_{54,5}}$ $C_{54}(Z_1)$ 

\hfill  $\cong_{T2_{54,3,2}}$ $C_{54}(X_1)$ $\cong_{T1_{54,7}}$ $C_{54}(X_3)$ $\cong_{T2_{54,3,4}}$ $C_{54}(Z_3)$ 

\hfill $\cong_{T1_{54,7}}$ $C_{54}(Z_2)$ $\cong_{T2_{54,3,2}}$ $C_{54}(X_2)$ $\cong_{T1_{54,7}}$ $C_{54}(X_1)$.
\end{enumerate}

Figure 26 presents the digraph of the Hamiltonian isomorphism series of $C_{54}(R)$ presented above where $R$ = $\{1,3,17,19\}$. In the digraph in Figure 26, $R_i$  represents $C_{54}(R_i)$ where $R_i$ = $X_i, Y_i, Z_i$ and $1 \leq i \leq 3$.

{\rm 	\begin{center}
		\begin{tikzpicture}  
		[scale=.8,auto=center,every node/.style={draw,circle}]
			
\node (1) at (-10,-0.5) {\tiny{$X_1$}};
\node [scale=.0] (2) at (-9,-.93) [label=90: $>$]{};
\node (3) at (-8,-0.5) {\tiny{$Y_1$}};	
\node [scale=.0] (4) at (-7,-0.93) [label=91: $>$]{};
\node (5) at (-6,-0.5)  {\tiny{$Y_2$}};

\draw[ line width=0.2mm] [blue](1)[dashed]  to (3);
\draw[ line width=0.2mm] [blue](3) to (5);

\node [scale=.0] (6) at (-5,-.93) [label=90: $>$]{};
\node (7) at (-4,-0.5) {\tiny{$Z_2$}};	
\node [scale=.0] (8) at (-3,-0.93) [label=91: $>$]{};
\node (9) at (-2,-0.5)  {\tiny{$Z_3$}};

\draw[ line width=0.2mm] [blue](5)[dashed] to (7);
\draw[ line width=0.2mm] [blue](7) to (9);

\node [scale=.0] (10) at (-1,-.93) [label=90: $>$]{};
\node (11) at (0,-0.5) {\tiny{$X_3$}};	
\node [scale=.0] (12) at (1,-0.93) [label=91: $>$]{};
\node (13) at (2,-0.5)  {\tiny{$X_2$}};

\draw[ line width=0.2mm] [blue](9)[dashed]  to (11);
\draw[ line width=0.2mm] [blue](11) to (13);

\node [scale=.0] (14) at (3,-.93) [label=90: $>$]{};
\node (15) at (4,-0.5) {\tiny{$Z_2$}};	
\node [scale=.0] (16) at (5,-0.93) [label=91: $>$]{};
\node (17) at (6,-0.5)  {\tiny{$Z_1$}};

\draw[ line width=0.2mm] [blue](13)[dashed] to (15);
\draw[ line width=0.2mm] [blue](15) to (17);

\node [scale=.0] (18) at (6,-1.93) [label=91: $\downarrow$]{};
\node (19) at (6,-2.5) {\tiny{$Y_1$}};	
\node [scale=.0] (20) at (5,-2.93) [label=90: $<$]{};
\node (21) at (4,-2.5)  {\tiny{$Y_3$}};

\draw[ line width=0.2mm] [blue](17)[dashed] to (19);
\draw[ line width=0.2mm] [blue](19) to (21);

\node [scale=.0] (22) at (3,-2.93) [label=90: $<$]{};
\node (23) at (2,-2.5) {\tiny{$X_3$}};	
\node [scale=.0] (24) at (1,-2.93) [label=91: $<$]{};
\node (25) at (0,-2.5)  {\tiny{$X_1$}};

\draw[ line width=0.2mm] [blue](21)[dashed]  to (23);
\draw[ line width=0.2mm] [blue](23) to (25);

\node [scale=.0] (26) at (-1,-2.93) [label=90: $<$]{};
\node (27) at (-2,-2.5) {\tiny{$Z_1$}};	
\node [scale=.0] (28) at (-3,-2.93) [label=91: $<$]{};
\node (29) at (-4,-2.5)  {\tiny{$Z_3$}};

\draw[ line width=0.2mm] [blue](25)[dashed] to (27);
\draw[ line width=0.2mm] [blue](27) to (29);

\node [scale=.0] (30) at (-5,-2.93) [label=90: $<$]{};
\node (31) at (-6,-2.5) {\tiny{$Y_3$}};	
\node [scale=.0] (32) at (-7,-2.93) [label=91: $<$]{};
\node (33) at (-8,-2.5)  {\tiny{$Y_2$}};

\draw[ line width=0.2mm] [blue](29)[dashed]  to (31);
\draw[ line width=0.2mm] [blue](31) to (33);

\node [scale=.0] (34) at (-9,-2.93) [label=90: $<$]{};
\node (35) at (-10,-2.5) {\tiny{$X_2$}};	
\node [scale=.0] (36) at (-10,-3.93) [label=91: $\downarrow$]{};
\node (37) at (-10,-4.5)  {\tiny{$X_3$}};

\draw[ line width=0.2mm] [blue](33)[dashed] to (35);
\draw[ line width=0.2mm] [blue](35) to (37);

\node [scale=.0] (38) at (-9,-4.93) [label=90: $>$]{};
\node (39) at (-8,-4.5) {\tiny{$Y_3$}};	
\node [scale=.0] (40) at (-7,-4.93) [label=91: $>$]{};
\node (41) at (-6,-4.5)  {\tiny{$Y_1$}};

\draw[ line width=0.2mm] [blue](37)[dashed]  to (39);
\draw[ line width=0.2mm] [blue](39) to (41);

\node [scale=.0] (42) at (-5,-4.93) [label=90: $>$]{};
\node (43) at (-4,-4.5) {\tiny{$Z_1$}};	
\node [scale=.0] (44) at (-3,-4.93) [label=91: $>$]{};
\node (45) at (-2,-4.5)  {\tiny{$Z_2$}};

\draw[ line width=0.2mm] [blue](41)[dashed] to (43);
\draw[ line width=0.2mm] [blue](43) to (45);

\node [scale=.0] (46) at (-1,-4.93) [label=90: $>$]{};
\node (47) at (0,-4.5) {\tiny{$Y_2$}};	
\node [scale=.0] (48) at (1,-4.93) [label=91: $>$]{};
\node (49) at (2,-4.5)  {\tiny{$Y_1$}};

\draw[ line width=0.2mm] [blue](45)[dashed]  to (47);
\draw[ line width=0.2mm] [blue](47) to (49);

\node [scale=.0] (50) at (3,-4.93) [label=90: $>$]{};
\node (51) at (4,-4.5) {\tiny{$X_1$}};	
\node [scale=.0] (52) at (5,-4.93) [label=91: $>$]{};
\node (53) at (6,-4.5)  {\tiny{$X_2$}};

\draw[ line width=0.2mm] [blue](49)[dashed] to (51);
\draw[ line width=0.2mm] [blue](51) to (53);

\node [scale=.0] (54) at (6,-5.93) [label=91: $\downarrow$]{};
\node (55) at (6,-6.5) {\tiny{$Y_2$}};	
\node [scale=.0] (56) at (5,-6.93) [label=90: $<$]{};
\node (57) at (4,-6.5)  {\tiny{$Y_3$}};

\draw[ line width=0.2mm] [blue](53)[dashed] to (55);
\draw[ line width=0.2mm] [blue](55) to (57);

\node [scale=.0] (58) at (3,-6.93) [label=90: $<$]{};
\node (59) at (2,-6.5) {\tiny{$Z_3$}};	
\node [scale=.0] (60) at (1,-6.93) [label=91: $<$]{};
\node (61) at (0,-6.5)  {\tiny{$Z_1$}};

\draw[ line width=0.2mm] [blue](57)[dashed]  to (59);
\draw[ line width=0.2mm] [blue](59) to (61);

\node [scale=.0] (62) at (-1,-6.93) [label=90: $<$]{};
\node (63) at (-2,-6.5) {\tiny{$X_1$}};	
\node [scale=.0] (64) at (-3,-6.93) [label=91: $<$]{};
\node (65) at (-4,-6.5)  {\tiny{$X_3$}};

\draw[ line width=0.2mm] [blue](61)[dashed] to (63);
\draw[ line width=0.2mm] [blue](63) to (65);

\node [scale=.0] (66) at (-5,-6.93) [label=90: $<$]{};
\node (67) at (-6,-6.5) {\tiny{$Z_3$}};	
\node [scale=.0] (68) at (-7,-6.93) [label=91: $<$]{};
\node (69) at (-8,-6.5)  {\tiny{$Z_2$}};

\draw[ line width=0.2mm] [blue](65)[dashed]  to (67);
\draw[ line width=0.2mm] [blue](67) to (69);

\node [scale=.0] (70) at (-9,-6.93) [label=90: $<$]{};
\node (71) at (-10,-6.5) {\tiny{$X_2$}};	
\node [scale=.0] (72) at (-11,-4.1) [label=91: $\uparrow$]{};

\draw[ line width=0.2mm] [blue](69)[dashed] to (71);
\draw[-, line width=0.2mm] [blue](71) to [out=120,in=240] (1);
	
\end{tikzpicture}

\vspace{.2cm}		
{\small  Fig. 26. Digraph of Hamiltonian isomorphism series of $C_{54}(R_i)$ with the representation of 

$R_i$ in the place of $C_{54}(R_i)$ where $R_i$ = $X_i, Y_i, Z_i$ for $i$ = 1,2,3.    }
\end{center} }

Using the above definition of the isomorphism digraph and remark \ref{r4.5}, we get the following result on circulant graphs.

\begin{theorem} \quad \label{t4.9} {\rm Let $n \geq 5$ and $C_n(R)$ be a  circulant graph having isomorphic  circulant graphs of Type-1 and Type-2. Then the isomorphism digraph of $C_n(R)$ exists and is a non-trivial connected symmetric digraph.  }
\end{theorem}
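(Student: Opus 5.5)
The plan is to work directly from Definition \ref{d4.1} (with Definition \ref{d3.5a}), taking $\mathcal{D}=\mathcal{ID}_n(C_n(R))$ with vertex set $Iso(C_n(R))$. I will establish three properties in turn: existence and non-triviality, symmetry, and connectedness.

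\emph{Existence and non-triviality.} $Iso(C_n(R))$ is a nonempty finite set, since it contains $C_n(R)$ and there are only finitely many jump sets $S\subseteq[1,\frac n2]$. So $\mathcal{D}$ is a well-defined finite simple digraph. By hypothesis, $C_n(R)$ has a Type-1 isomorphic circulant $C_n(T)$ with $T\neq R$, and a Type-2 isomorphic circulant $C_n(S)$ with respect to some admissible $m$. Both lie in $Iso(C_n(R))$, so $\mathcal{D}$ has at least two vertices and at least one arc, namely $(C_n(R),C_n(T))$ drawn as a solid line and $(C_n(R),C_n(S))$ drawn as a dashed line. Hence $\mathcal{D}$ is non-trivial. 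The assumption $n\geq 5$ only ensures that $|R|\geq 3$ and $m^3\mid n$ are compatible; it is used nowhere else.

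\emph{Symmetry.} Every arc of $\mathcal{D}$ is either Type-1 or Type-2. For a Type-1 arc with $C_n(S)\cong_{T1_{n,x}}C_n(T)$, part (a1) of Lemma \ref{l3.11} gives $C_n(T)\cong_{T1_{n,x^*}}C_n(S)$ with $xx^*=1$ in $\varphi_n$, so the reverse solid arc is present. For a Type-2 arc with $C_n(S)\cong_{T2_{n,m,t}}C_n(T)$, part (b1) gives $C_n(T)\cong_{T2_{n,m,\frac nm-t}}C_n(S)$, so the reverse dashed arc is present with the same $m$, and therefore the same colour. Together these show that $\mathcal{D}$ is a symmetric digraph in the sense of Definition \ref{d2.16}.

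\emph{Connectedness.} This is the delicate step. The definition of $Iso(C_n(R))$ takes all circulants isomorphic to $C_n(R)$, yet arcs record only Type-1 and Type-2 isomorphisms. An isomorphic circulant reached by neither relation, through any chain, would be an isolated vertex. Problems \ref{p3.7} and \ref{p3.10} show that non-adjacent pairs certainly occur, so connectedness has to come from chains rather than from direct adjacency.

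My approach is to read $V(\mathcal{D})$ as the paper actually constructs it, following Remark \ref{r4.5} and the Solution of Problem \ref{p3.6}. Starting from $C_n(R)$, one repeatedly applies $T1_n(\cdot)$ and $T2_{n,m}(\cdot)$ for every admissible $m$ until no new graph appears. By construction, every vertex produced is joined to $C_n(R)$ by a finite isomorphism series, that is, a directed path in $\mathcal{D}$. The orbits involved are exactly the groups $T1_n(C_n(S))$ and $T2_{n,m}(C_n(S))$ of Theorems \ref{t2.3} and \ref{t2.12}, each inducing a complete symmetric subdigraph. Symmetry then upgrades weak connectivity to strong connectivity. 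Lemma \ref{l3.11}(a3),(b3) lets consecutive same-type steps be merged, so the path can be taken to be a simple isomorphism series.

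I expect the main obstacle to be justifying that this closure exhausts the isomorphic set, i.e.\ that no circulant isomorphic to $C_n(R)$ is reachable only through some other kind of isomorphism. I would not try to prove that in general. Instead I would state explicitly that the vertex set is the closure of $C_n(R)$ under Type-1 and Type-2 maps, which is how the examples in Sections 3–4 compute it, and prove connectedness for that vertex set.
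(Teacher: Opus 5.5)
Your arguments for non-triviality and symmetry are the paper's. Its entire proof says that the hypothesis ``implies'' that $\mathcal{D}$ is non-trivial and connected, and that symmetry follows from Lemma~\ref{l3.11}, i.e.\ from (a1) and (b1) exactly as you use them. On connectedness the paper gives no argument, and your suspicion is well founded: with $V(\mathcal{D})=Iso(C_n(R))$ as in Definitions~\ref{d3.5a} and~\ref{d4.4}, the claim read literally is false.

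Here is a counterexample. Identify $\mathbb{Z}_{400}$ with $\mathbb{Z}_{16}\times\mathbb{Z}_{25}$ by the Chinese remainder theorem, and take jump sets $A=\{1,2,7\}$, $B=\{1,4,5,6,9,11\}$, $B'=\{1,4,6,9,10,11\}$.
\begin{itemize}
\item Both $C_{25}(B)$ and $C_{25}(B')$ are $C_5[C_5]$: cosets of $\langle 5\rangle$ whose indices differ by $\pm1$ are completely joined, and inside each coset the jump $\pm5$, resp.\ $\pm10$, gives a $5$-cycle.
\item Hence $G=C_{16}(A)\Box C_{25}(B)=C_{400}(16,25,50,64,80,96,144,175,176)$ and $G'=C_{16}(A)\Box C_{25}(B')=C_{400}(16,25,50,64,96,144,160,175,176)$ are isomorphic.
\item $G$ satisfies the hypothesis. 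Multiplication by $51$ gives the Type-1 image $C_{16}(3,5,6)\Box C_{25}(B)$. Also $\theta_{400,2,50}(G)=C_{400}(16,50,64,75,80,96,125,144,176)=C_{16}(2,3,5)\Box C_{25}(B)$ is a Type-2 image w.r.t.\ $m=2$, because units act componentwise and $C_{16}(1,2,7)$, $C_{16}(2,3,5)$ are not Adam-isomorphic.
\end{itemize}
Now $m=2$ is the only admissible $m$, since $m^3\mid 400$. The map $\theta_{400,2,t}$ fixes even vertices and translates odd ones by $2t$. So the odd part $S_{\mathrm{odd}}$ of the jump set becomes $S_{\mathrm{odd}}+2t$ seen from even vertices and $S_{\mathrm{odd}}-2t$ seen from odd ones, and the image is circulant only if $S_{\mathrm{odd}}+4t=S_{\mathrm{odd}}$. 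When $\emptyset\neq S_{\mathrm{odd}}\subseteq\mathbb{Z}_{16}\times\{0\}$, this forces $25\mid t$, so the $\mathbb{Z}_{25}$-factor is untouched. By induction, every graph reachable from $G$ by Type-1 or Type-2 steps is $C_{16}(A_1)\Box C_{25}(yB)$, with $A_1$ containing an odd jump and $y\in\mathbb{Z}_{25}^{*}$. But $B'$ is never $yB$ under reflexive reduction: sending $5$ to $\pm10$ needs $y\equiv\pm2\pmod 5$, which sends the residues $\pm1 \bmod 5$ to $\pm2$. Since arcs are symmetric, $G'$ lies in a different component of $\mathcal{D}$ from $G$.

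So replacing $Iso(C_n(R))$ by the closure of $C_n(R)$ under Type-1 and Type-2 maps is not a convenience but a necessary correction of the statement. It is also how the paper actually builds its vertex sets (Remark~\ref{r4.5}, Problem~\ref{p3.6}), and it is the only sense in which the paper's ``this implies connected'' is true. With that reading your proof is complete: connectedness is immediate from the construction, and symmetry makes weak and strong connectivity coincide. The detour through simple series via Lemma~\ref{l3.11}(a3),(b3) is unnecessary, and (b3) in any case only merges steps with the same $m$. You should state your result explicitly as the corrected theorem, not as a proof of the statement for all of $Iso(C_n(R))$.
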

\begin{proof} \quad Given that circulant graph $C_n(R)$ is having isomorphic circulant graphs of Type-1 and Type-2. This implies, the isomorphism digraph $\mathcal{D}$ = $\mathcal{ID}(C_n(R))$ is a non-trivial connected digraph and the digraph is a symmetric digraph follows from Lemma \ref{l3.11}.
\end{proof}

\section{ On isomorphism digraph $\mathcal{ID}_{n, m}(C_n(R))$ $\ni$ $m$ takes more than one value }

So far we could find the isomorphism digraph of a few circulant graphs $C_n(R)$, each has isomorphic circulant graph of Type-2 w.r.t. $m$ such that $m$ is having only one value. In \cite{v2-8}, the author obtained two families of isomorphic circulant graphs, (i) $C_{432}(R)$, each has isomorphic circulant graphs of Type-2 w.r.t. $m$ = 2 as well as $m$ = 3;  (ii) $C_{6750}(S)$, each has isomorphic circulant graph of Type-2 w.r.t. $m$ = 3 as well as $m$ = 5. In this section, we obtain the isomorphism digraph and the isomorphism graph of $C_{432}(16, 27, 48, 54, 128, 160, 189)$ which has isomorphic circulant graphs of Type-2 w.r.t. $m$ = 2 as well as $m$ = 3. The following problems which are presented in \cite{v2-8} as problems 3.1 and 3.5 are used in this study. In problem 3.1 in \cite{v2-8}, we obtained the following.

\begin{enumerate}
\item [\rm (A)] $T1_{432}(C_{6750}(A_1))$ = $\{C_{6750}(A_i): A_1 = R_1, i = 1,2,\dots,6\}$ where, calculations under reflexive modulo 432, 
	
	$A_1$ = $\{16, 27, 48, 54, 128, 160, 189\}$ = $R_1$, 
	
	$A_2$ = $\{64, 80, 81, 135, 162, 192, 208\}$ = $5A_1$, 
	
	$A_3$ = $\{27, 32, 54, 96, 112, 176, 189\}$ = $7A_1$, 
	
	$A_4$ = $\{32, 81, 96, 112, 135, 162, 176\}$ = $11A_1$, 
	
	$A_5$ = $\{16, 48, 81, 128, 135, 160, 162\}$ = $19A_1$,
	
	$A_6$ = $\{27, 54, 64, 80, 189, 192, 208\}$ = $23A_1$.
	
\item [\rm (B)] $T1_{432}(C_{6750}(B_1))$ = $\{C_{6750}(B_i): B_1 = S_1, i = 1,2,\dots,6\}$ where, calculations under reflexive modulo 432, 

$B_1$ = $\{27, 32, 48, 54, 112, 176, 189\}$ = $S_1$,

$B_2$ = $\{16, 81, 128, 135, 160, 162, 192\}$ = $5B_1$, 

$B_3$ = $\{27, 54, 64, 80, 96, 189, 208\}$ = $7B_1$,

$B_4$ = $\{64, 80, 81, 96, 135, 162, 208\}$ = $11B_1$, 

$B_5$ = $\{32, 48, 81, 112, 135, 162, 176\}$ = $19B_1$, 

$B_6$ = $\{16, 27, 54, 128, 160, 189, 192\}$ = $23B_1$.

\item [\rm (C)] $T1_{432}(C_{6750}(C_1))$ = $\{C_{6750}(C_i): C_1 = T_1, i = 1,2,\dots,6\}$ where, calculations under reflexive modulo 432, 

$C_1$ = $\{27, 48, 54, 64, 80, 189, 208\}$ = $T_1$, 

$C_2$ = $\{32, 81, 112, 135, 162, 176, 192\}$ = $5\times C_1$, 

$C_3$ = $\{16, 27, 54, 96, 128, 160, 189\}$ = $7\times C_1$, 

$C_4$ = $\{16, 81, 96, 128, 135, 160, 162\}$ = $11\times C_1$, 

$C_5$ = $\{48, 64, 80, 81, 135, 162, 208\}$ = $19\times C_1$, 

$C_6$ = $\{27, 32, 54, 112, 176, 189, 192\}$ = $23\times C_1$.

\item [\rm (D)] $T1_{432}(C_{6750}(D_1))$ = $\{C_{6750}(D_i): D_1 = R_2, i = 1,2,\dots,6\}$ where, calculations under reflexive modulo 432, 

$D_1$ = $\{16, 48, 54, 81, 128, 135, 160\}$ = $R_2$, 

$D_2$ = $\{27, 64, 80, 162, 189, 192, 208\}$ = $5D_1$, 

$D_3$ = $\{32, 54, 81, 96, 112, 135, 176\}$ = $7D_1$, 

$D_4$ = $\{27, 32, 96, 112, 162, 176, 189\}$ = $11D_1$,  

$D_5$ = $\{16, 27, 48, 128, 160, 162, 189\}$ = $19D_1$, 

$D_6$ = $\{54, 64, 80, 81, 135, 192, 208\}$ = $23D_1$.

\item [\rm (E)] $T1_{432}(C_{6750}(E_1))$ = $\{C_{6750}(E_i): E_1 = S_2, i = 1,2,\dots,6\}$ where, calculations under reflexive modulo 432, 

$E_1$ = $\{32, 48, 54, 81, 112, 135, 176\}$ = $S_2$, 

$E_2$ = $\{16, 27, 128, 160, 162, 189, 192\}$ = $5E_1$, 

$E_3$ = $\{54, 64, 80, 81, 96, 135, 208\}$ = $7E_1$, 

$E_4$ = $\{27, 64, 80, 96, 162, 189, 208\}$ = $11E_1$, 

$E_5$ = $\{27, 32, 48, 112, 162, 176, 189\}$ = $19E_1$,

$E_6$ = $\{16, 54, 81, 128, 135, 160, 192\}$ = $23E_1$.

\item [\rm (F)] $T1_{432}(C_{6750}(F_1))$ = $\{C_{6750}(F_i): F_1 = T_2, i = 1,2,\dots,6\}$ where, calculations under reflexive modulo 432, 

$F_1$ = $\{48, 54, 64, 80, 81, 135, 208\}$ = $T_2$, 

$F_2$ = $\{27, 32, 112, 162, 176, 189, 192\}$ = $5F_1$, 

$F_3$ = $\{16, 54, 81, 96, 128, 135, 160\}$ = $7F_1$, 

$F_4$ = $\{16, 27, 96, 128, 160, 162, 189\}$ = $11F_1$, 

$F_5$ = $\{27, 48, 64, 80, 162, 189, 208\}$ = $19F_1$, 

$F_6$ = $\{32, 54, 81, 112, 135, 176, 192\}$ = $23F_1$. \end{enumerate} 

\begin{prm} {\rm \cite{v2-8}} \quad \label{p5.1} {\rm Let $`\circ'$ be as given in definition \ref{d2.7}. For $i$ = 1, 2 and $j$ = 1 to 6, $R_i, S_i, T_i$, $A_j, B_j, \dots, F_j$ be as given above with $A_1$ = $R_1$, $B_1$ = $S_1$, $C_1$ = $T_1$, $D_1$ = $R_2$, $E_1$ = $S_2$ and $F_1$ = $T_2$. Then, the following statements are true.
		\begin{enumerate}	
			\item [\rm (a)]  Circulant graphs occuring in each of the following cases are either Type-2 isomorphic w.r.t. $m$ = 2 or Type-2 isomorphic w.r.t. $m$ = 3 as given below. 	
			
			\item [\rm (1A1)] $C_{432}(A_1)$ $\cong_{T2_{432,2,54}}$ $C_{432}(D_1)$; 	
			\item [\rm (2A1)] $C_{432}(A_1)$ $\cong_{T2_{432,3,32}}$ $C_{432}(B_1)$; 
			\item [\rm (3A1)] $C_{432}(A_1)$ $\cong_{T2_{432,3,64}}$ $C_{432}(C_1)$; $C_{432}(A_1)$ $\cong_{T2_{432,3,16}}$ $C_{432}(C_1)$;\\
			
			\item [\rm (1A2)] $C_{432}(A_2)$ $\cong_{T2_{432,2,54}}$ $C_{432}(D_2)$; 	
			\item [\rm (2A2)] $C_{432}(A_2)$ $\cong_{T2_{432,3,32}}$ $C_{432}(B_2)$; 
			\item [\rm (3A2)] $C_{432}(A_2)$ $\cong_{T2_{432,3,64}}$ $C_{432}(C_2)$; $C_{432}(A_2)$ $\cong_{T2_{432,3,16}}$ $C_{432}(C_2)$;\\
			
			\item [\rm (1A3)] $C_{432}(A_3)$ $\cong_{T2_{432,2,54}}$ $C_{432}(D_3)$; 	
			\item [\rm (2A3)] $C_{432}(A_3)$ $\cong_{T2_{432,3,32}}$ $C_{432}(B_3)$; 
			\item [\rm (3A3)] $C_{432}(A_3)$ $\cong_{T2_{432,3,64}}$ $C_{432}(C_3)$; $C_{432}(A_3)$ $\cong_{T2_{432,3,16}}$ $C_{432}(C_3)$;\\
			
			\item [\rm (1A4)] $C_{432}(A_4)$ $\cong_{T2_{432,2,54}}$ $C_{432}(D_4)$; 	
			\item [\rm (2A4)] $C_{432}(A_4)$ $\cong_{T2_{432,3,32}}$ $C_{432}(B_4)$; 
			\item [\rm (3A4)] $C_{432}(A_4)$ $\cong_{T2_{432,3,64}}$ $C_{432}(C_4)$; $C_{432}(A_4)$ $\cong_{T2_{432,3,16}}$ $C_{432}(C_4)$;\\
			
			\item [\rm (1A5)]	$C_{432}(A_5)$ $\cong_{T2_{432,2,54}}$ $C_{432}(D_5)$; 	
			\item [\rm (2A5)] $C_{432}(A_5)$ $\cong_{T2_{432,3,32}}$ $C_{432}(B_5)$; 
			\item [\rm (3A5)] $C_{432}(A_5)$ $\cong_{T2_{432,3,64}}$ $C_{432}(C_5)$; $C_{432}(A_5)$ $\cong_{T2_{432,3,16}}$ $C_{432}(C_5)$;\\
			
			\item [\rm (1A6)]	$C_{432}(A_6)$ $\cong_{T2_{432,2,54}}$ $C_{432}(D_6)$; 	
			\item [\rm (2A6)] $C_{432}(A_6)$ $\cong_{T2_{432,3,32}}$ $C_{432}(B_6)$; 
			\item [\rm (3A6)] $C_{432}(A_6)$ $\cong_{T2_{432,3,64}}$ $C_{432}(C_6)$; $C_{432}(A_6)$ $\cong_{T2_{432,3,16}}$ $C_{432}(C_6)$;\\
			
			\item [\rm (1B1)] $C_{432}(B_1)$ $\cong_{T2_{432,2,54}}$ $C_{432}(E_1)$; 	
			\item [\rm (2B1)] $C_{432}(B_1)$ $\cong_{T2_{432,3,32}}$ $C_{432}(C_1)$; 
			\item [\rm (3B1)] $C_{432}(B_1)$ $\cong_{T2_{432,3,64}}$ $C_{432}(A_1)$; $C_{432}(B_1)$ $\cong_{T2_{432,3,16}}$ $C_{432}(A_1)$;\\
			
			\item [\rm (1B2)] $C_{432}(B_2)$ $\cong_{T2_{432,2,54}}$ $C_{432}(E_2)$; 	
			\item [\rm (2B2)] $C_{432}(B_2)$ $\cong_{T2_{432,3,32}}$ $C_{432}(C_2)$; 
			\item [\rm (3B2)] $C_{432}(B_2)$ $\cong_{T2_{432,3,64}}$ $C_{432}(A_2)$; $C_{432}(B_2)$ $\cong_{T2_{432,3,16}}$ $C_{432}(A_2)$;\\
			
			\item [\rm (1B3)] $C_{432}(B_3)$ $\cong_{T2_{432,2,54}}$ $C_{432}(E_3)$; 	
			\item [\rm (2B3)] $C_{432}(B_3)$ $\cong_{T2_{432,3,32}}$ $C_{432}(C_3)$; 
			\item [\rm (3B3)] $C_{432}(B_3)$ $\cong_{T2_{432,3,64}}$ $C_{432}(A_3)$; $C_{432}(B_3)$ $\cong_{T2_{432,3,16}}$ $C_{432}(A_3)$;\\
			
			\item [\rm (1B4)] $C_{432}(B_4)$ $\cong_{T2_{432,2,54}}$ $C_{432}(E_4)$; 	
			\item [\rm (2B4)] $C_{432}(B_4)$ $\cong_{T2_{432,3,32}}$ $C_{432}(C_4)$; 
			\item [\rm (3B4)] $C_{432}(B_4)$ $\cong_{T2_{432,3,64}}$ $C_{432}(A_4)$; $C_{432}(B_4)$ $\cong_{T2_{432,3,16}}$ $C_{432}(A_4)$;\\
			
			\item [\rm (1B5)] $C_{432}(B_5)$ $\cong_{T2_{432,2,54}}$ $C_{432}(E_5)$; 	
			\item [\rm (2B5)] $C_{432}(B_5)$ $\cong_{T2_{432,3,32}}$ $C_{432}(C_5)$; 
			\item [\rm (3B5)] $C_{432}(B_5)$ $\cong_{T2_{432,3,64}}$ $C_{432}(A_5)$; $C_{432}(B_5)$ $\cong_{T2_{432,3,16}}$ $C_{432}(A_5)$;\\
			
			\item [\rm (1B6)] $C_{432}(B_6)$ $\cong_{T2_{432,2,54}}$ $C_{432}(E_6)$; 	
			\item [\rm (2B6)] $C_{432}(B_6)$ $\cong_{T2_{432,3,32}}$ $C_{432}(C_6)$; 
			\item [\rm (3B6)] $C_{432}(B_6)$ $\cong_{T2_{432,3,64}}$ $C_{432}(A_6)$; $C_{432}(B_6)$ $\cong_{T2_{432,3,16}}$ $C_{432}(A_6)$;\\
			
			\item [\rm (1C1)] $C_{432}(C_1)$ $\cong_{T2_{432,2,54}}$ $C_{432}(F_1)$; 	
			\item [\rm (2C1)] $C_{432}(C_1)$ $\cong_{T2_{432,3,32}}$ $C_{432}(A_1)$; 
			\item [\rm (3C1)] $C_{432}(C_1)$ $\cong_{T2_{432,3,64}}$ $C_{432}(B_1)$; $C_{432}(C_1)$ $\cong_{T2_{432,3,16}}$ $C_{432}(B_1)$;\\
			
			\item [\rm (1C2)] $C_{432}(C_2)$ $\cong_{T2_{432,2,54}}$ $C_{432}(F_2)$; 	
			\item [\rm (2C2)] $C_{432}(C_2)$ $\cong_{T2_{432,3,32}}$ $C_{432}(A_2)$; 
			\item [\rm (3C2)] $C_{432}(C_2)$ $\cong_{T2_{432,3,64}}$ $C_{432}(B_2)$; $C_{432}(C_2)$ $\cong_{T2_{432,3,16}}$ $C_{432}(B_2)$;\\
			
			\item [\rm (1C3)] $C_{432}(C_3)$ $\cong_{T2_{432,2,54}}$ $C_{432}(F_3)$; 	
			\item [\rm (2C3)] $C_{432}(C_3)$ $\cong_{T2_{432,3,32}}$ $C_{432}(A_3)$; 
			\item [\rm (3C3)] $C_{432}(C_3)$ $\cong_{T2_{432,3,64}}$ $C_{432}(B_3)$; $C_{432}(C_3)$ $\cong_{T2_{432,3,16}}$ $C_{432}(B_3)$;\\
			
			\item [\rm (1C4)] $C_{432}(C_4)$ $\cong_{T2_{432,2,54}}$ $C_{432}(F_4)$; 	
			\item [\rm (2C4)] $C_{432}(C_4)$ $\cong_{T2_{432,3,32}}$ $C_{432}(A_4)$; 
			\item [\rm (3C4)] $C_{432}(C_4)$ $\cong_{T2_{432,3,64}}$ $C_{432}(B_4)$; $C_{432}(C_4)$ $\cong_{T2_{432,3,16}}$ $C_{432}(B_4)$;\\
			
			\item [\rm (1C5)] $C_{432}(C_5)$ $\cong_{T2_{432,2,54}}$ $C_{432}(F_5)$; 	
			\item [\rm (2C5)] $C_{432}(C_5)$ $\cong_{T2_{432,3,32}}$ $C_{432}(A_5)$; 
			\item [\rm (3C5)] $C_{432}(C_5)$ $\cong_{T2_{432,3,64}}$ $C_{432}(B_5)$; $C_{432}(C_5)$ $\cong_{T2_{432,3,16}}$ $C_{432}(B_5)$;\\
			
			\item [\rm (1C6)] $C_{432}(C_6)$ $\cong_{T2_{432,2,54}}$ $C_{432}(F_6)$; 	
			\item [\rm (2C6)] $C_{432}(C_6)$ $\cong_{T2_{432,3,32}}$ $C_{432}(A_6)$; 
			\item [\rm (3C6)] $C_{432}(C_6)$ $\cong_{T2_{432,3,64}}$ $C_{432}(B_6)$; $C_{432}(C_6)$ $\cong_{T2_{432,3,16}}$ $C_{432}(B_6)$;\\
			
			\item [\rm (1D1)] $C_{432}(D_1)$ $\cong_{T2_{432,2,54}}$ $C_{432}(A_1)$; 	
			\item [\rm (2D1)] $C_{432}(D_1)$ $\cong_{T2_{432,3,32}}$ $C_{432}(E_1)$; 
			\item [\rm (3D1)] $C_{432}(D_1)$ $\cong_{T2_{432,3,64}}$ $C_{432}(F_1)$; $C_{432}(D_1)$ $\cong_{T2_{432,3,16}}$ $C_{432}(F_1)$;\\
			
			\item [\rm (1D2)] $C_{432}(D_2)$ $\cong_{T2_{432,2,54}}$ $C_{432}(A_2)$; 	
			\item [\rm (2D2)] $C_{432}(D_2)$ $\cong_{T2_{432,3,32}}$ $C_{432}(E_2)$; 
			\item [\rm (3D2)] $C_{432}(D_2)$ $\cong_{T2_{432,3,64}}$ $C_{432}(F_2)$; $C_{432}(D_2)$ $\cong_{T2_{432,3,16}}$ $C_{432}(F_2)$;\\
			
			\item [\rm (1D3)] $C_{432}(D_3)$ $\cong_{T2_{432,2,54}}$ $C_{432}(A_3)$; 	
			\item [\rm (2D3)] $C_{432}(D_3)$ $\cong_{T2_{432,3,32}}$ $C_{432}(E_3)$; 
			\item [\rm (3D3)] $C_{432}(D_3)$ $\cong_{T2_{432,3,64}}$ $C_{432}(F_3)$; $C_{432}(D_3)$ $\cong_{T2_{432,3,16}}$ $C_{432}(F_3)$;\\
			
			\item [\rm (1D4)] $C_{432}(D_4)$ $\cong_{T2_{432,2,54}}$ $C_{432}(A_4)$; 	
			\item [\rm (2D4)] $C_{432}(D_4)$ $\cong_{T2_{432,3,32}}$ $C_{432}(E_4)$; 
			\item [\rm (3D4)] $C_{432}(D_4)$ $\cong_{T2_{432,3,64}}$ $C_{432}(F_4)$; $C_{432}(D_4)$ $\cong_{T2_{432,3,16}}$ $C_{432}(F_4)$;\\
			
			\item [\rm (1D5)] $C_{432}(D_5)$ $\cong_{T2_{432,2,54}}$ $C_{432}(A_5)$; 	
			\item [\rm (2D5)] $C_{432}(D_5)$ $\cong_{T2_{432,3,32}}$ $C_{432}(E_5)$; 
			\item [\rm (3D5)] $C_{432}(D_5)$ $\cong_{T2_{432,3,64}}$ $C_{432}(F_5)$; $C_{432}(D_5)$ $\cong_{T2_{432,3,16}}$ $C_{432}(F_5)$;\\
			
			\item [\rm (1D6)] $C_{432}(D_6)$ $\cong_{T2_{432,2,54}}$ $C_{432}(A_6)$; 	
			\item [\rm (2D6)] $C_{432}(D_6)$ $\cong_{T2_{432,3,32}}$ $C_{432}(E_6)$; 
			\item [\rm (3D6)] $C_{432}(D_6)$ $\cong_{T2_{432,3,64}}$ $C_{432}(F_6)$; $C_{432}(D_6)$ $\cong_{T2_{432,3,16}}$ $C_{432}(F_6)$;\\
			
			\item [\rm (1E1)] $C_{432}(E_1)$ $\cong_{T2_{432,2,54}}$ $C_{432}(B_1)$; 	
			\item [\rm (2E1)] $C_{432}(E_1)$ $\cong_{T2_{432,3,32}}$ $C_{432}(F_1)$; 
			\item [\rm (3E1)] $C_{432}(E_1)$ $\cong_{T2_{432,3,64}}$ $C_{432}(D_1)$; $C_{432}(E_1)$ $\cong_{T2_{432,3,16}}$ $C_{432}(D_1)$;\\
			
			\item [\rm (1E2)] $C_{432}(E_2)$ $\cong_{T2_{432,2,54}}$ $C_{432}(B_2)$; 	
			\item [\rm (2E2)] $C_{432}(E_2)$ $\cong_{T2_{432,3,32}}$ $C_{432}(F_2)$; 
			\item [\rm (3E2)] $C_{432}(E_2)$ $\cong_{T2_{432,3,64}}$ $C_{432}(D_2)$; $C_{432}(E_2)$ $\cong_{T2_{432,3,16}}$ $C_{432}(D_2)$;\\
			
			\item [\rm (1E3)] $C_{432}(E_3)$ $\cong_{T2_{432,2,54}}$ $C_{432}(B_3)$; 	
			\item [\rm (2E3)] $C_{432}(E_3)$ $\cong_{T2_{432,3,32}}$ $C_{432}(F_3)$; 
			\item [\rm (3E3)] $C_{432}(E_3)$ $\cong_{T2_{432,3,64}}$ $C_{432}(D_3)$; $C_{432}(E_3)$ $\cong_{T2_{432,3,16}}$ $C_{432}(D_3)$;\\
			
			\item [\rm (1E4)] $C_{432}(E_4)$ $\cong_{T2_{432,2,54}}$ $C_{432}(B_4)$; 	
			\item [\rm (2E4)] $C_{432}(E_4)$ $\cong_{T2_{432,3,32}}$ $C_{432}(F_4)$; 
			\item [\rm (3E4)] $C_{432}(E_4)$ $\cong_{T2_{432,3,64}}$ $C_{432}(D_4)$; $C_{432}(E_4)$ $\cong_{T2_{432,3,16}}$ $C_{432}(D_4)$;\\
			
			\item [\rm (1E5)] $C_{432}(E_5)$ $\cong_{T2_{432,2,54}}$ $C_{432}(B_5)$; 	
			\item [\rm (2E5)] $C_{432}(E_5)$ $\cong_{T2_{432,3,32}}$ $C_{432}(F_5)$; 
			\item [\rm (3E5)] $C_{432}(E_5)$ $\cong_{T2_{432,3,64}}$ $C_{432}(D_5)$; $C_{432}(E_5)$ $\cong_{T2_{432,3,16}}$ $C_{432}(D_5)$;\\
			
			\item [\rm (1E6)] $C_{432}(E_6)$ $\cong_{T2_{432,2,54}}$ $C_{432}(B_6)$; 	
			\item [\rm (2E6)] $C_{432}(E_6)$ $\cong_{T2_{432,3,32}}$ $C_{432}(F_6)$; 
			\item [\rm (3E6)] $C_{432}(E_6)$ $\cong_{T2_{432,3,64}}$ $C_{432}(D_6)$; $C_{432}(E_6)$ $\cong_{T2_{432,3,16}}$ $C_{432}(D_6)$;\\
			
			\item [\rm (1F1)] $C_{432}(F_1)$ $\cong_{T2_{432,2,54}}$ $C_{432}(C_1)$; 	
			\item [\rm (2F1)] $C_{432}(F_1)$ $\cong_{T2_{432,3,32}}$ $C_{432}(D_1)$; 
			\item [\rm (3F1)] $C_{432}(F_1)$ $\cong_{T2_{432,3,64}}$ $C_{432}(E_1)$; $C_{432}(F_1)$ $\cong_{T2_{432,3,16}}$ $C_{432}(E_1)$;\\
			
			\item [\rm (1F2)] $C_{432}(F_2)$ $\cong_{T2_{432,2,54}}$ $C_{432}(C_2)$; 	
			\item [\rm (2F2)] $C_{432}(F_2)$ $\cong_{T2_{432,3,32}}$ $C_{432}(D_2)$; 
			\item [\rm (3F2)] $C_{432}(F_2)$ $\cong_{T2_{432,3,64}}$ $C_{432}(E_2)$; $C_{432}(F_2)$ $\cong_{T2_{432,3,16}}$ $C_{432}(E_2)$;\\
			
			\item [\rm (1F3)] $C_{432}(F_3)$ $\cong_{T2_{432,2,54}}$ $C_{432}(C_3)$; 	
			\item [\rm (2F3)] $C_{432}(F_3)$ $\cong_{T2_{432,3,32}}$ $C_{432}(D_3)$; 
			\item [\rm (3F3)] $C_{432}(F_3)$ $\cong_{T2_{432,3,64}}$ $C_{432}(E_3)$; $C_{432}(F_3)$ $\cong_{T2_{432,3,16}}$ $C_{432}(E_3)$;\\
			
			\item [\rm (1F4)] $C_{432}(F_4)$ $\cong_{T2_{432,2,54}}$ $C_{432}(C_4)$; 	
			\item [\rm (2F4)] $C_{432}(F_4)$ $\cong_{T2_{432,3,32}}$ $C_{432}(D_4)$; 
			\item [\rm (3F4)] $C_{432}(F_4)$ $\cong_{T2_{432,3,64}}$ $C_{432}(E_4)$; $C_{432}(F_4)$ $\cong_{T2_{432,3,16}}$ $C_{432}(E_4)$;\\
			
			\item [\rm (1F5)] $C_{432}(F_5)$ $\cong_{T2_{432,2,54}}$ $C_{432}(C_5)$; 	
			\item [\rm (2F5)] $C_{432}(F_5)$ $\cong_{T2_{432,3,32}}$ $C_{432}(D_5)$; 
			\item [\rm (3F5)] $C_{432}(F_5)$ $\cong_{T2_{432,3,64}}$ $C_{432}(E_5)$; $C_{432}(F_5)$ $\cong_{T2_{432,3,16}}$ $C_{432}(E_5)$;\\
			
			\item [\rm (1F6)] $C_{432}(F_6)$ $\cong_{T2_{432,2,54}}$ $C_{432}(C_6)$; 	
			\item [\rm (2F6)] $C_{432}(F_6)$ $\cong_{T2_{432,3,32}}$ $C_{432}(D_6)$; 
			\item [\rm (3F6)] $C_{432}(F_6)$ $\cong_{T2_{432,3,64}}$ $C_{432}(E_6)$; $C_{432}(F_6)$ $\cong_{T2_{432,3,16}}$ $C_{432}(E_6)$. 
		\end{enumerate}	
		
		\begin{enumerate}	
			\item [\rm (b1)]  $T2_{432,2}(C_{432}(A_i))$ = $\{C_{432}(A_i), C_{432}(D_i)\}$ = $T2_{432,2}(C_{432}(D_i))$ for $i$ = 1 to 6;  
			
			\item [\rm (b2)]  $T2_{432,2}(C_{432}(B_i))$ = $\{C_{432}(B_i), C_{432}(E_i)\}$ = $T2_{432,2}(C_{432}(E_i))$ for $i$ = 1 to 6;
			
			\item [\rm (b3)]  $T2_{432,2}(C_{432}(C_i))$ = $\{C_{432}(C_i), C_{432}(F_i)\}$ = $T2_{432,2}(C_{432}(F_i))$ for $i$ = 1 to 6.
			
			\item [\rm (c1)]  $T2_{432,3}(C_{432}(A_i))$ = $\{C_{432}(A_i), C_{432}(B_i), C_{432}(C_i)\}$ 
			
			\hfill = $T2_{432,2}(C_{432}(B_i))$ = $T2_{432,3}(C_{432}(C_i))$ for $i$ = 1 to 6;  
			
			\item [\rm (c2)] $T2_{432,3}(C_{432}(D_i))$ = $\{C_{432}(D_i), C_{432}(E_i), C_{432}(F_i)\}$ 
			
			\hfill = $T2_{432,2}(C_{432}(E_i))$ = $T2_{432,3}(C_{432}(F_i))$ for $i$ = 1 to 6.  
			
			\item [\rm (d)]  $(T2_{432,2}(C_{432}(X_i)), \circ)$ is an Abelian group for $X_i$ = $A_i, B_i, C_i$ and $i$ = 1 to 6.  
			
			\item [\rm (e)]  $(T2_{432,3}(C_{432}(Y_i)), \circ)$ is an Abelian group for $Y_i$ = $A_i, D_i$ and $i$ = 1 to 6.  \hfill $\Box$			
	\end{enumerate}		}
\end{prm}

 \begin{prm} \quad \label{p5.2} {\rm Find the isomorphism digraph, the isomorphism graph and Hamiltonian isomorphism series w.r.t. $m$ = 2 as well as w.r.t. $m$ = 3, if exist, of $C_{432}(16, 27, 48, 54, 128, 160, 189)$. }
 \end{prm}
 
 \noindent
 {\bf Solution.}\quad We use problems 3.1 and 3.5 in \cite{v2-8} to get $T1_{432}(C_{432}(R))$ and $T2_{432, m}(C_{432}(R))$ for $m$ = 2 and $m$ = 3 for different isomorphic circulant graphs of $C_{432}(R)$ where $R$ = $\{16, 27, 48, 54, 128, 160, 189\}$.
 
 Let $R$ = $R_1$ = $A_1$ and for $i$ = 1 to 6, $A_i, B_i, C_i, D_i, E_i, F_i$ be as given in the begining of this section. Let $\mathcal{D}$ be the isomorphism digraph and $\mathcal{G}$ be the isomorphism graph $\mathcal{G}$ of $C_{432}(R)$. Then, 
 
 $Isoset(C_{432}(R))$ = $\{C_{432}(X_i)/~ X_i = A_i,B_i,\dots,F_i~\text{and}~i = 1,2,\dots,6\}$ = $V(\mathcal{D})$ = $V(\mathcal{G})$.
 
Using solutions obtained in \cite{v2-4}, we get, for $i$ = 1 to 6 and $R$ = $R_1$ = $A_1$,

$T2_{432,2}(C_{432}(A_i))$ = $\{C_{432}(A_i), C_{432}(D_i) = \theta_{432,2,54}(C_{432}(A_i))\}$ = $T2_{432,2}(C_{432}(D_i))$;  

$T2_{432,2}(C_{432}(B_i))$ = $\{C_{432}(B_i), C_{432}(E_i) = \theta_{432,2,54}(C_{432}(B_i))\}$ = $T2_{432,2}(C_{432}(E_i))$;

$T2_{432,2}(C_{432}(C_i))$ = $\{C_{432}(C_i), C_{432}(F_i) = \theta_{432,2,54}(C_{432}(C_i))\}$ = $T2_{432,2}(C_{432}(F_i))$;

$T2_{432,3}(C_{432}(A_i))$ = $\{C_{432}(A_i), C_{432}(B_i) = \theta_{432,3,32}(C_{432}(A_i)), C_{432}(C_i) = \theta_{432,3,64}(C_{432}(C_i))\}$ 

\hfill = $T2_{432,2}(C_{432}(B_i))$ = $T2_{432,3}(C_{432}(C_i))$;  

$T2_{432,3}(C_{432}(D_i))$ = $\{C_{432}(D_i), C_{432}(E_i) = \theta_{432,3,32}(C_{432}(D_i)), C_{432}(F_i) = \theta_{432,3,64}(C_{432}(D_i))\}$ 

\hfill = $T2_{432,2}(C_{432}(E_i))$ = $T2_{432,3}(C_{432}(F_i))$;

$T1_{432}(C_{432}(\alpha_1))$ = $\{C_{432}(\alpha_i)/~i = 1,2,\dots,6\}$, $\alpha_1$ = $A_1, B_1, \dots, F_1$; and 
\\
$V(\mathcal{D})$ = $V(\mathcal{G})$ = $Isoset(C_{432}(R))$ = $Isoset_{432,2}(C_{432}(R))$ $\cup$ $Isoset_{432,3}(C_{432}(R))$

\hspace{2cm}  = $Isoset_{432,2}(C_{432}(X_i))$ $\cup$ $Isoset_{432,3}(C_{432}(X_i))$, $X_i$ = $A_i,B_i,\dots,F_i$ and $i$ = 1 to 6

\hspace{2cm} = $Isoset_{432,2}(C_{432}(X_i))$, $X_i$ = $A_i,B_i,\dots,F_i$ and $i$ = 1 to 6

\hspace{2cm} = $Isoset_{432,3}(C_{432}(X_i))$, $X_i$ = $A_i,B_i,\dots,F_i$ and $i$ = 1 to 6

\hspace{2cm} = $\{C_{432}(X_i)/~ X_i = A_i,B_i,\dots,F_i~\text{and}~i = 1,2,\dots,6\}$. 
\\
  $\Rightarrow$	The isomorphism digraph $\mathcal{D}$ =  $\mathcal{ID}_{432}(C_{432}(R))$ = $\mathcal{ID}_{432, 2}(C_{432}(R))$ $\cup$ $\mathcal{ID}_{432, 3}(C_{432}(R))$ 

\hfill = $\mathcal{ID}_{432, 2}(C_{432}(X_i))$ $\cup$ $\mathcal{ID}_{432, 3}(C_{432}(X_i))$, $X_i$ = $A_i,B_i,\dots,F_i$ and $i$ = 1 to 6 and 

the isomorphism graph $\mathcal{G}$ = $\mathcal{I}_{432}(C_{432}(R))$ = $\mathcal{I}_{432, 2}(C_{432}(R))$ $\cup$ $\mathcal{I}_{432, 3}(C_{432}(R))$ 

\hfill = $\mathcal{I}_{432, 2}(C_{432}(X_i))$ $\cup$ $\mathcal{I}_{432, 3}(C_{432}(X_i))$, $X_i$ = $A_i,B_i,\dots,F_i$ and $i$ = 1 to 6. 

The isomorphism digraph $\mathcal{D}$ and the isomorphism graph $\mathcal{G}$ of $C_{432}(R)$ are given in figures 27 and 28 with $X$ in the place of $C_{432}(X)$  and isomorphisms are not marked in the figures for more clarity. In figure 29, subdigraph $\mathcal{D}_1$ of $\mathcal{D}$ of $C_{432}(R)$ with induced subdigraph $\mathcal{D}[S_1]$ and $X$ in the place of $C_{432}(X)$ in the figure is presented where $S_1$ = $\{C_{432}(A_1)$, $C_{432}(B_1)$, $\dots$, $C_{432}(F_1)\}$. And in figure 30, subgraph $\mathcal{G}_1$ of $\mathcal{G}$ of $C_{432}(R)$ with induced subgraph $\mathcal{G}[S_1]$ and $X$ in the place of $C_{432}(X)$ in the figure is presented. In figures 29 and 30, Type-2 isomorphisms w.r.t. $m$ = 2 as well as $m$ = 3 are marked and not Type-1 isomorphism. 

Let $\mathcal{D}_i$ = ($\bigcup_{j=1}^6$ $\overleftrightarrow{K_6}(X_j)
$) $\cup$ $\mathcal{D}[S_i]$ and $\mathcal{G}_i$ = ($\bigcup_{j=1}^6$ $K_6(X_j)$) $\cup$ $\mathcal{G}[S_i]$ where $S_i$ = $\{C_{432}(A_i)$, $C_{432}(B_i)$, $\dots$, $C_{432}(F_i)\}$, $\overleftrightarrow{K_n}(X_i)$ is the complete digraph of order $n$ over $X_i$, $X_1$ = $\bigcup_{i=1}^6{\{C_{432}(A_i)\}}$, $X_2$ = $\bigcup_{i=1}^6{\{C_{432}(B_i)\}}$, $\dots$, $X_6$ = $\bigcup_{i=1}^6{\{C_{432}(F_i)\}}$ and $1 \leq i \leq 6$. In figures 29 and 30,  subdigraph $\mathcal{D}_1$ of $\mathcal{D}$ and subgraph $\mathcal{G}_1$ of $\mathcal{G}$ are presented.

Clearly, $\mathcal{D}$ = $\bigcup_{i=1}^6{\mathcal{D}_i}$ and $\mathcal{G}$ = $\bigcup_{i=1}^6$ $\mathcal{G}_i$. To get more clarity on $\mathcal{D}$ and $\mathcal{G}$, we present figures 31 to 34. In figures 31 and 32, we present  isomorphism subdigraph $\mathcal{D}_7$ of $\mathcal{D}$ and isomorphism subgraph $\mathcal{G}_7$ of $\mathcal{G}$ of $C_{432}(R)$ and in these figures only a few directed edges are marked with their Type-2 isomorphism w.r.t. $m$ = 2 as samples and similar markings can be put in others. In $\mathcal{D}_7$ and $\mathcal{G}_7$, we consider $\mathcal{D}$ without Type-2 isomorphism w.r.t. $m$ = 3 whereas in $\mathcal{D}_8$ and $\mathcal{G}_8$, we consider $\mathcal{D}$ without Type-2 isomorphism w.r.t. $m$ = 2.

From figures 27 and 28, it is clear that they don't have Hamiltoinam isomorphism series.

\begin{figure}[ht]
\centerline{\includegraphics[width=7in]{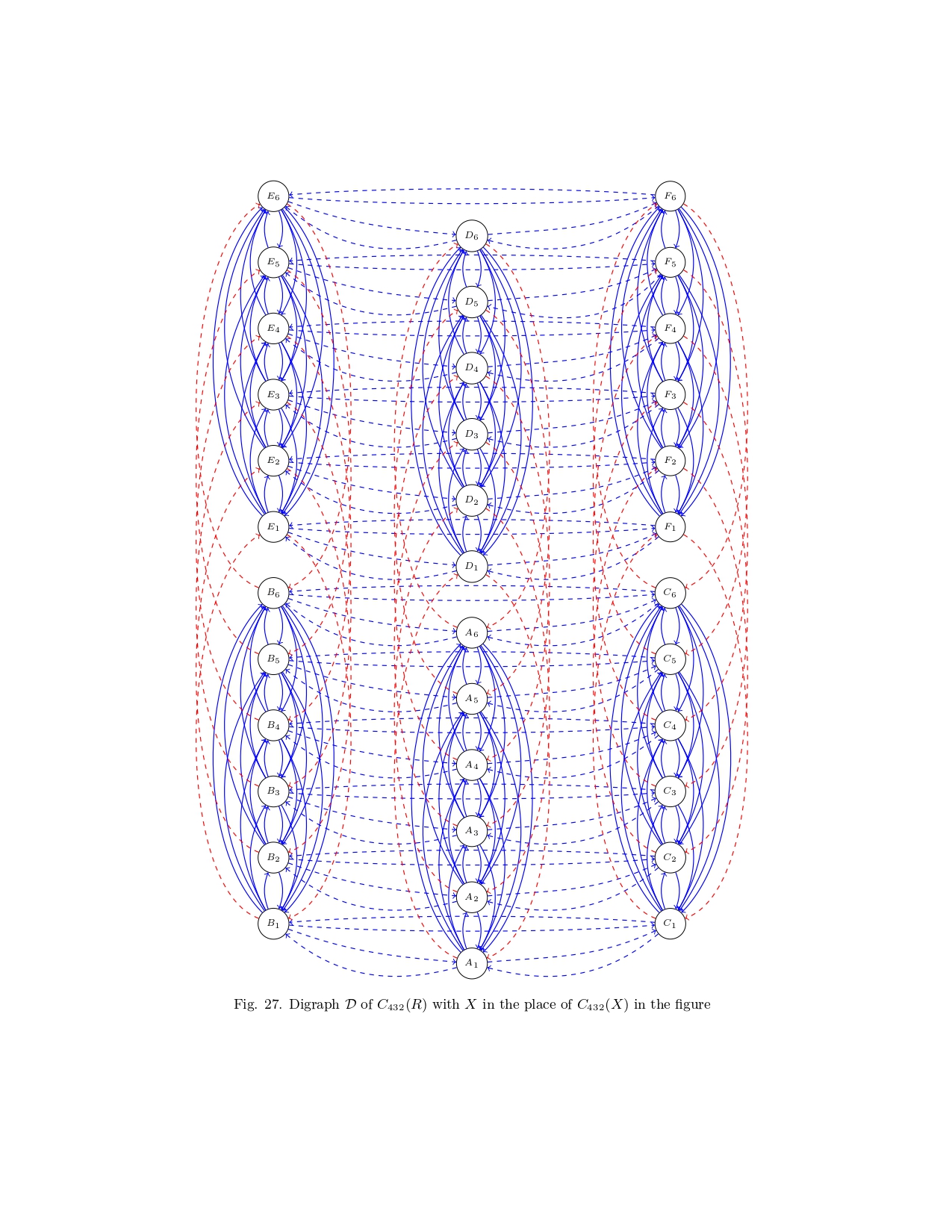}}
\end{figure}

\begin{figure}[ht]
\centerline{\includegraphics[width=5.5in]{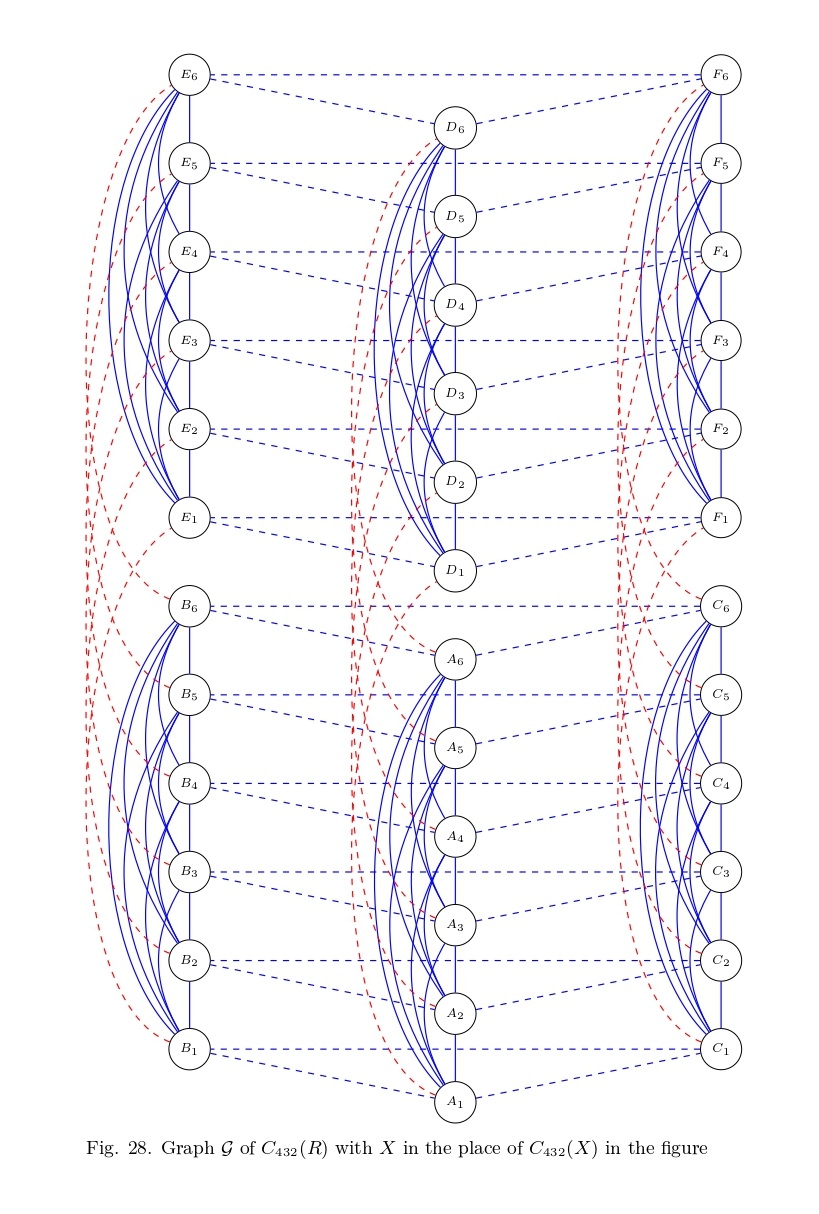}}
\end{figure}

\begin{figure}[ht]
\centerline{\includegraphics[width=6in]{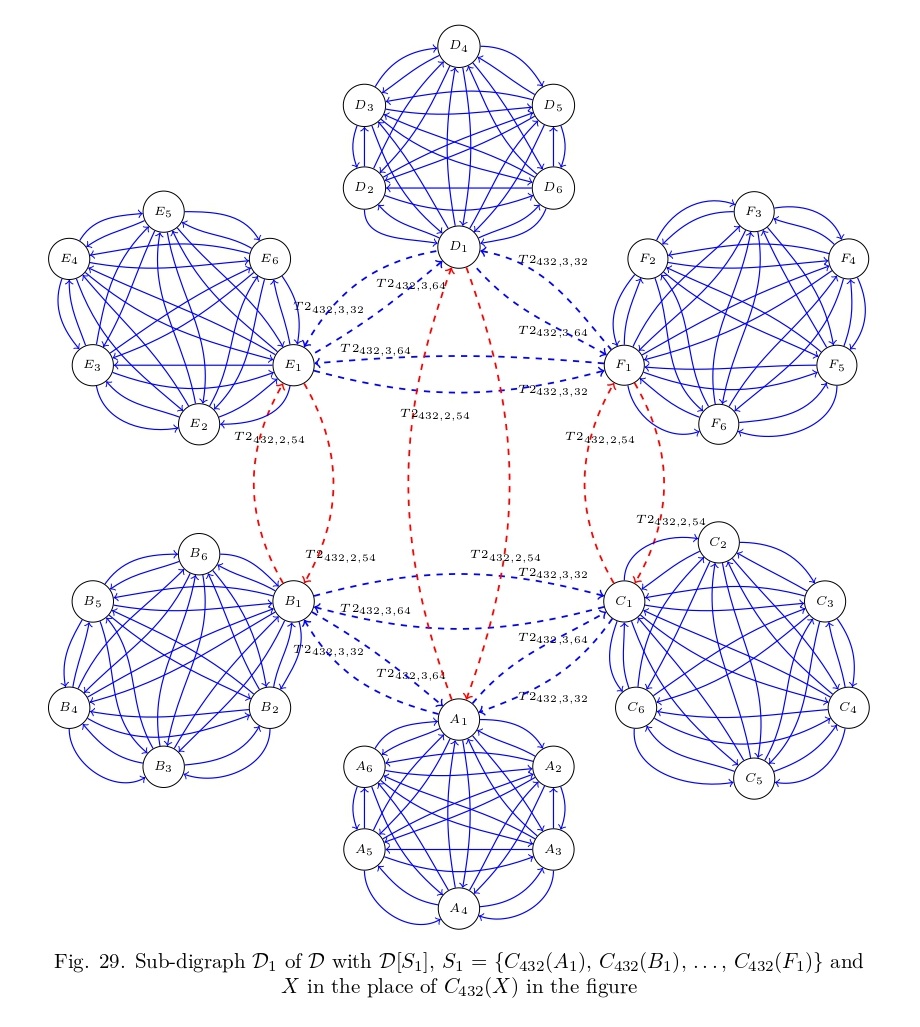}}
\end{figure}

\begin{figure}[ht]
\centerline{\includegraphics[width=5.7in]{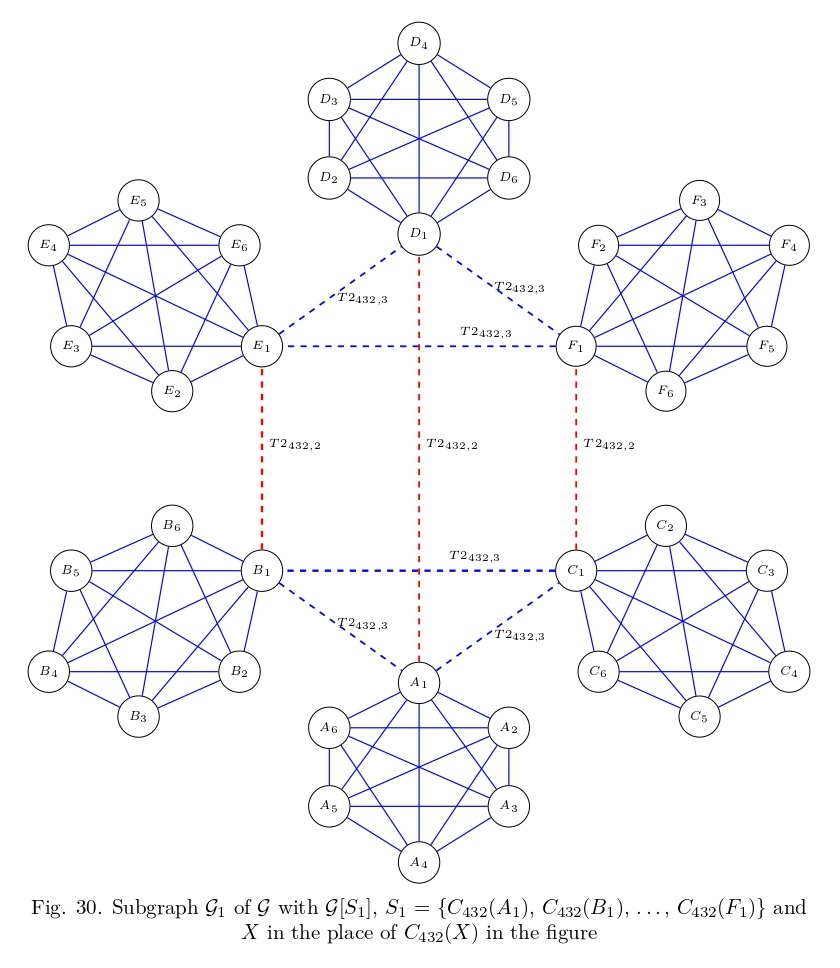}}
\end{figure}
 
\begin{figure}[ht]
\centerline{\includegraphics[width=5.5in]{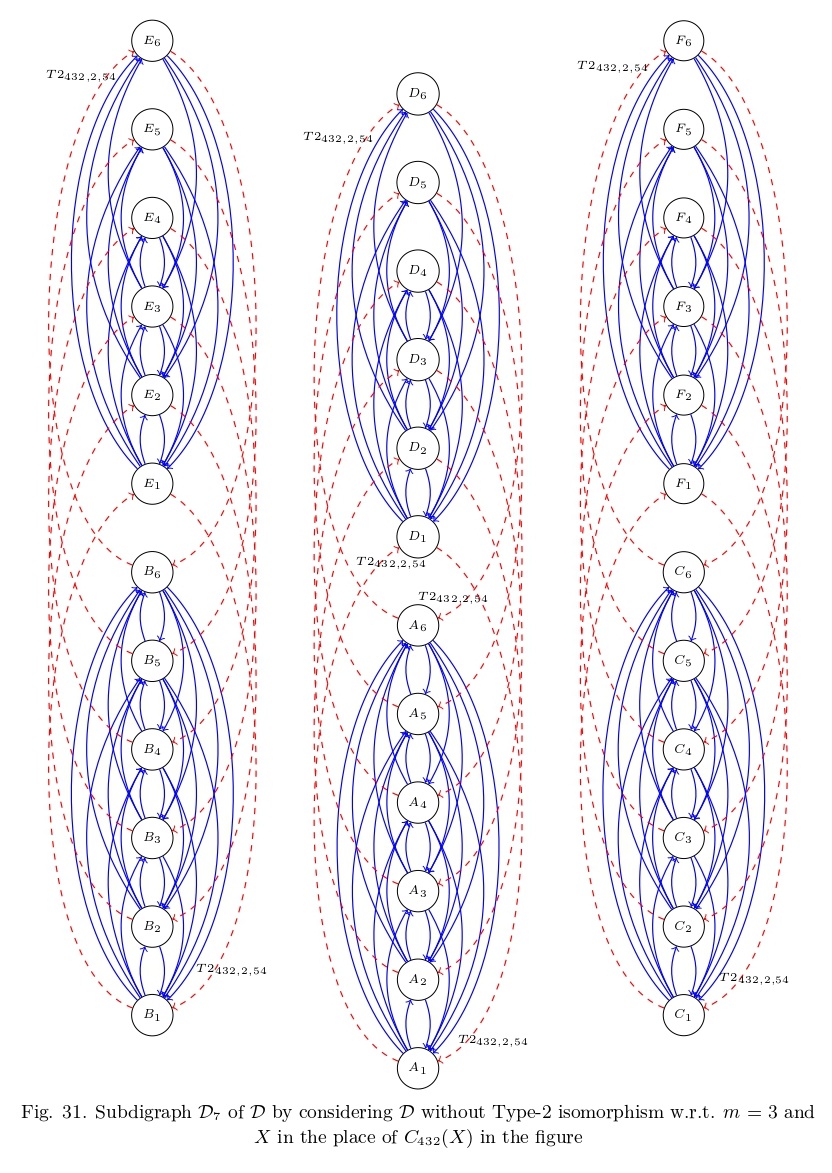}}
\end{figure}

\begin{figure}[ht]
\centerline{\includegraphics[width=5.7in]{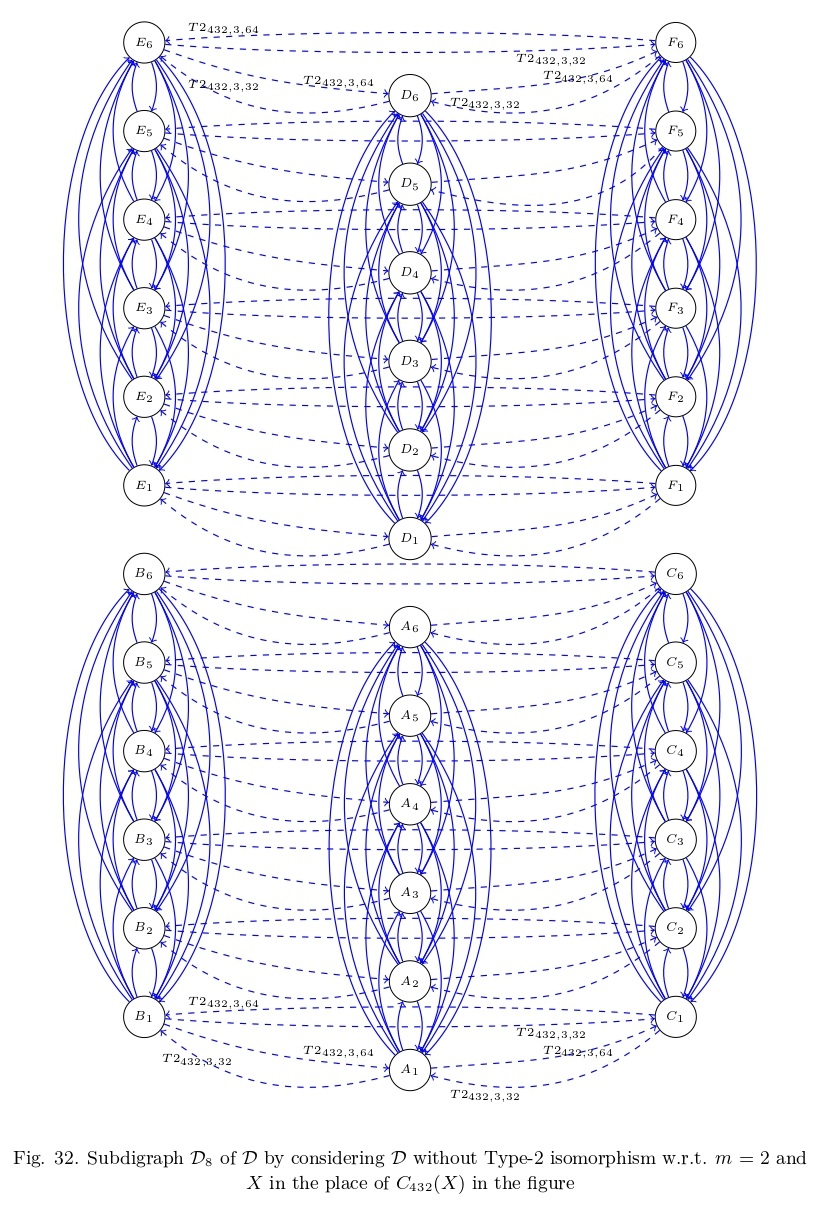}}
\end{figure}

\begin{figure}[ht]
\centerline{\includegraphics[width=5.5in]{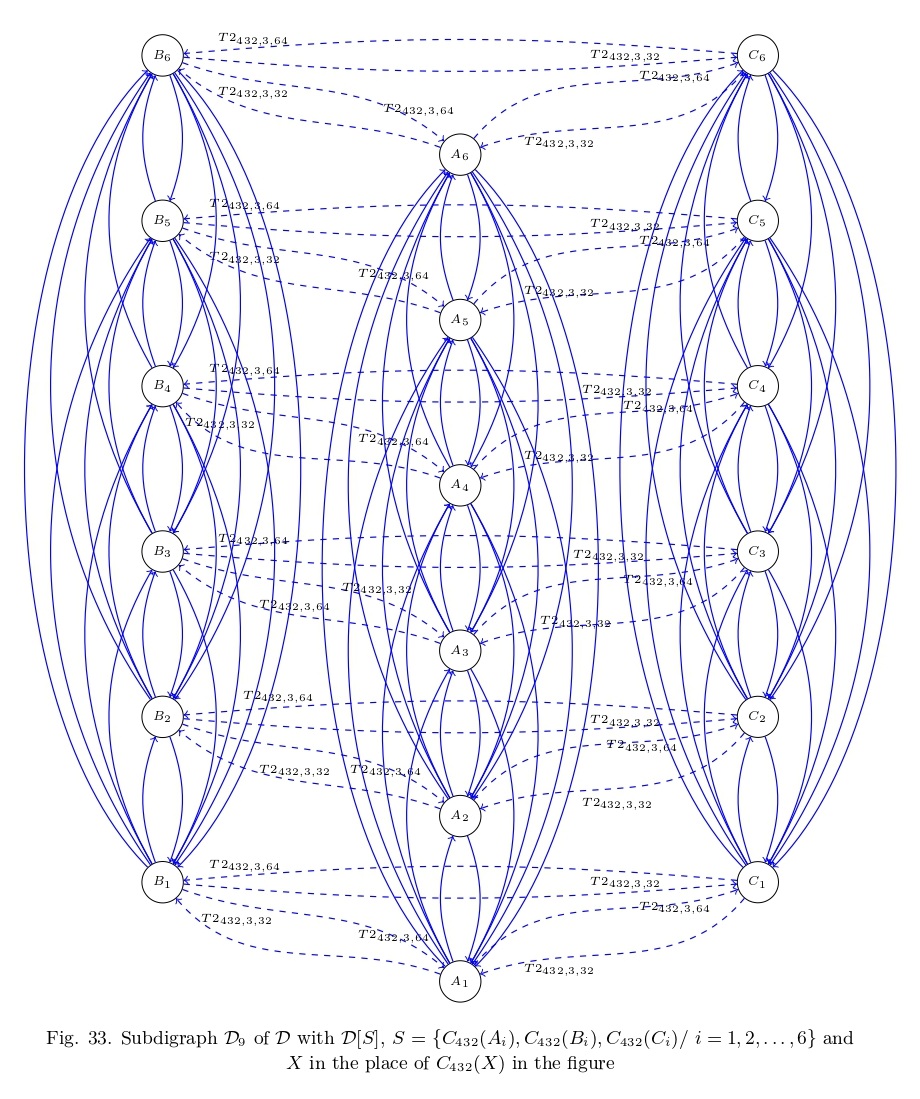}}
\end{figure}

\begin{figure}[ht]
\centerline{\includegraphics[width=6in]{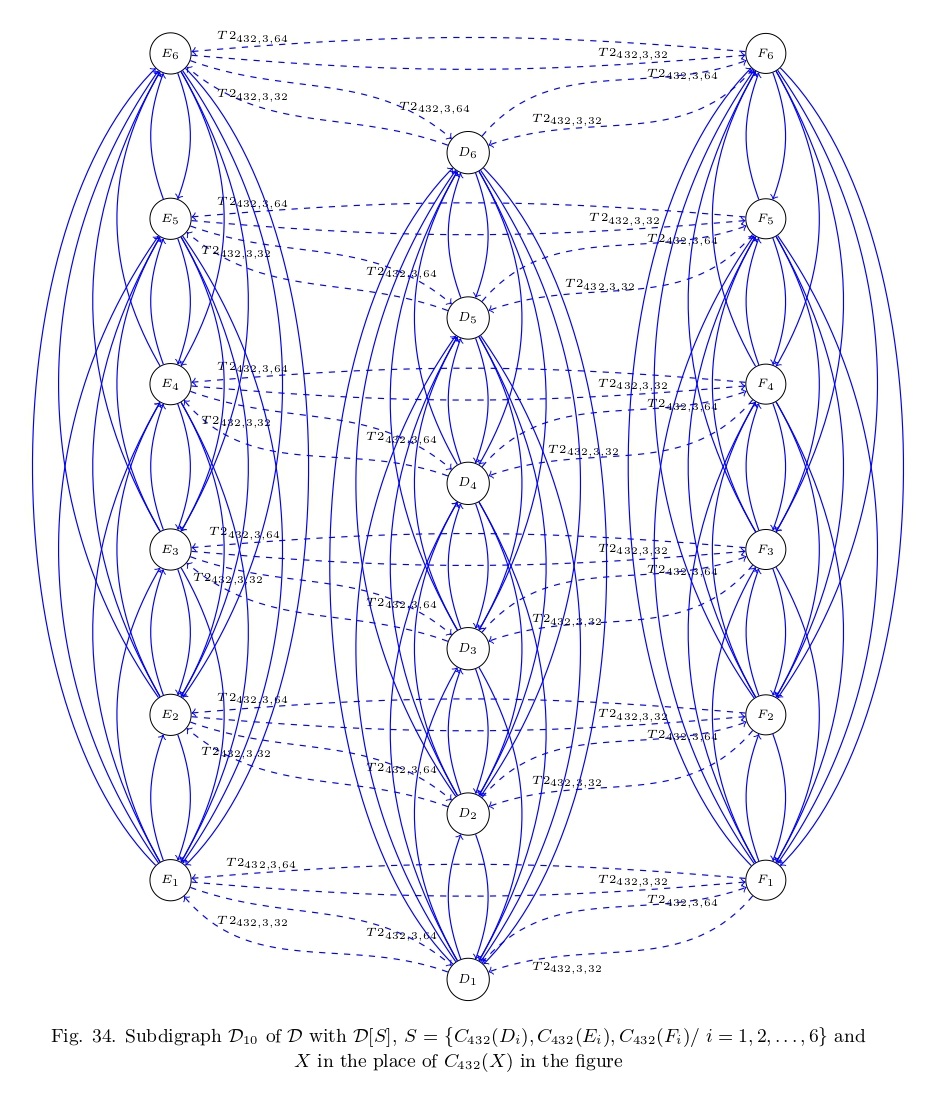}}
\end{figure}

\section{Diameter of isomorphic set of $C_n(R)$ and isomorphic distance of $C_n(S)$ and $C_n(T)$} 

In sections 4 and 5, we could notice that circulant graphs of different orders have different isomorphism digraphs. Motivated by the above, in this section, we define {\em diameter of isomorphic set} of $C_n(R)$ and {\em isomorphic distance} of $C_n(S)$ and $C_n(T)$ and obtained these values for some circulant graphs. 

\begin{definition} \quad \label{d6.1} Let $C_n(R)$ be a given circulant graph. We define {\em diameter of} isomorphic set of $C_n(R)$ as the diameter of $\mathcal{I}_{n,m_1,m_2,\dots,m_c}(C_n(R))$ and we denote it by $D_{Iso}(C_n(R))$. $i.e.$, $D_{Iso}(C_n(R))$ = $D(Isoset(C_n(R)))$ = $D(\mathcal{I}_{n,m_1,m_2,\dots,m_c}(C_n(R)))$. We consider 

$D_{Iso}(C_n(R))$ = 0 if $|Isoset(C_n(R))|$ = 1,

$D_{Iso}(C_n(R))$ = 1 if $|Isoset(C_n(R))| > 1$ and $C_n(S)\in T1_n(C_n(R))$, $\forall$ $C_n(S)\in Isoset(C_n(R))$, and

$D_{Iso}(C_n(R), C_m(S))$ = $\infty$ if $C_n(R)$ and $C_m(S)$ are non-isomorphic, $m,n\in\mathbb{N}$.
\end{definition}

\begin{definition} \quad \label{d6.2} Let $C_n(R)$ and $C_n(S)$ be given circulant graphs. We define {\em isomorphic distance} of $C_n(R)$ and $C_n(S)$ as the length of a shortest path between $C_n(R)$ and $C_n(S)$ in their isomorphism graph $\mathcal{I}_n(C_n(R), C_n(S))$ and it is denoted by $d_{Iso}(C_n(R), C_n(S))$. Also, we consider 

$d_{Iso}(C_n(R), C_n(S))$ = 0 if $R$ = $S$,

$d_{Iso}(C_n(R), C_n(S))$ = 1 if $R \neq S$ and $C_n(S)\in T1_n(C_n(R))$, and 

$d_{Iso}(C_n(R), C_n(S))$ = $\infty$ if $C_n(R)$ and $C_n(S)$ are non-isomorphic, $m,n\in\mathbb{N}$.
\end{definition}

Using results obtained in sections 4 and 5, we obtain the following on the diameter of isomorphic sets of circulant graphs of different orders.
\begin{enumerate} 
\item [\rm 1.] $D_{Iso}(C_{16}(1,2,7))$ = 2 = $D_{Iso}(C_{16}(2,3,5))$ = $D_{Iso}(C_{16}(3,5,6))$ = $D_{Iso}(C_{16}(1,6,7))$

\hfill = $D_{Iso}(C_{16}(1,2,4,7))$ = $D_{Iso}(C_{16}(2,3,4,5))$ = $D_{Iso}(C_{16}(3,4,5,6))$ = $D_{Iso}(C_{16}(1,4,6,7))$

\hfill = $D_{Iso}(C_{16}(1,2,7,8))$ = $D_{Iso}(C_{16}(2,3,5,8))$ = $D_{Iso}(C_{16}(3,5,6,8))$ = $D_{Iso}(C_{16}(1,6,7,8))$

\hfill = $D_{Iso}(C_{16}(1,4,6,7))$ = $D_{Iso}(C_{16}(3,4,5,6))$ = $D_{Iso}(C_{16}(1,2,4,7))$ = $D_{Iso}(C_{16}(2,3,4,5))$

\hfill = $D_{Iso}(C_{16}(1,6,7,8))$ = $D_{Iso}(C_{16}(3,5,6,8))$ = $D_{Iso}(C_{16}(1,2,7,8))$ = $D_{Iso}(C_{16}(2,3,5,8))$

\hspace{.2cm} = $D_{Iso}(C_{16}(1,2,4,7,8))$ = $D_{Iso}(C_{16}(2,3,4,5,8))$= $D_{Iso}(C_{16}(3,4,5,6,8))$

\hspace{.2cm}  = $D_{Iso}(C_{16}(1,4,6,7,8))$ = $D_{Iso}(C_{16}(1,4,6,7,8))$ = $D_{Iso}(C_{16}(3,4,5,6,8))$

\hspace{.2cm} = $D_{Iso}(C_{16}(1,2,4,7,8))$ = $D_{Iso}(C_{16}(2,3,4,5,8))$.

See problem \ref{p4.6}  and figures 7 to 14 for more details.  

\item [\rm 2.] $D_{Iso}(C_{27}(1,3,8,10))$ = 2 = $D_{Iso}(C_{27}(3,4,5,13))$ = $D_{Iso}(C_{27}(2,3,7,11))$ 

\hspace{.2cm} = $D_{Iso}(C_{27}(2,6,7,11))$ = $D_{Iso}(C_{27}(1,6,8,10))$ = $D_{Iso}(C_{27}(4,5,6,13))$ 

\hspace{.2cm} = $D_{Iso}(C_{27}(4,5,12,13))$ = $D_{Iso}(C_{27}(2,7,11,12))$ = $D_{Iso}(C_{27}(1,8,10,12))$ 

\hspace{.2cm} = $D_{Iso}(C_{27}(1,3,6,8,10))$ = $D_{Iso}(C_{27}(3,4,5,6,13))$ = $D_{Iso}(C_{27}(2,3,6,7,11))$ 

\hspace{.2cm} = $D_{Iso}(C_{27}(2,6,7,11,12))$ = $D_{Iso}(C_{27}(1,6,8,10,12))$ = $D_{Iso}(C_{27}(4,5,6,12,13))$ 

\hspace{.2cm} = $D_{Iso}(C_{27}(3,4,5,12,13))$ = $D_{Iso}(C_{27}(2,3,7,11,12))$ = $D_{Iso}(C_{27}(1,3,8,10,12))$ 

\hspace{.2cm} = $D_{Iso}(C_{27}(1,3,8,9,10))$ = $D_{Iso}(C_{27}(3,4,5,9,13))$ = $D_{Iso}(C_{27}(2,3,7,9,11))$ 

\hspace{.2cm} = $D_{Iso}(C_{27}(2,6,7,9,11))$ = $D_{Iso}(C_{27}(1,6,8,9,10))$ = $D_{Iso}(C_{27}(4,5,6,9,13))$ 

\hspace{.2cm} = $D_{Iso}(C_{27}(4,5,9,12,13))$ = $D_{Iso}(C_{27}(2,7,9,11,12))$ = $D_{Iso}(C_{27}(1,8,9,10,12))$ 

\hspace{.2cm} = $D_{Iso}(C_{27}(1,3,6,8,9,10))$ = $D_{Iso}(C_{27}(3,4,5,6,9,13))$ = $D_{Iso}(C_{27}(2,3,6,7,9,11))$ 

\hspace{.2cm} = $D_{Iso}(C_{27}(2,6,7,9,11,12))$ = $D_{Iso}(C_{27}(1,6,8,9,10,12))$ = $D_{Iso}(C_{27}(4,5,6,9,12,13))$ 

\hspace{.2cm} = $D_{Iso}(C_{27}(3,4,5,9,12,13))$ = $D_{Iso}(C_{27}(2,3,7,9,11,12))$ = $D_{Iso}(C_{27}(1,3,8,9,10,12))$.

See problem \ref{p4.7} and figures 15 to 22 for more details.  

\item [\rm 3.] $D_{Iso}(C_{54}(1,3,17,19))$ = 2 = $D_{Iso}(C_{54}(3,7,11,25))$ = $D_{Iso}(C_{54}(3,5,13,23))$ 

\hspace{.2cm} = $D_{Iso}(C_{54}(5,13,15,23))$ = $D_{Iso}(C_{54}(1,15,17,19))$ = $D_{Iso}(C_{54}(7,11,15,25))$ 

\hspace{.2cm} = $D_{Iso}(C_{54}(7,11,21,25))$ = $D_{Iso}(C_{54}(5,13,21,23))$ = $D_{Iso}(C_{54}(1,17,19,21))$. 

See problem \ref{p4.8} and figures 24 and 25 for more details.  

\item [\rm 4.] $D_{Iso}(C_{432}(16, 27, 48, 54, 128, 160, 189))$ = 3 = $D_{Iso}(C_{432}(27, 32, 48, 54, 112, 176, 189))$ 

\hfill = $D_{Iso}(C_{432}(27, 48, 54, 64, 80, 189, 208))$ 

\hspace{.2cm}  = $D_{Iso}(C_{432}(16, 48, 54, 81, 128, 135, 160))$ = $D_{Iso}(C_{432}(32, 48, 54, 81, 112, 135, 176))$ 

\hfill = $D_{Iso}(C_{432}(48, 54, 64, 80, 81, 135, 208))$ 

\hspace{.2cm} = $D_{Iso}(C_{432}(64, 80, 81, 135, 162, 192, 208))$ = $D_{Iso}(C_{432}(27, 32, 54, 96, 112, 176, 189))$ 

\hspace{.2cm} = $D_{Iso}(C_{432}(32, 81, 96, 112, 135, 162, 176))$ = $D_{Iso}(C_{432}(16, 48, 81, 128, 135, 160, 162))$ 

\hfill = $D_{Iso}(C_{432}(27, 54, 64, 80, 189, 192, 208))$ 

\hspace{.2cm} = $D_{Iso}(C_{432}(16, 81, 128, 135, 160, 162, 192))$ = $D_{Iso}(C_{432}(27, 54, 64, 80, 96, 189, 208))$ 

\hspace{.2cm} = $D_{Iso}(C_{432}(64, 80, 81, 96, 135, 162, 208))$ = $D_{Iso}(C_{432}(32, 48, 81, 112, 135, 162, 176))$ 

\hfill = $D_{Iso}(C_{432}(16, 27, 54, 128, 160, 189, 192))$ 

\hspace{.2cm} = $D_{Iso}(C_{432}(32, 81, 112, 135, 162, 176, 192))$ = $D_{Iso}(C_{432}(16, 27, 54, 96, 128, 160, 189))$ 

\hspace{.2cm} = $D_{Iso}(C_{432}(16, 81, 96, 128, 135, 160, 162))$ = $D_{Iso}(C_{432}(48, 64, 80, 81, 135, 162, 208))$ 

\hfill = $D_{Iso}(C_{432}(27, 32, 54, 112, 176, 189, 192))$ 

\hspace{.2cm} = $D_{Iso}(C_{432}(27, 64, 80, 162, 189, 192, 208))$ = $D_{Iso}(C_{432}(32, 54, 81, 96, 112, 135, 176))$ 

\hspace{.2cm} = $D_{Iso}(C_{432}(27, 32, 96, 112, 162, 176, 189))$ = $D_{Iso}(C_{432}(16, 27, 48, 128, 160, 162, 189))$ 

\hfill = $D_{Iso}(C_{432}(54, 64, 80, 81, 135, 192, 208))$ 

\hspace{.2cm} = $D_{Iso}(C_{432}(16, 27, 128, 160, 162, 189, 192))$ = $D_{Iso}(C_{432}(54, 64, 80, 81, 96, 135, 208))$ 

\hspace{.2cm} = $D_{Iso}(C_{432}(27, 64, 80, 96, 162, 189, 208))$ = $D_{Iso}(C_{432}(27, 32, 48, 112, 162, 176, 189))$ 

\hfill = $D_{Iso}(C_{432}(16, 54, 81, 128, 135, 160, 192))$ 

\hspace{.2cm} = $D_{Iso}(C_{432}(27, 32, 112, 162, 176, 189, 192))$ = $D_{Iso}(C_{432}(16, 54, 81, 96, 128, 135, 160))$ 

\hspace{.2cm} = $D_{Iso}(C_{432}(16, 27, 96, 128, 160, 162, 189))$ = $D_{Iso}(C_{432}(27, 48, 64, 80, 162, 189, 208))$ 

\hspace{.2cm} = $D_{Iso}(C_{432}(32, 54, 81, 112, 135, 176, 192))$. 

See problem \ref{p5.2} and figures 27 and 28 for more details.  
\end{enumerate}

\section{Conclusion} 

In problem \ref{p3.10} in section 3, it is shown that $C_{54}(1,3,17,19)$ and $C_{54}(7,11,15,25)$ are isomorphic but they are neither
of Type-1 nor of Type-2 w.r.t. $m$ = 3. Such circulant graphs are having isomorphic distance $\geq 2$. One can easily identify such circulant graphs from their isomorphism digraph/graph. See figures 7-22, 24,25, 27,28 to find more such circulant graphs. 

In sections 4 and 5, we noticed that isomorphism digraphs of $C_{432}(16,27,48,54,128,160,189)$ and of $C_{16}(R)$ do not contain Hamiltonian isomorphism series whereas Hamiltonian isomorphism series and its directed subgraph exist in isomorphism digraphs of $C_{27}(R)$, each with isomorphic circulant graphs of Type-1 and Type-2 and also of $C_{54}(1,3,17,19)$. Thus, related to the existence of Hamiltonian isomorphism series and its directed subgraph in isomorphism digraphs, we propose the following open problem.

\begin{oprm} {\rm\label{op.1}  Find families of circulant graphs $C_n(R)$ such that the isomorphism digraph 
\\
$I\mathcal{D}_n(C_n(R))$ contains a Hamiltonian  isomorphism series.} \hfill $\Box$
\end{oprm}

In section 5, we obtained isomorphism digraph and isomorphism graph of circulant graphs $C_{432}(R)$ such that each $C_{432}(R)$ has isomorphic circulant graphs of Type-2 w.r.t. $m$ = 2 as well as $m$ = 3. Related to the above, we propose the following open problem.

\begin{oprm} {\rm\label{op.2}  Find circulant graphs $C_{n}(R)$ such that each $C_{n}(R)$ has isomorphic circulant graphs of Type-2 w.r.t. $m$ = $m_1$ as well as $m$ = $m_2$.  

In general, find circulant graphs $C_{n}(R)$ such that each $C_{n}(R)$ has isomorphic circulant graphs of Type-2 w.r.t. $m$ = $m_1, m_2, \dots, m_c$, $c \geq 2$ and $c\in\mathbb{N}$.     } \hfill $\Box$
\end{oprm}

In section 6, we obtained the value of diameter of isomorphic sets of a few circulant graphs. Related to this, we propose the following open problem.

\begin{oprm} {\rm\label{op.3}  Find the relationship/formula, if it exists, for the diameter of isomorphic sets of any circulant graph.    } \hfill $\Box$
\end{oprm}

The author feels that this area of research work is going to spread rapidly and will attract more researchers. 

\vspace{.1cm}
\noindent
\textbf{Declaration of competing interest}\quad 
The author declares that he has no conflict of interest.

\begin {thebibliography}{10}

\bibitem {ad67}  
A. Adam, 
{\em Research problem 2-10},  
J. Combinatorial Theory, {\bf 3} (1967), 393.

\bibitem {bm82}	
J. A. Bondy and U. S. R. Murty, 
{\em Graph Theory with Applications, $5^{th}$ Edi.}, 
Elsevier Sci. Publ. Co., New York, 1982.

\bibitem {da79}	
P. J. Davis, 
{\em Circulant Matrices,} 
Wiley, New York, 1979.

\bibitem {dw02}	
Dauglas B. West, 
{\em Introduction to Graph Theory, $2^{ed}$ Edi.}, 
Pearson Education (Singapore) Pvt. Ltd., 2002.

\bibitem {eltu} 
B. Elspas and J. Turner, 
{\em Graphs with circulant adjacency matrices}, 
J. Combinatorial Theory, {\bf 9} (1970), 297-307.

\bibitem {krsi} 
I. Kra and S. R. Simanca, 
{\em On Circulant Matrices},  
AMS Notices, {\bf 59} (2012), 368--377.

\bibitem {v96} 
V. Vilfred, 
{\em $\sum$-labelled Graphs and Circulant Graphs}, 
Ph.D. Thesis, University of Kerala, Thiruvananthapuram, Kerala, India (1996). 

\bibitem {v25} 
V. Vilfred Kamalappan, 
\emph{All Type-2 Isomorphic  Circulant Graphs of $C_{16}(R)$ and $C_{24}(S)$}, 
arXiv: 2508.09384v1  [math.CO]  (12 Aug 2025), 28 pages.

\bibitem {v24} 
V. Vilfred, 
\emph{A study on Type-2 Isomorphic Circulant Graphs and related Abelian Groups}, 
arXiv: 2012.11372v11 [math.CO] (26 Nov. 2024), 183 pages.

\bibitem {v13} 
V. Vilfred, 
{\em A Theory of Cartesian Product and Factorization of Circulant Graphs},  
Hindawi Pub. Corp. - J. Discrete Math.,  \textbf{Vol. 2013}, Article~ ID~ 163740, 10 pages.

\bibitem {v20} 
V. Vilfred Kamalappan, 
\emph{ New Families of Circulant Graphs Without Cayley Isomorphism Property with $r_i = 2$},
Int. Journal of Applied and Computational Mathematics, (2020) 6:90, 34 pages. https://doi.org/10.1007/s40819-020-00835-0. Published online: 28.07.2020 by Springer.

\bibitem {v2-1} 
V. Vilfred Kamalappan, 
\emph{A study on Type-2 Isomorphic Circulant Graphs. \\ Part 1: Type-2 isomorphic circulant graphs $C_n(R)$ w.r.t. $m$ = 2}. 
Preprint. 31 pages

\bibitem {v2-2} 
V. Vilfred Kamalappan, 
\emph{A study on Type-2 isomorphic circulant graphs. \\ Part 2: Type-2 isomorphic circulant graphs of orders 16, 24, 27}. 
Preprint. 32 pages

\bibitem {v2-3} 
V. Vilfred Kamalappan, 
\emph{A study on Type-2 isomorphic circulant graphs. \\ Part 3: 384 pairs of Type-2 isomorphic circulant graphs $C_{32}(R)$}. 
Preprint. 42 pages

\bibitem {v2-4} 
V. Vilfred Kamalappan, 
\emph{A study on Type-2 isomorphic circulant graphs. \\ Part 4: 960 triples of Type-2 isomorphic circulant graphs $C_{54}(R)$}. 
Preprint. 76 pages

\bibitem {v2-5} 
V. Vilfred Kamalappan, 
\emph{A study on Type-2 isomorphic circulant graphs. \\ Part 5: Type-2 isomorphic circulant graphs of orders 48, 81, 96}. 
Preprint. 33 pages

\bibitem {v2-6} 
V. Vilfred Kamalappan, 
\emph{A study on Type-2 Isomorphic Circulant Graphs. \\ Part 6: Abelian groups $(T2_{n, m}(C_n(R)), \circ)$ and $(V_{n, m}(C_n(R)), \circ)$}. 
Preprint. 19 pages

\bibitem {v2-7} 
V. Vilfred Kamalappan, 
\emph{A study on Type-2 Isomorphic Circulant Graphs. \\ Part 7: Isomorphism series, digraph and graph of $C_n(R)$}. 
Preprint. 54 pages

\bibitem {v2-8} 
V. Vilfred Kamalappan, 
\emph{A Study on Type-2 Isomorphic Circulant Graphs: Part 8: $C_{432}(R)$, $C_{6750}(S)$ - each has 2 types of Type-2 isomorphic circulant graphs}. 
Preprint. 99 pages

\bibitem {v2-9} 
V. Vilfred Kamalappan and P. Wilson, 
\emph{A study on Type-2 Isomorphic Circulant Graphs. \\ Part 9: Computer program to show Type-1 and -2 isomorphic circulant graphs}. 
Preprint. 21 pages

\bibitem {v2-10} 
V. Vilfred Kamalappan and P. Wilson, 
\emph{A study on Type-2 Isomorphic Circulant Graphs. \\ Part 10: Type-2 isomorphic  $C_{np^3}(R)$ w.r.t. $m$ = $p$ and related groups}. 
Preprint. 20 pages

\end{thebibliography}


\end{document}